\providecommand*{\twoheadrightarrowfill@}{%
	\arrowfill@\relbar\relbar\twoheadrightarrow
}
\providecommand*{\twoheadleftarrowfill@}{%
	\arrowfill@\twoheadleftarrow\relbar\relbar
}
\providecommand*{\xtwoheadrightarrow}[2][]{%
	\ext@arrow 0579\twoheadrightarrowfill@{#1}{#2}%
}
\providecommand*{\xtwoheadleftarrow}[2][]{%
	\ext@arrow 5097\twoheadleftarrowfill@{#1}{#2}%
}
\newcommand{\C}{{\mathfrak C}}
\newcommand{\M}{{\mathcal M}}
\newcommand{\N}{{\mathbb{N}}}
\newcommand{\R}{{\mathbb{R}}}
\newcommand{\restr}{\mathord{\upharpoonright}}
\newcommand{\EZ}{\mathrel{ { {\mathbb E}_0 } } }
\newcommand{\Er}{\mathrel{E}}
\newcommand{\lang}{{\mathcal L}}
\newcommand{\catg}{{\mathcal C}}
\newcommand{\powerset}{{\mathcal P}}
\newcommand{\biglor}{\bigvee}
\newcommand{\sbmid}{\mid}
\newcommand{\fcolon}{\colon}
\DeclareMathOperator{\tp}{{tp}}
\DeclareMathOperator{\Th}{{Th}}
\DeclareMathOperator{\gal}{{Gal}}
\DeclareMathOperator{\cl}{{cl}}
\DeclareMathOperator{\Id}{{Id}}
\DeclareMathOperator{\id}{{id}}
\DeclareMathOperator{\aut}{{Aut}}
\DeclareMathOperator{\autf}{{Autf}}
\DeclareMathOperator{\CLO}{{CLO}}
\DeclareMathOperator{\dom}{{dom}}
\DeclareMathOperator{\Souslin}{{\mathcal A}}
\newtheorem{thm}{Theorem}[section]
\newtheorem{conj}[thm]{Conjecture}
\newtheorem{ques}[thm]{Question}
\newtheorem{lem}[thm]{Lemma}
\newtheorem{fct}[thm]{Fact}
\newtheorem{cor}[thm]{Corollary}
\newtheorem{prop}[thm]{Proposition}
\theoremstyle{remark}
\newtheorem{rem}[thm]{Remark}
\theoremstyle{definition}
\newtheorem{dfn}[thm]{Definition}
\newtheorem*{clm*}{Claim}
\newtheorem{ex}[thm]{Example}
\newcounter{claimcounter}[thm]
\newenvironment{clm}{\stepcounter{claimcounter}{\noindent {\textbf{Claim}} \theclaimcounter:}}{}
\newenvironment{clmproof}[1][\proofname]{\proof[#1]}{\endproof}
\title{Topological dynamics and the complexity of strong types}
\author{Krzysztof Krupi\'nski}
\email[K.\ Krupi\'{n}ski]{kkrup@math.uni.wroc.pl}
\address[K.\ Krupi\'nski]{
	Instytut Matematyczny, Uniwersytet Wrocławski\\
	pl. Grunwaldzki 2/4\\
	50-384 Wrocław, Poland
}
\thanks{The first author is supported by the Narodowe Centrum Nauki grants no. 2012/07/B/ST1/03513, 2015/19/B/ST1/01151, and 2016/22/E/ST1/00450}
\author{Anand Pillay}
\email[A.\ Pillay]{apillay@nd.edu}
\address[A.\ Pillay]{Department of Mathematics, University of Notre Dame\\
	281 Hurley Hall\\
	Notre Dame, IN 46556, USA}
\thanks{The second author is supported by NSF grant DMS-1360702.}
\author{Tomasz Rzepecki}
\email[T.\ Rzepecki]{tomasz.rzepecki@math.uni.wroc.pl}
\address[T.\ Rzepecki]{
	Instytut Matematyczny, Uniwersytet Wrocławski\\
	pl. Grunwaldzki 2/4\\
	50-384 Wrocław, Poland
}
\thanks{The third author is supported by the Narodowe Centrum Nauki grant no. 2015/17/N/ST1/02322}
\keywords{Topological dynamics, Galois groups, strong types, Borel cardinality}
\subjclass[2010]{03C45, 54H20, 03E15, 54H11}
\date{}
\begin{document}
	
	\begin{abstract}
		We develop topological dynamics for the group of automorphisms of a monster model of any given theory. In particular, we find strong relationships between objects from topological dynamics (such as the generalized Bohr compactification introduced by Glasner) and various Galois groups of the theory in question, obtaining essentially new information about them, e.g.\ we present the closure of the identity in the Lascar Galois group of the theory as the quotient of a compact, Hausdorff group by a dense subgroup.
		
		We apply this to describe the complexity of bounded, invariant equivalence relations, obtaining comprehensive results, subsuming and extending the existing results and answering some open questions from earlier papers. We show that, in a countable theory, any such relation restricted to the set of realizations of a complete type over $\emptyset$ is type-definable if and only if it is smooth. Then we show a counterpart of this result for theories in an arbitrary (not necessarily countable) language, obtaining also new information involving relative definability of the relation in question. As a final conclusion we get the following trichotomy. Let $\C$ be a monster model of a countable theory, $p \in S(\emptyset)$, and $E$ be a bounded, (invariant) Borel (or, more generally, analytic) equivalence relation on $p(\C)$. Then, exactly one of the following holds:
		\begin{enumerate}
			\item
			$E$ is relatively definable (on $p(\C)$), smooth, and has finitely many classes,
			\item
			$E$ is not relatively definable, but it is type-definable, smooth, and has $2^{\aleph_0}$ classes,
			\item
			$E$ is not type definable and not smooth, and has $2^{\aleph_0}$ classes.
		\end{enumerate}
		All the results which we obtain for bounded, invariant equivalence relations carry over to the case of bounded index, invariant subgroups of definable groups.
	\end{abstract}

	\maketitle

	\section{Introduction}
	
	Generally speaking, this paper concerns applications of topological dynamics and the ``descriptive set theory'' of compact topological groups to model theory.
	
	The idea of using methods and tools of topological dynamics in the study of groups definable in first order structures originates in \cite{Ne1}.
	Since then further important developments in this direction have be made (see e.g.\ \cite{Ne2, GiPePi, KrPi, ChSi}). 
	The motivation for these considerations is the fact that using the ``language'' of topological dynamics, one can describe new interesting phenomena concerning various model-theoretic objects which lead to non-trivial results and questions in a very general context (sometimes without any assumption on the theory in question, sometimes under some general assumptions such as NIP).
	
	With a given definable group $G$, one can associate various connected components of it (computed in a big model, called a monster model). The quotients by these connected components are invariants of the group $G$ (in the sense that they do not depend on the choice of the monster model) and one of the important tasks is to understand these quotients as mathematical objects. Topological dynamics turns out to be an appropriate tool to do that. Already Newelski noticed some connections between notions from topological dynamics (mainly Ellis groups) and quotients by these components. This was investigated more deeply in \cite{KrPi}, which led to important new results on such quotients.
	
	In Section~\ref{section: top dyn for aut(C)} of the current paper, we adapt ideas and some proofs from \cite{KrPi} to the following context. We consider any complete theory $T$ and its monster model $\C$. We develop topological dynamics for the group $\aut(\C)$ (in place of the definable group $G$ considered in the above paragraph). Instead of quotients by connected components, we are now considering certain Galois groups of $T$, namely $\gal_L(T)$, $\gal_{KP}(T)$ and $\gal_0(T)$ 
	(the first group is called the Lascar Galois group, the second one -- 	the Kim-Pillay Galois group, and the third one is the kernel of the canonical epimorphism from $\gal_L(T)$ to $\gal_{KP}(T)$). These groups are very important invariants of the given theory. While $\gal_{KP}(T)$ is naturally a compact, Hausdorff group, $\gal_L(T)$ and $\gal_0(T)$ are more mysterious objects, and our results shed new light on them; in particular, we show that $\gal_L(T)$ is naturally the quotient of a compact, Hausdorff group by some normal subgroup, while $\gal_0(T)$ is such a quotient but by a dense, normal subgroup. All of this follows from our considerations relating topological dynamics of the group $\aut(\C)$ and the above Galois groups.
	
	Our original motivation for the above considerations was to say something meaningful about Galois groups of first order theories. Later, it turned out that as a non-trivial outcome of these considerations, we obtained very general results on the complexity of bounded, invariant equivalence relations which refine type (which are sometimes called strong types, or rather their classes are called strong types). Certain concrete strong types play a fundamental role in model theory, mainly: Shelah strong types (classes of the relation which is the intersection of all $\emptyset$-definable equivalence relations with finitely many classes), Kim-Pillay strong types (classes of the finest bounded, $\emptyset$-type-definable equivalence relation denoted by $E_{KP}$), and Lascar strong types (classes of the finest bounded, invariant equivalence relation denoted by $E_L$). While the quotients by bounded, type-definable equivalence relations are naturally compact, Hausdorff spaces (with the so-called logic topology), the quotients by bounded, invariant equivalence relations are not naturally equipped with such a nice topology
	(the logic topology on them is compact but not necessarily Hausdorff, and may even be trivial). Thus, a natural question is how to measure the complexity of bounded, invariant equivalence relations and how to view quotients by them as mathematical objects. One can, of course, just count the number of elements of these quotients, but more meaningful is to look at Borel cardinalities (in the sense of descriptive set theory) of such relations (the precise sense of this is explained in Section~\ref{section: preliminaries}). Important results in this direction have already been established for Lascar strong types in \cite{KPS} and \cite{KMS}, and later they were generalized in \cite{KaMi} and \cite{KrRz} to a certain wider class of bounded, $F_\sigma$ equivalence relations. A fundamental paper in this area, focusing on the number of elements in the quotient spaces, is \cite{Ne}.
	
	From the main results of \cite{KaMi} and \cite{KrRz} it follows that, working in a countable theory, smoothness (in the sense of descriptive set theory) of a bounded, $F_\sigma$ equivalence relation restricted to the set of realizations of a single complete type over $\emptyset$ and satisfying 
	an additional technical assumption (which we call orbitality) is equivalent to its type-definability. It was asked whether one can drop this extra assumption and also weaken the assumption that the relation is $F_\sigma$ to the one that it is only Borel. In Section~\ref{section: smoothness}, we prove a very general theorem which answers these questions. In a simplified form, it says that a bounded, invariant equivalent relation defined on the set of realizations of a single complete type over $\emptyset$ in a countable theory is smooth if and only if it is type-definable; in other words, such a relation is either type-definable, or non-smooth. It is worth emphasizing that this kind of a result was not accessible by the methods of \cite{KMS}, \cite{KaMi} or \cite{KrRz}, as they were based on a distance function coming from the fact that the relation in question was $F_\sigma$ in those papers. In Section~\ref{section: arbitrary language}, we prove a variant of this result for theories in
	an arbitrary (i.e.\ not necessarily countable) language; this time, however, we do not talk about smoothness, focusing only on the cardinality of quotient spaces, but with extra information concerning relative definability. All of this yields the trichotomy formulated at the end of the abstract, which is 
	a comprehensive result relating smoothness, type-definability, relative definability and the number of classes of bounded, Borel equivalence relations in a countable theory. This trichotomy appears in Section~\ref{section: trichotomy} in a more general form.
	
	It should be stressed that -- using the ``affine sort'' technique -- all the results we have obtained easily carry over to the case of subgroups of definable groups, mirroring what was done in \cite{KrRz} and \cite{KaMi}, essentially extending some results of these papers. 
	These new corollaries will be stated along with the main theorems.

	We finish the introduction with a description of the structure of this paper. First of all we should say that the main results are contained in Sections~\ref{section: top dyn for aut(C)},~\ref{section: smoothness},~\ref{section: arbitrary language} and~\ref{section: trichotomy}.
	
	In Section~\ref{section: preliminaries}, we define the fundamental notions and recall the key facts, and also make some basic observations.
	
	In Section~\ref{section: top dyn for aut(C)}, we develop some topological dynamics of the group $\aut(\C)$ of automorphisms of the monster model, focusing on relationships with Galois groups of the theory in question. As an outcome, we get new information on these Galois groups as well as on the ``spaces'' of strong types, which is then essentially used in Sections~\ref{section: smoothness} and~\ref{section: arbitrary language}. The main results of Section~\ref{section: top dyn for aut(C)} are Theorems~\ref{thm:main theorem 1},~\ref{thm:main theorem 2} and~\ref{thm:main theorem 3}.
	
	The appendix is an extension of Section~\ref{section: top dyn for aut(C)}. In particular, it explains why in Section~\ref{section: top dyn for aut(C)} (and thus in the whole paper) we have to work with the Ellis semigroup of the appropriate type space and not just with this type space itself.
	
	In Section~\ref{section: topological lemmas}, first we prove a general lemma concerning topological dynamics, and then we apply it to prove a technical lemma which is used later in the proofs of the main results of Sections~\ref{section: smoothness} and~\ref{section: arbitrary language}.
	In the second half, we prove a few other observations needed in Section~\ref{section: arbitrary language}.
	
	In Section~\ref{section: smoothness}, we prove our main result on smoothness and type-definability of bounded, invariant equivalence relations in the countable language case. This is Theorem~\ref{thm:main_Borel} which is formulated in a very general form and then followed by a collection of immediate corollaries, which are restrictions to more concrete situations and give answers to some questions from \cite{KaMi} and \cite{KrRz} discussed in the final part of Subsection~\ref{subsection: bounded relations}.
	
	Section~\ref{section: arbitrary language} deals with bounded, invariant equivalence relations in a language of arbitrary cardinality. The main result here is Theorem~\ref{thm:nwg}.
	We also explain some of the consequences and limitations of this theorem, and suggest and motivate Conjecture~\ref{conj:nwg2}, which would be a strengthening of part (I) of Theorem~\ref{thm:nwg}.
	
	Section~\ref{section: trichotomy} summarizes the main results of Sections~\ref{section: smoothness} and~\ref{section: arbitrary language} in the form of the aforementioned trichotomy theorem, along with a variant for definable groups.
	
	It is worth mentioning that after this paper was submitted, the third author made some further progress \cite{Rze17}. In the current paper, the equivalence of smoothness and type-definability for bounded, invariant equivalence relations defined on the set of realizations of a single complete type over $\emptyset$ is proved. One can still ask what happens if the relation in question is defined on a bigger set. It is easy to see that the assumption that the relation refines type is then needed.  Example 4.4 of \cite{KrRz} shows that even under this assumption, in general, smoothness does not imply type-definability. In \cite{Rze17}, the third author introduced a new class of {\em weakly orbital equivalence relations} (which contains invariant relations defined on a single complete type over $\emptyset$ as well as orbital relations, such as $E_L$, considered on the whole monster model), and proved that for such relations smoothness implies type-definability. This result generalizes  Theorem~\ref{thm:main_Borel}, but one should emphasize that the proof of this generalization uses Theorem~\ref{thm:main_Borel} and does not yield a new proof of Theorem~\ref{thm:main_Borel}.

	\section{Preliminaries}\label{section: preliminaries}

	\subsection{Topological dynamics}\label{subsection: topological dynamics}

	As a general reference for the knowledge on topological dynamics needed in this paper we would recommend \cite{Gl} and \cite{Au}. In this paper, by ``compact'' we mean what some may call ``quasicompact'', namely we do not include the Hausdorff property in the definition, and in fact we will explicitly state the separation properties satisfied by spaces in question.
	
	A {\em $G$-flow} is a pair $(G,X)$, where $G$ is a topological group acting continuously on a compact, Hausdorff space $X$.
	
	\begin{dfn}
		The {\em Ellis semigroup} of the flow $(G,X)$, denoted by $EL(X)$, is the closure of the collection of functions $\{\pi_g \sbmid g \in G\}$ (where $\pi_g: X \to X$ is given by $\pi_g(x)=gx$) in the space $X^X$ equipped with the product topology, with composition as the semigroup operation.
	\end{dfn}
	
	Since composition of functions in $X^X$ is continuous in the left coordinate,
	the semigroup operation on $EL(X)$ is also continuous in the left coordinate. Moreover, both $X^X$ and $EL(X)$ are $G$-flows, and minimal subflows of $EL(X)$ are exactly minimal left ideals with respect to the semigroup structure on $EL(X)$. We have the following fundamental fact proved by Ellis (e.g. see Corollary 2.10 and Propositions 3.5 and 3.6 of \cite{El}, or Proposition 2.3 of \cite{Gl}).
	
	\begin{fct}\label{Ellis theorem}
		Let $S$ be a semigroup equipped with a compact, Hausdorff topology so that the semigroup operation is continuous in the left coordinate.
		Let $\M $ be a minimal left ideal in $S$, and let $J(\M )$ be the set of all idempotents in $\M $. Then:
		\begin{enumerate}[label=\roman*), nosep]
			\item
			For any $p \in \M $, $Sp=\M p=\M $.
			\item
			$\M $ is the disjoint union of sets $u\M $ with $u$ ranging over $J(\M )$.
			\item
			For each $u \in J(\M )$, $u\M $ is a group with the
			identity element $u$, where the group operation is the restriction of the semigroup operation on $S$.
			\item
			All the groups $u\M $ (for $u \in J(\M )$) are isomorphic, even when we vary the minimal ideal $\M $.
		\end{enumerate}
	\end{fct}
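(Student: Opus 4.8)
The statement is Ellis's classical structure theorem for a compact (Hausdorff) right-topological semigroup --- here, ``continuous in the left coordinate'' means $x \mapsto xq$ is continuous for each fixed $q$. The plan is to run the standard argument, whose engines are (a) Zorn's lemma together with compactness, used to extract minimal closed left ideals and minimal closed subsemigroups, and (b) the observation that for fixed $a,b \in S$ the set $\{x \in S : xa = b\}$ is closed, being the preimage of the closed point $b$ under the continuous map $x \mapsto xa$. First I would note that minimal left ideals exist: the nonempty closed left ideals form a family closed under intersections of chains (nonempty by compactness), so Zorn produces a minimal one; and such an $\M$ is automatically minimal among \emph{all} left ideals, since for a left ideal $I \subseteq \M$ and $p \in I$ the set $Sp$ is a closed left ideal --- the continuous image of the compact set $S$ under right multiplication by $p$ --- with $Sp \subseteq I \subseteq \M$, hence $Sp = \M \subseteq I$. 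The same computation applied to $Sp$ and $\M p$ (both nonempty closed left ideals contained in $\M$ when $p \in \M$, using $\M \subseteq S$) yields $Sp = \M p = \M$, which is (i); in particular $\M = Sp$ is closed.

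The core combinatorial input is the lemma that \emph{every nonempty closed subsemigroup $T$ of $S$ contains an idempotent}. I would prove it by taking, via Zorn and compactness, a minimal nonempty closed subsemigroup $T' \subseteq T$; for any $u \in T'$ the set $T'u$ is a nonempty closed subsemigroup of $T'$, so $T'u = T'$, whence $u \in T'u$; then $\{w \in T' : wu = u\}$ is a closed subsemigroup of $T'$, nonempty (as $u \in T'u$), hence again equal to $T'$; thus $u \in \{w : wu = u\}$, i.e.\ $uu = u$. Applying this to $T = \{q \in \M : qp = p\}$ --- a closed subsemigroup, nonempty because $\M p = \M \ni p$ by (i) --- gives, for each $p \in \M$, an idempotent $u \in \M$ with $up = p$, so $p = up \in u\M$. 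Hence $\M = \bigcup_{u \in J(\M)} u\M$, which is half of (ii); disjointness I postpone.

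For (iii), fix $u \in J(\M)$ and observe $u\M = \{x \in \M : ux = x\}$ (from $u(ux) = ux$ and $x = ux$). This set is closed under the operation: for $x,y$ in it, $xy \in \M y = \M$ by (i) and $u(xy) = (ux)y = xy$. It has $u$ as a left identity, and every $x$ in it has a left inverse inside it: by (i), $\M x = \M \ni u$, so $zx = u$ for some $z \in \M$, and then $uz \in u\M$ satisfies $(uz)x = u(zx) = uu = u$. A semigroup with a left identity in which every element has a left inverse with respect to it is a group --- a short standard argument upgrades these to two-sided --- so $u\M$ is a group with identity $u$. Now the disjointness in (ii): if $x \in u\M \cap v\M$ with $u,v \in J(\M)$, let $x^{-1}$ be the inverse of $x$ in the group $u\M$; since $vx = x$ we get $vu = v(xx^{-1}) = (vx)x^{-1} = xx^{-1} = u$, and symmetrically $uv = v$ (using the inverse of $x$ in $v\M$ and $ux = x$). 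Then $uv = v$ forces $v\M \subseteq u\M$ (if $vy = y$ then $uy = (uv)y = vy = y$) and $vu = u$ forces $u\M \subseteq v\M$, so $u\M = v\M$.

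Finally, for (iv): for $u,v \in J(\M)$ in one minimal ideal, left multiplication $\lambda_v \colon x \mapsto vx$ maps $u\M$ into $v\M$ and $\lambda_u$ maps $v\M$ into $u\M$; since $\lambda_v \circ \lambda_u$ restricted to $v\M$ is left multiplication by the group element $vu \in v\M$ (hence a bijection of $v\M$), and symmetrically for $\lambda_u \circ \lambda_v$ on $u\M$, the map $\lambda_v \colon u\M \to v\M$ is a bijection. It need not be a homomorphism, but satisfies $\lambda_v(a)\lambda_v(b) = \lambda_v(a\,(uv)\,b)$ (products taken in the group $u\M$, using $au = a$), so precomposing it with left multiplication by $(uv)^{-1}$ in $u\M$ produces a genuine group isomorphism $u\M \cong v\M$. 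For $u \in J(\M)$ and $u' \in J(\M')$ in different minimal ideals, the same scheme works with right multiplications, using that $\M' u = \M$ and $\M u' = \M'$ are themselves minimal left ideals (by the argument proving (i)), which carries the groups between the two ideals. The main obstacle is the idempotent lemma --- the one genuinely non-formal step; the chief source of friction in (iii)--(iv) is keeping the one-sided identities straight and, in (iv), the ``twist'' by the inner automorphism needed to turn the natural bijection into an isomorphism.
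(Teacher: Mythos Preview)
The paper does not prove this statement; it is stated as a classical fact due to Ellis and used as a black box. Your proposal is a correct and complete rendition of the standard proof --- the Ellis--Numakura idempotent lemma via Zorn and compactness, followed by the routine group-theoretic manipulations --- and there is nothing to compare it against in the paper itself. The only place I would tighten is the cross-ideal case of (iv), which you sketch rather briefly; the scheme you indicate (right multiplications carrying one minimal ideal onto the other) does work, but spelling out explicitly that for $u\in J(\M)$ one can choose $u'\in J(\M')$ with $u'u=u$ and $uu'=u'$, and that $x\mapsto xu'$ then gives the required isomorphism $u\M\to u'\M'$, would make the argument self-contained.
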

	
	Applying this to $S:=EL(X)$, the isomorphism type of the groups $u\M $ (or just any of these groups) from the above fact is called the {\em Ellis group} of the flow $X$.
	
	A {\em $G$-ambit} is a $G$-flow $(G,X,x_0)$ with a distinguished point $x_0 \in X$ such that the orbit $Gx_0$ is dense. A {\em universal $G$-ambit} is an initial object in the category of all $G$-ambits, where morphisms are homomorphisms of $G$-ambits (i.e. continuous maps between pointed spaces, preserving the action of $G$). 
	It is clear that a universal $G$-ambit always exists. Indeed, take a set $\{(G,X_i,x_i) : i \in I\}$ of representatives of isomorphism ``classes'' of all $G$-ambits, put $X:= \prod_i X_i$, $x:=(x_i)_i$, and let ${\mathcal U}$ be the closure of the orbit of $x$ with respect to the coordinatewise action of $G$ on $X$. Then $(G,{\mathcal U},x)$ is universal.  For example, in the case when $G$ is discrete, it is just $\beta G$ (the Stone-\v Cech compactification of $G$); in the category of externally definable $G$-ambits, it is the appropriate space of externally definable types (see \cite[Fact 1.10]{KrPi} for details). The universal $G$-ambit can be equipped with
	the structure of a left continuous semigroup which is isomorphic to its own Ellis semigroup, so, in fact, there is no need to work with the original definition of the Ellis semigroup for universal ambits. However, we will be considering the action of $\aut(\C)$ (where $\C$ is a monster model) on a certain space of global types on which, as we will see in the appendix, rather often there is no natural left continuous semigroup operation. Hence, we will have to really work with the original definition of the Ellis semigroup of our $\aut(\C)$-ambit.

	A very important notion for this paper is 
	the { \em $\tau$-topology} on an Ellis group. In \cite{Au,Gl}, it is defined on Ellis groups
	of $\beta G$ (for a discrete group $G$), but it can also be defined on Ellis groups of any flow $(G,X)$. To introduce the $\tau$-topology, we first need to define the so-called circle operation on subsets of $EL(X)$. Similarly to \cite{KrPi} (see the discussion at the beginning of Section 2 in \cite{KrPi}), although we do not have a continuous on the left ``action'' of the semigroup $EL(X)$ on $2^{EL(X)}$ (i.e.\ on the space of non-empty, closed subsets of $EL(X)$) extending the natural action of $G$, we can take the statement in point (1) of \cite[Chapter IX, Lemma 1.1]{Gl} as the definition of $\circ$.
	
	\begin{dfn}
		For $A \subseteq EL(X)$ and $p\in EL(X)$, $p \circ A$ is defined as the set of all points $\eta \in EL(X)$ for which there exist nets $(\eta_i)$ in $A$ and $(g_i)$ in $G$ such that $\lim g_i=p$ and $\lim g_i\eta_i=\eta$.
	\end{dfn}
	
	As was observed in \cite[Section 2]{KrPi},
	it is easy to check that $p\circ A$ is closed, $pA \subseteq p \circ A$ and $p\circ (q \circ A) \subseteq (pq)\circ A$ (but we do not know whether $p\circ (q \circ A) = (pq)\circ A$; in \cite{Gl}, it follows from the existence of the action of $\beta G$ on $2^{\beta G}$).
	
	Now, choose any minimal left ideal $\M $ in $EL(X)$ and an idempotent $u \in \M $.
	
	\begin{dfn}\label{Def: tau topology}
		For $A \subseteq u\M $, define $\cl_\tau (A) = (u \circ A) \cap u\M$.
	\end{dfn}
	
	Now, the proofs of 1.2-1.12 (except 1.12(2)) from \cite[Chapter IX]{Gl} go through (with some slight modifications) in our context.
	In particular, $\cl_\tau$ is a closure operator on subsets of $u\M $, and it induces the so-called {\em $\tau$-topology} on $u\M $ which is weaker than the topology inherited from $EL(X)$; the $\tau$-topology is compact and $T_1$, and multiplication is continuous in each coordinate separately. Also, the topological isomorphism type of $u\M$ depends on the choice of neither $\M$ nor $u \in J(\M)$: for the fact that it is independent of the choice of $u \in J(\M)$ see Lemma 1.4 in  \cite[Chapter IX]{Gl}; however, we could not find in the literature a proof that it is also independent from the choice of $\M$, so we briefly explain how to do that. 

Consider two minimal left ideals $\M$ and $\M'$, and an idempotent $u \in \M$. By Proposition 2.5 of \cite{Gl} or Proposition 3.6 of \cite{El}, there is an idempotent $u' \in \M'$ such that $uu' = u'$ and $u'u =u$. Then one easily checks that $f \fcolon u\M \to u'\M'$ given by $f(x)= xu'$ is an (abstract)  isomorphism with the inverse given by $f^{-1}(y)=yu$. So it is enough to show that $f$ is a closed map. Consider a $\tau$-closed subset $A$ of $u\M$. Then 
$A=u'u'A \subseteq  u'(u' \circ A) = uu'(u' \circ A) \subseteq u(u'\circ (u' \circ A)) \subseteq  u(u' \circ A) = u(u' \circ uA) \subseteq u(u' \circ (u \circ A)) \subseteq u(u'u \circ A) = u(u \circ A)=A$ (the last equality is equivalent to $\tau$-closedness of $A$). Hence, $u'(u' \circ A) =A$. One can easily check that $(u' \circ A)u' = u' \circ Au'$, so we conclude that $u'(u' \circ Au') =Au' = f[A]$, which means that $f[A]$ is $\tau$-closed in $u'\M'$.
	
	\begin{dfn}
		$H(u\M )$ is the intersection of the sets $\cl_\tau(V)$ with $V$ ranging over all $\tau$-neighborhoods of $u$ in the group $u\M $.
	\end{dfn}
	\begin{fct}[Theorem 1.9 in Chapter IX of \cite{Gl}]
		$H(u\M )$ is a $\tau$-closed, normal subgroup of $u\M $. The quotient group $u\M /H(u\M )$ equipped with the quotient topology induced by the $\tau$-topology is a compact, Hausdorff group (and this quotient topology will also be called the {\em $\tau$-topology}). For any $\tau$-closed subgroup $K$ of $u\M $, $u\M /K$ is a Hausdorff space if and only if $K \supseteq H(u\M )$.
	\end{fct}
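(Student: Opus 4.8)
The plan is to extract this from \cite[Chapter IX]{Gl} along the same lines as the facts recalled just above, that is, by transferring Glasner's arguments from $\beta G$ to a general Ellis semigroup $EL(X)$; the tools available for this are exactly the ones already in place: $\cl_\tau$ is a closure operator on $u\M$; the $\tau$-topology on $u\M$ is compact and $T_1$ with separately continuous multiplication, so in particular every left translation $\lambda_p\colon x\mapsto px$ and right translation $\rho_p\colon x\mapsto xp$ (for $p\in u\M$) is a $\tau$-homeomorphism of $u\M$, whence $p\cdot\cl_\tau(A)=\cl_\tau(pA)$ and $\cl_\tau(A)\cdot p=\cl_\tau(Ap)$ for $A\subseteq u\M$; and the circle-operation inclusions $pA\subseteq p\circ A$ and $p\circ(q\circ A)\subseteq(pq)\circ A$. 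The routine points come first. $H(u\M)$ is $\tau$-closed, being an intersection of $\tau$-closed sets, and $u\in H(u\M)$ since $u\in V\subseteq\cl_\tau(V)$ for every $\tau$-neighbourhood $V$ of $u$; and once $H(u\M)$ is known to be a normal subgroup, $u\M/H(u\M)$ with the quotient topology is compact, being a continuous image of the $\tau$-compact $u\M$.

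The substantive content — and where I expect the main obstacle — is that $H(u\M)$ is a normal subgroup of $u\M$ and that $u\M/H(u\M)$ is Hausdorff. Over $\beta G$ Glasner's proofs of these facts rely on a genuine \emph{associative} action of $\beta G$ on its closed subsets, i.e.\ the equality $p\circ(q\circ A)=(pq)\circ A$, which (as noted above) we do not have; so the real work is to run Glasner's arguments that $H(u\M)$ is a $\tau$-closed subgroup and that distinct points of $u\M/H(u\M)$ can be separated by open sets, and to check that at each step only the one-sided inclusion $p\circ(q\circ A)\subseteq(pq)\circ A$ is actually used — which turns out to be the case, just as in the analogous transfer carried out for definable groups in \cite{KrPi}. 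Normality is then immediate: for fixed $g\in u\M$ the conjugation $x\mapsto\rho_{g^{-1}}\lambda_g(x)$ is a $\tau$-homeomorphism of $u\M$ fixing $u$, hence permutes the $\tau$-neighbourhoods of $u$ and therefore fixes $H(u\M)$ setwise. Granting all this, $u\M/H(u\M)$ is a group and the quotient map $\pi\colon u\M\to u\M/H(u\M)$ is open (since $\pi^{-1}(\pi(U))=U\,H(u\M)$ is a union of $\tau$-open right translates of $U$), so the multiplication on $u\M/H(u\M)$ — the image of the separately continuous multiplication on $u\M$ under the open homomorphism $\pi$ — is again separately continuous; being also compact and Hausdorff, $u\M/H(u\M)$ is a topological group by the classical theorem (due to Ellis) that a compact Hausdorff group with separately continuous multiplication is a topological group, and its topology is the $\tau$-topology as asserted.

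Finally, for a $\tau$-closed subgroup $K\le u\M$ we must show $u\M/K$ is Hausdorff if and only if $K\supseteq H(u\M)$. If $K\supseteq H(u\M)$, then $K\,H(u\M)=K$, so $K$ is a union of $H(u\M)$-cosets whose preimage $K$ is $\tau$-closed; hence $K/H(u\M)$ is a closed subgroup of the compact Hausdorff topological group $u\M/H(u\M)$, and by transitivity of quotient topologies the quotient topology on $u\M/K$ agrees with that of $\bigl(u\M/H(u\M)\bigr)\big/\bigl(K/H(u\M)\bigr)$, which is Hausdorff. Conversely, assume $u\M/K$ is Hausdorff. As above, the quotient map $\pi\colon u\M\to u\M/K$ is open and $u\M/K$ is a compact Hausdorff topological group. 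Openness of $\pi$ makes $\{\pi(V):V\text{ a }\tau\text{-neighbourhood of }u\}$ a neighbourhood base at the identity of $u\M/K$, and since a Hausdorff topological group is regular, the intersection of the closures of the members of a neighbourhood base at the identity equals $\{e\}$; so, using continuity of $\pi$ and $H(u\M)\subseteq\cl_\tau(V)$ for every $V$, we get $\pi(H(u\M))\subseteq\bigcap_V\overline{\pi(V)}=\{e\}$, i.e.\ $H(u\M)\subseteq K$, as required.
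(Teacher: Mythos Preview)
The paper does not supply its own proof of this Fact: it is stated as a background result, covered by the earlier remark that ``the proofs of 1.2--1.12 (except 1.12(2)) from \cite[Chapter IX]{Gl} go through (with some slight modifications) in our context.'' Your proposal is therefore in complete alignment with the paper's approach --- you are simply spelling out more of the transfer of Glasner's arguments than the paper does.

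One small slip in your converse direction: you write that $u\M/K$ ``is a compact Hausdorff \emph{topological group}'' and then invoke ``a Hausdorff topological group is regular.'' But $K$ is only assumed to be a subgroup, not a normal subgroup, so $u\M/K$ is a coset space, not a group. The fix is immediate and does not affect the rest of your argument: $u\M/K$ is compact (continuous image of a compact space) and Hausdorff (by hypothesis), hence normal, in particular regular at the point $uK$; the remainder of your paragraph then goes through unchanged.
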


	In \cite[Chapter IX]{Gl}, it is proved that in the case of the discrete group $G$, working in $\beta G$, the topological group $u\M /H(u\M )$ coincides with the so-called generalized Bohr compactification of $G$. In \cite{KrPi}, a similar result is proved in the category of externally definable objects. In fact, the proof of \cite[Theorem 2.5]{KrPi} can be adapted to show that, working in $EL(X)$, $u\M /H(u\M )$ is also the generalized Bohr compactification of $G$, but computed in the category of $G$-flows $(G,Y)$ such that for any $y_0 \in Y$ there is a homomorphism of $G$-flows from $EL(X)$ to $Y$ sending $\Id$ to $y_0$. But since this observation is not in the main stream of the current paper, we will not talk about the details.
	
	The key fact for us is that $u\M /H(u\M )$ is a compact, Hausdorff group.

	\subsection{Descriptive set theory}\label{subsection: descriptive set theory}
	
	Let $E$ and $F$ be equivalence relations on Polish spaces $X$ and $Y$, respectively. We say that {\em $E$ is Borel reducible to $F$} if there exists a Borel reduction of $E$ into $F$, i.e.\ a Borel function $f \fcolon X \to Y$ such that for all $x,y \in X$
	\[
		x\Er y \iff f(x)\mathrel{F} f(y).
	\]
	If $E$ is Borel reducible to $F$, we write $E\leq_{B}F$.
	
	We say that $E$ and $F$ are {\em Borel equivalent} or {\em Borel bi-reducible} or that they have the same {\em Borel cardinality}, symbolically $E \sim_B F$, if $E \leq_B F$ and $F \leq_B E$.
	
	$E$ is said to be {\em smooth} if it is Borel reducible to $\Delta_{2^{\N}}$, i.e.\ to equality on the Cantor set. Note that each smooth equivalence relation is automatically Borel (as the preimage of $\Delta_{2^{\N}}$ by a Borel function).
	
	The following two dichotomies are fundamental.
	
	\begin{fct}[Silver dichotomy]
		\label{fct:silver}
		For every Borel equivalence relation $E$ on a Polish space either $E \leq_B \Delta_{\N}$, or $\Delta_{2^{\N}} \leq_B E$.
	\end{fct}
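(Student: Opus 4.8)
The statement is Silver's dichotomy for (co)analytic equivalence relations, Borel being a special case; the cleanest route, and the one I would take, is Harrington's proof via the Gandy--Harrington topology. First I would reduce: by relativizing all parameters it is enough to treat a lightface $\Pi^1_1$ equivalence relation $E$ on $X = 2^{\N}$. If $E$ has only countably many classes then, fixing representatives $x_n$, each class $[x_n] = \{y : y \Er x_n\}$ is a section of a $\Pi^1_1$ set, hence $\Pi^1_1$, and since the classes partition $X$ each is also $\Sigma^1_1$, hence Borel; so $y \mapsto \min\{n : y \Er x_n\}$ is a Borel reduction of $E$ to $\Delta_{\N}$. Thus the whole content is: \emph{if $E$ has uncountably many classes then $\Delta_{2^{\N}} \leq_B E$}, and for this it suffices to produce a perfect set $P \subseteq X$ of pairwise $E$-inequivalent points, since a homeomorphism $2^{\N} \to P$ is then a continuous reduction of $\Delta_{2^{\N}}$ to $E$.

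Next I would set up the Gandy--Harrington topology $\tau$ on $X$, generated by the lightface $\Sigma^1_1$ sets, and use its standard features: $\tau$ refines the usual topology, it is strong Choquet, there is a $\tau$-dense $\Pi^1_1$ set $\Omega$ (the reals $x$ with $\omega_1^{x} = \omega_1^{\mathrm{CK}}$) met by every nonempty $\Sigma^1_1$ set, and $\tau$ restricted to $\Omega$ is Polish --- exactly the ingredients needed to run Cantor-scheme and Baire-category arguments inside $\tau$. The key dichotomy lemma is then: either $X$ is covered by countably many $\Sigma^1_1$ sets each contained in a single $E$-class (in which case $E$ has countably many classes and we are done), or there is a nonempty $\Sigma^1_1$ set $A$ such that every nonempty $\Sigma^1_1$ subset of $A$ meets at least two $E$-classes. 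I would prove this by the usual reflection argument on the $\Pi^1_1$ set of codes $e$ for which the $e$-th $\Sigma^1_1$ set is ``monochromatic'' --- a $\Pi^1_1$ condition, since it unwinds to $\forall x \forall y\,(\neg U(e,x) \vee \neg U(e,y) \vee x \Er y)$ for $U$ a universal $\Sigma^1_1$ set.

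Assuming the second alternative, I would build, inside $A \cap \Omega$ (where $\tau$ is Polish), a Cantor scheme $(A_s)_{s \in 2^{<\N}}$ of nonempty $\Sigma^1_1$ subsets of $A$ with $A_{s0}, A_{s1} \subseteq A_s$, with $\tau$-closures in $\Omega$ nested and of vanishing diameter, and --- the crucial requirement --- with $A_{s0}$ and $A_{s1}$ \emph{$E$-apart}: no point of $\overline{A_{s0}}$ is $E$-related to a point of $\overline{A_{s1}}$. The splitting step uses the property of $A$ to find $E$-inequivalent $x, y \in A_s$, and then an effective separation argument to shrink to $\Sigma^1_1$ sets $A_{s0} \ni x$ and $A_{s1} \ni y$ inside $A_s$ whose product avoids $E$. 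For each $z \in 2^{\N}$ the intersection $\bigcap_n A_{z \restr n}$ is then a single point $f(z) \in \Omega$; $f$ is continuous for the usual topology (as $\tau$ is finer) and injective, and $z \neq z'$ forces $\neg(f(z) \Er f(z'))$ by $E$-apartness, so $P = f[2^{\N}]$ is the required perfect set.

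The step I expect to be the main obstacle is the $E$-apartness in the splitting: turning ``$B$ meets two classes'' into genuinely $\Sigma^1_1$, $\tau$-open pieces whose product misses the $\Pi^1_1$ relation $E$. This is where one needs the effective-descriptive-set-theory machinery --- $\Sigma^1_1$-parametrization, effective ($\Sigma^1_1$) reflection and separation, and the strong Choquet property of the Gandy--Harrington topology on the product space $X \times X$ applied to the $\Sigma^1_1$ set of $E$-inequivalent pairs. Once that separation is available, everything else is a routine Cantor-scheme construction.
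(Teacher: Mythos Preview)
The paper does not prove this statement: it is recorded as a \emph{Fact} in the preliminaries, cited as a classical result from descriptive set theory with no proof given. So there is nothing to compare against.

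That said, your outline is the standard Harrington proof via the Gandy--Harrington topology and is essentially correct as a sketch. The one place where the write-up is a bit loose is the ``$E$-apartness'' splitting step, which you yourself flag: the honest way to do it is to work in the Gandy--Harrington topology on the \emph{product} $X\times X$, where the complement of $E$ is $\Sigma^1_1$ and hence $\tau$-open, and build the Cantor scheme of pairs there (this is how the argument is usually presented, e.g.\ in Gao's or Kechris's texts). Doing the separation coordinate-wise, as your sketch suggests, is awkward because ``$A_{s0}\times A_{s1}$ misses $E$'' is not directly a $\Sigma^1_1$ condition on the pieces. But this is a matter of presentation rather than a genuine gap; the underlying idea is right.
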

	
	By $\EZ$ we denote the equivalence relation of eventual equality on $2^{\N}$.
	
	\begin{fct}[Harrington-Kechris-Louveau dichotomy]\label{fct:Harrington-Kechris-Louveau dichotomy}
		For every Borel equivalence relation $E$ on a Polish space either $E \leq_B \Delta_{2^{\N}}$ (i.e. $E$ is smooth), or ${\EZ} \leq_B E$.
	\end{fct}
	
	The definition of Borel cardinalities makes sense for non-Borel equivalence relations.
	However, one has to be careful here. While for Borel equivalence relations on Polish spaces non-smoothness implies possessing $2^{\aleph_0}$ classes
	(e.g.\ by Fact~\ref{fct:Harrington-Kechris-Louveau dichotomy}),
	there are non-Borel equivalence relations which are non-smooth and with only 2 classes (e.g. a partition of a Polish space into two non-Borel subsets).
	
	Recall that for an equivalence relation $E$ on a set $X$, a subset $Y$ of $X$ is said to be {\em $E$-saturated} if it is a union of some classes of $E$.  In this paper, we will say that a family $\{B_i \sbmid i \in \omega\}$ of subsets of $X$ {\em separates classes} of $E$ if for every $x \in X$, $[x]_E= \bigcap \{ B_i \sbmid x \in B_i\}$. Note that this implies that all $B_i$'s are $E$-saturated. Thus, a family  $\{B_i \sbmid i \in \omega\}$ of subsets of $X$ separates classes of $E$ if and only if each $B_i$ is $E$-saturated and each class of $E$ is the intersection of those sets $B_i$ which contain it. 
	The following is folklore.
	
	\begin{fct}\label{fct: separating family}
		Let $X$ be an equivalence relation on a Polish space $X$. Then, $E$ is smooth if and only if there is a countable family $\{B_i\sbmid i \in \omega\}$ of Borel ($E$-saturated) subsets of $X$ separating classes of $E$. 
	\end{fct}
	
	\begin{proof}
		Let $f$ be a Borel reduction of $E$ to $\Delta_{2^{\N}}$. Let $\{C_i\sbmid i \in \omega\}$ be a countable open basis of the space $2^{\N}$. Then $\{f^{-1}[C_i]\sbmid i \in \omega \}$ is a countable family consisting of Borel ($E$-saturated) subsets of $X$ separating classes of $E$.

		For the converse, consider a family $\{ B_i \sbmid i \in \N\}$ satisfying all the requirements. Define $f \colon X \to 2^{\N}$ by $f(x) = \chi_{\{i \in \N \sbmid x \in B_i\}}$ (i.e. the characteristic function of ${\{i \in \N \sbmid x \in B_i\}}$). It is easy to see that this is a Borel reduction of $E$ to $\Delta_{2^{\N}}$.
	\end{proof}

	\subsection{Model theory}\label{subsection: model theory}
	
	Let $T$ be a first order theory. We will usually work in a {\em monster model} $\C$ of $T$, which by definition is a {\em $\kappa$-saturated} (i.e.\ each type over an arbitrary set of parameters from $\C$ of size less than $\kappa$ is realized in $\C$) and {\em strongly $\kappa$-homogeneous} (i.e.\ any elementary map between subsets of $\C$ of cardinality less than $\kappa$ extends to an isomorphism of $\C$) model of $T$ for a ``sufficiently large''	strong limit cardinal $\kappa$. Then $\kappa$ is called the {\em degree of saturation} of $\C$. Recall that a monster model in this sense always exists \cite[Theorem 10.2.1]{Ho}. Whenever we talk about types or type-definable sets, we mean
	that they are defined over {\em small} (i.e.\ of cardinality less than $\kappa$) sets of parameters from $\C$; an exception are global types which by definition are complete types over $\C$.
	When we consider a product of sorts of $\C$, we assume that it is a product of a small (i.e.\ less than $\kappa$) number of sorts.
	Sometimes we will also work in a bigger monster model $\C' \succ \C$ whose degree of saturation $\kappa'$ is always assumed to be ``much'' bigger than the cardinality of $\C$.
	
	An {\em invariant} set is a subset of a product of sorts of $\C$ which is invariant under $\aut(\C)$; an {\em $A$-invariant} set is a subset invariant under $\aut(\C/A)$ (such a set is clearly a union of sets of realizations of some number of complete types over $A$). 
	
	We would like stress that in this paper ``type-definable'' means ``type-definable with parameters'' whereas ``invariant'' means ``invariant over $\emptyset$'' (unless otherwise specified).
	
	We say that $D$ is a {\em relatively definable subset} of a subset $C$ of a product of sorts if $D$ is an intersection of $C$ with a definable set.
	
	By $a,b,\dots$ we will denote (possibly infinite) tuples of elements from some sorts of $\C$; to emphasize that these are tuples, sometimes we will write $\bar a, \bar b,\dots$.
	
	Recall that $\bar a \equiv \bar b$ means that $\bar a$ and $\bar b$ have the same type over $\emptyset$. For a tuple $\bar a$ from $\C$ and a set of parameters $A$, by $S_{\bar a}(A)$ we denote the space of all types $\tp(\bar b/A)$ with $\bar b \equiv \bar a$. For an $A$-invariant subset $X$ of a product of sorts of $\C$, we define
	$X_A:=\{ \tp(\bar x/A)\sbmid \bar x \in X\}$.
	
	An invariant equivalence relation on a product of (an arbitrary small number $\lambda$ (i.e.\ $\lambda <\kappa$) of sorts of $\C$) is said to be {\em bounded} if it has less than $\kappa$ many classes (equivalently, at most $2^{|T|+\lambda}$ classes, which follows from the fact that the relation of having the same type over any given model refines any bounded invariant equivalence relation (see below)); we use the same definition for relations defined on invariant or type-definable subsets of products of sorts. If a bounded, invariant equivalence relation refines the relation of having the same type over $\emptyset$ (in short, refines type), we call its classes {\em strong types}. Recall that:
	
	\begin{itemize}
		\item $E_L$ is the finest bounded, invariant equivalence relation on a given product of sorts, and its classes are called {\em Lascar strong types},
		\item $E_{KP}$ is the finest bounded, $\emptyset$-type-definable equivalence relation on a given product of sorts, and its classes are called {\em Kim-Pillay strong types}.
	\end{itemize}
	
	Clearly $E_L$ refines $E_{KP}$. $E_L$ can be described as the transitive closure of the relation $\Theta(a,b)$ saying that $(a,b)$ begins an infinite indiscernible sequence, and also as the transitive closure of the relation saying that the elements have the same type over some small submodel of $\C$ (e.g. see \cite[Proposition 5.4]{KiPi} and \cite[Fact 1.13]{CLPZ01}). Recall that $\Theta(a,b)$ and the relation saying that the elements have the same type over some small submodel of $\C$ are both $\emptyset$-type-definable. The Lascar distance $d_L(a,b)$ is defined as the minimal number $n$ for which there are $a_0=a,a_1,\dots,a_n=b$ such that $\Theta(a_i,a_{i+1})$ holds for all $i$, if such a number $n$ exists, and otherwise it is $\infty$.
	
	\begin{dfn}
		Let $E$ be a bounded, invariant equivalence relation on a product $P$ of some sorts of $\C$. We define the {\em logic topology} on $P/E$ by saying that a subset $D \subseteq P/E$ is closed if its preimage in $P$ is type-definable.
	\end{dfn}
	
	It is well known that $P/E$ is compact, and if $E$ is type-definable, then $P/E$ is also Hausdorff \cite[Lemma 3.3]{LaPi}. The same remains true if we restrict $E$ to a type-definable subset of $P$. The next remark will be useful later.
	
	\begin{rem}\label{rem:type-definability_of_relations}
		If $E$ is an invariant equivalence relation defined on 
		a single complete type $[a]_{\equiv}$ over $\emptyset$, then $E$ has a type-definable [resp. relatively definable] class if and only if $E$ is type-definable [resp. relatively definable].
	\end{rem}

	\begin{proof}
		We prove the type-definable version; the relatively definable version is similar.
		The implication $(\Leftarrow)$ is obvious. For the other implication,
		without loss of generality $[a]_E$ is type-definable. Since $[a]_E$ is $a$-invariant, we get that it is type-definable over $a$, i.e.\ $[a]_E=\pi(\C,a)$ for some partial type $\pi(x,y)$ over $\emptyset$. Then, for any $b \equiv a$ we have $[b]_E=\pi(\C,b)$. Thus, $\pi(x,y)$ defines $E$.
	\end{proof}

	The following easy proposition seems to be new.
	
	\begin{prop}\label{prop: closure of alpha/E}
		If $E$ is a bounded, invariant equivalence relation defined on a single complete type $p$ over $\emptyset$, then for any $a \in p(\C)$ and $b/E \in \cl(a/E)$ one has $\cl(b/E)=\cl(a/E)$ (i.e.\ the logic topology on $\cl(a /E)$ is trivial). This implies that the closures of singletons in $p(\C)/E$ form a partition of $p(\C)/E$, 
		and the preimage of the equivalence relation on $p(\C)/E$ defined by $\cl(x)=\cl(y)$ is the finest bounded, $\emptyset$-type-definable equivalence relation on $p(\C)$ coarsening $E$.
	\end{prop}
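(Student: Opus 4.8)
The plan is to reduce the whole proposition to a single identity. Let $\pi \fcolon p(\C) \to p(\C)/E$ be the quotient map, write $X_a := \pi^{-1}(\cl(a/E))$, and let $F$ be the finest bounded, $\emptyset$-type-definable equivalence relation on $p(\C)$ coarsening $E$ (it exists, e.g.\ as the intersection of all such relations, and each of its classes is type-definable over its elements). The goal is to prove $X_a = [a]_F$ for every $a \in p(\C)$; once this is known, everything in the statement falls out formally.

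First I would set up the soft observations. Since $E$ is invariant, $\aut(\C)$ acts on $p(\C)/E$, and it acts by homeomorphisms of the logic topology, because an automorphism carries type-definable sets to type-definable sets (so the induced bijection on the quotient sends closed sets to closed sets). As $\aut(\C/a)$ fixes the point $a/E$, it fixes the closed set $\cl(a/E)$ setwise, so $X_a$ is a type-definable (being the $\pi$-preimage of a closed set), $\aut(\C/a)$-invariant subset of $p(\C)$, hence type-definable over $a$; write $X_a = \rho(\C,a)$ for a partial type $\rho(x,y)$ over $\emptyset$. By strong homogeneity of $\C$ it then follows that $X_b = \rho(\C,b)$ for every $b \models p$. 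I would also record the elementary equivalence, for $x,y \models p$, that $y \in X_x \iff y/E \in \cl(x/E) \iff \cl(y/E) \subseteq \cl(x/E) \iff X_y \subseteq X_x$.

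One inclusion is immediate: $[a]_F$ is type-definable and $E$-saturated (as $F$ coarsens $E$), so $\pi([a]_F)$ is a closed subset of $p(\C)/E$ containing $a/E$, whence $\cl(a/E) \subseteq \pi([a]_F)$ and therefore $X_a \subseteq [a]_F$. For the reverse inclusion I would introduce the relation $R$ on $p(\C)$ given by $R(x,y) \iff y \in X_x \wedge x \in X_y$, which by the observation above is simply $X_x = X_y$. It is reflexive and symmetric by inspection, transitive because $X_x = X_y = X_z$ forces $X_x = X_z$, bounded because it coarsens $E$ (if $E(x,y)$ then $\cl(x/E) = \cl(y/E)$), and — this is the one point that needs the setup — type-definable over $\emptyset$, using the uniform presentation $X_b = \rho(\C,b)$. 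Hence $F$ refines $R$, so $[a]_F \subseteq [a]_R \subseteq X_a$ (the last step since $y \in [a]_R$ gives $y \in X_y = X_a$). This yields $X_a = [a]_F$.

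Finally I would read off the statement: $b/E \in \cl(a/E)$ means $b \in X_a = [a]_F$, i.e.\ $F(a,b)$, which is symmetric in $a$ and $b$ and gives $X_b = X_a$, i.e.\ $\cl(b/E) = \cl(a/E)$; so the logic topology restricted to $\cl(a/E)$ is trivial. Consequently two singleton-closures that meet must coincide (a common point $c/E$ forces $\cl(a/E) = \cl(c/E) = \cl(a'/E)$), so the singleton-closures partition $p(\C)/E$, and the preimages of these closures are exactly the sets $X_a = [a]_F$, i.e.\ the classes of $F$. The genuinely substantial step is the reverse inclusion, and inside it the crux is that $R$ is type-definable \emph{over $\emptyset$} — this is precisely what the uniform description of the $X_b$'s (from $\aut(\C/a)$-invariance of $X_a$ together with strong homogeneity) is for. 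It is worth noting that the potential asymmetry of the specialization preorder on $p(\C)/E$ is not assumed anywhere; it comes out a posteriori from the identity $X_a = [a]_F$.
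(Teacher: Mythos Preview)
Your proof is correct, but it takes a genuinely different route from the paper's. The paper argues as follows: by Zorn's lemma and compactness of $p(\C)/E$, there is a minimal nonempty closed subset $D$; minimality forces $\cl(c/E)=D$ for every $c/E\in D$; then, since $\aut(\C)$ acts transitively on $p(\C)$ and by homeomorphisms on $p(\C)/E$, the same property transports to $\cl(a/E)$ for every $a$. The identification of the preimages with $F$-classes is then deduced from Remark~\ref{rem:type-definability_of_relations}. Your argument instead bypasses Zorn entirely: you prove the single identity $X_a=[a]_F$ directly, via the two inclusions and the auxiliary $\emptyset$-type-definable relation $R(x,y)\iff X_x=X_y$. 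In effect you absorb the content of Remark~\ref{rem:type-definability_of_relations} into the main argument (your uniform presentation $X_b=\rho(\C,b)$ is exactly that remark's trick), so the identification with $F$ comes out simultaneously rather than as an afterthought. The paper's proof is shorter and more conceptual; yours is more explicit and avoids any appeal to choice, and it makes transparent from the outset that the specialization preorder on $p(\C)/E$ is actually an equivalence relation.
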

	
	\begin{proof}
		By Zorn's Lemma and compactness of $p(\C)/E$, we can find a minimal nonempty, closed subset $D$ of $p(\C)/E$. Then, for any $c/E \in D$, $\cl(c/E)=D$.
		Now, for any $a \in p(\C)$ there is an automorphism $f \in \aut(\C)$ mapping $a$ to some $c$ such that $c/E \in D$, and so for any $b/E \in \cl(a/E)$ one has $\cl(b/E)=\cl(a/E)$. This clearly implies that the closures of singletons form a partition of $p(\C)/E$, and the final statement follows from the definition of the logic topology and Remark~\ref{rem:type-definability_of_relations}.
	\end{proof}
	
	Since it is known that $E_{KP}$ restricted to any complete type over $\emptyset$ is the finest bounded, $\emptyset$-type-definable equivalence relation on the set of realizations of this type \cite[Lemma 4.18]{LaPi}, the above proposition gives us the next corollary, whose last part answers a question asked by Domenico Zambella in conversation with the first author.
	
	\begin{cor}\label{cor: KP class is the closure of L class}
		For any $a$, $[a]_{E_{KP}}/E_L=\cl(a/E_L)$, and the logic topology on $[a]_{E_{KP}}/E_L$ is trivial. In particular, $[a]_{E_{KP}}$ is the smallest $E_L$-saturated, type-definable subset containing $[a]_{E_L}$.
	\end{cor}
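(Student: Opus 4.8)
The plan is to read the corollary off of Proposition~\ref{prop: closure of alpha/E} together with the quoted fact that $E_{KP}$ restricted to a single complete type over $\emptyset$ is the finest bounded, $\emptyset$-type-definable equivalence relation on the realization set of that type. Fix $a$ and put $p := \tp(a/\emptyset)$. Then $E_L\restr p(\C)$ is a bounded, invariant equivalence relation on the realizations of the single complete type $p$, so Proposition~\ref{prop: closure of alpha/E} applies to it: it tells us, first, that the logic topology on $\cl(a/E_L)$ is trivial, and second, that the preimages in $p(\C)$ of the closures $\cl(c/E_L)$ (for $c \in p(\C)$) are exactly the classes of the finest bounded, $\emptyset$-type-definable equivalence relation $F$ on $p(\C)$ that coarsens $E_L\restr p(\C)$.

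The one point worth a remark is that the clause ``coarsening $E_L\restr p(\C)$'' is vacuous here. By the quoted fact, $E_{KP}\restr p(\C)$ is the finest bounded, $\emptyset$-type-definable equivalence relation on $p(\C)$; since $E_L$ refines $E_{KP}$ (recalled in the preliminaries), $E_L\restr p(\C)$ refines $E_{KP}\restr p(\C)$, hence refines every bounded, $\emptyset$-type-definable equivalence relation on $p(\C)$. Therefore the overall finest such relation already coarsens $E_L\restr p(\C)$, so $F = E_{KP}\restr p(\C)$, and the preimage of $\cl(a/E_L)$ in $p(\C)$ is the $E_{KP}$-class $[a]_{E_{KP}}$ (which lies inside $p(\C)$ because $E_{KP}$ refines $\equiv$). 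Passing to the quotient $p(\C)\to p(\C)/E_L$, this says exactly $[a]_{E_{KP}}/E_L = \cl(a/E_L)$, and the triviality of the logic topology on this set has already been noted.

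For the final ``in particular'' I would argue directly. The set $[a]_{E_{KP}}$ is type-definable since $E_{KP}$ is $\emptyset$-type-definable, it is $E_L$-saturated since $E_L$ refines $E_{KP}$, and it contains $[a]_{E_L}$. For minimality, let $Y$ be any $E_L$-saturated, type-definable set with $[a]_{E_L}\subseteq Y$; replacing $Y$ by $Y\cap p(\C)$ --- which is again type-definable, again $E_L$-saturated since $p(\C)$ is ($E_L$ refining $\equiv$), and still contains $[a]_{E_L}$ --- we may assume $Y\subseteq p(\C)$. Then $Y/E_L$ is, by definition of the logic topology, a closed subset of $p(\C)/E_L$ containing $a/E_L$, hence containing its closure $\cl(a/E_L) = [a]_{E_{KP}}/E_L$; pulling back along $p(\C)\to p(\C)/E_L$ gives $Y\supseteq [a]_{E_{KP}}$.

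I do not foresee a genuine obstacle: the substance is entirely in Proposition~\ref{prop: closure of alpha/E}. The only things needing a moment's attention are (a) checking that ``$F$ coarsens $E_L\restr p(\C)$'' is automatic, which reduces to combining ``$E_L$ refines $E_{KP}$'' with the quoted description of $E_{KP}$ on a single complete type, and (b) the harmless bookkeeping that $[a]_{E_{KP}}\subseteq p(\C)$ and that restricting a competitor set $Y$ to $p(\C)$ changes nothing relevant.
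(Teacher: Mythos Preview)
Your proposal is correct and follows precisely the approach the paper indicates: apply Proposition~\ref{prop: closure of alpha/E} to $E_L$ restricted to $p(\C)$ and invoke the quoted fact that $E_{KP}\restr_{p(\C)}$ is the finest bounded, $\emptyset$-type-definable equivalence relation there. Your explicit check that the ``coarsening $E_L$'' clause is automatic, and your direct argument for the minimality statement via the logic topology, simply spell out details the paper leaves to the reader.
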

	
	Now, we recall fundamental issues about Galois groups of first order theories. Good references for this knowledge are \cite{LaPi}, \cite{Zi}, and \cite{GiNe}.
	
	\begin{dfn}$\,$
		\begin{enumerate}[label=\roman{*}),nosep]
			\item
			{\em The group of Lascar strong automorphisms}, which is denoted by $\autf_L(\C)$, is the subgroup of $\aut(\C)$ which is generated by all automorphisms fixing small submodels of $\C$ pointwise, i.e.\ $\autf_L(\C)=\langle \sigma \sbmid \sigma \in \aut(\C/M)\;\, \mbox{for a small}\;\, M\prec \C\rangle$.
			\item
			{\em The Lascar Galois group of $T$}, which is denoted by $\gal_L(T)$, is the quotient group $\aut(\C)/\autf_L(\C)$ (which makes sense, as $\autf_L(\C)$ is a normal subgroup of $\C$).
		\end{enumerate}
	\end{dfn}

	Now, we are going to define a certain natural topology on $\gal_L(T)$. For details, the reader may consult Sections 4 and 5 of \cite{Zi}.
	Let $\mu\fcolon \aut(\C) \to \gal_L(\C)$ be the quotient map. Choose a small model $M$, and let $\bar m$ be the tuple of all its elements. Let $\mu_1\fcolon \aut(\C) \to S_{\bar m}(M)$ be defined by $\mu_1(\sigma)=\tp(\sigma(\bar m)/M)$, and $\mu_2\fcolon S_{\bar m}(M) \to \gal_L(T)$ by $\mu_2(\tp(\sigma(\bar m)/M))=\sigma /\autf_L(\C)$. Then $\mu_2$ is a well-defined surjection, and $\mu=\mu_2 \circ \mu_1$. Thus, $\gal_L(T)$ becomes the quotient of the space $S_{\bar m}(M)$ by the relation of lying in the same fiber of $\mu_2$, and so we can define a topology on $\gal_L(T)$ as the quotient topology. In this way, $\gal_L(T)$ becomes a compact (but not necessarily Hausdorff) topological group. This topology does not depend on the choice of the model $M$. The topological group $\gal_L(T)$ does not depend (up to a topological isomorphism) on the choice of the monster model $\C$ in which it is computed (for $\gal_L(T)$ treated as an abstract group a proof can be found in Section 2 of \cite{Zi}, and in order to see that the isomorphism obtained there is a homeomorphism, use the definition of the topology on $\gal_L(T)$). 
	
	\begin{fct}\label{fct: characterization of topology on Gal_L(T)}
		The following conditions are equivalent for $C \subseteq \gal_L(T)$.
		\begin{enumerate}[label=\roman{*}),nosep,]
			\item
			$C$ closed.
			\item
			For every (possibly infinite) tuple $\bar a$ of elements of $\C$, the set $\{\sigma(\bar a)\sbmid \sigma \in \aut(\C)\;\, \mbox{and}\;\, \mu(\sigma) \in C\}$ is type-definable [over some [every] small submodel of $\C$].
			\item
			There are a tuple $\bar a$ and a partial type $\pi(\bar x)$ (with parameters) such that $\mu^{-1}[C]=\{ \sigma \in \aut(\C)\sbmid \sigma(\bar a) \models \pi(\bar x)\}$.
			\item
			For some tuple $\bar m$ enumerating a small submodel of $\C$, the set $\{\sigma(\bar m)\sbmid \sigma \in \aut(\C)\;\, \mbox{and}\;\, \mu(\sigma) \in C\}$ is type-definable [over some [any] small submodel of $\C$].
		\end{enumerate}
	\end{fct}

	\begin{proof}
		A part of this fact is contained in \cite[Lemma 4.10]{LaPi}. The rest is left as an exercise.
	\end{proof}
	
	
	\begin{dfn}$\,$
		\begin{enumerate}[label=\roman{*}),nosep]
			\item
			$\gal_0(T)$ is defined as the closure of the identity in $\gal_L(T)$.
			\item
			$\gal_{KP}(T):=\gal_L(T)/\gal_0(T)$ equipped with the quotient topology is called the {\em Kim-Pillay Galois group of $T$}.
		\end{enumerate}
	\end{dfn}
	
	By general topology, $\gal_{KP}(T)$ is always a compact, Hausdorff group. On the other hand, the topology on $\gal_0(T)$ inherited from $\gal_L(T)$ is trivial, and one of the problems we address
	is how to treat $\gal_0(T)$ and $\gal_L(T)$ as mathematical objects and how to measure their complexity. Section~\ref{section: top dyn for aut(C)} will give us an answer to this question.
	
	Finally, recall that $E_L$ (on a given product of sorts) turns out to be the orbit equivalence relation of $\autf_L(\C)$, and $E_{KP}$ is the orbit equivalence relation of $\autf_{KP}(\C):=\mu^{-1}[\gal_0(T)]$.
	
	We finish with an easy lemma which will be used in the proof of Theorem~\ref{thm:main_Borel}, and whose last point is easily seen to be equivalent to Corollary~\ref{cor: KP class is the closure of L class}.
	
	\begin{lem}\label{lem:lem_closed}
		Suppose $Y$ is a type-definable set which is $E_L$-saturated. Then:
		\begin{enumerate}[label=\roman{*}),,nosep]
			\item
			$\autf_L(\C)$ acts naturally on $Y$.
			\item
			The subgroup $S$ of $\gal_L(T)$ consisting of all $\sigma/\autf_L(\C)$ such that $\sigma[Y]=Y$ (i.e.\ the setwise stabilizer of $Y/E_L$ under the natural action of $\gal_L(T)$) is a closed subgroup of $\gal_L(T)$. In particular, $\autf_{KP}(\C)/\autf_L(\C) =\gal_0(T) \leq S$.
			\item
			$Y$ is a union of $E_{KP}$-classes.
		\end{enumerate}
	\end{lem}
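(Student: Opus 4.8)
The plan is to dispatch~(i) immediately, reduce~(iii) to~(ii), and locate all the content of~(ii) in the single assertion that $S$ is closed in $\gal_L(T)$. For~(i): since $E_L$ is the orbit equivalence relation of $\autf_L(\C)$ and $Y$ is a union of $E_L$-classes, any $\sigma\in\autf_L(\C)$ maps $Y$ onto $Y$; in particular $\autf_L(\C)$ acts on $Y$, and whether $\sigma[Y]=Y$ depends only on $\sigma\autf_L(\C)$, so $S$ is well defined, and it is plainly a subgroup of $\gal_L(T)$, being the image of the subgroup $\{\sigma\in\aut(\C):\sigma[Y]=Y\}$ of $\aut(\C)$. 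Granting that $S$ is closed: it is then a closed subgroup containing the identity, hence contains $\gal_0(T)$ (the closure of the identity), and $\gal_0(T)=\autf_{KP}(\C)/\autf_L(\C)$ by the definition $\autf_{KP}(\C):=\mu^{-1}[\gal_0(T)]$, which is~(ii). For~(iii): then $\autf_{KP}(\C)=\mu^{-1}[\gal_0(T)]\subseteq\mu^{-1}[S]=\{\sigma\in\aut(\C):\sigma[Y]=Y\}$, so every $\sigma\in\autf_{KP}(\C)$ fixes $Y$ setwise; since $E_{KP}$ is the orbit equivalence relation of $\autf_{KP}(\C)$, for each $a\in Y$ we get $[a]_{E_{KP}}=\{\sigma(a):\sigma\in\autf_{KP}(\C)\}\subseteq Y$, so $Y$ is a union of $E_{KP}$-classes.

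So it remains to show $S$ is closed. Fix a partial type $\pi_Y$ over a small set with $Y=\pi_Y(\C)$, and for $a\in Y$ put $C_a:=\{\sigma\in\aut(\C):\sigma(a)\in Y\}=\{\sigma\in\aut(\C):\sigma(a)\models\pi_Y\}$. The key point is that $C_a$ is a union of right cosets of $\autf_L(\C)$: if $\sigma(a)\in Y$ and $\tau\in\autf_L(\C)$ then $\tau(a)\mathrel{E_L}a$, so $\sigma\tau(a)\mathrel{E_L}\sigma(a)\in Y$, and hence $\sigma\tau(a)\in Y$ since $Y$ is $E_L$-saturated (applying this also to $\tau^{-1}$ gives $C_a\autf_L(\C)=C_a$). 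Therefore $C_a=\mu^{-1}[D_a]$ for $D_a:=\mu[C_a]\subseteq\gal_L(T)$, and as $\mu^{-1}[D_a]$ has the form $\{\sigma:\sigma(a)\models\pi_Y\}$, Fact~\ref{fct: characterization of topology on Gal_L(T)}(iv) shows $D_a$ is closed. Now $\{\sigma\in\aut(\C):\sigma[Y]\subseteq Y\}=\bigcap_{a\in Y}C_a=\mu^{-1}[D]$ where $D:=\bigcap_{a\in Y}D_a$ is closed, and $\sigma[Y]\supseteq Y$ iff $\sigma^{-1}[Y]\subseteq Y$ iff $\mu(\sigma)^{-1}\in D$ iff $\mu(\sigma)\in D^{-1}$, so $\{\sigma:\sigma[Y]\supseteq Y\}=\mu^{-1}[D^{-1}]$ with $D^{-1}$ closed (inversion is a homeomorphism of the topological group $\gal_L(T)$). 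Intersecting, $\{\sigma:\sigma[Y]=Y\}=\mu^{-1}[D\cap D^{-1}]$, and applying the surjection $\mu$ gives $S=D\cap D^{-1}$, which is closed.

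The only delicate point is the passage from $Y$ to $\gal_L(T)$: it is exactly $E_L$-saturation of $Y$ that makes each ``partial orbit'' set $C_a$ a union of $\autf_L(\C)$-cosets, so that it descends to a subset $D_a$ of $\gal_L(T)$ and Fact~\ref{fct: characterization of topology on Gal_L(T)} can be invoked. A direct attempt instead --- fixing a tuple $\bar m$ enumerating a small model containing the parameters of $\pi_Y$ and checking that $\{\sigma(\bar m):\sigma[Y]=Y\}$ is type-definable --- appears not to work, because each of ``$\sigma[Y]\subseteq Y$'' and ``$\sigma[Y]\supseteq Y$'' is naturally presented as an intersection of \emph{open} (not closed) subsets of the relevant type space; splitting $\sigma[Y]=Y$ into the two inclusions and treating each $a\in Y$ separately is what avoids this.
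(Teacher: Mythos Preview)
Your proof is correct and follows essentially the same route as the paper's: the paper writes $S=P\cap P^{-1}$ where $P=\bigcap_{a\in Y}\{\sigma/\autf_L(\C):\sigma(a)\in Y\}$ is closed by Fact~\ref{fct: characterization of topology on Gal_L(T)}, then deduces~(ii) from $\gal_0(T)=\cl(\id)$ and~(iii) from $E_{KP}$ being the $\autf_{KP}(\C)$-orbit relation. You spell out more carefully why each $C_a$ descends to $\gal_L(T)$ (the $E_L$-saturation argument) and add the remark about why a direct type-definability check on $\{\sigma(\bar m):\sigma[Y]=Y\}$ is less convenient, but the core decomposition and the use of Fact~\ref{fct: characterization of topology on Gal_L(T)} are identical.
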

	
	\begin{proof}
		(i) follows immediately from the assumption that $Y$ is $E_L$-saturated.
		
		(ii) The fact that $S$ is closed can be	deduced from Fact~\ref{fct: characterization of topology on Gal_L(T)} and from the fact that this is a topological (not necessarily Hausdorff) group. To see this, note that $S=P \cap P^{-1}$, where $P:= \bigcap_{a \in Y} \{ \sigma/\autf_L(\C)\sbmid \sigma(a) \in Y\}$ is closed in $\gal_L(T)$ by Fact~\ref{fct: characterization of topology on Gal_L(T)}(iii). The second part of (ii) follows from the first one and the fact that $\autf_{KP}(\C)/\autf_L(\C) = \gal_0(T)=\cl (\id/\autf_L(\C))$.
		
		(iii) is immediate from (ii) and the fact that $E_{KP}$ is the orbit equivalence relation of $\autf_{KP}(\C)$. Alternatively, one can use Corollary ~\ref{cor: KP class is the closure of L class}. Namely, since $Y$ is type-definable and $E_L$-saturated, $Y/E_L$ is closed, so by Corollary ~\ref{cor: KP class is the closure of L class}, we get that for every $a \in Y$, $[a]_{E_{KP}}/E_L = \cl(a/E_L) \subseteq Y/E_L$, i.e. $Y$ is $E_{KP}$-saturated.
	\end{proof}

	\subsection{Bounded invariant equivalence relations and Borel cardinalities}\label{subsection: bounded relations}
	
	As was already mentioned, one of the general questions is how to measure the complexity of bounded, invariant equivalence relations. A possible answer is: via Borel cardinalities. However, any such a relation is defined on the monster model which is not any reasonable (Polish) topological space. Therefore, one has to interpret the relation in question in the space of types over a model. This was formalized in \cite{KPS} for Lascar strong types and generalized to arbitrary relations in \cite{KrRz}.
	
	First, we recall basic definitions and facts from \cite{KrRz}. Then we will discuss the most important known theorems and some questions which we answer in this paper.
	
	We work in a monster model $\C$ of some theory $T$.
	Recall that if $X$ is an $A$-invariant set, we associate with $X$ the subset
	\[
		X_A:=\{\tp(a/A)\sbmid a\in X\}
	\]
	of $S(A)$.
	
	In contrast to \cite{KrRz}, here by a type-definable set we mean a type-definable set over parameters.

	\begin{dfn}
		Suppose $X$ is a subset of some product of sorts $P$. Then we say that $P$ is the \emph{support} of $X$, and we say that $X$ is \emph{countably supported} if $P$ is a product of countably many sorts.
	\end{dfn}
	
	\begin{dfn}[Borel invariant set, Borel class of an invariant set]\label{definition: Borel subsets of a model}
		For any invariant set $X$, we say that $X$ is \emph{Borel} if the corresponding subset $X_\emptyset$ of $S(\emptyset)$ is, and in this case, by the {\em Borel class} of $X$ we mean the Borel class of $X_\emptyset$ (e.g.\ we say that $X$ is $F_\sigma$ if $X_\emptyset$ is $F_\sigma$, and we might say that $X$ is clopen if $X_\emptyset$ is clopen (i.e.\ if $X$ is definable)).
		
		Similarly if $X$ is $A$-invariant, we say that it is {\em Borel over $A$} if the corresponding subset $X_A$ of $S(A)$ is (and Borel class is understood analogously).
		
		We say that a set is {\em pseudo-$*$} if it is $*$ over some small set of parameters, e.g it is {\em pseudo-closed} if it is closed over some small set (equivalently, if it is type-definable (with parameters from a small set)).
	\end{dfn}
	
	\begin{dfn}
		Suppose $E$ is a bounded, invariant equivalence relation on an invariant set $X$ in a product $P$ of sorts, and $M$ is a model.
		
		Then we define $E^M\subseteq (X_M)^2\subseteq (P_M)^2$ as the relation
		\[
			p \Er^M q \iff \textrm{there are $a\models p$ and $b\models q$ such that }a \Er b.
		\]
		(Since $E$-classes are $M$-invariant, this is equivalent to saying that for all $a\models p,\, b\models q$ we have $a \Er b$, which implies that $E^M$ is an equivalence relation.)
	\end{dfn}
	The next proposition shows the Borel classes of $E^M$ and $E$ are the same in the countable case.
	\begin{fct}[Proposition 2.9 in \cite{KrRz}]\label{fct:eqcmp}
		Consider a model $M$, and some bounded, invariant equivalence relation $E$ on an invariant subset $X$ of a product of sorts $P$.
		
		Consider the natural restriction map $\pi\fcolon (P^2)_M\to (P_M)^2$ (i.e.\ $\pi(\tp(a,b/M))=(\tp(a/M),\tp(b/M))$). Then we have the following facts:
		\begin{itemize}
			\item Each $E$-class is $M$-invariant, in particular, for any $a,b\in X$
			\[
				a \Er b\iff \tp(a,b/M)\in E_M \iff \tp(a/M)\Er^M \tp(b/M)
			\]
			and $\pi^{-1}[E^M]=E_M$.
			\item
			If one of $E^M$ (as a subset of of $(P_M)^2$), $E_M$ (as a subset of $(P^2)_M$), or $E$ (considered as a subset of $(P^2)_{\emptyset}$) is closed or $F_\sigma$, then all of them are closed or $F_\sigma$ (respectively). In the countable case (when the support of $E$, the language and $M$ are all countable), we have more generally that the Borel classes of $E^M, E_M, E$ are all the same.
			\item
			Similarly -- for $M$-invariant $Y\subseteq X$ -- the relation $E^M\restr_{Y_M}$ is closed or $F_\sigma$ [or Borel in the countable case] if and only if $E_M\cap (Y^2)_M$ is.
		\end{itemize}
	\end{fct}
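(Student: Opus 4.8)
\emph{Outline of the approach.} I would build everything on the two ``forgetful'' maps out of $(P^2)_M$: the map $\pi$ already named in the statement, with $\pi(\tp(a,b/M))=(\tp(a/M),\tp(b/M))$, and the parameter-restriction map $r\fcolon(P^2)_M\to(P^2)_\emptyset$, $r(\tp(a,b/M))=\tp(a,b/\emptyset)$. Both are continuous surjections between compact Hausdorff spaces --- in the countable case, compact metrizable, hence Polish --- so both are \emph{closed} maps, and the plan is to show $E_M=r^{-1}[E_\emptyset]$ and $E_M=\pi^{-1}[E^M]$ and then transfer topological complexity along these maps.

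\emph{First bullet.} The one genuinely model-theoretic point is that every $E$-class is $M$-invariant. Restricted to the set of realizations of a single complete $\emptyset$-type (to which $E$-classes belong, $E$ refining type is not needed --- a Lascar chain stays within a complete type), $E$ is a bounded invariant equivalence relation, hence coarser than $E_L$ there; and $E_L$ refines the relation ``same type over $M$'' because two tuples with the same type over the small model $M$ are at $E_L$-distance $1$ (recalled description of $E_L$ as the transitive closure of ``same type over some small submodel''). So $a\equiv_M b\Rightarrow aE_Lb\Rightarrow aEb$, i.e.\ $E$-classes are unions of complete types over $M$. From this: (i) the ``there exist'' and ``for all'' forms of the definition of $E^M$ coincide, and $E^M$ is an equivalence relation; (ii) for $a,b\in X$, $aEb\iff\tp(a/M)\Er^M\tp(b/M)$, while invariance of $E$ gives $aEb\iff\tp(a,b/\emptyset)\in E_\emptyset$. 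Unwinding (ii) against the definitions of $r$ and $\pi$ yields $E_M=r^{-1}[E_\emptyset]$ and $E_M=\pi^{-1}[E^M]$; the latter is exactly the displayed assertion.

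\emph{Second and third bullets --- ``closed'', ``$F_\sigma$'', ``Borel''.} Fix $q\in\{r,\pi\}$ with $E_M=q^{-1}[B]$, $B\in\{E_\emptyset,E^M\}$, $q$ a continuous closed surjection. Then ``$B$ closed/$F_\sigma$/Borel $\Rightarrow E_M$ closed/$F_\sigma$/Borel'' is just continuity. For the converse: if $q^{-1}[B]$ is closed then $B=q[q^{-1}[B]]$ is closed ($q$ closed and onto); if $q^{-1}[B]=\bigcup_nF_n$ with $F_n$ closed, replace each $F_n$ by the still-closed set $q^{-1}[q[F_n]]$, which is still contained in $q^{-1}[B]$, obtaining $q^{-1}[B]=\bigcup_nq^{-1}[q[F_n]]$ and hence $B=\bigcup_nq[F_n]$, an $F_\sigma$ set; and if $q^{-1}[B]$ is Borel then $B=q[q^{-1}[B]]$ and $Y\setminus B=q[q^{-1}[Y\setminus B]]$ are both analytic, so $B$ is Borel by Souslin's theorem. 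This gives the first half of the second bullet; for the third bullet one checks that when $Y\subseteq X$ is $M$-invariant, $r$ and $\pi$ restrict to continuous closed surjections between the corresponding saturated subspaces (closedness of the restriction on saturated subsets being inherited from the ambient map), and reruns the argument.

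\emph{The exact Borel class in the countable case --- the main obstacle.} It remains to upgrade ``Borel $\iff$ Borel'' to ``same $\mathbf{\Sigma}^0_\xi/\mathbf{\Pi}^0_\xi$ class'', i.e.\ to show that for $q$-saturated $A=q^{-1}[B]$, $A\in\mathbf{\Sigma}^0_\xi$ forces $B\in\mathbf{\Sigma}^0_\xi$ (the $\mathbf{\Pi}$-case being the complement, since complementation commutes with $q[\cdot]$ on saturated sets). I would run this by induction on $\xi$: the case $\xi=1$ is the quotient-map property of the closed surjection $q$, and $\xi=2$ is the $F_\sigma$ computation above, using that $q[F_n]$ is closed. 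The inductive step at higher $\xi$ is where I expect the real difficulty: writing $A=\bigcup_nA_n$ with $A_n\in\mathbf{\Pi}^0_{\xi_n}$, neither enlarging $A_n$ to $q^{-1}[q[A_n]]$ nor shrinking it to its largest saturated subset simultaneously preserves the rank $\xi_n$ and the union, since a single fiber of $q$ may meet infinitely many of the $A_n$. Getting past this will, I think, require either a finer analysis of how the fibres of $q$ interact with a Borel-rank decomposition, or a genuinely model-theoretic ingredient --- exploiting that $M$ is a model and the language is countable --- e.g.\ a sufficiently definable coherent choice of realizations giving a low-complexity section of $q$. This is the step I would budget the most effort for; the rest is bookkeeping.
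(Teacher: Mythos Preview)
The paper does not prove this statement: it is quoted as a Fact from \cite{KrRz} (Proposition~2.9 there), with no argument given here. So there is no ``paper's own proof'' to compare against; I can only assess your proposal on its merits.

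Your treatment of the first bullet and of the closed/$F_\sigma$ cases is correct and is the standard route. The reduction to the two continuous closed surjections $r$ and $\pi$, together with the identification $E_M=r^{-1}[E_\emptyset]=\pi^{-1}[E^M]$, is exactly the right framework, and your extraction of ``$E$-classes are $M$-invariant'' from $E_L\subseteq E$ is clean. Your Souslin-theorem argument for ``Borel iff Borel'' is also fine.

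You are right to flag the \emph{exact} Borel-class claim as the genuine obstacle. Neither $r$ nor $\pi$ is open in general (for instance, the image under $r$ of a basic clopen $[\varphi(\bar x,\bar m)]$ is the $\emptyset$-invariant hull of $\varphi(\C,\bar m)$, which need not be definable), so there is no cheap transfer of $\mathbf\Sigma^0_\xi$-rank along the quotient. A na\"ive induction on $\xi$ fails for precisely the reason you identify: saturating the pieces of a $\mathbf\Sigma^0_\xi$ decomposition can raise their rank. If you want to push this through, the natural strategy is to produce a \emph{section} of low complexity for $r$ and for $\pi$ (a continuous section would give the result immediately via $B=s^{-1}[q^{-1}[B]]$), and this is where the countability hypotheses on the language, the support, and $M$ should enter --- they make the relevant type spaces zero-dimensional compact metrizable, where selection theorems are available. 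Whether one can get continuous (as opposed to merely Borel) sections here is the crux; a merely Borel section recovers only ``Borel iff Borel'', which you already have. For the applications made in the present paper the weaker ``Borel iff Borel'' (and ``analytic iff analytic'') statement suffices, so your outline already covers everything that is actually used downstream.
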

	
	Although analyticity was not considered in \cite{KrRz}, one can easily check that the above definitions and observations have their counterparts for analyticity.
	Namely, using Definition~\ref{definition: analytic sets} of analytic sets in arbitrary spaces (which coincides with the definition of analytic sets in Polish spaces), we say that an invariant subset $X$ of some product $P$ of sorts is {\em analytic} if $X_\emptyset$ is an analytic subset of $P_{\emptyset}$.
%
	Now, let $E$ be a bounded, invariant equivalence relation defined on an invariant subset of a product $P$ of sorts, and $M$ be a model.
	By Remark~\ref{remark: preservation of analyticity by images and preimages}, analyticity is preserved under taking images and preimages by continuous functions between compact, Hausdorff spaces. Moreover, the function $\pi\fcolon (P^2)_M\to (P_M)^2$ from the last fact and the restriction function $r \fcolon (P^2)_M \to (P^2)_\emptyset$ are both continuous and satisfy: $\pi[E_M] = E^M$, $\pi^{-1}[E^M]=E_M$, $r[E_M]=E_\emptyset$, and $r^{-1}[E_\emptyset]=E_M$. All of this implies that  if one of $E^M$, $E_M$, or $E$ is analytic, then all of them are.	
	
	Below, we will sometimes restrict a bounded, invariant equivalence relation $E$ defined on $X$ to an $E$-saturated set $Y \subseteq X$. Note that in such a situation, $Y$ is invariant over any model $M$ (which follows from the fact that $E$ is coarser than the relation of having the same type over $M$, and so classes of $E$ are invariant over $M$).

	\begin{fct}[Proposition 2.12 in \cite{KrRz}]\label{fct:cartdf}
		Assume that the language is countable. For any $E$ which is a bounded, invariant equivalence relation on some $\emptyset$-type-definable and countably supported set $X$, and for any $Y\subseteq X$ which is pseudo-closed (i.e.\ type-definable) and $E$-saturated, the Borel cardinality of the restriction of $E^M$ to $Y_M$ does not depend on the choice of the countable model $M$. In particular, if $X=Y$, the Borel cardinality of $E^M$ does not depend on the choice of the countable model $M$.
	\end{fct}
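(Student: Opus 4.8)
The plan is to fix two countable models $M_1, M_2 \prec \C$ and show $E^{M_1}\restr_{Y_{M_1}} \sim_B E^{M_2}\restr_{Y_{M_2}}$ by first reducing to the case where one model elementarily contains the other. Since the language is countable, downward L\"owenheim--Skolem yields a countable $N \prec \C$ with $M_1 \cup M_2 \subseteq N$, and then $M_1 \prec N$ and $M_2 \prec N$ by Tarski--Vaught. So it suffices to prove that $E^M\restr_{Y_M} \sim_B E^N\restr_{Y_N}$ whenever $M \prec N$ are countable elementary submodels of $\C$.

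Fix such $M \prec N$, and let $A \supseteq M$ be a small set over which $Y$ is type-definable. First I would record some bookkeeping: since $Y$ is $E$-saturated and $E$-classes are $M$-invariant, $Y$ is $M$-invariant, hence $N$-invariant; moreover $Y_A = \bigcap_{\varphi \in \pi}[\varphi]$ is closed in $S(A)$, and $Y_M$ is its image under the restriction map $S(A) \to S(M)$, which is closed (a continuous map of compact Hausdorff spaces), so $Y_M$ is closed in $S(M)$, and likewise $Y_N$ is closed in $S(N)$. Thus $Y_M, Y_N$ are compact Polish, $E^M\restr_{Y_M}$ and $E^N\restr_{Y_N}$ are Borel by Fact~\ref{fct:eqcmp}, and the restriction map $r\colon S(N) \to S(M)$ restricts to a continuous surjection $r\colon Y_N \to Y_M$ (onto because $p = \tp(a/M) \in Y_M$ with $a \in Y$ is the restriction of $\tp(a/N) \in Y_N$); being continuous from a compact space to a Hausdorff one, $r$ is also closed.

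The conceptual heart is that $r$ is simultaneously a reduction in both directions. For $p, q \in Y_N$ I claim $p \Er^N q \iff r(p) \Er^M r(q)$: if $p \Er^N q$, witnesses $a \models p$, $b \models q$ with $a \Er b$ (these lie in $Y \subseteq X$ by $N$-invariance of $Y$) restrict to witnesses of $r(p) \Er^M r(q)$; conversely, if $r(p) \Er^M r(q)$ then, since $E$-classes are $M$-invariant, $a \Er b$ for \emph{all} $a \models r(p)$, $b \models r(q)$, so choosing $a \models p$ and $b \models q$ (which in particular realize $r(p)$ and $r(q)$) gives $p \Er^N q$. Hence $r$ is a continuous, so Borel, reduction of $E^N\restr_{Y_N}$ to $E^M\restr_{Y_M}$; and for \emph{any} Borel section $s\colon Y_M \to Y_N$ of $r$ (i.e.\ $r \circ s = \id$), the same equivalence shows $s$ is a Borel reduction of $E^M\restr_{Y_M}$ to $E^N\restr_{Y_N}$.

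So the whole argument comes down to producing a Borel section of $r$, which I expect to be the technical crux (note $r$ need not be open, so one cannot just write down an obvious section). Here I would use standard descriptive set theory: since $r\colon Y_N \to Y_M$ is a closed continuous surjection of compact metrizable spaces, the map $p \mapsto r^{-1}[\{p\}]$ is an upper semi-continuous, nonempty-compact-valued multifunction from $Y_M$ to $Y_N$; decomposing an open $U \subseteq Y_N$ into countably many closed pieces and using compactness of the fibers, one checks that $\{p : r^{-1}[\{p\}] \cap U \neq \emptyset\}$ is $F_\sigma$, hence Borel, so the Kuratowski--Ryll-Nardzewski selection theorem provides a Borel selection $s\colon Y_M \to Y_N$ with $r \circ s = \id$. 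This finishes the case $M \prec N$; chaining $E^{M_1}\restr_{Y_{M_1}} \sim_B E^N\restr_{Y_N} \sim_B E^{M_2}\restr_{Y_{M_2}}$ gives the statement, and the final ``in particular'' is the case $X = Y$.
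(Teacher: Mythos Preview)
Your argument is correct. Note, however, that this statement is not proved in the present paper: it is recorded as a Fact and attributed to \cite[Proposition~2.12]{KrRz}, so there is no proof here to compare against. That said, your approach is exactly the standard one (and essentially the argument in \cite{KrRz}): reduce via L\"owenheim--Skolem to the case $M\prec N$, observe that the restriction map $r\colon Y_N\to Y_M$ is a continuous reduction of $E^N\restr_{Y_N}$ to $E^M\restr_{Y_M}$ using that $E$-classes are already $M$-invariant, and then obtain the reverse reduction from any Borel section of $r$. Your justification of the section via Kuratowski--Ryll-Nardzewski is fine; the point that $\{p: r^{-1}[\{p\}]\cap U\neq\emptyset\}=r[U]$ is $F_\sigma$ (since $U$ is $F_\sigma$ in a compact metric space and $r$ is closed) is exactly what is needed. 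One cosmetic remark: the phrase ``using compactness of the fibers'' in your section argument is not actually used---what matters is that $r$ is closed---but this does not affect correctness.
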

	
	This justifies the following definition.
	
	\begin{dfn}
		If $E$ is as in the previous proposition, then by the \emph{Borel cardinality of $E$} we mean the Borel cardinality of $E^M$ for a countable model $M$. Likewise, we say that $E$ is {\em smooth} if $E^M$ is smooth for a countable model $M$.
		
		Similarly, if $Y$ is pseudo-closed and $E$-saturated, the Borel cardinality of $E\restr_Y$ is the Borel cardinality of $E^M\restr_{Y_M}$ for a countable model $M$.
	\end{dfn}
	
	
	Type-definable equivalence relations are trivially smooth, because the associated relations on type spaces are closed and so smooth (in fact, any Borel equivalence relation $E$ on a Polish space $Y$ such that all $E$-classes are $G_\delta$-subsets of $Y$ is smooth \cite[Corollary 1.32]{KMS}). 
	\begin{fct}[Fact 2.14 in \cite{KrRz}]\label{fct: type-definability imply smoothness}
		\label{fct:tdsmt}
		A bounded, type-definable equivalence relation in a countable theory is smooth.
	\end{fct}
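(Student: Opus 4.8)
The plan is to reduce the statement to the elementary observation that a closed equivalence relation on a compact metrizable space is smooth, and then to note that type-definability of $E$ makes the associated relation on a type space closed.

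Say $E$ is a bounded, type-definable equivalence relation on an invariant set $X$ in a product $P$ of countably many sorts, and fix a countable model $M$; by Fact~\ref{fct:cartdf} and the ensuing definition it is enough to show that $E^M$ is smooth. Although in our convention $E$ (and hence its domain $X$) is a priori only type-definable over some small set of parameters $A$, it is in fact $\emptyset$-type-definable: if $E=\pi(\C,\bar b)$ for a partial type $\pi$ over $\emptyset$ and $\bar b$ an enumeration of $A$, then $E$ is $\bar b$-invariant, so $E_{\bar b}$ is the closed set of (relevant) types extending $\pi(\bar x,\bar b)$, and $E_\emptyset$ is its image under the continuous restriction map, hence compact, hence closed (and similarly for $X$). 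Thus $X_M$ is a closed, hence compact metrizable, hence Polish, subspace of the (countable-support) type space over $M$, and by Fact~\ref{fct:eqcmp} the relation $E^M$ is closed in $(X_M)^2$.

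It remains to prove that a closed equivalence relation $F$ on a compact metrizable space $Z$ is smooth. The quotient map $q\fcolon Z\to Z/F$ is continuous and surjective; it is closed, since for closed $C\subseteq Z$ the set $q^{-1}[q[C]]$ is the projection to the first coordinate of the closed subset $F\cap(Z\times C)$ of the compact space $Z\times Z$; and its fibres, the $F$-classes, are closed in $Z$, hence compact. Hence $q$ is a perfect map defined on a metrizable space, so by the classical theorem on perfect images $Z/F$ is metrizable; being also compact and Hausdorff (the latter because $F$ is closed), $Z/F$ is Polish. Now $q$ is itself a continuous -- in particular Borel -- reduction of $F$ to equality $\Delta_{Z/F}$ on the Polish space $Z/F$, and equality on any Polish space Borel reduces to $\Delta_{2^{\N}}$ (a second countable Hausdorff space embeds topologically into $[0,1]^{\N}$, which is Borel isomorphic to $2^{\N}$). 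So $F\leq_{B}\Delta_{2^{\N}}$, i.e.\ $F$ is smooth; applying this with $F=E^M$ and $Z=X_M$ finishes the proof.

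The only step that is not routine bookkeeping is the metrizability of $Z/F$, i.e.\ the perfect map theorem -- equivalently, one produces by normality of $Z$ a countable family of $F$-invariant continuous real functions separating the $F$-classes. I expect this, together with the minor point that the parameters of $E$ can be absorbed so that Fact~\ref{fct:eqcmp} applies with a countable $M$, to be the only mild obstacle; everything else is already packaged in Facts~\ref{fct:eqcmp} and~\ref{fct:cartdf}.
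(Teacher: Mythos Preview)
Your proof is correct and follows exactly the approach the paper indicates: the paper states this as a cited fact preceded only by the one-line justification ``type-definable equivalence relations are trivially smooth, because the associated relations on type spaces are closed and so smooth,'' and you have simply filled in the details of ``closed $\Rightarrow$ smooth'' on a compact metrizable space via the perfect map theorem. Your extra care about absorbing the parameters of $E$ to make it $\emptyset$-type-definable is fine, though in the paper's standing setup $X$ is already assumed $\emptyset$-type-definable (Fact~\ref{fct:cartdf}), so this step is not strictly needed.
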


	Before we recall the main known theorems on non-smoothness of Lascar equivalence and, more generally, of some bounded, $F_\sigma$ equivalence relations, we need to recall first some definitions, particularly the definition of a normal form and the associated distance function.
	
	\begin{dfn}[Normal form]
		\label{dfn:nfrm}
		If $(\Phi_n(x,y))_{n \in \N}$ is a sequence of (partial) types over $\emptyset$ on a $\emptyset$-type-definable set $X$ such that $\Phi_0(x,y)=((x=y)\land x\in X)$ and which is increasing (i.e.\ for all $n$, $\Phi_n(x,y)\vdash \Phi_{n+1}(x,y)$), then we say that \emph{$\biglor_{n\in \N} \Phi_n(x,y)$ is a normal form} for an invariant equivalence relation $E$ on $X$ if we have for any $a,b\in X$ the equivalence $a \Er b\iff \C \models \biglor_{n\in \N}\Phi_n(a,b)$, and if the binary function $d=d_\Phi\fcolon X^2\to \N\cup \{\infty\}$ defined as
		\[
			d(a,b)= \min \{n\in \N \sbmid \C\models \Phi_n(a,b)\}
		\]
		(where $\min \emptyset=\infty$) is an invariant metric with possibly infinite values -- that is, it satisfies the axioms of coincidence, symmetry and triangle inequality. In this case, we say that \emph{$d$ induces $E$ on $X$}.
	\end{dfn}
	
	\begin{ex}
		The prototypical example of a normal form is $\biglor_n d_L(x,y)\leq n$, inducing $E_L$, and $d_L$ is the associated metric (where $E_L$ is the relation of having the same Lascar strong type and $d_L$ is the Lascar distance).
	\end{ex}
	
	It turns out that any $F_\sigma$ equivalence relation has a normal form (see \cite[Proposition 2.21]{KrRz}).
	
	A fundamental theorem of Newelski is the following.
	
	\begin{fct}[{Corollary 1.12 in \cite{Ne}}]\label{fct:twN}
		Assume $E$ is an equivalence relation with normal form $\biglor_{n\in\N}\Phi_n$. Assume $p\in S(\emptyset)$ and $Y\subseteq p(\C)$ is type-definable and $E$-saturated. Then $E$ is equivalent on $Y$ to some $\Phi_n(x,y)$ (and therefore $E$ is type-definable on $Y$), or $\lvert Y/E\rvert\geq 2^{\aleph_0}$.
	\end{fct}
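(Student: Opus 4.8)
The plan is to argue by a dichotomy on the behaviour of the distance function $d=d_\Phi\fcolon Y^2\to\N\cup\{\infty\}$ attached to the given normal form. Since the $\Phi_n$ are increasing and $\Phi_0(x,y)$ is equality, for $a,b\in Y$ one always has $\C\models\Phi_n(a,b)\implies d(a,b)\leq n<\infty\implies a\Er b$; thus $\Phi_n\restr_Y\subseteq E\restr_Y$ for every $n$, so ``$E$ is equivalent on $Y$ to some $\Phi_n$'' just means ``$E\restr_Y=\Phi_n\restr_Y$ for some $n$'', which is exactly the statement that the $d$-diameter of $E$ on $Y$, i.e.\ $\sup\{d(a,b)\sbmid a,b\in Y,\ a\Er b\}$, is finite; in that case $E\restr_Y=\Phi_n\restr_Y$ for the bound $n$, hence is type-definable because $Y$ is. So it suffices to prove: if this diameter is infinite (for every $n$ there are $a,b\in Y$ with $a\Er b$ and $d(a,b)>n$), then $\lvert Y/E\rvert\geq 2^{\aleph_0}$.

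Assume the diameter is infinite. The goal is to build a binary tree $(\pi_s)_{s\in 2^{<\omega}}$ of consistent partial types in a single variable, with parameters from a fixed small set, such that $\pi_\emptyset$ is the partial type defining $Y$, $\pi_{s^\frown i}\vdash\pi_s$, and
\[
\pi_s(x)\cup\pi_t(y)\vdash\lnot\Phi_n(x,y)\qquad\text{for all }n\text{ and all distinct }s,t\in 2^n.
\]
Granting such a tree, one finishes as follows. By saturation of $\C$, for each $\eta\in 2^\omega$ there is $a_\eta\in Y$ realizing the small partial type $\bigcup_n\pi_{\eta\restr n}$; and if $\eta\neq\eta'$ first differ at coordinate $j$, then $\eta\restr n\neq\eta'\restr n$ for every $n>j$, so $\C\models\lnot\Phi_n(a_\eta,a_{\eta'})$ for every $n>j$, whence (the $\Phi_n$ being increasing) $\C\models\lnot\Phi_k(a_\eta,a_{\eta'})$ for all $k$, i.e.\ $d(a_\eta,a_{\eta'})=\infty$, i.e.\ $\lnot(a_\eta\Er a_{\eta'})$. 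Thus $\{[a_\eta]_E\sbmid\eta\in 2^\omega\}$ is a family of $2^{\aleph_0}$ distinct $E$-classes.

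The tree is constructed by recursion on the level $n$, and this is the crux. Along the way one maintains the extra invariant that each $\pi_s$ still carries $E$-related pairs of arbitrarily large $d$-distance (in the same sense as $Y$ does), so that the recursion does not terminate; the base case is precisely the hypothesis. At the successor step one must, for every $s\in 2^n$ simultaneously, split $\pi_s$ into two consistent refinements $\pi_{s^\frown0},\pi_{s^\frown1}$ so that the $2^{n+1}$ resulting types pairwise force $\lnot\Phi_{n+1}$: for pairs with distinct length-$n$ prefixes one has to strengthen the inherited ``$\lnot\Phi_n$'' to ``$\lnot\Phi_{n+1}$'', for a pair with a common prefix one has to create the separation from scratch, and in all cases one must keep the new types of infinite diameter. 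This is done using the infinite-diameter invariant, the $\aut(\C)$-invariance of $d$, the triangle inequality, and compactness.

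The hard part is exactly this successor step. The obstruction is that ``$d(x,y)>m$'' is the negation of the partial type $\Phi_m$ and so is not itself a type-definable condition — one cannot simply adjoin the desired distance inequalities to a type and invoke compactness. Instead one has to genuinely spread out the witnesses of unbounded distance into a bushy finite configuration ($2^{n+1}$ points, pairwise more than $n+1$ apart in $d$, compatibly refining the level-$n$ pattern) and then certify those distances by finite fragments of types, all while preserving infinite diameter so the construction continues; this bookkeeping is the combinatorial heart of Newelski's argument. (For a countable language one could alternatively feed the $F_\sigma$ relation $E^M$ on the Polish space $Y_M$ — see Fact \ref{fct:eqcmp} — into the Silver dichotomy, which reduces the problem to showing that having only $\leq\aleph_0$ classes already forces type-definability; but the direct tree construction has the advantage of working for an arbitrary theory.)
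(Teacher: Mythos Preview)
The paper does not contain a proof of this statement: it is recorded as a \emph{Fact}, attributed to Newelski (Corollary~1.12 of \cite{Ne}), and cited without argument. So there is no ``paper's own proof'' against which to compare your proposal.

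As to your proposal itself: the overall architecture is right --- the dichotomy on the $d_\Phi$-diameter of $E$ on $Y$, and in the infinite-diameter case a binary tree of partial types whose branches realize pairwise $E$-inequivalent points --- is exactly the shape of Newelski's argument. But what you have written is an outline, not a proof: you explicitly flag the successor step as ``the hard part'' and ``the combinatorial heart of Newelski's argument'' and then do not carry it out. The ingredients you list (invariance of $d$, triangle inequality, compactness) are necessary but not by themselves sufficient to perform the split; in particular, you have not explained how, starting from a type $\pi_s$ of infinite diameter, one produces two refinements \emph{each of infinite diameter} that are mutually $\Phi_{n+1}$-separated, nor how one simultaneously upgrades the separation between cousins from $\lnot\Phi_n$ to $\lnot\Phi_{n+1}$. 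Newelski's actual mechanism is not a bare compactness argument: he passes to the type space over a small model and runs a Baire-category argument there (his ``weakly $d$-good'' types are comeager), which is what makes the recursion go through. Your sketch correctly locates where the difficulty lies but does not resolve it.
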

	
	By Remark~\ref{rem:type-definability_of_relations}, one immediately gets
	
	\begin{cor}
		\label{cor:nwcr}
		Suppose $E$ is an invariant, $F_\sigma$ equivalence relation. Then, if for some complete type $p$ over $\emptyset$ the restriction $E\restr_{p(\C)}$ is not type-definable, it has at least $2^{\aleph_0}$ classes within any type-definable and $E$-saturated set $Y\subseteq p(\C)$.
	\end{cor}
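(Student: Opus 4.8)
The plan is to deduce this directly from Newelski's theorem (Fact~\ref{fct:twN}) together with Remark~\ref{rem:type-definability_of_relations}, as the sentence preceding the statement already hints. First I would move the relation onto $p(\C)$: since $p(\C)$ is $\emptyset$-type-definable and $E$ is invariant and $F_\sigma$, the restriction $E':=E\restr_{p(\C)}$ is again an invariant, $F_\sigma$ equivalence relation on a $\emptyset$-type-definable set (the trace of an $F_\sigma$ subset of $S(\emptyset)$ on the closed set $(p(\C)^2)_\emptyset$ is $F_\sigma$). By \cite[Proposition~2.21]{KrRz}, $E'$ admits a normal form $\biglor_{n\in\N}\Phi_n$ on $p(\C)$. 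Note that for $Y\subseteq p(\C)$, being $E$-saturated is the same as being $E'$-saturated, and $Y/E=Y/E'$.

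Next, I would fix a type-definable, $E$-saturated, nonempty $Y\subseteq p(\C)$ and apply Fact~\ref{fct:twN} to $E'$, $p$ and $Y$. This gives the alternative: either $E'$ is equivalent on $Y$ to some $\Phi_n$ --- hence $E\restr_Y=E'\restr_Y$ is type-definable --- or $\lvert Y/E\rvert=\lvert Y/E'\rvert\geq 2^{\aleph_0}$, which is exactly the desired conclusion. So it remains to rule out the first alternative under the hypothesis that $E\restr_{p(\C)}$ is not type-definable.

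Suppose towards a contradiction that $E\restr_Y$ is type-definable and pick $a\in Y$. Then $\{b\in Y\sbmid b\Er a\}$ is type-definable; since $Y\subseteq p(\C)$ and $Y$ is $E$-saturated inside $p(\C)$, this set coincides with the full $E'$-class of $a$, namely $\{b\in p(\C)\sbmid b\Er a\}$. Thus $E'=E\restr_{p(\C)}$ has a type-definable class, and since $p(\C)$ is the set of realizations of a single complete type over $\emptyset$, Remark~\ref{rem:type-definability_of_relations} forces $E\restr_{p(\C)}$ to be type-definable, contradicting our hypothesis. Hence the first alternative is impossible, so $\lvert Y/E\rvert\geq 2^{\aleph_0}$.

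I do not anticipate a genuine obstacle here: the mathematical content is entirely carried by Fact~\ref{fct:twN} and Remark~\ref{rem:type-definability_of_relations}. The only points requiring a little care are checking that passing from $E$ to $E\restr_{p(\C)}$ preserves being $F_\sigma$ (so that a normal form on $p(\C)$ exists), and that the notion of $E$-saturation is precisely what makes the class $[a]_{E\restr_Y}$ equal to $[a]_{E\restr_{p(\C)}}$ for $a\in Y$ --- this is where the hypothesis $Y\subseteq p(\C)$ is used.
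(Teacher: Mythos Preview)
Your argument is correct and matches the paper's intended derivation: in the paper, Corollary~\ref{cor:nwcr} is stated immediately after Fact~\ref{fct:twN} with the lead-in ``By Remark~\ref{rem:type-definability_of_relations}, one immediately gets'', which is exactly your route (apply Newelski's theorem on $Y$, then use $E$-saturation of $Y$ and Remark~\ref{rem:type-definability_of_relations} to rule out the type-definable branch). The paper also records a second, alternative proof later via Theorem~\ref{thm:nwg} (observing that an $F_\sigma$ relation lies in $\Souslin(\text{type-definable})$), but your proof is the primary one.
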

	
	In particular,
	
	\begin{cor}\label{cor: Newelski's theorem}
		For any tuple $\bar a$, either $E_L \restr_{[\bar a]_{E_{KP}}}$ 
		has only one class, or it has at least $2^{\aleph_0}$ classes.
	\end{cor}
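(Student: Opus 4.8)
The plan is to obtain Corollary~\ref{cor: Newelski's theorem} as a direct instance of Corollary~\ref{cor:nwcr} applied to the relation $E = E_L$. Recall that $E_L$ is invariant and $F_\sigma$: it has the normal form $\biglor_{n\in\N}(d_L(x,y)\leq n)$ with $\Phi_0(x,y)$ essentially $x=y$, each $\Phi_n = (d_L(x,y)\le n)$ being $\emptyset$-type-definable (since $\Theta$ is), so the image of $E_L$ in $S(\emptyset)$ is a countable union of closed sets. Fix a tuple $\bar a$, put $p := \tp(\bar a/\emptyset)$ and $Y := [\bar a]_{E_{KP}}$. First I would check that $Y$ satisfies the hypotheses needed to feed it into Corollary~\ref{cor:nwcr}: since $E_{KP}$ is the finest bounded $\emptyset$-type-definable equivalence relation and the relation of having the same type over $\emptyset$ is itself bounded and $\emptyset$-type-definable, $E_{KP}$ refines $\equiv$, hence $Y \subseteq p(\C)$; as $[\bar a]_{E_{KP}}$ is $\bar a$-invariant it is type-definable (over $\bar a$); and since $E_L$ refines $E_{KP}$, the set $Y$ is a union of $E_L$-classes, i.e.\ $E_L$-saturated.

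With these checks in place, Corollary~\ref{cor:nwcr} gives a dichotomy according to whether $E_L\restr_{p(\C)}$ is type-definable. If it is not, the corollary directly yields $\lvert Y/E_L\rvert \geq 2^{\aleph_0}$, which is the second alternative. If $E_L\restr_{p(\C)}$ is type-definable, then — and this is the one point that needs a small argument — by Remark~\ref{rem:type-definability_of_relations}, applied on the single complete type $p$, the relation $E_L\restr_{p(\C)}$ is in fact $\emptyset$-type-definable. Since $E_L \subseteq E_{KP}$ always and $E_{KP}\restr_{p(\C)}$ is the finest bounded $\emptyset$-type-definable equivalence relation on $p(\C)$, this forces $E_L\restr_{p(\C)} = E_{KP}\restr_{p(\C)}$. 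But $Y$ is a single $E_{KP}$-class, so $E_L$ restricted to $Y$ is the full relation $Y^2$, i.e.\ $E_L\restr_Y$ has exactly one class — the first alternative. Since $\bar a \in Y$, the set $Y$ is nonempty, so exactly one of the two alternatives occurs, which is the assertion.

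I do not expect a genuine obstacle here; the mathematical content is already packaged in Fact~\ref{fct:twN}/Corollary~\ref{cor:nwcr}. The only place requiring thought is the type-definable branch, and even there one has a choice of argument: instead of invoking the minimality of $E_{KP}$ on a complete type, one could note that type-definability of $E_L\restr_{p(\C)}$ makes $[\bar a]_{E_L}$ a type-definable, $E_L$-saturated set containing $[\bar a]_{E_L}$, so by Corollary~\ref{cor: KP class is the closure of L class}, which identifies $[\bar a]_{E_{KP}}$ as the smallest such set, one again gets $Y = [\bar a]_{E_L}$; alternatively, $Y/E_L$ would then be simultaneously compact Hausdorff (type-definable restriction of a bounded invariant relation) and, by Proposition~\ref{prop: closure of alpha/E}, carry the trivial logic topology, hence be a single point.
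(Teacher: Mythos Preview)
Your proof is correct and follows essentially the same approach as the paper, which merely introduces this corollary with ``In particular,'' after Corollary~\ref{cor:nwcr}: you apply Corollary~\ref{cor:nwcr} to $E=E_L$ with $Y=[\bar a]_{E_{KP}}$, and in the type-definable branch you supply exactly the expected argument (via Remark~\ref{rem:type-definability_of_relations} and the minimality of $E_{KP}\restr_{p(\C)}$) showing $E_L$ and $E_{KP}$ coincide on $p(\C)$. The alternative routes you sketch for that branch (via Corollary~\ref{cor: KP class is the closure of L class} or Proposition~\ref{prop: closure of alpha/E}) are also valid and in the same spirit.
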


	If the language is countable, the above corollary says that either $E_L \restr_{[\bar a]_{E_{KP}}}$ has only one class, or $\Delta_{2^\omega}$ Borel reduces to it. Having in mind the Silver dichotomy and the Harrington-Kechris-Louveau dichotomy, it was conjectured in \cite{KPS} that the second part can be strengthened to the statement that $E_L \restr_{[\bar a]_{E_{KP}}}$ is non-smooth (i.e.\ $\EZ$ Borel reduces to it). This was proved in \cite{KMS}. More precisely:
	
	\begin{fct}[Main Theorem A in \cite{KMS}]\label{fct: KPS theorem}
		Assume that $T$ is a complete theory in a countable language, and consider $E_L$ on a product of countably many sorts. Suppose $Y$ is an $E_L$-saturated, pseudo-$G_\delta$ subset of the domain of $E_L$. Then either each $E_L$ class on $Y$ is $d_L$-bounded (from which it easily follows that $E_L$ coincides with $E_{KP}$ on $Y$, so it is type-definable on $Y$), or $E\restr_Y$ is non-smooth.
	\end{fct}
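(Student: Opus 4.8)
One natural route to a proof of Fact~\ref{fct: KPS theorem}, which I now sketch, proceeds as follows. First, the two alternatives are genuinely exclusive: as the parenthetical remark in the statement indicates, if every $E_L$-class meeting $Y$ is $d_L$-bounded, then a compactness argument exploiting the pseudo-$G_\delta$-ness of $Y$ promotes this to a \emph{uniform} bound $n$, so that $E_L\restr_Y$ coincides with the type-definable relation $d_L(x,y)\le n$ (equivalently, with $E_{KP}\restr_Y$), hence is smooth by Fact~\ref{fct:tdsmt}, whereas $\EZ$ is not smooth. So the real content is the implication: if the $d_L$-diameters of the $E_L$-classes meeting $Y$ are unbounded, then $\EZ$ Borel-reduces to $E_L^M\restr_{Y_M}$ for a countable model $M$ --- which is model-independent by Fact~\ref{fct:cartdf} (and its routine extension from pseudo-closed to pseudo-$G_\delta$ sets), and witnesses non-smoothness.

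The combinatorial core is to convert the unboundedness into a single $\emptyset$-type-definable ``gadget''. Since $\Theta(x,y)$ (``$(x,y)$ begins an infinite indiscernible sequence'') is $\emptyset$-type-definable and $E_L$ is its transitive closure, the plan is to use compactness together with a Ramsey-style indiscernibility extraction (passing to a larger monster $\C'$ when convenient) to produce, indexed by the binary tree $2^{<\omega}$, a system of elements $a_s$ lying in a copy of $Y$ with $\Theta(a_s,a_{s^\frown i})$ for all $s$ and $i<2$, while the type of a pair $(a_s,a_t)$ forces the Lascar distance $d_L(a_s,a_t)$ to be controlled --- both from above \emph{and from below} --- by the tree-distance between $s$ and $t$. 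For a branch $\eta\in 2^\omega$ one then takes $f(\eta)$ to be a realization, inside $Y$, of the union of the types read off along $\eta$: the upper control makes $\eta\EZ\nu$ imply that $f(\eta)$ and $f(\nu)$ are joined by finitely many $\Theta$-steps, hence $E_L$-equivalent, and the lower control makes ``$\eta,\nu$ differ infinitely often'' imply that no finite $\Theta$-chain joins $f(\eta)$ to $f(\nu)$, so they are $E_L$-inequivalent. Finally, since only countably many parameters occur, L\"owenheim--Skolem lets us pass to types over a countable model $M$ carrying them; then $f$ becomes a continuous (in particular Borel) injection $2^\omega\to Y_M$, noting that $Y_M$ is a $G_\delta$, hence Polish, subspace of $S(M)$ because $Y$ is pseudo-$G_\delta$. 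This is the required reduction.

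The main obstacle is the \emph{lower} distance estimate: one must lay out the ``steps'' of the gadget so sparsely --- so that the $n$-th step is, in an appropriate sense, non-dividing (mutually indiscernible) over everything already constructed --- that $d_L$ becomes genuinely additive along paths of the tree, so that infinitely many real steps truly escape the class. It is precisely here that the failure of type-definability of $E_L$ on $Y$ must be turned into quantitative information, and here that the pseudo-$G_\delta$ hypothesis is used a second time, to realize the whole tree inside $Y$. In the $F_\sigma$ generalizations one has, in place of $d_L$, an honest normal-form metric (Definition~\ref{dfn:nfrm}) performing this bookkeeping automatically; for $E_L$ one only has $d_L$ itself, and preserving the additivity under the restriction to $Y$ is the delicate heart of the argument. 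Everything else --- the exclusiveness of the alternatives, the descriptive-set-theoretic packaging, and the model-independence --- should be routine given Section~\ref{section: preliminaries}. (Alternatively, once $\gal_L(T)$ has been exhibited as the quotient of a compact Hausdorff group by a dense normal subgroup, as in the topological-dynamics part of this paper, one can instead deduce non-smoothness from the general principle that the coset equivalence relation of a sufficiently Polishable non-closed subgroup of a compact group is non-smooth, bypassing the explicit tree construction.)
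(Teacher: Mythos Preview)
The paper does not prove this statement: Fact~\ref{fct: KPS theorem} is simply cited from \cite{KMS} as background, so there is no ``paper's own proof'' to compare against. What you have sketched is, at a high level, the architecture of the original \cite{KMS} argument --- the tree of indiscernibles built by compactness/Ramsey, with the lower Lascar-distance control being the delicate point. That identification is accurate, but your sketch is not yet a proof: the heart of \cite{KMS} is precisely the careful construction that makes $d_L$ genuinely additive along the tree (via a sequence of indiscernible arrays and a subtle use of the $\Theta$-relation), and you have only named this step, not carried it out. Several of your side claims are also loose: the ``uniform bound by compactness from pseudo-$G_\delta$'' step needs more than what you wrote (this is where Newelski's argument, not bare compactness, does the work), and realizing the whole tree inside $Y$ when $Y$ is merely pseudo-$G_\delta$ requires care you have not supplied.

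Your parenthetical alternative is closer to what the present paper actually does, but note the discrepancy in scope. The topological-dynamics route of Sections~\ref{section: top dyn for aut(C)}--\ref{section: smoothness} culminates in Theorem~\ref{thm:main_Borel} and its Corollary~\ref{cor:mainB}, which recover the dichotomy ``type-definable or non-smooth'' for $E_L$ restricted to a single $E_{KP}$-class (or more generally to a \emph{pseudo-closed} $E$-saturated set). This does \emph{not} fully reprove Fact~\ref{fct: KPS theorem} as stated: the paper's method requires $Y$ to be type-definable rather than merely pseudo-$G_\delta$, and it does not recover the diameter information (that some class has unbounded $d_L$-diameter) --- indeed, the paper explicitly flags this as an open question (Question~\ref{ques:diam}). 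So if your aim is to prove the fact exactly as stated, the topological-dynamics shortcut is not available, and you must engage with the \cite{KMS} construction in earnest.
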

	
	In \cite{KaMi} and \cite{KrRz}, the last fact was generalized to a certain wider class of bounded $F_\sigma$ relations. In order to formulate this generalization, we need to recall one more definition from \cite{KrRz}.
	
	\begin{dfn}[Orbital equivalence relation, orbital on types equivalence relation]
		Suppose $E$ is an invariant equivalence relation on a set $X$.
		\begin{itemize}[nosep]
			\item
			We say that $E$ is \emph{orbital} if there is a group $\Gamma\leq \aut(\C)$ such that 
			$E$ is the orbit equivalence relation of $\Gamma$.
			\item
			We say that $E$ is \emph{orbital on types} if it refines type and the restriction of $E$ to any complete type over $\emptyset$ is orbital.
		\end{itemize}
	\end{dfn}
	
	\begin{fct}[Theorem 3.4 in \cite{KrRz}] \label{fct:mainA}
		We are working in the monster model $\C$ of a complete, countable theory. Suppose we have:
		\begin{itemize}[nosep]
			\item
			a $\emptyset$-type-definable, countably supported set $X$,
			\item
			an $F_\sigma$, bounded equivalence relation $E$ on $X$, which is orbital on types,
			\item
			a pseudo-closed and $E$-saturated set $Y\subseteq X$,
			\item
			an $E$-class $C\subseteq Y$ with infinite diameter with respect to some normal form of $E$.
		\end{itemize}
		Then $E\restr_Y$ is non-smooth.
	\end{fct}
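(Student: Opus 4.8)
The plan is to produce, for some countable model $M$, a Borel map $f\fcolon 2^{\N}\to Y_M$ with $\eta\EZ\nu\iff f(\eta)\Er^M f(\nu)$. Since the Borel cardinality of $E\restr_Y$ is well defined and equals that of $E^M\restr_{Y_M}$ for any countable $M$ (Fact~\ref{fct:cartdf}), and $E^M\restr_{Y_M}$ is $F_\sigma$, hence Borel (Fact~\ref{fct:eqcmp}), the existence of such an $f$ yields ${\EZ}\leq_B E^M\restr_{Y_M}$, and then the Harrington--Kechris--Louveau dichotomy (Fact~\ref{Harrington-Kechris-Louveau dichotomy}) gives that $E\restr_Y$ is non-smooth. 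First I would reduce to the case $Y\subseteq p(\C)$ for a single $p\in S(\emptyset)$: the infinite-diameter class $C$ lies in some $p(\C)$ (as $E$ refines type), $p(\C)$ is $E$-saturated, so $Y\cap p(\C)$ is again pseudo-closed, $E$-saturated and contains $C$, and a reduction into $E^M\restr_{(Y\cap p(\C))_M}$ composed with the inclusion $(Y\cap p(\C))_M\hookrightarrow Y_M$ suffices. Then I would fix a normal form $\biglor_n\Phi_n$ for $E$ with its invariant metric $d=d_\Phi$ (Definition~\ref{dfn:nfrm}), so that $a\Er b\iff d(a,b)<\infty$ on $p(\C)$, and, using orbitality on types, fix $\Gamma\leq\aut(\C)$ whose orbit equivalence relation on $p(\C)$ is $E\restr_{p(\C)}$ (so $Y$, being $E$-saturated, is $\Gamma$-invariant).

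The core is an amalgamation/fusion construction in the spirit of the proof of the Lascar-strong-type case, Fact~\ref{fct: KPS theorem}, with $\Gamma$ and $d$ taking over the roles of $\autf_L(\C)$ and the Lascar distance $d_L$. Using that $C$ has infinite $d$-diameter together with the strong homogeneity of $\C$, I would choose a base point $a\in C$ and build, over a fixed small set of parameters that includes those over which $Y$ is defined, a coherent binary tree of automorphisms $(\sigma_w)_{w\in 2^{<\N}}$ inside $\Gamma$ --- with $\sigma_w$ a word built from ``generators'' attached to the positions at which $w$ equals $1$ --- with the following properties:
\begin{enumerate}[label=(\alph*),nosep]
	\item (coherence) for every finite $S\subseteq 2^{\N}$ the tuples $(\sigma_{\eta\restr n}(a))_{\eta\in S}$ converge in type (over the chosen parameter set) as $n\to\infty$, so each branch $\eta$ determines an element $a_\eta$; as each $\sigma_{\eta\restr n}(a)$ lies in $[a]_E\subseteq Y$ and $Y$ is pseudo-closed, every $a_\eta$ lies in $Y$, and for any fixed countable $M$ the map $\eta\mapsto\tp(a_\eta/M)$ is Borel;
	\item (finite flips) changing $\eta$ in finitely many coordinates changes $a_\eta$ by a single element of $\Gamma$, so $\eta\EZ\nu$ implies $a_\eta\Er a_\nu$;
	\item (independence) the generators are ``$d$-independent'': for $w,w'$ of equal length, $d\bigl(\sigma_w(a),\sigma_{w'}(a)\bigr)\geq\varphi\bigl(\lvert\{i:w(i)\neq w'(i)\}\rvert\bigr)$ for some fixed $\varphi$ with $\varphi(m)\to\infty$, and the array $(\sigma_w(a))_w$ is built homogeneously enough that this lower bound persists in the limit, so that $a_\eta$ and $a_\nu$ lie in distinct $E$-classes whenever $\neg(\eta\EZ\nu)$.
\end{enumerate}

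Granting (a)--(c), fix any countable $M$ and set $f(\eta):=\tp(a_\eta/M)$. Then $f$ is a Borel map into $Y_M$ by (a); by (b), $\eta\EZ\nu\Rightarrow a_\eta\Er a_\nu\Rightarrow f(\eta)\Er^M f(\nu)$ (using $M$-invariance of $E$-classes, Fact~\ref{fct:eqcmp}); and by (c), $\neg(\eta\EZ\nu)\Rightarrow\neg(a_\eta\Er a_\nu)\Rightarrow\neg(f(\eta)\Er^M f(\nu))$. Hence $f$ is the desired reduction, and $E\restr_Y$ is non-smooth by the first paragraph.

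\textbf{The main obstacle.} All the real work is in the construction of (a)--(c), and in particular in securing (c) compatibly with (a)--(b): from a \emph{single} class $C$ of infinite $d$-diameter one must extract infinitely many mutually $d$-independent moves inside the orbit $\Gamma a$ and fuse them into a coherent tree whose branchwise limits $a_\eta$ genuinely lie in distinct $E$-classes. Both hypotheses of the theorem are indispensable here: orbitality furnishes the group $\Gamma$ with which to generate the tree, while the $F_\sigma$ assumption furnishes an honest metric $d$ whose triangle inequality lets independent finite moves accumulate, eventually leaving every $E$-class (invariance of $E$ alone gives only the coarse dichotomy $d<\infty$ versus $d=\infty$, which does not accumulate). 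The genuinely delicate point is that the independence in (c) must survive passage to the limit, i.e.\ that $\neg(a_\eta\Er a_\nu)$ whenever $\neg(\eta\EZ\nu)$; in the Lascar case this is achieved by a homogeneity/indiscernibility argument on the constructed array, and the task here is to carry out that argument with an arbitrary orbital $\Gamma$ and an arbitrary normal-form metric in place of $\autf_L(\C)$ and $d_L$, and to perform the extraction of the independent moves from the infinite diameter of $C$.
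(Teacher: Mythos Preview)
This statement is a \emph{Fact} cited from \cite{KrRz} (Theorem~3.4 there); the present paper does not give its own proof. Your sketch is essentially the strategy of the original proof in \cite{KrRz} (which in turn adapts \cite{KMS}): reduce to a single complete type, use orbitality to fix a group $\Gamma$ whose orbits are the $E$-classes on $p(\C)$, use a normal form to get an invariant metric $d$, and then run a tree/fusion construction to embed $\EZ$. You have correctly identified the architecture and the genuine difficulty --- making the ``independence'' in your item~(c) survive the limit --- and you are honest that this is where the real work lies; in \cite{KrRz} this is handled via a strong Choquet argument (their Theorem~3.14, recorded here as Fact~\ref{fct:dtmtoolu}), not via an ad hoc indiscernibility argument, and your sketch does not yet reach that point.

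What is worth noting is that the present paper's contribution is precisely to \emph{avoid} this entire line of argument. The paper proves the stronger Theorem~\ref{thm:main_Borel}, which drops both the $F_\sigma$ and the orbitality hypotheses (retaining only that $E$ refines $\equiv$), by a completely different method: it passes through the Ellis semigroup $EL(S_{\bar c}(\C))$, the $\tau$-topology on the Ellis group $u\M$, and the compact Hausdorff quotient $u\M/H(u\M)$, and then applies Miller's theorem (Fact~\ref{fct:Miller}) to conclude that a certain stabilizer subgroup is closed, whence the $E$-class is type-definable. No normal form, no metric, and no tree construction appear. So your approach is the ``right'' one for the statement as literally cited, but it is exactly the approach this paper is designed to supersede; the paper's own route to (a generalization of) Fact~\ref{fct:mainA} is the topological-dynamics machinery of Sections~\ref{section: top dyn for aut(C)} and~\ref{section: smoothness}.
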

	
	In fact, in \cite[Theorem 3.17]{KaMi}, the authors allow $X$ to be type-definable over some parameters (and $E$ is the intersection of an invariant set with $X \times X$) and assume only that $Y$ is pseudo-$G_\delta$, but we work with the stronger assumption that $X$ is $\emptyset$-type-definable and $Y$ is pseudo-closed (i.e.\ type-definable), as we find it the most interesting situation.
	Note also that in \cite[Theorem 3.17]{KaMi}, there is a slightly weaker assumption than orbitality on types, but one can easily see that both formulations of the theorem are equivalent (but assuming in \cite[Theorem 3.17]{KaMi} additionally that $X$ is $\emptyset$-type-definable and $Y$ is pseudo-closed).
	
	In \cite[Problem 3.21]{KaMi}, the authors asked if one can drop the assumption concerning orbitality in the above theorem. From our Theorem~\ref{thm:main_Borel}, it will follow that the answer is yes (assuming instead that $E$ refines $\equiv$; otherwise the answer is no,
	by \cite[Example 4.4]{KrRz}).
	In fact, our theorem is a much stronger generalization of the above theorem: not only
	do we remove the orbitality assumption but also, more importantly, the assumption that the relation is $F_\sigma$ (removing from the statement the part concerning the diameter and replacing it by an appropriate assumption of non-type-definability
	-- note that the two are equivalent for $F_\sigma$ equivalence relations, by Fact~\ref{fct:twN}).
	
	In \cite[Theorem 4.9]{KrRz}, it was deduced from Fact~\ref{fct:mainA} that if $E$ is an $F_\sigma$, bounded, orbital on types equivalence relation defined on a single complete type over $\emptyset$ or refining $E_{KP}$, then smoothness of $E$ is equivalent to type-definability of $E$. On the other hand, it was shown that if one drops the assumption that $E$ is defined on a single complete type over $\emptyset$ or refines $E_{KP}$, then smoothness need not imply type-definability. The following question was formulated there (Question 4.11 in Section 4.3).
	\begin{ques}\label{ques:quest1_from_KrRz}
		Suppose that $E$ is a Borel, bounded equivalence relation which is defined on a single complete type over $\emptyset$ or which refines $E_{KP}$.
		Is it true that smoothness of $E$ implies that $E$ is type-definable?
	\end{ques}
	From our Theorem~\ref{thm:main_Borel}, we will immediately get the positive answer to this question. 
	
	All our results on [non-]smoothness of bounded, invariant equivalence relations (which are not necessarily $F_\sigma$) were not accessible by the methods of \cite{KMS,KaMi,KrRz} mainly due to the lack of 
	a distance function associated with normal forms of $F_\sigma$ relations.

	\subsection{Definable groups and their subgroups}\label{subsection: definable groups}
	Definable groups are not 
	the central notion in this paper, however, the results we obtain can be readily adapted to their context, as we will see in Corollaries~\ref{cor:group_borel},~\ref{cor:group_nwg} and~\ref{cor:group_trichotomy}.

	To formulate those corollaries, we need to recall some basic facts.	
	
	\begin{dfn}
		Suppose $G$ is a $\emptyset$-type-definable group and $H\leq G$ is invariant. We define $E_H$ as the relation on $G$ of lying in the same right coset of $H$.
	\end{dfn}
	
	In \cite{KrRz}, the following result has been proved. See also \cite[Corollary 3.36]{KaMi} for a more general statement.
	
	\begin{fct}[Corollary 3.9 in \cite{KrRz}]\label{fct:mainG_KR}
		Assume the language is countable.
		Suppose that $G$ is a $\emptyset$-definable group (and therefore finitely supported) and $H\unlhd G$ is an invariant, normal subgroup of bounded index, which is $F_\sigma$ (equivalently, generated by a countable family of type-definable sets). Suppose in addition that $K\geq H$ is a pseudo-closed (i.e.\ type-definable) subgroup of $G$. Then $E_H\restr_{K}$ is smooth if and only if $H$ is type-definable.
	\end{fct}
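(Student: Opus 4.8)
The plan is to deduce the easy implication from Fact~\ref{fct:tdsmt}, and the hard one, via the ``affine sort'' technique, from Fact~\ref{fct:mainA}. If $H$ is type-definable, then, since the group operations of $G$ are $\emptyset$-definable, $E_H=\{(a,b)\in G^2\sbmid ab^{-1}\in H\}$ is type-definable, hence so is $E_H\restr_K$; as $[G:H]$ is bounded, $E_H\restr_K$ is a bounded, type-definable equivalence relation, so it is smooth by Fact~\ref{fct:tdsmt}. For the converse I would argue the contrapositive: assume $H$ is not type-definable, and show that $E_H\restr_K$ is non-smooth.

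The idea is to pass to an expansion in which the relation becomes an $F_\sigma$ equivalence relation refining $\equiv$. Let $\C^{*}$ be a monster model of the complete, countable theory $T^{*}$ obtained from $T$ by adjoining a new sort $P$ and a regular left action ``$\cdot$'' of $G$ on $P$; we take $\C$ to be the $\lang$-reduct of $\C^{*}$. Fixing $s_{0}\in P$, the map $\phi\fcolon G\to P$, $\phi(g)=g\cdot s_{0}$, is a bijection, and the automorphisms of $\C^{*}$ fixing $\C$ pointwise are precisely those acting on $P$, through $\phi$, as the right translations of $G$; these are transitive, so $P$ is a single complete type over $\emptyset$ in $\C^{*}$. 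Let $E^{*}$ be the orbit equivalence relation on $P$ of the subgroup of $\aut(\C^{*})$ corresponding under $\phi$ to the right translations by elements of $H$. Since $H\unlhd G$, left and right $H$-cosets coincide, so $\phi$ carries $E_H$ on $G$ onto $E^{*}$ on $P$; moreover $E^{*}$ is invariant, it is orbital, and it refines $\equiv$ (trivially, $P$ being one type), hence it is orbital on types, and it is bounded. Put $Y:=\phi[K]=\{g\cdot s_{0}\sbmid g\in K\}$; as $H\leq K$ this set is $E^{*}$-saturated, it is type-definable, and $\phi$ carries $E_H\restr_K$ onto $E^{*}\restr_Y$. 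The standard affine-sort bookkeeping --- choosing a countable $M^{*}\prec\C^{*}$ with $s_{0}\in M^{*}$ whose $\lang$-reduct is a prescribed countable $M\prec\C$, and noting that a $T^{*}$-type over $M^{*}$ of an element of a sort of $\C$ is determined by its restriction to $\lang$ --- shows, exactly as in \cite{KrRz}, that $E_H\restr_K$ and $E^{*}\restr_Y$ have the same Borel cardinality, and also that $E^{*}$ is type-definable precisely when $H$ is. So it suffices to prove $E^{*}\restr_Y$ non-smooth.

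For that I would produce an $E^{*}$-class inside $Y$ of infinite diameter with respect to some normal form, and invoke Fact~\ref{fct:mainA}. As $H$ is $\emptyset$-invariant and $F_\sigma$, write $H=\bigcup_{n}C_{n}$ with the $C_{n}$ $\emptyset$-type-definable, increasing, symmetric and containing $1$ (all without loss of generality), and set $D_{0}=\{1\}$ and $D_{n}=C_{n}^{\,n}$ for $n\geq1$; the $D_{n}$ are then $\emptyset$-type-definable, increasing, symmetric, satisfy $D_{m}D_{n}\subseteq D_{m+n}$, and have union $H$. The partial types $\Phi_{n}(x,y):=(\exists h\in D_{n})(h\cdot x=y)$ form a normal form for $E^{*}$ --- so $E^{*}$ is in particular $F_\sigma$ --- and for $s,t\in P$ with $s\mathrel{E^{*}}t$, writing $t=h\cdot s$ for the unique $h\in H$, one gets $d_{\Phi}(s,t)=\min\{m\sbmid h\in D_{m}\}$; hence every $E^{*}$-class has diameter $\sup_{h\in H}\min\{m\sbmid h\in D_{m}\}$. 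If this were finite, say $\leq n$, then $H\subseteq D_{n}\subseteq H$ would force $H=D_{n}$, a type-definable set, contrary to assumption. So every $E^{*}$-class --- in particular the one through $s_{0}$, which lies in the non-empty set $Y$ --- has infinite diameter, and Fact~\ref{fct:mainA}, applied with $P$ in place of $X$, with $E^{*}$, with the type-definable, $E^{*}$-saturated set $Y$, and with such a class $C\subseteq Y$, shows that $E^{*}\restr_Y$, hence $E_H\restr_K$, is non-smooth.

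The step I expect to be the real obstacle is not any of the computations above but the affine-sort reduction itself: one must check that adjoining the torsor yields a complete (countable) theory with a monster model in which $P$ is a single $\emptyset$-type, and --- more delicately --- that this change of language leaves unchanged the Borel cardinality of the relation being studied, which is exactly what dictates the careful choice of $M$, $M^{*}$ and $s_{0}$. Granting this (it is the content of the affine-sort machinery of \cite{KrRz}), the finite/infinite-diameter dichotomy and the two appeals to Facts~\ref{fct:mainA} and~\ref{fct:tdsmt} finish the argument.
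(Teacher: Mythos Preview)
Your proposal is correct and follows essentially the same approach the paper indicates: the paper explicitly states that Fact~\ref{fct:mainG_KR} is obtained by combining the affine-sort construction (Facts~\ref{fct:affine_sort} and~\ref{fct:grres}) with Fact~\ref{fct:mainA}, and your argument fills in exactly these details --- passing to $E_{H,X}$ on the torsor, using normality of $H$ to verify orbitality, building a normal form from an $F_\sigma$ presentation of $H$, and showing infinite diameter when $H$ is not type-definable. The only remark is that your $E^{*}$ is the paper's $E_{H,X}$ and your ``affine-sort bookkeeping'' is precisely the content of Fact~\ref{fct:grres} and Remark~\ref{rem:grresplus}, so the obstacle you flag in your last paragraph is already packaged in the paper's cited facts.
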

	
	To obtain it, the following construction is used.
	
	\begin{fct}[see {\autocite[Section 3, in particular Propositions 3.3 and 3.4]{GiNe}}]
		\label{fct:affine_sort} If $G$ is a $\emptyset$-definable group, and we adjoin to $\C$ a left principal homogeneous space $X$ of $G$ (as a new sort; we might think of it as an ``affine copy of $G$''), along with a binary function symbol for the left action of $G$ on $X$, we have the isomorphism
		\[
			\aut((\C,X,\cdot))\cong G\rtimes \aut(\C),
		\]
		where:
		\begin{enumerate}
			\item
			the semidirect product is induced by the natural action of $\aut(\C)$ on $G$,
			\item
			on $\C$, the action of $\aut(\C)$ is natural, and that of $G$ is trivial,
			\item
			on $X$ we define the action by fixing some $x_0$ and putting $\sigma_g(h\cdot x_0)=(hg^{-1})x_0$ and $\sigma(h\cdot x_0)=\sigma(h)\cdot x_0$ (for $g\in G$ and $\sigma\in \aut(\C)$).
		\end{enumerate}
	\end{fct}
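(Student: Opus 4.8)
The plan is to write down an explicit isomorphism $\Psi\fcolon G\rtimes\aut(\C)\to\aut((\C,X,\cdot))$ and check it is a well-defined homomorphism, injective and surjective, with surjectivity the only step that genuinely uses that the action is principal (i.e.\ free and transitive).

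The enabling remark is that, upon choosing a point $x_0\in X$, the map $h\mapsto h\cdot x_0$ is a bijection $G\to X$, so the structure $(\C,X,\cdot,x_0)$ obtained by naming $x_0$ is bi-interpretable with $\C$ (identifying $X$ with $G$); hence $\aut((\C,X,\cdot,x_0))\cong\aut(\C)$ canonically, and, forgetting the constant, $\aut((\C,X,\cdot,x_0))\subseteq\aut((\C,X,\cdot))$. Transporting an arbitrary map through this identification, one sees that any bijection of $(\C,X,\cdot)$ whose restriction to $\C$ is an automorphism of $\C$ and which commutes with $\cdot$ is automatically an automorphism of $(\C,X,\cdot)$; I would use this throughout to certify that the maps defined below by formulas are genuine automorphisms, and not merely bijections respecting $\cdot$.

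Next, for $\sigma\in\aut(\C)$ let $\bar\sigma$ denote the extension of $\sigma$ fixing $x_0$, i.e.\ $\bar\sigma(h\cdot x_0)=\sigma(h)\cdot x_0$, and for $g\in G$ let $\sigma_g$ fix $\C$ pointwise with $\sigma_g(h\cdot x_0)=(hg^{-1})\cdot x_0$, exactly as in (2) and (3). By the remark above both are automorphisms of $(\C,X,\cdot)$. Then I would verify the three identities $\overline{\sigma\sigma'}=\bar\sigma\circ\bar\sigma'$, $\sigma_g\circ\sigma_{g'}=\sigma_{gg'}$ (here the inverse in $hg^{-1}$ is precisely what makes $g\mapsto\sigma_g$, rather than its opposite, a homomorphism), and the conjugation relation $\bar\sigma\circ\sigma_g\circ\bar\sigma^{-1}=\sigma_{\sigma(g)}$ — each checked by evaluating on $\C$ (where all the relevant maps restrict to $\id$ or to the obvious automorphism) and on $x_0$, then extending to all of $X=G\cdot x_0$ using transitivity of $\cdot$ together with the fact that two automorphisms of $(\C,X,\cdot)$ agreeing on $\C$ and on $x_0$ agree everywhere. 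These identities say precisely that $\Psi(g,\sigma):=\sigma_g\circ\bar\sigma$ is a homomorphism $G\rtimes\aut(\C)\to\aut((\C,X,\cdot))$ for the semidirect product described in (1). Injectivity is immediate: if $\sigma_g\circ\bar\sigma=\id$, restricting to $\C$ gives $\sigma=\id$, hence $\sigma_g=\id$, and evaluating at $x_0$ gives $g=e$.

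For surjectivity, take $\tau\in\aut((\C,X,\cdot))$. Its restriction $\sigma:=\tau\restr_\C$ is an automorphism of $\C$ (the $\C$-sort and its entire structure, including the $\emptyset$-definable group $G$, are preserved), and since $X$ is a principal homogeneous space there is a unique $g\in G$ with $\tau(x_0)=g^{-1}\cdot x_0$. I would then argue $\tau=\sigma_g\circ\bar\sigma$: both restrict to $\sigma$ on $\C$, both send $x_0$ to $g^{-1}\cdot x_0$, and both commute with $\cdot$ over $\sigma$, so since $X=G\cdot x_0$ they agree everywhere, completing the proof. I expect the only real difficulty to be bookkeeping — keeping the semidirect-product conventions consistent (which factor is normal, and where the inverse sits so that $g\mapsto\sigma_g$ is a homomorphism) and making sure the explicitly given maps are elementary rather than merely structure-respecting; the bi-interpretation remark in the second paragraph is what disposes of the latter concern cleanly.
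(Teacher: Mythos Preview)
The paper does not give its own proof of this statement: it is recorded as a \emph{Fact} with a citation to \cite[Section 3, Propositions 3.3 and 3.4]{GiNe}, and no argument is supplied in the text. So there is nothing to compare against on the paper's side.

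Your proposal is correct and is exactly the standard direct verification. The three identities you list do hold (the computations go through as you indicate), $\Psi(g,\sigma)=\sigma_g\circ\bar\sigma$ is a homomorphism for the semidirect product with multiplication $(g,\sigma)(g',\sigma')=(g\,\sigma(g'),\sigma\sigma')$, and injectivity and surjectivity are as you say. One small remark: your worry about elementarity is a bit overcautious. The only structure on the new sort $X$ beyond what lives in $\C$ is the action map $\cdot$, so a bijection of the two-sorted structure that restricts to an automorphism of $\C$ and satisfies $\rho(g\cdot x)=\sigma(g)\cdot\rho(x)$ is already an \emph{isomorphism} of $(\C,X,\cdot)$, and isomorphisms are automatically elementary. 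Your bi-interpretation remark is a perfectly good alternative way to see this, but you do not strictly need it; the check that $\sigma_g$ and $\bar\sigma$ commute with $\cdot$ (over $\id$ and $\sigma$ respectively) already shows they are automorphisms.
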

	
	In this context, we induce another equivalence relation (which is an equivalence relation on the set of realizations of a single type).
	
	\begin{dfn}
		Let $H$ be an invariant subgroup of $G$. Then $E_{H,X}$ is the relation on $X$ of being in the same $H$-orbit.
	\end{dfn}
	
	Then the following fact, paired with Fact~\ref{fct:mainA}, yields Fact~\ref{fct:mainG_KR}.
	\begin{fct}[Lemma 2.35 and Proposition 2.42 from \cite{KrRz}]
		\label{fct:grres}
		Let $H\leq G$ be an invariant subgroup of bounded index and let $K$ be a pseudo-closed (i.e.\ type-definable) subgroup such that $H\leq K\leq G$.

		Let $M\preceq \C$ be any small model. Then, if we put $N=(M,G(M)\cdot x_0)\preceq (\C,X,\cdot)$, the map $g\mapsto g\cdot x_0$ induces a homeomorphism $G_M\to X_N$ which takes $E_{H}^M$ to $E_{H,X}^N$ and $K_M$ to $(K\cdot x_0)_N$.
		
		Furthermore:
		\begin{itemize}
			\item
			$E_{H,X}$ is type-definable or $F_\sigma$ if and only if $E_H$ is, if and only if $H$ is (respectively), 
			\item
			if the language and $M$ are both countable, while $H$ is Borel, so are $E_H$ and $E_{H,X}$, and the Borel cardinalities of $E_H\restr_{K}$ and $E_{H,X}\restr_{K\cdot x_0}$ coincide.
		\end{itemize}
	\end{fct}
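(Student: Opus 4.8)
The plan is to transport the whole situation along the definable bijection $g\mapsto g\cdot x_0$ between $G(\C)$ and $X(\C)$, using Fact~\ref{fct:affine_sort} to keep track of types over $M$ and $N$, and then to feed the resulting identification of type spaces into Facts~\ref{fct:eqcmp} and~\ref{fct:cartdf} to get the assertions about Borel classes and Borel cardinalities. First I would dispose of routine points: the structure $(\C,X,\cdot)$ is again a monster model of its own theory (adjoining a principal homogeneous space of a $\emptyset$-definable group preserves $\kappa$-saturation and $\kappa$-strong homogeneity, after harmlessly enlarging $\kappa$ if need be), the new sort $X$ is a single sort so everything in sight is finitely supported, and $N=(M,G(M)\cdot x_0)$ is an elementary substructure of $(\C,X,\cdot)$ with $\lvert N\rvert=\lvert M\rvert$ (in particular countable when $M$ is), since $G(M)\cdot x_0$ is in definable bijection with $G(M)\subseteq M$ and elementarity follows by the Tarski--Vaught test from $M\preceq\C$ together with the fact that the only structure $X$ carries is the $G$-action.

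The key step is the homeomorphism. Using Fact~\ref{fct:affine_sort}, an automorphism $(g,\sigma)\in G\rtimes\aut(\C)$ of $(\C,X,\cdot)$ fixes $N$ pointwise if and only if $\sigma\in\aut(\C/M)$ and $g$ is the identity of $G$; the condition on $g$ comes from requiring that every point of $G(M)\cdot x_0$ be fixed, using that points of the principal homogeneous space $X$ are separated by the corresponding elements of $G$. Such an automorphism acts on $X$ simply by $h\cdot x_0\mapsto\sigma(h)\cdot x_0$, so restriction identifies $\aut((\C,X,\cdot)/N)$ with the canonical lift of $\aut(\C/M)$. From this one reads off that $\tp(g_1\cdot x_0/N)=\tp(g_2\cdot x_0/N)$ iff $\tp(g_1/M)=\tp(g_2/M)$, so $\tp(g/M)\mapsto\tp(g\cdot x_0/N)$ is a well-defined bijection $G_M\to X_N$ (onto, because $X=G(\C)\cdot x_0$). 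It is continuous because any formula over $N$ evaluated at $g\cdot x_0$ translates, via the $G$-action and the identification $G(M)\cdot x_0\leftrightarrow G(M)$, into a formula over $M$ evaluated at $g$; and a continuous bijection from a compact space to a Hausdorff space is a homeomorphism. The same translation shows this homeomorphism carries the right coset $Hg$ to the $H$-orbit $(Hg)\cdot x_0$ in $X$, hence it takes $E_H^M$ to $E_{H,X}^N$, and it plainly takes $K_M$ onto $(K\cdot x_0)_N$.

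For the ``furthermore'' clauses, note that $H$ is the fibre of $E_H$ over the identity of $G$, whereas $E_H$ is the preimage of $H$ under the $\emptyset$-definable map $(g_1,g_2)\mapsto g_1g_2^{-1}$; hence $H$ is type-definable, or $F_\sigma$ (equivalently, generated by countably many type-definable sets), or -- in the countable case -- Borel, exactly when $E_H$ is the corresponding kind of set. Next, Fact~\ref{fct:eqcmp} gives that $E_H$ and $E_H^M$ have the same Borel class and that $E_{H,X}$ and $E_{H,X}^N$ have the same Borel class; since the homeomorphism above carries $E_H^M$ to $E_{H,X}^N$ it preserves these classes, which yields the three-way equivalence in the first bullet. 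Finally, when the language and $M$ are countable, by Fact~\ref{fct:cartdf} and the definition of the Borel cardinality of a restriction, the Borel cardinality of $E_H\restr_K$ equals that of $E_H^M\restr_{K_M}$, and that of $E_{H,X}\restr_{K\cdot x_0}$ equals that of $E_{H,X}^N\restr_{(K\cdot x_0)_N}$; these agree because the homeomorphism $G_M\to X_N$ restricts to a topological -- hence Borel -- isomorphism $K_M\to(K\cdot x_0)_N$ carrying one relation to the other.

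The main obstacle is getting the group-theoretic bookkeeping of Fact~\ref{fct:affine_sort} exactly right: pinning down the pointwise stabiliser of $N$ inside the semidirect product, and checking that the chosen normalisation of the actions really matches right cosets of $H$ in $G$ with $H$-orbits in $X$ (a harmless composition with group inversion may be needed here, depending on conventions). Once that algebraic core is secure, the topological claim reduces to the standard ``continuous bijection of compacta'' argument, and the descriptive-set-theoretic transfers are routine invocations of Facts~\ref{fct:eqcmp} and~\ref{fct:cartdf} together with the trivial fact that a homeomorphism is a Borel isomorphism.
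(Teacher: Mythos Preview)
The paper does not give its own proof of this statement: it is recorded as a Fact, citing Lemma~2.35 and Proposition~2.42 of \cite{KrRz}, and is used as a black box in the proofs of Corollaries~\ref{cor:group_borel} and~\ref{cor:group_nwg}. So there is nothing in the present paper to compare your argument against line by line.

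That said, your reconstruction is essentially the argument one would expect, and it is sound. The identification of $\aut((\C,X,\cdot)/N)$ with $\aut(\C/M)$ via Fact~\ref{fct:affine_sort} is exactly the right engine for the homeomorphism $G_M\to X_N$, and your reductions of the two bullet points to Facts~\ref{fct:eqcmp} and~\ref{fct:cartdf} plus ``homeomorphisms are Borel isomorphisms'' are the intended ones. Your caveat about a possible composition with inversion is apt: with the action $\sigma_g(h\cdot x_0)=(hg^{-1})\cdot x_0$ from Fact~\ref{fct:affine_sort}, the $H$-orbit of $a\cdot x_0$ is $(aH)\cdot x_0$, i.e.\ the image of a \emph{left} coset, whereas $E_H$ is defined via right cosets; since $K$ is a subgroup (so $K=K^{-1}$) and inversion is a $\emptyset$-definable homeomorphism of $G$, precomposing with $g\mapsto g^{-1}$ repairs this without affecting any of the descriptive-set-theoretic conclusions. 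One small point you leave implicit but should make explicit is that $X$ is a single complete $\emptyset$-type in the expanded structure (because the copy of $G$ inside $\aut((\C,X,\cdot))$ already acts transitively on $X$), which is what makes $E_{H,X}$ an equivalence relation on a set of the required shape and makes $X_N$ the right target for the homeomorphism.
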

	
	In fact, the assumption of Borelness is not needed for the last part of the last item concerning Borel cardinalities.
	
	\begin{rem}
		\label{rem:grresplus}
		The preceding fact can easily be extended to obtain the following additional information.
		\begin{itemize}
			\item
			$K\cdot x_0$ is type-definable (because $K$ is).
			\item 
			One of $H$, $E_H\restr_K$, and $E_{H,X}\restr_{K\cdot x_0}$ is type-definable if and only if all of them are.
			\item
			One of $H$, $E_H\restr_K$, and $E_{H,X}\restr_{K\cdot x_0}$ is relatively definable if and only if all of them are (in $K$, $K^2$ and $(K\cdot x_0)^2$, respectively).
		\end{itemize}
		
	\end{rem}

	
	Similarly to the discussion following Fact~\ref{fct:eqcmp}, we have a counterpart of the last item of Fact~\ref{fct:grres} for analyticity. Recall that an invariant set $Y$ is said to be {\em analytic} if $Y_\emptyset$ is analytic in the sense of Definition~\ref{definition: analytic sets}.
	
	\begin{rem}\label{remark: analyticity for groups}
		Take the general situation from Fact~\ref{fct:grres}. If one of the sets $H$, $E_H$, $E_{H,X}$, $E_H^M$, or $E_{H,X}^N$ is analytic, then all of them are. 
	\end{rem}

	\begin{proof}
		This follows from Remark~\ref{remark: preservation of analyticity by images and preimages} and the existence of appropriate continuous functions. In order to see that $H$ being analytic is equivalent to $E_H$ being analytic, consider the continuous function $f \fcolon (G \times G)_\emptyset \to H_\emptyset$ given by $f(\tp(a,b))= \tp(ba^{-1})$, and note that $f[(E_H)_\emptyset] = H_\emptyset$ and $f^{-1}[ H_\emptyset] = (E_H)_\emptyset$. The equivalences for pairs of relations $E_H$, $E_H^M$  and $E_{H,X}$, $E_{H,X}^N$ follow from the more general remark proved in the paragraph following Fact ~\ref{fct:eqcmp}. Finally, that $E_H^M$ is analytic if and only if $E_{H,X}^N$ is analytic follows from the existence of the continuous function  $\pi \fcolon G_M \to X_N$ considered in Fact~\ref{fct:grres}.
	\end{proof}

	\subsection{Topology}
	
	Let $X$ be a topological space. Recall that a subset $B$ of $X$ has the {\em Baire property} (BP) in $X$
	if it is the symmetric difference of an open and meager subset of $X$. We say that $B$ is {\em strictly Baire} if $B \cap C$ has the BP in $C$ for every closed subset $C$ of $X$ (or, equivalently, for every $C\subseteq X$; for this and other facts about strictly Baire sets, see \cite[§11 VI.]{Ku}). We say that $X$ is {\em totally non-meager} if no non-empty closed subset of $X$ is meager in itself. Of course, each compact, Hausdorff space is totally non-meager.
	
	One of the important ingredients of the proof of Theorem~\ref{thm:main_Borel} will be the following theorem from \cite{Mi}.
	This theorem was pointed out to the first author by Maciej Malicki.
	
	\begin{fct}[Theorem 1 in \cite{Mi}]\label{fct:Miller}
		Assume $G$ is a totally non-meager topological group. Suppose $H$ is a subgroup of $G$ and $\{E_i \sbmid i \in \omega\}$ is a collection of right $H$-invariant (i.e.\ $E_iH=E_i$), strictly Baire sets which separates left $H$-cosets (i.e.\ for each $g \in G$, $gH= \bigcap \{ E_i \sbmid g \in E_i\}$). Then $H$ is closed in $G$.
	\end{fct}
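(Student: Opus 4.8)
The plan is to prove this the way one proves that the Vitali equivalence relation on $\R$ (cosets of $\Q$) is not smooth: the hypotheses say precisely that the orbit relation of $H$ acting on $G$ by right translation is, in a strong sense, separated by countably many invariant sets with the Baire property, and the conclusion is the ``ergodicity'' obstruction. First I would reduce to the case that $H$ is dense: it suffices to show $H=\overline H$, and $\overline H$ is a closed subgroup of $G$, hence again totally non-meager (a set closed in $\overline H$ is closed in $G$, so not meager in itself), and in particular a Baire space. This is the one place where ``totally non-meager'' rather than merely ``Baire'' is essential, since a closed subspace of a Baire space need not be Baire. Each $E_i\cap\overline H$ is strictly Baire in $\overline H$ and right $H$-invariant, and $\{E_i\cap\overline H\mid i\in\omega\}$ still separates the left $H$-cosets contained in $\overline H$; so, renaming, we may assume $H$ is dense in $G$, $G$ is Baire, and we must show $H=G$. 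Taking $C=G$ in the definition, each $E_i$ then has the Baire property in $G$.

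The key step is a topological ergodicity lemma: every right $H$-invariant $A\subseteq G$ with the Baire property is meager or comeager. Indeed $\overline A$ is right $H$-invariant and closed, so $\operatorname{int}(\overline A)$ is right $H$-invariant and open; if nonempty it contains a coset $uH$, which is dense since $u\overline H=G$, forcing $\overline A=G$. Thus each of $A$ and $G\setminus A$ is nowhere dense or dense. If $A$ is nowhere dense, $A$ is meager; if $G\setminus A$ is, $A$ is comeager; and $A$, $G\setminus A$ cannot both be dense, for writing $A=U\triangle M$ with $U$ open and $M$ meager, density of $A$ forces $U$ dense (a nonempty open set disjoint from $U$ would meet $A$ in a meager set), density of $G\setminus A$ forces $G\setminus U$ dense, and an open dense $U$ with dense complement is impossible.

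Now I would build a ``generic coset''. Define $\phi\colon G\to 2^\omega$ by $\phi(g)(i)=1$ iff $g\in E_i$. By right $H$-invariance of the $E_i$, $\phi$ is constant on left cosets, and by the separating hypothesis $\phi(g)=\phi(g')$ iff $gH=g'H$, so the fibers of $\phi$ are exactly the left $H$-cosets. For finite partial $s\colon F\to 2$, the set $\phi^{-1}([s])=\bigcap_{s(i)=1}E_i\cap\bigcap_{s(i)=0}(G\setminus E_i)$ is a finite Boolean combination of sets each meager or comeager by the lemma, hence is itself meager or comeager. Starting from $\phi^{-1}(2^\omega)=G$, which is comeager, and noting that whenever $\phi^{-1}([x^*\restr n])$ is comeager exactly one of its two one-step extensions is comeager (they are disjoint, so not both comeager, and their union is comeager, so not both meager), I recursively pick the comeager branch to get $x^*\in 2^\omega$ with all $\phi^{-1}([x^*\restr n])$ comeager, whence $\phi^{-1}(\{x^*\})=\bigcap_n\phi^{-1}([x^*\restr n])$ is comeager and in particular nonempty. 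Picking $g_0$ in this fiber, $g_0H=\phi^{-1}(\{x^*\})$ is comeager, so $H=g_0^{-1}(g_0H)$ is comeager. Finally, if $H\neq G$, pick $g\in G\setminus H$; then $gH$ is disjoint from $H$ and comeager (a homeomorphic image of $H$), and two disjoint comeager subsets of a nonempty Baire space is impossible. Hence $H=G=\overline H$ is closed.

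I expect the main obstacle to be the ergodicity lemma together with the reduction step: recognizing that the right ``smallness'' dichotomy for invariant BP sets holds, and that ``totally non-meager'' is exactly the hypothesis that is preserved when passing to the closed subgroup $\overline H$ so that the Baire category machinery keeps working there. Once those are in place, the generic-branch construction is routine. Minor care is needed with degenerate cases (for instance, if $\{i\mid g\in E_i\}$ is empty then the separating hypothesis already forces $gH=G$) and with checking that strictly Baire sets restrict correctly to the closed subspace $\overline H$.
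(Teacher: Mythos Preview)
The paper does not give its own proof of this fact; it is quoted from Miller's paper and only cited. Your overall strategy---reduce to $H$ dense in a Baire group, prove a topological $0$--$1$ law for right $H$-invariant BP sets, then recursively select a comeager branch to produce a comeager coset---is the standard one and is correct in outline.

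There is, however, a genuine gap in your proof of the ergodicity lemma. You argue that $A$ and $G\setminus A$ cannot both be dense, but this is false: take $A=H$ itself when $H$ is dense and meager (which is exactly the situation you are in before the final contradiction). Your justification, that a nonempty open $V$ disjoint from $U$ meets $A$ in a meager set, does not contradict the density of $A$; dense meager sets exist. So the trichotomy you set up (nowhere dense / nowhere dense complement / both dense) does not resolve the lemma, since the third case genuinely occurs and you must still show $A$ is meager or comeager there.

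The clean fix is to show directly that the open kernel $U$ in $A=U\triangle M$ is itself right $H$-invariant: for $h\in H$, $A=Ah$ gives $U\triangle Uh\subseteq M\cup Mh$, which is open and meager in the Baire group $G$, hence empty, so $Uh=U$. Then either $U=\emptyset$ and $A$ is meager, or $U$ is nonempty open and $H$-invariant, hence (by your own argument about $\operatorname{int}(\overline A)$) dense, so $G\setminus A\subseteq (G\setminus U)\cup M$ is meager. With this correction the rest of your argument (the reduction to $\overline H$, the recursive construction of $x^*$, and the final step) goes through.
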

	
	We will also use the Souslin operation $\Souslin$. Recall that a {\em Souslin scheme} is a family $(P_s)_{s \in \omega^{<\omega}}$ of subsets of a given set. The {\em Souslin operation} $\Souslin$ applied to such a scheme produces the set
	\[
		\Souslin_s P_s:=\bigcup_{s \in \omega^\omega} \bigcap_n P_{s\restr_n}.
	\]
	Given any collection $\Gamma$ of subsets of a set $X$, $\Souslin(\Gamma)$ denotes the collection of sets $\Souslin_s P_s$, where all sets $P_s$ are in $\Gamma$.
	
	It is well-known that in a Hausdorff topological space $X$, the collection of all subsets with BP is a $\sigma$-algebra which is closed under the Souslin operation \cite[Theorem 25.3]{Arh}. In particular, all sets in $\Souslin(\CLO(X))$ have BP, where $\CLO(X)$ is the collection of all closed subsets of $X$. It follows that, in fact, all sets in $\Souslin(\CLO(X))$ are even strictly Baire.
	
	We say that a Souslin scheme $(P_s)_{s \in \omega^{<\omega}}$ is {\em regular} if $s \subseteq t$ implies $P_s \supseteq P_t$. It is easy to check that if
	$(P_s)_{s \in \omega^{<\omega}}$ is a Souslin scheme and $Q_s:= \bigcap_{s \subseteq t} P_s$, then $(Q_s)_{s \in \omega^{<\omega}}$ is regular and $\Souslin_s P_s =\Souslin_s Q_s$.
	
	By \cite[Theorem 25.7]{Ke}, we know that in a Polish space, all Borel (even analytic) subsets are of the form $\Souslin_s F_s$ for a regular Souslin scheme $(F_s)_{s \in \omega^{<\omega}}$ consisting of closed subsets. 
	In fact, all analytic subsets of a Polish space $X$ form exactly the family $\Souslin(\CLO(X))$, and this description can be taken as a possible extension of the definition of analytic sets to arbitrary spaces, which we have in mind in the paragraph following Fact~\ref{fct:eqcmp} and in Remark~\ref{remark: analyticity for groups}.

	\begin{dfn}\label{definition: analytic sets}
		Let $X$ be a topological space. The members of $\Souslin(\CLO(X))$ will be called {\em analytic} subsets of $X$.
	\end{dfn}

	
	\begin{rem}\label{rem: image of intersection}
		Assume that $X$ is a compact (not necessarily Hausdorff) space and that $Y$ is a $T_1$-space. Let $f\fcolon X \to Y$ be a continuous map. Suppose $(F_n)_{n\in \omega}$ is descending sequence of closed subsets of $X$. Then $f[\bigcap_n F_n]=\bigcap_n f[F_n]$.
	\end{rem}
	
	\begin{proof}
		The inclusion $(\subseteq)$ is always true. For the opposite inclusion, consider any $y \in \bigcap_n f[F_n]$. Then $f^{-1}(y) \cap F_n \ne \emptyset$ for all $n$. Since $(F_n)_{n\in \omega}$ is descending, we get that the family $\{f^{-1}(y) \cap F_n\sbmid n \in \omega\}$ has the finite intersection property. On the other hand, since $\{y\}$ is closed in $Y$ (as $Y$ is $T_1$) and $f$ is continuous, we have that each set $f^{-1}(y) \cap F_n$ is closed. So compactness of $X$ implies that $f^{-1}(y) \cap \bigcap_n F_n=\bigcap_n f^{-1}(y) \cap F_n \ne \emptyset$. Thus $y \in f[\bigcap_n F_n]$.
	\end{proof}
	
	\begin{rem}\label{remark: preservation of analyticity by images and preimages}
		Let $f\fcolon X \to Y$ be a continuous map between topological spaces. Then:
		\begin{enumerate}
			\item The preimage by $f$ of any analytic subset of $Y$ is an analytic subset of $X$.
			\item Assume that $X$ is compact (not necessarily Hausdorff) and that $Y$ is Hausdorff. Then the image by $f$ of any analytic subsets of $X$ is an analytic subset of $Y$.
		\end{enumerate}
	\end{rem}
	
	\begin{proof}
		(1) is clear by continuity of $f$ and general properties of preimages.
		
		To show (2), consider any analytic subset $A$ of $X$. Then $A=\bigcup_{s \in \omega^\omega} \bigcap_n F_{s\restr_n}$ for some regular Souslin scheme $(F_s)_{s \in \omega^{<\omega}}$ of closed subsets of $X$. By compactness of $X$ and the assumptions that $Y$ is Hausdorff and $f$ is continuous, we see that each set $f[F_s]$ is closed. By Remark~\ref{rem: image of intersection}, 
		$$f[X] = \bigcup_{s \in \omega^\omega} \bigcap_n f[F_{s\restr_n}].$$
		Hence, $f[X]$ is analytic. 
	\end{proof}

	Let us recall Pettis theorem (for a proof see e.g.\ \cite[Theorem 9.9]{Ke}).
	
	\begin{fct}\label{fct: Pettis}
		Let $G$ be a topological group. If $A\subseteq G$ has BP and is non-meager, the set $A^{-1}A:=\{a^{-1}b\sbmid a,b \in A\}$ contains an open neighborhood of the identity.
	\end{fct}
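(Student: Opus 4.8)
The plan is to carry out the standard Baire-category argument. First I would use the Baire property to fix an open set $U \subseteq G$ and a meager set $M \subseteq G$ with $A = U \mathbin{\triangle} M$. From $A \subseteq U \cup M$ and the meagerness of $M$ it follows that $U$ is non-meager (hence non-empty), since otherwise $A$ would be meager; also $U \setminus A \subseteq M$ is meager. The one real subtlety is that $G$ is not assumed to be a Baire space, so I cannot yet assert that non-empty open subsets of $U$ are non-meager. To get around this I would invoke the Banach Category Theorem: the union $W$ of all open meager subsets of $G$ is itself meager, so $U \not\subseteq W$, and I may fix a point $u_0 \in U \setminus W$. By the choice of $W$, no non-empty open subset of $G$ containing $u_0$ is meager.

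The second step is to exhibit the neighbourhood of the identity. Since the map $g \mapsto u_0 g^{-1}$ is continuous and maps $e$ to $u_0 \in U$, the set $W_1 := \{\, g \in G \sbmid u_0 g^{-1} \in U \,\}$ is an open neighbourhood of $e$, and I claim $W_1 \subseteq A^{-1}A$. Fix $g \in W_1$. Then $u_0 \in U \cap Ug$, so this set is open, non-empty, and — as it contains $u_0$ — non-meager. On the other hand $U \setminus A$ is meager, and since right translation by $g$ is a homeomorphism, $(U \setminus A)g$ is meager too, so $(U \setminus A) \cup (U \setminus A)g$ is meager and therefore cannot contain $U \cap Ug$. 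Pick $x$ in $U \cap Ug$ but outside $(U \setminus A) \cup (U \setminus A)g$. Since $x \in U$ and $x \notin U \setminus A$, we get $x \in A$; writing $x = vg$ with $v \in U$, the condition $x \notin (U \setminus A)g$ gives $v \notin U \setminus A$, hence $v = xg^{-1} \in A$. Then $g = (xg^{-1})^{-1} x = v^{-1} x$ with $v, x \in A$, so $g \in A^{-1}A$, as claimed.

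Everything else is routine: that translates of meager sets are meager in a topological group, and the final one-line algebraic identity. The step I expect to be the main obstacle — or at least the only place where care is genuinely needed — is the category bookkeeping in the first paragraph: without the Banach Category Theorem (equivalently, without reducing to a point whose every neighbourhood is non-meager), the non-meagerness of $U \cap Ug$ used in the second paragraph would not be available for a general, possibly non-Baire, topological group.
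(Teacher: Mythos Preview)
Your argument is correct, and in fact the paper does not give its own proof of this fact at all: it simply records the Piccard--Pettis theorem as a known result and refers to \cite[Theorem~9.9]{Ke}. So there is nothing to compare at the level of details.

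That said, your write-up is more careful than the standard reference in one respect worth noting. The version in \cite{Ke} is stated for Polish groups, which are automatically Baire, so the non-meagerness of $U\cap Ug$ comes for free there. You correctly observe that the statement here is for an arbitrary topological group, and you patch the gap with the Banach Category Theorem to locate a point $u_0$ all of whose open neighbourhoods are non-meager. That is exactly the right fix, and the remainder (the translation argument showing $A\cap Ag\ne\emptyset$ and the algebraic identity $g=v^{-1}x$) is the classical Pettis proof carried out cleanly.
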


	\section{Topological dynamics for \texorpdfstring{${\operatorname{Aut}(\mathfrak{C})}$}{Aut(C)}}\label{section: top dyn for aut(C)}

	In this section, we will prove our main results relating 
	the topological dynamics of $\aut(\C)$ with Galois groups and spaces of strong types, namely Theorems~\ref{thm:main theorem 1},~\ref{thm:main theorem 2} and~\ref{thm:main theorem 3}.
	
	In this section, $\C$ denotes a monster model of a complete, first order theory $T$, and $\bar c$ -- a tuple consisting of ALL elements of $\C$; $\C' \succ \C$ is a bigger monster model. Whenever we compute Galois groups, we do it inside $\C'$.
	Nonetheless, in this section, as well as the later ones, we will use automorphisms of both $\C$ and $\C'$, sometimes in the same context. To distinguish between the two, we will denote the latter by $\sigma$ or $\tau$ with primes (i.e.\ $\sigma',\tau'$).
	
	Recall that
	\[
		S_{\bar c}(\C): = \{ \tp(\bar a/\C) \sbmid \bar a \equiv \bar c\}.
	\]
	
	The group $\aut(\C)$ acts naturally on the space $S_{\bar c}(\C)$. It is easy to check that $(\aut(\C),S_{\bar c}(\C), \tp(\bar c/\C))$ is an $\aut(\C)$-ambit, where $\aut(\C)$ is equipped with the pointwise convergence topology. Moreover, the assignment $f \mapsto \tp(f(\bar c)/\C)$ yields a homeomorphic embedding of $\aut(\C)$ in $S_{\bar c}(\C)$.
	
	We will be working in the Ellis semigroup $EL:=EL(S_{\bar c}(\C))$ of the above ambit. One could ask whether on $S_{\bar c}(\C)$ there is a left continuous semigroup operation extending the natural action of $\aut(\C)$ on $S_{\bar c}(\C)$, because then $S_{\bar c}(\C)$ would be isomorphic to $EL$ and so the situation would be simplified (as for $\beta G$ for a discrete group $G$). As we will see in the appendix, such a semigroup operation exists if and only if $T$ is stable, which shows that in order to stay in full generality, we really have to work with $EL$.
	
	Recall that $EL$ is the closure in $S_{\bar c}(\C)^{S_{\bar c}(\C)}$ of $\aut(\C)$ (where the elements of $\aut(\C)$ are naturally treated as elements of $S_{\bar c}(\C)^{S_{\bar c}(\C)}$), and the semigroup operation, denoted by $*$, is just the composition of functions. Let $\Id\fcolon S_{\bar c}(\C) \to S_{\bar c}(\C)$ be the identity function.
	
	We enumerate $S_{\bar c}(\C)$ as $(\tp({\bar c}_k/\C) \sbmid k<\lambda)$ for some cardinal $\lambda$ and some tuples ${\bar c}_k\equiv \bar c$, where ${\bar c}_0=\bar c$. Then the elements of $S_{\bar c}(\C)^{S_{\bar c}(\C)}$ can be naturally viewed as sequences of types indexed by $\lambda$. For $k<\lambda$, denote by $\pi_k$ the projection from $EL$ to the $k$-th coordinate. In particular,
	\phantomsection
	\label{ind:pi0} $\pi_0(\Id)= \tp(\bar c/\C)$, and, more generally, $\pi_k(\Id)= \tp (\bar{c}_k/\C)$ for $k<\lambda$.

	Clearly, $S_{\bar c}(\C)^{S_{\bar c}(\C)}$ is an $\aut(\C)$-flow (with the coordinatewise action of $\aut(\C)$, denoted by $\cdot$). Then $EL=\cl (\aut(\C) \cdot \Id)$. So, $(\aut(\C),EL, \Id)$ is an $\aut(\C)$-ambit. Moreover, the natural embedding of $\aut(\C)$ in $EL$ is an isomorphism with its image (equal to $\aut(\C) \cdot \Id$) in the category of topological groups. Thus, we will be freely considering $\aut(\C)$ as a topological subgroup of $EL$.
	
	\begin{rem}\label{rem: pi0 is onto}
		$\pi_0$ is surjective.
	\end{rem}
	
	\begin{proof}
		It follows from the fact that the image of $\pi_0$ is closed in $S_{\bar c}(\C)$ (as $\pi_0\fcolon EL \to S_{\bar c}(\C)$ is continuous, $EL$ is compact, and $S_{\bar c}(\C)$ is Hausdorff) and the fact that the image of $\pi_0$ contains the orbit of $\aut(\C)$ on $\tp(\bar c/\C)$ which is dense in $S_{\bar c}(\C)$.
	\end{proof}
	
	
	
	\begin{prop}\label{prop: very basic}
		For any $x \in EL$ there is $\sigma' \in \aut(\C')$ such that for all $k$, $\pi_k(x)= \tp(\sigma'({\bar c}_k)/\C)$.
	\end{prop}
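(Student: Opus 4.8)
The plan is to realize the whole sequence of types $(\pi_k(x))_{k<\lambda}$ simultaneously and \emph{coherently} inside $\C'$, and then use the strong homogeneity of $\C'$ to produce $\sigma'$.

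First I would fix a net $(f_i)$ in $\aut(\C)$ with $\lim_i f_i = x$; such a net exists since, by definition, $EL$ is the closure of $\aut(\C)$ in the compact Hausdorff space $S_{\bar c}(\C)^{S_{\bar c}(\C)}$. Since $|\C|<\kappa'$ and $\C'$ is $\kappa'$-strongly homogeneous, each $f_i$ extends to some $\widehat{f_i}\in\aut(\C')$, and then $\pi_k(f_i)=\tp\bigl(\widehat{f_i}(\bar c_k)/\C\bigr)$ for every $k$. Unravelling the product topology, $f_i\to x$ means exactly that $\lim_i \tp\bigl(\widehat{f_i}(\bar c_k)/\C\bigr)=\pi_k(x)$ in $S_{\bar c}(\C)$ for each $k<\lambda$.

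Next, let $\bar e$ be the concatenation of all the tuples $\bar c_k$, $k<\lambda$; its length is at most $|S_{\bar c}(\C)|\cdot|\C|\le 2^{|\C|}$, hence $<\kappa'$ because $\kappa'$ is a strong limit much larger than $|\C|$. The types $q_i:=\tp\bigl(\widehat{f_i}(\bar e)/\C\bigr)$ all lie in the compact space of complete types over $\C$ of tuples $\equiv\bar e$, so I would pass to a subnet along which the $q_i$ converge to some type $q$ in that space, and realize $q$ by a tuple $\bar d=(\bar d_k)_{k<\lambda}$ in $\C'$ (possible by $\kappa'$-saturation, as $\bar e$ is small). Since $\bar d\equiv\bar e$, the coordinatewise assignment $\bar c_k\mapsto\bar d_k$ is a well-defined partial elementary map with domain of size $<\kappa'$. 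Projecting the convergent subnet onto the $k$-th block of coordinates and using that $S_{\bar c}(\C)$ is Hausdorff, I get $\tp(\bar d_k/\C)=\lim_i \tp\bigl(\widehat{f_i}(\bar c_k)/\C\bigr)=\pi_k(x)$ for every $k$. Finally, by $\kappa'$-strong homogeneity of $\C'$ this partial elementary map extends to some $\sigma'\in\aut(\C')$, and then $\tp(\sigma'(\bar c_k)/\C)=\tp(\bar d_k/\C)=\pi_k(x)$ for all $k$, which is exactly what is required.

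The only genuinely non-formal step is the passage to a subnet producing a \emph{single} type $q$ of the big tuple $\bar e$ over $\C$ whose $k$-th block equals $\pi_k(x)$: one cannot simply ``glue together'' the individual types $\pi_k(x)$, since the joint type over $\C$ carries strictly more information than its blocks, and a priori the $\pi_k(x)$ need not even be jointly consistent with $\tp(\bar e/\emptyset)$. Routing the argument through the approximating automorphisms $f_i$ and a convergent subnet of the $q_i$ is precisely what guarantees simultaneously that $q$ is consistent, restricts to $\tp(\bar e/\emptyset)$, and has the prescribed blocks. The remaining ingredients — existence of the extensions $\widehat{f_i}$ and the cardinal bookkeeping ensuring the relevant tuples and the domain of the elementary map stay small with respect to $\kappa'$ — are routine given the standing conventions on $\C$ and $\C'$.
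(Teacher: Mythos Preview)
Your proof is correct and follows essentially the same approach as the paper: both arguments use the density of $\aut(\C)$ in $EL$ to show that the joint condition ``$(\bar c_k')_{k<\lambda}\equiv(\bar c_k)_{k<\lambda}$ and each $\bar c_k'\models\pi_k(x)$'' is consistent, then realize it in $\C'$ by $\kappa'$-saturation and invoke $\kappa'$-strong homogeneity to produce $\sigma'$. The paper phrases the consistency step as ``finite fragments are realized by density,'' whereas you unfold this explicitly via a convergent subnet of the types $q_i=\tp(\widehat{f_i}(\bar e)/\C)$; these are two renderings of the same compactness argument.
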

	
	\begin{proof}
		This follows from compactness and the fact that $\aut(\C)$ is dense in $EL$. Indeed, by the strong $\kappa'$-homogeneity of $\C'$, we need to show that there are $\bar c_k' \in \C'$, $k < \lambda$, such that $({\bar c}_k'\sbmid k < \lambda) \equiv ({\bar c}_k\sbmid k < \lambda)$ and ${\bar c}_k' \models \pi_k(x)$ for all $k < \lambda$. This is a type-definable condition on $({\bar c}_k'\sbmid k < \lambda)$, so, by compactness (or rather $\kappa'$-saturation of $\C'$), it is enough to realize each finite fragment of this type. But this can be done by the density of $\aut(\C) \cdot \Id$ in $EL$.
	\end{proof}

	Below, we give a commutative diagram of maps which will be defined in the rest of this section and which play a fundamental role in this paper.
	
	\begin{figure}[H]
		\centering
		\begin{tikzcd}
			u\M \arrow[rdd, hook]\arrow[rd,two heads,"j"]\arrow[rrd,bend left=10,"f", two heads] \arrow[rrrd,bend left=15, two heads, "h_E"] \\
			& u\M/H(u\M) \arrow[r,"\bar f", two heads] \arrow[rr,"\bar h_E",bend left=15, two heads]& \gal_L(T)\arrow[r,"g_E",two heads]&\left[ \bar\alpha\right]_{\equiv}/E\\
			& \M\arrow[r, hook]& EL=EL(S_{\bar c}(\C))\arrow[u,"\hat{f}", two heads]
		\end{tikzcd}
		\caption{Commutative diagram of maps considered below (the tuple $\bar\alpha$ will be fixed after Corollary~\ref{cor:galquot}).}
	\end{figure}
	In the table below, we give short descriptions and references to definitions of arrows in the diagram.
	\begin{center}
		\renewcommand{\arraystretch}{1.3}
		\begin{tabular}{r| l || r|l }
			$j$ & the quotient map 
			&
			$\hat f$& the homomorphism described below
			\\ \hline
			$g_E$& the orbit map of $[\bar\alpha]_E$ (page \pageref{ind:gE})
			&
			$f$ & restriction of $\hat f$ (page \pageref{ind:f})
			\\ \hline
			$h_E$ & $g_E\circ f$ (page \pageref{ind:hE}) 
			&
			$\bar f$& factor of $f$ (page \pageref{ind:fbar}) 
			\\ \hline 
			$\bar h_E$& $g_E\circ \bar f$ (page \pageref{ind:barhE}) 
		\end{tabular}
	\end{center}
	Now, define 
	\phantomsection
	\label{ind:fhat}
	$\hat{f}\fcolon EL \to \gal_L(T)$ by
	\[
		\hat{f}(x)=\sigma' \autf_L(\C'),
	\]
	where $\sigma' \in \aut(\C')$ is such that $\sigma'(\bar c) \models \pi_0(x)$. By a standard argument, we get that $\hat{f}$ is well-defined and onto. Indeed, suppose that $\sigma'_1,\sigma'_2 \in \aut(\C')$ are such that $\sigma'_1(\bar c), \sigma'_2(\bar c) \models \pi_0(x)$.
	Then there is $\tau' \in \aut(\C'/\C)$ such that $\tau'(\sigma'_1(\bar c))=\sigma'_2(\bar c)$, and so ${\sigma'_2}^{-1} \circ \tau' \circ \sigma'_1 \in \aut(\C'/\C)$,
	hence, using normality of $\autf_L(\C')$,
	we get that ${\sigma'_2}^{-1}\sigma'_1 \in \autf_L(\C')$, which shows that $\hat{f}$ is well-defined.
	To see that it is onto, consider any $\sigma' \in \aut(\C')$. By Remark~\ref{rem: pi0 is onto}, there is $x \in EL$ with $\sigma'(\bar c) \models \pi_0(x)$. Then $\hat{f}(x)=\sigma' \autf_L(\C')$. 
	
	\begin{rem}
		For any $k<\lambda$, $\hat{f}(x)=\sigma' \autf_L(\C')$, where $\sigma' \in \aut(\C')$ is such that $\sigma'({\bar c}_k) \models \pi_k(x)$.
	\end{rem}
	\begin{proof}
		By Proposition~\ref{prop: very basic}, there is $\sigma' \in \aut(\C')$ such that $\sigma'({\bar c}) \models \pi_0(x)$ and $\sigma'({\bar c}_k) \models \pi_k(x)$. This is enough, as the value $\tau' \autf_L(\C')$ does not depend on the choice of $\tau' \in \aut(\C')$ such that $\tau'({\bar c}_k) \models \pi_k(x)$.
	\end{proof}
	
	\begin{prop}\label{prop: semigroup epi}
		$\hat{f}:EL \to \gal_L(T)$ is a semigroup epimorphism.
	\end{prop}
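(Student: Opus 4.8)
The plan is to show that $\hat{f}$ is a semigroup homomorphism, since surjectivity was already established in the discussion preceding the statement. So I need to verify that $\hat{f}(x * y) = \hat{f}(x) \cdot \hat{f}(y)$ for all $x, y \in EL$, where the product on the right is in $\gal_L(T)$. First I would unwind the definitions: by Proposition \ref{prop: very basic} pick $\sigma' \in \aut(\C')$ with $\sigma'(\bar c_k) \models \pi_k(x)$ for all $k$, and pick $\tau' \in \aut(\C')$ with $\tau'(\bar c) \models \pi_0(y)$, so that $\hat{f}(x) = \sigma'\autf_L(\C')$ and $\hat{f}(y) = \tau'\autf_L(\C')$; the goal is then to show $\hat{f}(x*y) = \sigma'\tau'\autf_L(\C')$, i.e.\ that $(\sigma' \circ \tau')(\bar c) \models \pi_0(x * y)$ (up to an element of $\autf_L(\C')$, which is what the well-definedness argument already absorbs).

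The key computational step is to pin down $\pi_0(x * y)$. Using Remark \ref{rem: very basic}, take a net $(f_i)$ in $\aut(\C)$ with $\lim_i f_i = x$ in $EL$; then $x * y = \lim_i (f_i \cdot y)$, so $\pi_0(x * y) = \lim_i f_i \cdot \pi_0(y) = \lim_i \tp(f_i(\bar d)/\C)$ where $\bar d \in \C$ is any realization of $\pi_0(y)$ — wait, $\pi_0(y)$ is a type over $\C$, so I should instead argue in $\C'$: choose $\bar d' \in \C'$ with $\bar d' \models \pi_0(y)$, e.g.\ $\bar d' = \tau'(\bar c)$. The coordinatewise action gives $\pi_0(f_i \cdot y) = f_i(\pi_0(y))$, the pushforward of the type $\pi_0(y) \in S_{\bar c}(\C)$ under $f_i \in \aut(\C)$. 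Now $f_i \to x$ means in particular $f_i(\bar c_k) \to \pi_k(x)$ coordinatewise for every $k$; choosing the index $k$ with $\bar c_k = \bar d'$ restricted appropriately — more precisely, since $\bar d' \equiv \bar c$ over $\emptyset$ and $S_{\bar c}(\C)$ enumerates all such types over $\C$, the type $\pi_0(y) = \tp(\bar d'/\C)$ appears as $\tp(\bar c_k/\C)$ for some $k$, and then $\lim_i \tp(f_i(\bar c_k)/\C) = \pi_k(x)$. Combining, $\pi_0(x*y) = \pi_k(x)$ where $k$ is the index with $\tp(\bar c_k/\C) = \pi_0(y)$. By the Remark following the definition of $\hat f$, $\hat f(x * y) = \rho'\autf_L(\C')$ for any $\rho' \in \aut(\C')$ with $\rho'(\bar c_k) \models \pi_k(x) = \pi_0(x*y)$.

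Finally I would produce such a $\rho'$ explicitly as $\sigma' \circ \tau'$: since $\bar c_k \equiv \bar c$ and $\tau'(\bar c) \models \pi_0(y) = \tp(\bar c_k/\C)$, there is $\upsilon' \in \aut(\C'/\C)$ with $\upsilon'(\tau'(\bar c)) = \bar c_k$; then $(\sigma' \circ \upsilon' \circ \tau')(\bar c) = \sigma'(\bar c_k) \models \pi_k(x)$, so $\hat f(x * y) = \sigma'\upsilon'\tau'\autf_L(\C')$. Since $\upsilon' \in \aut(\C'/\C) \subseteq \autf_L(\C')$ and $\autf_L(\C')$ is normal, $\sigma'\upsilon'\tau'\autf_L(\C') = \sigma'\tau'\autf_L(\C') = \hat f(x)\hat f(y)$, as desired. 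The main obstacle I anticipate is the bookkeeping in the previous paragraph: matching the coordinate index $k$ to the type $\pi_0(y)$ and justifying the passage to the limit $\lim_i \tp(f_i(\bar c_k)/\C) = \pi_k(x)$ carefully, keeping track of which automorphisms act on $\C$ versus $\C'$ and checking that all the "up to $\autf_L$" ambiguities are genuinely absorbed by normality; none of the individual steps is deep, but the indices must be lined up precisely.
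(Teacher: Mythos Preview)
Your proposal is correct and follows essentially the same route as the paper: identify the index $k$ with $\tp(\bar c_k/\C)=\pi_0(y)$, use a net converging to $x$ together with Remark~\ref{rem: very basic} to get $\pi_0(x*y)=\pi_k(x)$, then realize this via $\sigma'\tau'$ and read off the coset. The paper streamlines your last paragraph by choosing $\tau'\in\aut(\C')$ with $\tau'(\bar c)=\bar c_k$ \emph{exactly} (possible since $\bar c_k\equiv\bar c$), so that $(\sigma'\tau')(\bar c)=\sigma'(\bar c_k)\models\pi_0(x*y)$ directly and no auxiliary $\upsilon'$ is needed; also note that the sentence invoking ``the Remark'' should really say $\rho'(\bar c)\models\pi_0(x*y)$ (the definition of $\hat f$), which is precisely what your computation $(\sigma'\upsilon'\tau')(\bar c)\models\pi_0(x*y)$ establishes.
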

	\begin{proof}
		Take any $x,y \in EL$.
		There is a unique $k$ such that $\pi_0(y)=\pi_k(\Id)$. 
		Then $\pi_0(xy) = (x*y)(\tp(\bar c/\C))= x(y(\tp(\bar c/\C)))=x(\pi_0(y)) = x(\pi_k(\Id)) = \pi_k(x)$.
		
		By Proposition~\ref{prop: very basic}, there is $\sigma' \in \aut(\C')$ such that $\sigma'({\bar c}) \models \pi_0(x)$ and $\sigma'({\bar c}_k) \models \pi_k(x)$. There is also $\tau' \in \aut(\C')$ such that $\tau'(\bar c)={\bar c}_k \models \pi_0(y)$.
		
		By these two paragraphs, we conclude that $(\sigma' \tau')(\bar c) \models \pi_0(x*y)$. Thus, $\hat{f}(x*y)=(\sigma' \tau')\autf_L(\C')= (\sigma' \autf_L(\C')) (\tau' \autf_L(\C'))=\hat{f}(x) \hat{f}(y)$.
	\end{proof}
	
	Although the next remark will not be applied anywhere in this paper, we thought that it should be included here.
	
	\begin{rem}
		$\hat{f}$ is continuous, where $\gal_L(T)$ is equipped with the standard (compact but not necessarily Hausdorff) topology as defined in Subsection~\ref{subsection: model theory}.
	\end{rem}
	
	\begin{proof}
		Let $C \subseteq \gal_L(T)$ be closed. By the definition of the topology on $\gal_L(T)$, we get that $D:=\{ \tp(\sigma'(\bar c)/\C)\sbmid \sigma' \autf_L(\C') \in C\}$ is closed in $S_{\bar c}(\C)$. 
		Since $\hat{f}^{-1}[C]=\{x \in EL\sbmid \pi_0(x) \in D\}$ and $\pi_0$ is continuous, we conclude that $\hat{f}^{-1}[C]$ is closed in $EL$.
	\end{proof}
	
	From now on, let $\M $ be a minimal left ideal in $EL$, and $u$ -- an idempotent in $\M $. So, $u\M $ ($=u*\M $) is the associated Ellis group. Clearly $\M =EL * u$, and so $u*\M =u*EL*u$. Since $\hat{f}$ is a semigroup epimorphism, 
	we get the following corollary concerning the function
	\phantomsection\label{ind:f}
	$f\fcolon u\M \to \gal_L(T)$ defined as the restriction of $\hat{f}$ to $u\M $.
	
	\begin{cor}\label{cor: group epi}
		$f\fcolon u\M \to \gal_L(T)$ is a group epimorphism.
	\end{cor}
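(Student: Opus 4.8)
The plan is to read the statement off directly from Proposition~\ref{prop: semigroup epi} and Fact~\ref{Ellis theorem}, exploiting the two observations already recorded just before the statement, namely that $\hat f(u)=\id\autf_L(\C')$ and that $u\M=u*EL*u$. So there is no real new content to prove; the work is purely formal bookkeeping.

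First I would note that, by Fact~\ref{Ellis theorem}(iii), $u\M$ is a group with identity $u$ whose group operation is the restriction of $*$. Hence $f=\hat f\restr_{u\M}$, being a restriction of the semigroup homomorphism $\hat f$, is a semigroup homomorphism from the group $u\M$ into $\gal_L(T)$; since it sends the idempotent $u$ to the identity of $\gal_L(T)$, and a semigroup homomorphism between groups which preserves the identity automatically preserves inverses, $f$ is a group homomorphism. The only step that needs a word of justification is that $\hat f(u)$ must be the neutral element of $\gal_L(T)$: as $\hat f$ is multiplicative and $u$ is idempotent, $\hat f(u)$ is an idempotent of the group $\gal_L(T)$, and a group has exactly one idempotent.

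For surjectivity, given any $g\in\gal_L(T)$ I would use that $\hat f$ is onto (as established before Proposition~\ref{prop: semigroup epi}) to pick $x\in EL$ with $\hat f(x)=g$; then $u*x*u\in u*EL*u=u\M$ and, by multiplicativity of $\hat f$, $f(u*x*u)=\hat f(u)\,\hat f(x)\,\hat f(u)=g$. Thus $f$ is onto, which finishes the argument. I do not anticipate any genuine obstacle here: the corollary is a formal consequence of $\hat f$ being a surjective semigroup homomorphism together with the identity $u\M=u*EL*u$, and the only slightly subtle point (that $\hat f(u)$ is the identity of $\gal_L(T)$) has essentially already been noted in the text preceding the statement.
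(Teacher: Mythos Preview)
Your argument is correct and is exactly the paper's intended proof: the text immediately preceding the corollary already records that $\hat f$ is a semigroup epimorphism, that $\hat f(u)$ is the identity of $\gal_L(T)$, and that $u\M=u*EL*u$, and the corollary is then stated as an immediate consequence. You have simply spelled out the formal bookkeeping that the paper leaves implicit.
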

	
	\begin{proof}
		Only surjectivity requires an explanation. We have
		$f[u*\M] = \hat{f}[u*EL *u]=\hat{f}(u)\hat{f}[EL]\hat{f}(u) = \gal_L(T)$, because $\hat{f}[EL] =\gal_L(T)$.
	\end{proof}

	Now, we will prove a counterpart of Theorem 0.1 from \cite{KrPi}. The proof is an adaptation of the proof from that paper to our new context, which, however, requires more technicalities.
	
	As usual, $\mu\fcolon \aut(\C') \to \gal_L(T)$ will be the quotient map, and $\gal_L(T)$ and $\gal_{KP}(T)$ are equipped with the standard topologies (see Subsection~\ref{subsection: model theory}). For the definition of the $\tau$-topology on $u\M $ and the definition of the subgroup $H(u\M )$ see Subsection~\ref{subsection: topological dynamics}.
	
	\begin{thm}\label{thm:main theorem 1}
		Suppose that $u\M$ is equipped with the $\tau$-topology and $u\M/H(u\M)$ -- with the induced quotient topology. Then:
		\begin{enumerate}[nosep]
			\item $f$ is continuous.
			\item $H(u\M) \leq \ker(f)$.
			\item The formula $pH(u\M) \mapsto f(p)$ yields a well-defined continuous epimorphism 
			\phantomsection\label{ind:fbar}
			$\bar f$ from $u\M/H(u\M)$ to $\gal_L(T)$.
		\end{enumerate}
		In particular, we get the following sequence of continuous epimorphisms:
		\begin{equation}
			u\M\twoheadrightarrow u\M/H(u\M)\xtwoheadrightarrow{\bar f}{}\gal_L(T)\twoheadrightarrow\gal_{KP}(T).
		\end{equation}
	\end{thm}
	\begin{proof}
		(1) Let $\bar D \subseteq \gal_L(T)$ be closed. Then $D:=\mu^{-1}[\bar D] \bar c$ is type-definable by Fact~\ref{fct: characterization of topology on Gal_L(T)}.
		The goal is to show that $f^{-1}[\bar D]$ is a $\tau$-closed subset of $u\M$.
		
		Consider any $p \in \cl_{\tau}(f^{-1}[\bar D])$. By the definition of the $\tau$-topology, there are $g_i \in \aut(\C)$ and $p_i \in f^{-1}[\bar D]$ such that $\lim_i g_i =u$ and $\lim_ig_ip_i =p$.
		
		Let $F_n$ be the collection of all pairs
		$(\bar a,\bar b)$ from $\C'$
		(where $\bar a$ and $\bar b$ are from the same sorts as $\bar c$) for which there are models $M_0,\dots,M_{n-1} \prec \C'$ and a sequence $\bar{d}_0,\dots,\bar{d}_{n}$ such that $\bar a= \bar{d}_0 \equiv_{M_{0}} \bar{d}_1 \equiv_{M_1} \dots \equiv_{M_{n-1}} \bar{d}_{n}=\bar b$. 
		Then $F_n$ is $\emptyset$-type-definable;
		so we can identify $F_n$ with a partial type over $\emptyset$ closed under conjunction. We will write $d(\bar a,\bar b) \leq n$ iff $(\bar a,\bar b) \in F_n$, and for $\sigma' \in \aut(\C')$, $d(\sigma')\leq n$ iff $\sigma'$ can be written as the composition of $n$ automorphisms each of which fixes pointwise a submodel.
		
		Since $u \in \ker( f)$, we get that for $\bar \alpha \models \pi_0(u)$, one has $d(\bar \alpha,\bar c) \leq n$ for some $n$. As $\lim_i \tp(g_i(\bar c)/\C))=\pi_0(u)$, we get that for every $\varphi(\bar x,\bar y) \in F_n$ the formula $\varphi(g_i(\bar c),\bar c)$ holds for $i$ big enough.
		
		Take any $\bar{a}_i \models \pi_0(p_i)$. Note that each $\bar{a}_i$ belongs to $D$
		(this follows from the fact that $\bar a_i\in \mu^{-1}[\{f(p_i)\}]\bar c$).
		For each $i$, let $g_i'$ be an extension of $g_i$ to an automorphism of $\C'$. By the last paragraph and the fact that $\lim_ig_ip_i =p$, we get that for every $\varphi(\bar x,\bar y) \in  F_n$ and $\psi(\bar x) \in \pi_0(p)$ one has that $\varphi(g_i'(\bar c),\bar c) \wedge \psi(g_i'(\bar{a}_i))$ holds for $i$ big enough.
		Thus,
		\[
			\forall \varphi(\bar x,\bar y) \in F_n\; \forall \psi(\bar x) \in \pi_0(p)\; \exists i\; \exists \bar a, \bar b\; (\models \varphi(\bar b,\bar c) \wedge \psi(\bar a) \; \mbox{and} \; \bar c \bar{a}_i \equiv \bar b \bar a).
		\]
		So, by compactness, there are $\bar a, \bar b$ and $\bar d \in D$ such that
		\[
			d(\bar b,\bar c) \leq n\; \, \mbox{and}\;\, \bar a \models \pi_0(p)\;\, \mbox{and}\;\, \bar c \bar d \equiv \bar b \bar a.
		\]
		So, there is $\sigma' \in \aut(\C')$ such that $\sigma'(\bar c \bar d)=\bar b \bar a$. Since $d(\bar b,\bar c) \leq n$ and $\bar c$ is a model, we see that $d(\sigma')\leq n+1$; in particular, $\sigma' \in \autf_L(\C')$.
		
		By Remark~\ref{rem: pi0 is onto}, there is $q \in EL$ such that $\pi_0(q)=\tp(\bar d/\C)$.
		Since $\sigma'(\bar d) =\bar a \models \pi_0(p)$, we get $f(p)(=\hat{f}(p))=\hat{f}(q)$. But since $\bar d \in D$, we see that $\hat{f}(q) \in \bar D$. Therefore,
		$p \in f^{-1}[\bar D]$.\\[2mm]
		(2) Let $\widetilde{G}_n$ be the subset of $EL$ consisting of all sequences whose first coordinate equals $\tp(\bar a/\C)$ for some $\bar a$ such that $(\bar a,\bar c) \in F_n$. Since $u \in \ker( f)$, we have that $u \in \widetilde{G}_n$ for some $n$. Let $\pi(\bar x)$ be the partial type over $\C$ 
		which defines $F_{2n+1}(\C',\bar c)$ and is closed under conjunction.
		Consider any $\varphi(\bar x) \in \pi(\bar x)$. 
		Let
		\[
			V_\varphi:=\pi_0^{-1}[[\neg \varphi(\bar x)]] \cap u\M.
		\]
		(Recall that $\pi_0^{-1}[[\neg \varphi(\bar x)]]$ is the clopen subset of $EL$ consisting of sequences whose first coordinate is a type containing $\neg \varphi(\bar x)$.)
		\begin{clm*}$\,$
			\begin{enumerate}[label=(\roman*),nosep]
				\item
				$u \notin \cl_{\tau}(V_\varphi)$.
				\item
				$\cl_\tau (u\M \setminus \cl_\tau(V_\varphi)) \subseteq \cl_\tau(u\M \setminus V_\varphi) \subseteq \widetilde{G}_{3n+2}^\varphi$, where $\widetilde{G}_{3n+2}^\varphi$ is the subset of $EL$ consisting of all sequences whose first coordinate equals $\tp(\bar a /\C)$ for some $\bar a$ for which there is $\bar b$ such that $\models \varphi(\bar b)$ and $(\bar a,\bar b) \in F_{n+1}$.
			\end{enumerate}
		\end{clm*}
		\begin{clmproof}[Proof of claim]
			(i) Suppose for a contradiction that $u \in \cl_{\tau}(V_\varphi)$. So there are $g_i \in \aut(\C)$ and $p_i \in V_\varphi$ such that $\lim_i g_i =u$ and $\lim_ig_ip_i =u$. Arguing as in the proof of (1), we conclude that there are $\bar a, \bar b$ and $\bar d \models \neg \varphi(\bar x)$ such that
			\[
				d(\bar b,\bar c) \leq n\; \, \mbox{and}\;\, \bar a \models \pi_0(u)\;\, \mbox{and}\;\, \bar c \bar d \equiv \bar b \bar a.
			\]
			So, there is $\sigma' \in \aut(\C')$ such that $\sigma'(\bar c \bar d)=\bar b \bar a$, and we see that $d(\sigma')\leq n+1$. Thus, $d(\bar d,\bar a) \leq n+1$. But $\bar a \models \pi_0(u)$, so $d(\bar a, \bar c) \leq n$. Therefore, $d(\bar d, \bar c) \leq 2n+1$, i.e.\ $\bar d \models \pi(\bar x)$,
			which contradicts the assumption that $\bar d \models \neg \varphi(\bar x)$.\\[2mm]
			(ii) We need to check that $\cl_\tau(u\M \setminus V_\varphi) \subseteq \widetilde{G}_{3n+2}^\varphi$. Consider any $p \in \cl_\tau(u\M \setminus V_\varphi)$.
			There are $g_i \in \aut(\C)$ and $p_i \in u\M \setminus V_\varphi$ such that $\lim_i g_i =u$ and $\lim_ig_ip_i =p$. Arguing as in the proof of (1), we conclude that there are $\bar a, \bar b$ and $\bar d \models \varphi(\bar x)$ such that
			\[
				d(\bar b,\bar c) \leq n\; \, \mbox{and}\;\, \bar a \models \pi_0(p)\;\, \mbox{and}\;\, \bar c \bar d \equiv \bar b \bar a.
			\]
			As in (i), we get $d(\bar d,\bar a) \leq n+1$, which together with the fact that $\models \varphi(\bar d)$ and $\bar a \models \pi_0(p)$ gives us that $p \in \widetilde{G}_{3n+2}^\varphi$.
		\end{clmproof}
		Notice that $\bigcap_{\varphi(\bar x) \in \pi(\bar x)}\widetilde{G}_{3n+2}^\varphi = \widetilde{G}_{3n+2}$. 
		Moreover, for each $\varphi \in \pi(x)$, $u\M \setminus \cl_\tau(V_\varphi)$ is, by Claim (i), a
		$\tau$-open neighborhood of $u$ in $u\M$. Hence, by Claim (ii), we see that
		\begin{align*}
			H(u\M) &= \bigcap \left\{\cl_{\tau}(U)\sbmid U \; \mbox{$\tau$-neighborhood of} \; u\right\} \subseteq \bigcap_{\varphi(\bar x) \in \pi(\bar x)}\widetilde{G}_{3n+2}^\varphi \cap u\M= \\& = \widetilde{G}_{3n+2} \cap u\M\subseteq \ker(f),
		\end{align*}
		which finishes the proof of (2).\\[1mm]
		(3) follows from (1) and (2).
	\end{proof}
	
	The next observation is an immediate corollary of the above theorem.
	
	\begin{cor}
		The group $\gal_L(T)$ is abstractly isomorphic to a quotient of a compact, Hausdorff group. More precisely, it is (abstractly) isomorphic to the quotient of $u\M /H(u\M )$ by $\ker (\bar f)$.
	\end{cor}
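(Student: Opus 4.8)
The plan is to obtain the corollary as a purely formal consequence of Theorem \ref{thm:main theorem 1}, with no additional work. First I would recall that, by the Fact stated in Subsection \ref{subsection: topological dynamics}, $H(u\M)$ is a ($\tau$-closed) normal subgroup of $u\M$, so that $u\M/H(u\M)$ is a genuine group, and moreover, equipped with the quotient $\tau$-topology, it is a compact, Hausdorff group. This supplies the compact, Hausdorff group named in the statement.

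Next I would invoke Theorem \ref{thm:main theorem 1}(3), which provides a well-defined group epimorphism $\bar f \fcolon u\M/H(u\M) \to \gal_L(T)$ (its continuity, though part of that theorem, plays no role here). Since $\bar f$ is a surjective homomorphism of abstract groups, $\ker(\bar f)$ is a normal subgroup of $u\M/H(u\M)$, and the first isomorphism theorem yields an abstract isomorphism
\[
\bigl(u\M/H(u\M)\bigr)/\ker(\bar f) \;\cong\; \gal_L(T),
\]
which is exactly the claimed presentation; in particular $\gal_L(T)$ is abstractly a quotient of the compact, Hausdorff group $u\M/H(u\M)$.

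I do not expect any obstacle, since everything substantive has already been established in Theorem \ref{thm:main theorem 1}; the corollary merely repackages part (3) of that theorem via the first isomorphism theorem. (Alternatively, one could compose the group epimorphism $f \fcolon u\M \to \gal_L(T)$ of Corollary \ref{cor: group epi} with the quotient map by $H(u\M)$, using part (2) to see that this descends; but routing through $\bar f$ is the cleanest formulation.)
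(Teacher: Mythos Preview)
Your proposal is correct and matches the paper's own treatment: the paper gives no explicit proof, simply introducing this as an immediate corollary of Theorem~\ref{thm:main theorem 1}. Your unpacking via the first isomorphism theorem applied to the epimorphism $\bar f$ of part~(3), together with the fact that $u\M/H(u\M)$ is compact Hausdorff, is exactly the intended argument.
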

	
	In Corollary~\ref{cor:galquot}, we will see that the above isomorphism is actually topological (i.e.\ it is a homeomorphism).
	
	The next theorem is interesting in its own right, but it is also essential for applications in further sections.
	It is a counterpart of \cite[Theorem 0.2]{KrPi}, and the proof from \cite{KrPi} goes through except that one of the lemmas there and one of the remarks requires a new proof which is done below.
	
	\begin{thm}\label{thm:main theorem 2}
		The group $\gal_0(T)$ is the quotient of a compact, Hausdorff group by a dense subgroup.
		More precisely, for $Y:=\ker(\bar f)$ let $\cl_\tau(Y)$ be its closure inside $u\M/H(u\M)$. Then $\bar f[\cl_\tau(Y)]=\gal_0(T)$, so $\bar f$ restricted to $\cl_\tau(Y)$ induces an isomorphism between $\cl_\tau(Y)/Y$ and $\gal_0(T)$.
	\end{thm}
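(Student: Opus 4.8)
The plan is to prove the equality $\bar f[\cl_\tau(Y)]=\gal_0(T)$ by two inclusions, where $Y=\ker(\bar f)$ and $\bar f\colon u\M/H(u\M)\to\gal_L(T)$ is the continuous epimorphism of Theorem \ref{thm:main theorem 1}, and then to read off the isomorphism from the first isomorphism theorem. The inclusion $\bar f[\cl_\tau(Y)]\subseteq\gal_0(T)$ is immediate: $\bar f$ is continuous and $\bar f[Y]=\{e\}$ (the identity of $\gal_L(T)$), so $\bar f[\cl_\tau(Y)]$ is contained in the closure of $\{e\}$, which is $\gal_0(T)$; equivalently $\cl_\tau(Y)\subseteq\bar f^{-1}[\gal_0(T)]$, and the latter set is $\tau$-closed since $\gal_0(T)$ is closed and $\bar f$ continuous. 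For the reverse inclusion it suffices to prove $\bar f^{-1}[\gal_0(T)]\subseteq\cl_\tau(Y)$: applying $\bar f$ and using its surjectivity gives $\gal_0(T)=\bar f[\bar f^{-1}[\gal_0(T)]]\subseteq\bar f[\cl_\tau(Y)]$ (so in fact $\cl_\tau(Y)=\bar f^{-1}[\gal_0(T)]$). Once $\bar f[\cl_\tau(Y)]=\gal_0(T)$ is known, $\ker(\bar f\restr_{\cl_\tau(Y)})=Y\cap\cl_\tau(Y)=Y$, so $\bar f$ induces an abstract isomorphism $\cl_\tau(Y)/Y\cong\gal_0(T)$; as $\cl_\tau(Y)$ is a $\tau$-closed subgroup of the compact Hausdorff group $u\M/H(u\M)$, hence compact Hausdorff itself, and $Y$ is $\tau$-dense in it by construction, this presents $\gal_0(T)$ as the quotient of a compact Hausdorff group by a dense subgroup, which is the assertion of the theorem.

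So everything rests on the density statement $\bar f^{-1}[\gal_0(T)]\subseteq\cl_\tau(Y)$, and this is the main obstacle — it is exactly where the proof of the corresponding result in \cite{KrPi} has to be redone in the present, less structured setting (the ``new proof of a lemma'' alluded to above). I would first transfer the problem down to $u\M$: since $H(u\M)\subseteq\ker(f)$ by Theorem \ref{thm:main theorem 1}(2), the sets $\ker(f)$ and $f^{-1}[\gal_0(T)]$ are $H(u\M)$-saturated and the relevant $\tau$-closures commute with the quotient map $u\M\to u\M/H(u\M)$, so it is enough to show, in $u\M$, that $f^{-1}[\gal_0(T)]\subseteq\cl_\tau(\ker f)=(u\circ\ker f)\cap u\M$. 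Unwinding the definition of $\hat f$, for $q\in u\M$ one has $q\in\ker f$ iff some (equivalently, every) realization of $\pi_0(q)$ is at finite Lascar distance from $\bar c$, i.e.\ is $E_L$-equivalent to $\bar c$ (computed in $\C'$), whereas $f(q)\in\gal_0(T)$ iff some (equivalently, every) realization of $\pi_0(q)$ is $E_{KP}$-equivalent to $\bar c$, i.e.\ lies in every type-definable, $E_L$-saturated set containing $\bar c$. So, given $p\in u\M$ with $f(p)\in\gal_0(T)$, I must construct nets $(g_i)$ in $\aut(\C)$ and $(p_i)$ in $\ker f$ with $\lim_i g_i=u$ and $\lim_i g_ip_i=p$ in $EL$, witnessing $p\in u\circ\ker f$.

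The construction mirrors the proof of Theorem \ref{thm:main theorem 1}(1)--(2): indexing by finite approximations, one uses the density of $\aut(\C)\cdot\Id$ in $EL$ to choose $g_i\in\aut(\C)$ close to $u$, and then, using that every realization of $\pi_0(p)$ is $E_{KP}$-equivalent to $\bar c$ together with the $\kappa$-saturation of $\C$ and $\kappa'$-saturation of $\C'$, one finds $p_i\in\ker f$ whose first coordinate realizes the required finitely many formulas from $g_i^{-1}(\pi_0(p))$ while keeping some realization of $\pi_0(p_i)$ at finite Lascar distance from $\bar c$; a limit along a suitable net then yields the desired $(g_i),(p_i)$. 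This is the same compactness bookkeeping as in the Claim inside the proof of Theorem \ref{thm:main theorem 1}(2), except that the type-definable ``Lascar distance $\le n$'' sets $F_n$ must now be combined with the type-definable sets coming from arbitrary bounded $\emptyset$-type-definable equivalence relations, since it is $E_{KP}$ (the intersection of all of these), rather than $E_L$, that describes $\bar f^{-1}[\gal_0(T)]$. The genuinely delicate point — and the reason this is harder than in \cite{KrPi}, where an analogous lemma is available in a richer context — is to make the $\tau$-topology, built from the circle operation and nets of genuine automorphisms of $\C$, cooperate through this compactness argument with the logic topology on the space of $E_L$-classes, which is what governs membership in the $E_{KP}$-class of $\bar c$; getting the two to align is the crux of the proof.
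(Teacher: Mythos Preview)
Your overall architecture is sound and matches the paper: one inclusion is free from continuity of $\bar f$, and the other is the whole content. Reducing to $f^{-1}[\gal_0(T)]\subseteq\cl_\tau(\ker f)$ inside $u\M$ is also correct. Where you diverge from the paper is in the mechanism for the hard inclusion, and this is where your sketch has a real gap.

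The paper does not attempt a direct net construction $(g_i,p_i)$ with $p_i\in\ker f$. Instead, following \cite{KrPi}, the key new ingredient is a lemma about the set $J$ of \emph{idempotents} of the minimal ideal $\M$: one shows $\cl(J)\subseteq\ker(\hat f)\cap\M$ (this is the ``counterpart of Lemma 4.7'' whose proof is the one that genuinely has to be redone here). The argument exploits that for an idempotent $v$ one has $v*v=v$, which via Proposition~\ref{prop: very basic} yields realizations $\bar a,\bar b\models\pi_0(v)$ and $\bar d$ with $\bar a\equiv_\C\bar d\equiv_\C\bar b$ and $\bar c\bar d\equiv\bar a\bar b$; from this one extracts a Lascar-strong automorphism witnessing $\hat f(v)=e$, and compactness pushes this to the closure. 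The density of $\ker f$ in $f^{-1}[\gal_0(T)]$ is then obtained (as in \cite{KrPi}) from this supply of elements together with the characterization of closed sets in $\gal_L(T)$ via $\hat f$ (Remark~\ref{rem: new 4.2}), not by building approximants by hand.

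Your proposed construction, by contrast, asks for $p_i\in\ker f$ whose first coordinate approximates $g_i^{-1}(\pi_0(p))$. The difficulty you do not address is that $\ker f\subseteq u\M$, and $u\M$ is a very thin subset of $EL$: unlike $EL$ itself, it is not the closure of $\aut(\C)$, and there is no a priori reason one can realize a prescribed finite fragment of a type as $\pi_0$ of an element of $u\M$, let alone one lying in $\ker f$. The compactness bookkeeping from the proof of Theorem~\ref{thm:main theorem 1} produces elements of $EL$, not of $u\M$. The idempotent lemma is precisely what supplies a large, explicitly controllable stock of elements in $\ker(\hat f)\cap\M$ without having to solve this realization problem directly; that is the idea your proposal is missing.
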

	\begin{proof}
		The proof is almost the same as the proof of Theorem 0.2 in \cite{KrPi}. So the reader is referred to that proof, and here we only give a brief outline and explain
		the non-obvious modifications which are needed. 

		The point is that if one replaces $S_{G,M}(N)$ by $EL$, $G/{G^*}^{000}_A$ by $\gal_L(T)$, ${G^*}^{00}_A/{G^*}^{000}_A$ by $\gal_0(T)$, and $S_{{G^*}^{000}_A,M}(N)$ by $\ker(\hat{f})$, then the proofs of all the lemmas and remarks involved in the proof of \cite[Theorem 0.2]{KrPi} go through automatically, except Remark 4.2 and Lemma 4.7 whose proofs require an adaptation to the present context which is done below. But before that we give a brief outline of the proof.
		
		Since $\bar f$ is continuous and $\gal_0(T)$ is closed and contains the identity, we get $ \bar f [\cl_\tau(Y)]\subseteq \gal_0(T)$. It remains to prove the opposite inclusion.		
		Take the notation from the proof of \cite[Theorem 0.2]{KrPi}. In particular, $J$ denotes the set of all idempotents in $\M$, $P_u:=\ker(f)=\ker(\hat{f}) \cap u\M $ and $P_v$ is defined analogously for any $v \in J$; $S:=\cl_\tau(P_u)$. Since the quotient map $j \fcolon u\M \to u\M/H(u\M)$ is continuous and closed, we see that $j[S]=\cl_{\tau}(Y)$. Hence, $\bar f[\cl_{\tau}(Y)] = f[S]$. Since $\gal_0(T)$ is the closure of the identity in $\gal_L(T)$, the whole proof boils down to showing that $f[S]$ is closed in $\gal_L(T)$. The counterpart of Remark 4.3 from \cite{KrPi} reduces the last thing to showing that $\hat{f}^{-1}[f[S]] \cap \M$ is closed in $\M$. The counterpart of Lemma 4.6 from \cite{KrPi} says that $\hat{f}^{-1}[f[S]] \cap \M= \bigcup_{v \in J} v \circ P_u$ (where $\circ$ is the circle operation defined in Subsection~\ref{subsection: topological dynamics}). Finally, the counterpart of Lemma 4.8  from \cite{KrPi} says that the last set is closed, which completes the proof.

		Now, we only give proofs of the counterparts of two steps from \cite{KrPi} which require an adaptation to our context.
		
		\begin{rem}[The counterpart of Remark 4.2 from \cite{KrPi}]\label{rem: new 4.2}
			$\bar D \subseteq \gal_L(T)$ is closed iff $\hat{f}^{-1}[\bar D]$ is closed.
		\end{rem}
		\begin{proof}
			By Fact~\ref{fct: characterization of topology on Gal_L(T)}, we have: $\bar D$ is closed iff $D:=\mu^{-1}[\bar D]{\bar c}$ is type-definable iff $\widetilde{D}:=\{ \tp(\bar a/\C) \sbmid \bar a \in D\}$ is closed in $S_{\bar c}(\C)$. But $\hat{f}^{-1}[\bar D] = \pi_0^{-1}[\widetilde{D}]$, and $\pi_0$ is onto $S_{\bar c}(\C)$ by Remark~\ref{rem: pi0 is onto}, continuous and closed. Hence, $\bar D$ is closed iff $\widetilde{D}$ is closed iff $\hat{f}^{-1}[\bar D]$ is closed.
		\end{proof}
		
		\begin{lem}[The counterpart of Lemma 4.7 from \cite{KrPi}]\label{lem: new 4.7}
			$\cl (J) \subseteq \ker(\hat{f}) \cap \M$. Equivalently, $\cl (J) \subseteq \bigcup_{v \in J} P_v = \bigcup_{v \in J} vP_u$.
		\end{lem}
		\begin{proof}
			The second part follows as in \cite{KrPi}. We will prove the first part.
			
			Consider for a moment an arbitrary $v \in J$. 
			There is $k$ such that $\pi_0(v)=\pi_k(\Id)$. So $\bar{c}_k \models \pi_0(v)$. Then, by the fact that $v*v=v$,
			we get $\pi_0(v)= \pi_0(v*v)=v(v(\tp(\bar c/\C)))=v(\pi_0(v))= v(\pi_k(\Id))=\pi_k(v)$. By Proposition~\ref{prop: very basic},
			\[
				\exists \bar a\; \exists \bar b \; (\bar a \models \pi_0(v) \;\, \mbox{and}\;\, \bar b \models \pi_k(v)=\pi_0(v)\;\, \mbox{and}\;\, \bar c \bar{c}_k \equiv \bar a \bar b).
			\]
			
			Let $p \in \cl(J)$. Consider any formula $\varphi(\bar x) \in \pi_0(p)$. Then $J \cap \pi_0^{-1}[[\varphi(\bar x)]] \ne \emptyset$. So, by the above paragraph,
			\[
				\exists \bar d \; \exists \bar a \; \exists \bar b\; ( \bar d \models \varphi(\bar x) \;\, \mbox{and}\;\, \bar a \equiv_\C \bar d \equiv_\C \bar b \;\, \mbox{and}\;\, \bar c \bar d \equiv \bar a \bar b).
			\]
			Thus, by compactness, there are $\bar d$, $\bar a$ and $\bar b$ such that
			\[
				\bar d \models \pi_0(p) \;\, \mbox{and}\;\, \bar a \equiv_\C \bar d \equiv_\C \bar b \;\, \mbox{and}\;\, \bar c \bar d \equiv \bar a \bar b.
			\]
			So, we can choose $\sigma' \in \aut(\C')$ such that $\sigma'(\bar c \bar d)= \bar a \bar b$. Since $\sigma'(\bar d)=\bar b \equiv_\C \bar d$, we see that $\sigma' \in \autf_L(\C')$. On the other hand, since $\sigma' (\bar c)=\bar a \equiv_\C \bar d \models \pi_0(p)$, we see that $\hat{f}(p) = \sigma' \autf_L(\C')$. Therefore, $p \in \ker(\hat{f}) \cap \M$.
		\end{proof}
		The proof of Theorem~\ref{thm:main theorem 2} is completed.
	\end{proof}
	
	\begin{cor}
		\label{cor:galquot}
		The mapping $\bar f\fcolon u\M/H(u\M)\to \gal_L(T)$ is a topological group quotient mapping (i.e.\ it is a surjective homomorphism such that any given subset of $\gal_L(T)$ is closed iff its preimage is closed). Thus, the induced group isomorphism from $(u\M/H(u\M))/\ker(\bar f)$ to $\gal_L(T)$ is a homeomorphism.
	\end{cor}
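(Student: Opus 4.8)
The plan is to verify directly that $\bar f$ is a topological quotient map, i.e.\ that a set $\bar D\subseteq\gal_L(T)$ is closed exactly when $\bar f^{-1}[\bar D]$ is closed in $u\M/H(u\M)$; the final assertion about $(u\M/H(u\M))/\ker(\bar f)$ is then the routine fact that a surjective group homomorphism which is a topological quotient map induces a homeomorphism (and group isomorphism) modulo its kernel. The one non-dynamical ingredient I would set up first is a description of the topology of $\gal_L(T)$ relative to $\gal_{KP}(T)$. Write $\pi\fcolon\gal_L(T)\to\gal_{KP}(T)$ for the quotient map. Since $\gal_0(T)=\ker(\pi)$ carries the trivial subspace topology, so does every coset $g\,\gal_0(T)$ (a homeomorphic translate of it), and hence every open subset of $\gal_L(T)$ is $\gal_0(T)$-saturated; as $\pi$ is an open continuous surjection, the open (equivalently, the closed) subsets of $\gal_L(T)$ are then precisely the $\pi$-preimages of the open (resp.\ closed) subsets of the compact Hausdorff group $\gal_{KP}(T)$. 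Equivalently: $\bar D$ is closed in $\gal_L(T)$ iff $\bar D$ is $\gal_0(T)$-saturated and $\pi[\bar D]$ is closed in $\gal_{KP}(T)$.

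One implication is free: $\bar f$ is continuous by Theorem \ref{thm:main theorem 1}(3), so $\bar D$ closed implies $\bar f^{-1}[\bar D]$ closed. For the converse I would first note that $g:=\pi\circ\bar f\fcolon u\M/H(u\M)\to\gal_{KP}(T)$ is a continuous surjection from a compact space onto a compact Hausdorff space, hence a closed map, hence a topological quotient map. Now suppose $\bar f^{-1}[\bar D]$ is closed; the crucial step is to show $\bar D$ is $\gal_0(T)$-saturated, and this is exactly where Theorem \ref{thm:main theorem 2} is used. If $\bar D$ were not $\gal_0(T)$-saturated, pick $d\in\bar D$ and $e\in\gal_0(T)$ with $de\notin\bar D$; choose $p$ with $\bar f(p)=d$, and, using $\bar f[\cl_\tau(\ker\bar f)]=\gal_0(T)$ from Theorem \ref{thm:main theorem 2}, choose $r\in\cl_\tau(\ker\bar f)$ with $\bar f(r)=e$. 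Since left translation by $p$ is a homeomorphism of the topological group $u\M/H(u\M)$,
\[
pr\in p\cdot\cl_\tau(\ker\bar f)=\overline{p\,\ker\bar f}=\overline{\bar f^{-1}(d)}\subseteq\overline{\bar f^{-1}[\bar D]}=\bar f^{-1}[\bar D],
\]
whereas $\bar f(pr)=de\notin\bar D$, a contradiction. Hence $\bar D=\pi^{-1}[\pi[\bar D]]$, so $\bar f^{-1}[\bar D]=g^{-1}[\pi[\bar D]]$ is closed, whence $\pi[\bar D]$ is closed by the quotient property of $g$, and therefore $\bar D$ is closed by the description of the topology of $\gal_L(T)$ above. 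This shows $\bar f$ is a topological quotient map.

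For the last sentence of the corollary I would simply observe that a surjective group homomorphism which is a topological quotient map factors as the canonical projection $u\M/H(u\M)\to(u\M/H(u\M))/\ker\bar f$ followed by a continuous bijective homomorphism that is again a quotient map, and a continuous bijective quotient map is a homeomorphism. The only genuinely non-formal point in the whole argument — and the place where the work of Section \ref{section: top dyn for aut(C)} is really needed — is the $\gal_0(T)$-saturation step: knowing merely that $\ker\bar f$ maps into $\gal_0(T)$ would not be enough, one needs the stronger conclusion of Theorem \ref{thm:main theorem 2} that $\bar f$ carries the closure of $\ker\bar f$ onto all of $\gal_0(T)$. Everything else is general topology of (possibly non-Hausdorff) topological groups plus the already established continuity of $\bar f$.
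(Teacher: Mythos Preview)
Your proof is correct and follows essentially the same route as the paper's. Both arguments hinge on Theorem~\ref{thm:main theorem 2} to show that a set with closed $\bar f$-preimage is $\gal_0(T)$-saturated, and then use that $\pi\circ\bar f$ is a closed map to conclude; the only cosmetic difference is that the paper phrases the saturation step on the $u\M/H(u\M)$ side (observing that the closed preimage $A'$ is automatically $\ker\bar f$-invariant, hence $\cl_\tau(\ker\bar f)$-invariant, hence $\ker(\pi\circ\bar f)$-invariant), whereas you carry out the equivalent element chase on the $\gal_L(T)$ side.
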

	
	\begin{proof}
		
		$\bar f$ is a continuous, surjective homomorphism immediately by Theorem~\ref{thm:main theorem 1}.
		
		Let $q\fcolon \gal_L(T)\to \gal_{KP}(T)$ be the natural quotient map. Then $q \circ \bar f$ is closed, because it is continuous, $\gal_{KP}(T)$ is Hausdorff and $u\M/H(u\M)$ is compact.

		Consider any $A\subseteq \gal_L(T)$ such that $A':=\bar f^{-1}[A]$ is closed. Then $A'$ is $Y:=\ker (\bar f)$-invariant (i.e.\ $A'=A'Y$), and therefore also $\cl_\tau(Y)$-invariant. 
		
		Now, we check that $\cl_\tau(Y)=\ker (q\circ \bar f)$. Since $\ker (q\circ \bar f) = \bar f^{-1}[\gal_0(T)]$, the inclusion $\cl_\tau(Y) \subseteq \ker (q\circ \bar f)$ follows immediately from Theorem~\ref{thm:main theorem 2}. For the opposite inclusion, take any $p \in \bar f^{-1}[\gal_0(T)]$. By Theorem~\ref{thm:main theorem 2}, there is $p' \in \cl_\tau(Y)$ such that $\bar f(p) = \bar f(p')$. So $p \in p'\ker(\bar f)=p'Y\subseteq \cl_\tau(Y)\cl_\tau(Y) = \cl_\tau(Y)$.
		
		By the last two paragraphs, $A=q^{-1}[q\circ \bar f[A']]$, and therefore it is closed (as the preimage by a continuous map of the image by a closed map of a closed set).
	\end{proof}

	Now, we would like to extend the context to the spaces of arbitrary strong types on any tuples (not necessarily enumerating a model). So, let $E$ be any bounded, invariant equivalence relation refining $\equiv$, and let $\bar \alpha \in \dom(E)$. Although all the objects defined below ($g_E$, $h_E$, etc.) depend not only on $E$ but also on $\bar \alpha$, we will skip it in the notation (i.e. we will not write $g_{E,\bar \alpha}, h_{E,\bar \alpha}$, etc.).\\ 
	
	 $\gal_L(T)$ acts transitively on $[\bar \alpha]_{\equiv}/E$ in the obvious way.
	Define 
	\phantomsection
	\label{ind:gE}$g_E \fcolon \gal_L(T) \to [\bar \alpha]_{\equiv}/E$ by taking the value of this action at the element $\bar \alpha/E$. An explicit formula for $g_E$ is
	\[
		g_E(\sigma' \autf_L(\C'))=\sigma'(\bar \alpha)/E.
	\]
	The following remark is a folklore result which follows immediately from Fact~\ref{fct: characterization of topology on Gal_L(T)} and the definition of the logic topology.
	
	\begin{rem}\label{rem: g_E is quotient}
		$g_E$ is a topological quotient mapping (i.e.\ it is a continuous surjection such that any given set in $[\bar \alpha]_\equiv/E$ is closed iff its preimage is closed).
	\end{rem}
	
	Composing the action of $\gal_L(T)$ on $[\bar \alpha]_{\equiv}/E$ with the epimorphism $f\fcolon u\M \to \gal_L(T)$, we get a transitive action of $u\M$ on $[\bar \alpha]_{\equiv}/E$, and similarly, composing it with $\bar f\fcolon u\M/H(u\M) \to \gal_L(T)$, we get a transitive action of $u\M/H(u\M)$ on $[\bar \alpha]_{\equiv}/E$. 
	\phantomsection
	Define \label{ind:hE}$h_E\fcolon u\M \to [\bar\alpha]_\equiv/E$ and \label{ind:barhE}$\bar h_E\fcolon u\M/H(u\M) \to [\bar \alpha]_{\equiv}/E$ by taking the values of these actions at the element $\bar \alpha/E$. Then
	\[
		h_E=g_E \circ f \fcolon u\M \to [\bar\alpha]_\equiv/E\;\; \mbox{and}\;\; \bar h_E=g_E \circ \bar f\fcolon u\M/H(u\M) \to [\bar \alpha]_{\equiv}/E
	\]
are continuous surjections (where $\circ$ stands for the composition of functions).
	Note that $\bar h_E$ is the factorization of $h_E$ through $H(u\M)$. An explicit formula for $\bar h_E$ is 
	\[
		\bar h_E (pH(u\M))= \sigma'(\bar \alpha)/E,
	\]
	where $\sigma' \in \aut(\C')$ is such that $\sigma'(\bar c) \models \pi_0(p)$.
	
	Define $\ker(h_E)$ and $\ker(\bar h_E)$ to be the stabilizers of $\bar \alpha/ E$ with respect to the actions of $u\M$ on $[\bar \alpha]_{\equiv}/E$ and $u\M/H(u\M)$ on $[\bar \alpha]_{\equiv}/E$, respectively. So $\ker(h_E)$ and $\ker(\bar h_E)$ are (not necessarily normal) subgroups of $u\M$ and $u\M/H(u\M)$, respectively. More explicitly, these kernels are given by:
	\[
		\ker(h_E)=\{ p \in u\M\sbmid h_E(p)=\bar \alpha/ E\}
	\]
	and
	\[
		\ker(\bar h_E) = \{ pH(u\M)\sbmid \bar h_E (pH(u\M)) = \bar \alpha/ E\}.
	\]
	
	
	As an immediate corollary of Corollary~\ref{cor:galquot} and Remark~\ref{rem: g_E is quotient}, we get the following conclusion.
	
	\begin{cor}\label{cor: h_E is quotient}
		$\bar{h}_E$ is a topological quotient mapping.
	\end{cor}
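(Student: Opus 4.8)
The plan is to recognize that $\bar h_E = g_E \circ \bar f$ is a composition of two maps each of which is already known to be a topological quotient mapping, and to invoke the elementary fact that a composition of topological quotient mappings is again one. First I would record the two ingredients: $\bar f \fcolon u\M/H(u\M) \to \gal_L(T)$ is a topological quotient mapping by Corollary \ref{cor:galquot}, and $g_E \fcolon \gal_L(T) \to [\bar\alpha]_\equiv/E$ is a topological quotient mapping by Remark \ref{rem: g_E is quotient}. Continuity and surjectivity of $\bar h_E$ are then immediate, being inherited from $g_E$ and $\bar f$.

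It remains to verify the defining property of a quotient mapping. Given any $A \subseteq [\bar\alpha]_\equiv/E$, I would chase through the two descriptions in order: $A$ is closed if and only if $g_E^{-1}[A]$ is closed in $\gal_L(T)$ (because $g_E$ is a topological quotient mapping), if and only if $\bar f^{-1}[g_E^{-1}[A]]$ is closed in $u\M/H(u\M)$ (because $\bar f$ is a topological quotient mapping); and since $\bar f^{-1}[g_E^{-1}[A]] = (g_E \circ \bar f)^{-1}[A] = \bar h_E^{-1}[A]$, this says exactly that $A$ is closed iff $\bar h_E^{-1}[A]$ is closed. Hence $\bar h_E$ is a topological quotient mapping.

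There is essentially no obstacle here; this is why the authors call it an immediate corollary. The only minor point worth keeping in mind is that a ``topological quotient mapping'' is characterized here in terms of closed sets rather than open sets, but the set-chase above is purely formal and goes through verbatim in that formulation, so nothing extra is required.
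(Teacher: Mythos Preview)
Your proposal is correct and follows exactly the approach the paper intends: the authors state this corollary as ``an immediate corollary of Corollary \ref{cor:galquot} and Remark \ref{rem: g_E is quotient}'', i.e.\ precisely the observation that $\bar h_E = g_E \circ \bar f$ is a composition of two topological quotient mappings. Your explicit verification that such compositions are again quotient mappings is the routine step they leave implicit.
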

	
	As a conclusion, we obtain new topological information 
	about $\cl(\bar \alpha/E)$, which will be directly used in the proofs of Theorems~\ref{thm:main_Borel} and~\ref{thm:nwg}. Recall that Proposition~\ref{prop: closure of alpha/E} tells us that the logic topology on $\cl(\bar \alpha/E)$ is trivial, so it is useless. 
	
	Before the formulation of the theorem, note that by the definitions of $\bar h_E$ and $\ker(\bar h_E)$ in terms of the action of $u\M/H(u\M)$ on $[\bar \alpha]_\equiv/E$, it follows that for any $Z \subseteq u\M/H(u\M)$ one has $\bar h_E^{-1}[\bar h_E[Z]] = Z \ker(\bar h_E)$. By  $\cl_\tau(\ker(\bar h_E)){/}\ker(\bar h_E)$ we will denote the set of left cosets of $\ker(\bar h_E)$ in $\cl_\tau(\ker(\bar h_E))$.
	
	\begin{thm}\label{thm:main theorem 3}
		Let $E$ be a bounded, invariant equivalence relation refining $\equiv$, and $\bar \alpha \in \dom (E)$. Then $\bar h_E[\cl_\tau(\ker(\bar h_E))]= \cl(\bar \alpha/E)$ (where the closure $\cl(\bar \alpha/E)$ is computed in $[\bar \alpha]_\equiv/E$).
		This means that the map $\bar h_E$ restricted to $\cl_\tau(\ker(\bar h_E))$ induces a bijection between $\cl_\tau(\ker(\bar h_E)){/}\ker(\bar h_E)$ and $\cl(\bar \alpha/E)$. Thus, $\cl(\bar \alpha/E)$ is naturally the quotient of a compact, Hausdorff group by a dense (not necessarily normal) subgroup.
	\end{thm}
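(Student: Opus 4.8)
The plan is to exploit that $\bar h_E$ is, by Corollary \ref{cor: h_E is quotient}, a topological quotient mapping from the compact Hausdorff group $G:=u\M/H(u\M)$ onto the (compact, but in general non-Hausdorff) space $[\bar\alpha]_\equiv/E$, and that its fibers are exactly the left cosets of the subgroup $K:=\ker(\bar h_E)$; indeed $\bar h_E$ is obtained by evaluating the action of $G$ on $[\bar\alpha]_\equiv/E$ at the point $\bar\alpha/E$, and $K$ is precisely the stabilizer of that point, so $\bar h_E(g)=\bar h_E(g')$ iff $g^{-1}g'\in K$. Since $G$ carries a genuine compact Hausdorff topology (the quotient $\tau$-topology), $\cl_\tau(K)$ is just the ordinary closure $\overline K$ of the subgroup $K$ in $G$, hence a closed — and therefore compact — subgroup. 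The whole statement thus reduces to the set equality $\bar h_E[\overline K]=\cl(\bar\alpha/E)$ (and then a purely formal coset bookkeeping); this is a variant of Theorem \ref{thm:main theorem 2} and could alternatively be pieced together from that theorem (applied to $\bar f$) and the analogous fact for the quotient map $g_E$ of Remark \ref{rem: g_E is quotient}, using Corollary \ref{cor:galquot}, but I find it cleanest to argue directly.

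For the inclusion $\bar h_E[\overline K]\subseteq \cl(\bar\alpha/E)$ I would just use continuity of $\bar h_E$: $\bar h_E[\overline K]\subseteq \overline{\bar h_E[K]}=\overline{\{\bar\alpha/E\}}=\cl(\bar\alpha/E)$, since $\bar h_E$ sends the identity coset $H(u\M)$ (which lies in $K$) to $\bar\alpha/E$ and $\bar h_E[K]=\{\bar\alpha/E\}$.

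The opposite inclusion is the only delicate step, and it is exactly where one needs that $\bar h_E$ is a \emph{quotient} map rather than merely continuous — one cannot invoke ``the continuous image of a compact set is closed'', because $[\bar\alpha]_\equiv/E$ need not be Hausdorff. Instead, since $\overline K$ is a subgroup containing $K$ and the fibers of $\bar h_E$ are left cosets of $K$, its full preimage is $\bar h_E^{-1}[\bar h_E[\overline K]]=\overline K\cdot K=\overline K$, which is closed in $G$; as $\bar h_E$ is a topological quotient map, this forces $\bar h_E[\overline K]$ to be closed in $[\bar\alpha]_\equiv/E$. Being a closed set containing $\bar h_E(H(u\M))=\bar\alpha/E$, it must contain $\cl(\bar\alpha/E)$, which gives the missing inclusion.

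Finally, from $\bar h_E[\overline K]=\cl(\bar\alpha/E)$ the remaining assertions follow formally: for $g,g'\in\overline K$ we have $\bar h_E(g)=\bar h_E(g')$ iff $gK=g'K$, so $\bar h_E\restr_{\overline K}$ factors through an injection $\overline K/K\to\cl(\bar\alpha/E)$, which by the above is onto, hence a bijection. Since $\overline K$ is a closed subgroup of the compact Hausdorff group $G$ it is itself compact Hausdorff, and $K$ is dense in it by construction; this is exactly the statement that $\cl(\bar\alpha/E)$ is naturally the quotient of a compact, Hausdorff group by a dense (not necessarily normal) subgroup.
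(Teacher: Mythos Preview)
Your proof is correct and follows essentially the same route as the paper: both use continuity of $\bar h_E$ for the inclusion $\bar h_E[\cl_\tau(\ker(\bar h_E))]\subseteq\cl(\bar\alpha/E)$, and for the reverse inclusion both observe that $\bar h_E^{-1}[\bar h_E[\cl_\tau(\ker(\bar h_E))]]=\cl_\tau(\ker(\bar h_E))$ and invoke Corollary~\ref{cor: h_E is quotient} to conclude the image is closed. Your write-up is more detailed (in particular you spell out why the closed-image-of-compact argument is unavailable and you verify the final bijection and group-theoretic statements explicitly), but the underlying argument is the same.
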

	
	\begin{proof}
		The inclusion $(\subseteq)$ follows from the continuity of $\bar h_E$. For the other inclusion we need to show that $\bar h_E[\cl_\tau(\ker (\bar h_E))]$ is closed, but this follows from Corollary~\ref{cor: h_E is quotient} and an easy observation that $\bar h_E^{-1}[\bar h_E[\cl_\tau(\ker (\bar h_E))]]=\cl_\tau(\ker (\bar h_E))$ (see the paragraph preceding Theorem~\ref{thm:main theorem 3}).
		
		The second part follows from the first one together with the definitions of $\bar h_E$ and $\ker(\bar h_E)$ in terms of the action of $u\M/H(u\M)$ on $[\bar \alpha]_\equiv/E$ and the following (general) remark concerning actions: there is a natural bijection between the set of left cosets of the stabilizer of $\bar \alpha/E$ and the orbit of $\bar \alpha/E$ under the action of $\cl_\tau(\ker (\bar h_E))$. \qedhere
	\end{proof}
	
	Our considerations in this section lead to various questions which we leave for the future. For example, one can ask for which theories the objects $\M$, $u \M$ or $u\M/H(u\M)$ do not depend (up to isomorphism) on the choice of the monster model $\C$ for which they are computed, or at least when they are of bounded size. One can also try to find some classes of theories for which the natural epimorphism from $u\M/H(u\M)$ to $\gal_{KP}(T)$ (i.e.\ the composition of $\bar f$ with the quotient map from $\gal_L(T)$ to $\gal_{KP}(T)$) is an isomorphism, which could possibly lead to new examples of non G-compact theories.
	
	\section{Topological lemmas}\label{section: topological lemmas}
	
	\subsection{Technical lemma}
	
	In this subsection, we will prove a certain technical lemma (Lemma~\ref{lem: main technical lemma} below), concerning the situation from Section~\ref{section: top dyn for aut(C)}, which will be used in the proofs of the main theorems in Sections~\ref{section: smoothness} and~\ref{section: arbitrary language}.
	We take the notation from Section~\ref{section: top dyn for aut(C)}.
	
	We start from a general lemma concerning the $\tau$-topologies which will be used in the proof of Lemma~\ref{lem: main technical lemma}, but which is also interesting in its own right and may have further applications. As we said, the notation is taken from Section~\ref{section: top dyn for aut(C)}, but this particular lemma is a general observation concerning topological dynamics and it works for the Ellis semigroup of any flow.
	
	\begin{lem}\label{lem: variant}
		\phantomsection
		\label{ind:zeta}
		Let $\zeta\fcolon \cl (u\M) \to u\M$ be the function defined by $\zeta(x)=ux$ and let $\xi\fcolon u\M \to Z$ be a continuous function, where $Z$ is a regular (e.g.\ compact, Hausdorff) space and $u\M$ is equipped with the $\tau$-topology. Then $\xi \circ \zeta:\cl (u\M) \to Z$ is continuous, where $\cl (u\M)$ is equipped with the topology induced from the Ellis semigroup $EL$.
	\end{lem}
	
	\begin{figure}[H]
		\centering
		\includegraphics[width={\textwidth}]{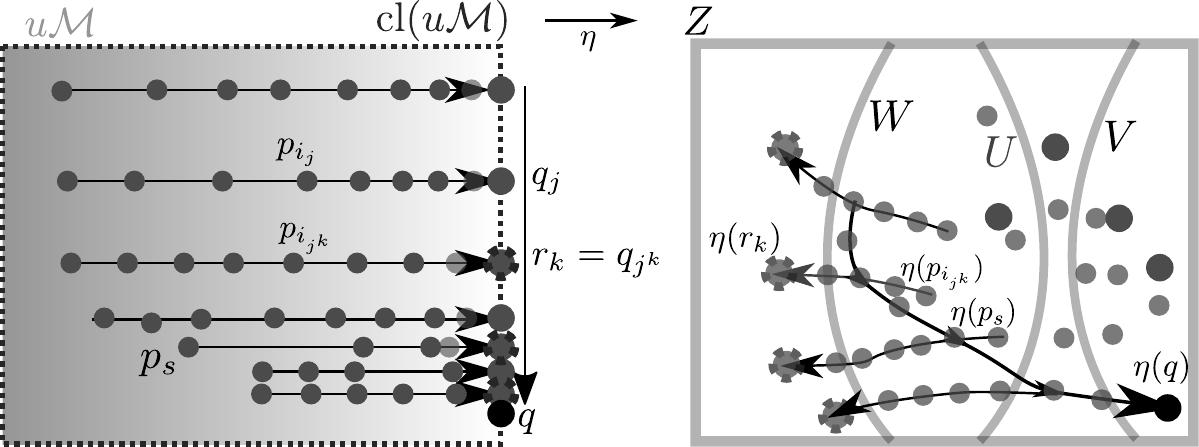}
		{Illustration of the nets described in the following proof.}
	\end{figure}
	
	\begin{proof}
		Denote $\xi \circ \zeta$ by $\eta$.
		By Lemma 1.5 from \cite[Chapter IX]{Gl}, we know that for any net $(p_i)_i$ in $u\M$ and $ p \in \cl(u\M)$ such that $\lim p_i = p$ one has $\tau\mbox{-}\!\lim p_i=up$. So, in such a situation, $\eta(p)=\xi(up)=\lim_i \xi(p_i)=\lim_i \xi(up_i)=\lim_i \eta(p_i)$.
		
		Consider any net $(q_j)_{j\in J}$ in $\cl(u\M)$ converging to $q$ in $\cl(u\M)$. The goal is to show that $\lim_j \eta(q_j)=\eta(q)$. Suppose for a contradiction that there is an open neighborhood $W$ of $\eta(q)$ and a subnet $(r_k)$ of $(q_j)$ such that all points $\eta(r_k)$ belong to $W^c$. Since $Z$ is regular, we can find open subsets $U$ and $V$ such that $W^c \subseteq U$, $\eta(q) \in V$ and $U \cap V=\emptyset$.
		
		For each $j$ we can choose a net $(p_{i^j})_{i^j \in I^j}$ in $u\M$ such that $\lim_{i^j} p_{i^j}=q_j$.
		
		For each $k$, $r_k = q_{j_{k}}$ for some $j_k \in J$, and $\eta(r_k) \in U$. Hence, since by the first paragraph of the proof $\eta(r_k)=\eta(q_{j_k})= \lim_{i^{j_k}} \eta(p_{i^{j_k}})$, we see that for big enough $i^{j_k} \in I^{j_k}$ one has $\eta(p_{i^{j_k}}) \in U$.
		
		On the other hand, let $S:=J \times \prod_{j \in J} I^j$ be equipped with the product order. For $s \in S$, put $p_s:= p_{i^{j_s}}$, where $j_s$ is the first coordinate of $s$ and $i^{j_s}$ is the $j_s$-coordinate of $s$. Since $\lim_{j \in J} q_j=q$ and $\lim_{i^j \in I^j} p_{i^j}=q_j$, we get $\lim_s p_s=q$. So, by the first paragraph of the proof, $\lim_s \eta(p_s)=\eta(q) $, and hence, for $s\in S$ big enough, $\eta(p_s) \in V$.
		
		By the last two paragraphs, we can find $j \in J$ and $i^j \in I^j$ (big enough) so that $\eta(p_{i^j}) \in U \cap V$, a contradiction as the last set is empty.
	\end{proof}
	
	In the proof of Lemma~\ref{lem: variant}, the assumption that $Z$ is regular was essentially used. However, $u\M$ is only $T_1$ with the $\tau$-topology, so we have the following question.
	
	\begin{ques}\label{question: Is zeta continuous}
		Is the function $\zeta\fcolon \cl (u\M) \to u\M$ defined by $\zeta(x)=ux$ continuous (where $\cl (u\M)$ is equipped with the topology induced from the Ellis semigroup $EL$ and $u\M$ is equipped with the $\tau$-topology)?
	\end{ques}
	
	Recall that $E$ is any bounded, invariant equivalence relation refining $\equiv$, and $\bar \alpha \in \dom(E)$. Denote by $P$ the domain of $E$. Then $[\bar \alpha]_\equiv \subseteq P$.
	
	Choose any $M\prec \C$, and put $P_M:=\{ \tp(a/M)\sbmid a\in P\}$. Let
	\[
		\hat{r}\fcolon EL \to P_M
	\]
	be defined by saying that $\hat{r}(x)$ is the restriction of the type $\pi_0(x)$ to $M$ and to the coordinates corresponding to $\bar \alpha$.
	In other words, if $x \in EL$, we take $\sigma' \in \aut(\C')$ such that $\sigma'(\bar c) \models \pi_0(x)$, and then $\hat{r}(x)=\tp(\sigma'(\bar \alpha)/M)$. Note that $\hat{r}$ is continuous.
	
	Let $r \fcolon u\M \to P_M$ and $r_{\cl}\fcolon \cl(u\M) \to P_M$ be the restrictions of $\hat{r}$ to $u\M$ and to $\cl(u\M)$, respectively (where the closure is computed in the topology on $EL$). Note that $r_{\cl}$ is continuous (with the topology on $\cl(u\M)$ inherited from $EL$). Next, $j\fcolon u\M \to u\M/H(u\M)$ denotes the quotient map. Following the notation from Lemma~\ref{lem: variant}, $\zeta\fcolon \cl (u\M) \to u\M$ is the function defined by $\zeta(x)=ux$. We also have the natural function from $P_M$ to $P/E$, mapping a type in $P_M$ to the $E$-class of its realization.

	The commutative diagrams below contain the relevant maps (commutativity follows easily from the definitions of all these maps) and will play an important role also in further sections.
	
	\begin{figure}[H]
		\centering
		\begin{tikzcd}
			EL\arrow[r,"\hat{f}"]\arrow[d,"\hat{r}"]&\gal_L(T)\arrow[d,"g_E"] \\
			P_M\arrow[r] & P/E
		\end{tikzcd}
	\end{figure}	
	
	\begin{figure}[H]
		\centering
		\begin{tikzcd}
			\cl(u\M)\arrow[r,"\zeta"]&u\M\arrow[r,"j"]\arrow[d,"r"]\arrow[rrd,"h_E"] & (u\M)/H(u\M) \arrow[r,"\bar f"]\arrow[dr,"\bar h_E"]& \gal_L(T) \arrow[d,"g_E"] \\
			\cl(u\M)\arrow[r,"r_{\cl}"]&P_M\arrow[rr] & & P/E
		\end{tikzcd}
	\end{figure}
	
	\begin{center}
		\renewcommand{\arraystretch}{1.3}
		\begin{tabular}{r| l || r|l }
			$j$ & the quotient map 
			&
			$\hat f$& the semigroup homomorphism (page \pageref{ind:fhat})
			\\ \hline
			$g_E$& the orbit map of $[\bar\alpha]_E$ (page \pageref{ind:gE})
			&
			$f$ & restriction of $\hat f$ to $u\M$ (page \pageref{ind:f})
			\\ \hline
			$h_E$ & $g_E\circ f$ (page \pageref{ind:hE}) 
			&
			$\bar f$& factor of $f$ (page \pageref{ind:fbar}) 
			\\ \hline 
			$\bar h_E$& $g_E\circ \bar f$ (page \pageref{ind:barhE})
			&
			$\hat r$ & the ``quotient map'' described above
			\\ \hline 
			$\zeta$ & multiplication by $u$ (just defined)
			&
			$r_\textrm{cl}$ & restriction of $\hat r$
			\\ \hline
			& 
			&
			$r$ & restriction of $\hat r$
		\end{tabular}
	\end{center}
	
	Recall that by $\CLO(X)$ we mean the family of closed subsets of a given topological space $X$, and the members of $\Souslin (\CLO(X))$ are called analytic subsets of $X$ (see Definition~\ref{definition: analytic sets}).
		
	\begin{lem}\label{lem: main technical lemma}
		Assume that $B$ is an $E^M$-saturated, analytic subset of $P_M$. 
		Then $j[r^{-1}[B]]$ is an analytic subset of $u\M/H(u\M)$.
	\end{lem}
	
	\begin{proof}
		First, we give a short proof using Lemma~\ref{lem: variant}. Then we will give an alternative proof, using a less general (and more model-theoretic) argument than Lemma~\ref{lem: variant}, but giving more detailed information about $r^{-1}[B]$ (in particular, that  $r^{-1}[B]$ is also an analytic subset of $u\M$).
		
		\begin{clm*}
			$r^{-1}[B]=\zeta[{r_{\cl}}^{-1}[B]]$.
		\end{clm*}
		
		\begin{clmproof}[Proof of Claim]
			The inclusion $(\subseteq)$ is clear, as $r$ is the restriction of $r_{\cl}$ to $u\M$ and $\zeta$ restricted to $u\M$ is the identity function.
			
			To show $(\supseteq)$, consider any $x \in \zeta[{r_{\cl}}^{-1}[B]]$. Take $y \in {r_{\cl}}^{-1}[B]$ such that $\zeta(y)=x$.
			
			Since by Proposition~\ref{prop: semigroup epi}, $\hat{f}\fcolon EL \to \gal_L(T)$ is a semigroup homomorphism, we see that $\hat{f}(y)=\hat{f}(u)\hat{f}(y)=\hat{f}(uy)=\hat{f}(x)$. This implies that $(g_E \circ \hat{f})(y) = (g_E \circ \hat{f})(x)$. Therefore, $r_{\cl}(y) \Er ^M r_{\cl}(x)$.
			
			Since $r_{\cl}(y) \in B$ and $B$ is $E^M$-saturated, we conclude that $r_{\cl}(x) \in B$. As $x \in u\M$ and $r_{\cl}\restr_{u\M}=r$, we get that $x\in r^{-1}[B]$.
		\end{clmproof}
		
		Since $r_{\cl}$ is continuous and $B$ is analytic, Remark~\ref{remark: preservation of analyticity by images and preimages}(1) implies that $r_{\cl}^{-1}[B]$ is an analytic subset of $\cl(u\M)$. Since $\cl(u\M)$ is compact and $u\M/H(u\M)$ is Hausdorff, and, by Lemma~\ref{lem: variant}, $j \circ \zeta \fcolon \cl(u\M) \to u\M/H(u\M)$ is continuous, using Remark~\ref{remark: preservation of analyticity by images and preimages}(2), we conclude that $(j\circ \zeta)[{r_{\cl}}^{-1}[B]]$ is an analytic subset of $u\M/H(u\M)$. On the other hand, by the claim, $j[r^{-1}[B]]=(j \circ \zeta)[{r_{\cl}}^{-1}[B]]$. So the proof is 
		complete.	
	\end{proof}

	\begin{proof}[Alternative proof of Lemma~\ref{lem: main technical lemma}]
		Since $B \in \Souslin(\CLO(P_M))$,
		\[
			B=\Souslin_s F_s=\bigcup_{s \in \omega^\omega} \bigcap_n F_{s\restr_n}
		\]
		for a regular Souslin scheme $(F_s)_{s \in \omega^{<\omega}}$ consisting of closed subsets of $P_M$. Therefore,
		\[
			r^{-1}[B]= \Souslin_s r^{-1}[F_s]= \bigcup_{s \in \omega^\omega} \bigcap_n r^{-1}[F_{s\restr_n}].
		\]
		
		It is clear that $r$ is continuous, but with $u\M$ equipped with the topology induced from the topology on the Ellis semigroup $EL$ (which is stronger than the $\tau$-topology). So each $r^{-1}[F_{s\restr_n}]$ is closed in this topology, but not necessarily in the $\tau$-topology, which is a problem. We resolve it by proving the following
		
		\begin{clm*}
			$r^{-1}[B]= \Souslin_s \cl_\tau(r^{-1}[F_s])= \bigcup_{s \in \omega^\omega} \bigcap_n \cl_\tau(r^{-1}[F_{s\restr_n}])$.
		\end{clm*}
		
		\begin{clmproof}
			Only the inclusion $(\supseteq)$ requires a proof.
			Fix any $s\in \omega^\omega$.
			Pick any element $x \in \bigcap_{n}\cl_{\tau}(r^{-1}[F_{s\restr_n}])\subseteq u\M$. We need to show that $x\in r^{-1}[B]$.
			
			By the choice of $x$, one can find nets $(\sigma_j)$ in $\aut(\C)$ and $(x_j)$ in $u\M$ with the properties:
			
			\begin{enumerate}[label=(\roman{*})]
				\item for every $n$ there is $j_n$ such that for all $j>j_n$, $x_j \in r^{-1}[F_{s\restr_n}]$,
				\item $\lim_j \sigma_j=u$,
				\item $\lim_j \sigma_j x_j=x $.
			\end{enumerate}
			
			By compactness of $\cl (u\M)$ (where the closure is computed in the topology on $EL$), there is a subnet $(y_k)$ of $(x_j)$ such that $\lim_k y_k$ exists in $\cl (u\M)$; denote this limit by $y$. By (i), we get
			
			\begin{equation}\label{eq:3}
				y \in \bigcap_n \cl (r^{-1}[F_{s\restr_n}]).
			\end{equation}
			
			Using (ii), (iii), the fact that $\lim_k y_k=y$ and 
			an argument as in the proof of Theorem~\ref{thm:main theorem 1}, we get that there are $\bar a \models \pi_0(x)$, $\bar d \models \pi_0(y)$, and $\bar b$ such that
			$\bar b \Er _L \bar c\;\; \mbox{and}\;\; \bar c \bar d \equiv \bar b \bar a$. Take $\sigma' \in \aut(\C')$ such that $\sigma'(\bar c \bar d)=\bar b \bar a$. Since $\bar b \Er _L \bar c$ and $\bar c$ is a model, we get that $\sigma' \in \autf_L(\C')$, so $\bar d \Er _L \bar a$. Hence,
			\begin{equation}\label{eq:4}
				\pi_0(x) \Er ^\C_L \pi_0(y).
			\end{equation}
			
			Define the relation $\hat{E}'$ on $\cl(u\M)$ by
			\[
				p\mathrel{\hat{E}'}q \iff r_{\cl}(p)\Er ^M r_{\cl}(q).
			\]
			Since $r_{\cl}$ is continuous and coincides with $r$ on $u\M$,
			\[
				{r_{\cl}}^{-1}[B]= \bigcup_{\eta \in \omega^\omega} \bigcap_n {r_{\cl}}^{-1}[F_{\eta\restr_n}] \supseteq \bigcup_{\eta \in \omega^\omega} \bigcap_n \cl(r^{-1}[F_{\eta\restr_n}]).
			\]
			Hence, by (\ref{eq:3}), we get that $y \in {r_{\cl}}^{-1}[B]$.
			On the other hand, by (\ref{eq:4}), we have that $x \mathrel{\hat{E}'} y$. But, since $B$ is $E^M$-saturated, we also see that ${r_{\cl}}^{-1}[B]$ is $\hat{E}'$-saturated. Therefore, $x \in {r_{\cl}}^{-1}[B]$. Since $x \in u\M$, we conclude that $x \in r^{-1}[B]$,	which completes the proof.
		\end{clmproof}
		
		By the claim, $r^{-1}[B]$ is an analytic subset of $u\M$ (where $u\M$ is equipped with the $\tau$-topology). As $u\M$ is compact, $u\M/H(u\M)$ is Hausdorff and $j$ is continuous, using Remark~\ref{remark: preservation of analyticity by images and preimages}(2), we conclude that $j[r^{-1}[B]]$ is an analytic subset of $u\M/H(u\M)$.
	\end{proof}
	
	We have the following corollary of the claim in the first proof of Lemma~\ref{lem: main technical lemma}.
	
	\begin{rem}\label{remark: to use in 5.12}
		Assume that $B$ is an $E^M$-saturated, $F_\sigma$ subset of $P_M$. 
		Then $j[r^{-1}[B]]$ is an $F_\sigma$ subset of $u\M/H(u\M)$.
	\end{rem}
	\begin{proof}
		By the claim in the first method of the proof of Lemma~\ref{lem: main technical lemma}, $j[r^{-1}[B]]=(j \circ \zeta)[{r_{\cl}}^{-1}[B]]$. On the other hand, since $\cl(u\M)$ is compact and $u\M/H(u\M)$ is Hausdorff, using continuity of the functions $r_{\cl}$ and $j \circ \zeta$, we get that $(j \circ \zeta)[{r_{\cl}}^{-1}[B]]$ is $F_\sigma$.
	\end{proof}

	\subsection{Analytic sets and type spaces}
	The proposition below shows that the Souslin operation $\Souslin$ is quite well-behaved in
	the model-theoretic context. 
	We will use it later in the proof of Theorem~\ref{thm:nwg}.
	
	\begin{prop}\label{prop: Souslin and parameters}$\,$
		\begin{enumerate}[label=\roman{*}),nosep]
			\item
			An $A$-invariant set $X$ is in $\Souslin(A\textrm{-type-definable})$ iff $X_A$ is in the class $\Souslin(\CLO(S(A)))$ (in other words, $X_A$ is analytic).
			\item
			For any $A$-invariant and $B$-invariant set $X$, if $X_B$ is in $\Souslin(\CLO(S(B)))$ then the set $X_A$ is in the class $\Souslin(\CLO(S(A)))$.
			\item
			Suppose $X$ is an $A$-invariant set and that $X$ is in $\Souslin(B\textrm{-type-definable})$ (which implies that it is also $B$-invariant). Then $X$ is in the class $\Souslin(A\textrm{-type-definable})$.
		\end{enumerate}
	\end{prop}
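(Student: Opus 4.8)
The plan is to prove (i) first — it is the bridge between the model-theoretic side (type-definable sets in $\C$) and the descriptive-set-theoretic side (closed sets in type spaces) — then (ii), which carries the real content, and finally deduce (iii) formally from (i) and (ii). Throughout, let $P$ be the relevant product of sorts, and for a small set $D$ let $\rho_D\fcolon P\to S(D)$ be the map $a\mapsto \tp(a/D)$; by saturation $\rho_D[P]=S(D)$ and, for $D$-invariant $X$, $X_D=\rho_D[X]$ and $X=\rho_D^{-1}[X_D]$.

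For (i): if $X$ is $A$-invariant and $X_A=\Souslin_s Q_s$ with each $Q_s\in\CLO(S(A))$, then $X=\rho_A^{-1}[X_A]=\Souslin_s\rho_A^{-1}[Q_s]$, because preimages commute with the Souslin operation, and each $\rho_A^{-1}[Q_s]$ is $A$-type-definable; this gives the ``if'' direction. Conversely, write $X=\Souslin_s P_s$ with each $P_s$ an $A$-type-definable set, say $P_s=\{a\in P\fcolon \C\models\pi_s(a)\}$ for a partial type $\pi_s$ over $A$. The key point is that each $P_s$, being $A$-type-definable, is a union of fibres of $\rho_A$, so along any branch $\eta\in\omega^\omega$ one has $\rho_A\bigl[\bigcap_n P_{\eta\restr n}\bigr]=\bigcap_n\rho_A[P_{\eta\restr n}]$: if $p$ lies in every $\rho_A[P_{\eta\restr n}]$, pick $a_0\models p$ with $a_0\in P_{\eta\restr 0}$; for each $n$ choose $a_n\in P_{\eta\restr n}$ with $a_n\models p$, and since $a_0\equiv_A a_n$ and $P_{\eta\restr n}$ is $A$-invariant we get $a_0\in P_{\eta\restr n}$, hence $a_0\in\bigcap_n P_{\eta\restr n}$. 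As images always commute with unions, $X_A=\rho_A[\Souslin_s P_s]=\Souslin_s\rho_A[P_s]$, and each $\rho_A[P_s]$ equals the closed set $\{q\in S(A)\fcolon \pi_s\subseteq q\}$; thus $X_A\in\Souslin(\CLO(S(A)))$.

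For (ii): put $C:=A\cup B$ (a small set) and let $r_A\fcolon S(C)\to S(A)$ and $r_B\fcolon S(C)\to S(B)$ be the restriction maps, which are continuous, surjective and closed (continuous maps between compact Hausdorff spaces). Using $A$-invariance of $X$ one checks directly that $X_C=r_A^{-1}[X_A]$: if $p\restr A\in X_A$ and $a\models p$, then $\tp(a/A)\in X_A$, so $a\in X$, so $p\in X_C$, and the reverse inclusion is immediate; likewise $X_C=r_B^{-1}[X_B]$ by $B$-invariance. From $X_B\in\Souslin(\CLO(S(B)))$ and continuity of $r_B$ we get $X_C=r_B^{-1}[X_B]\in\Souslin(\CLO(S(C)))$, say $X_C=\Souslin_s R_s$ with $R_s\in\CLO(S(C))$, which after regularizing the scheme (as recalled in the Topology subsection) we may take to be regular. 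Since $r_A$ is surjective, $X_A=r_A[X_C]=r_A\bigl[\Souslin_s R_s\bigr]$. Now $r_A$ is continuous, $S(C)$ is compact and $S(A)$ is Hausdorff, and $(R_s)$ is regular, so Remark \ref{rem: image of intersection} applied along each branch gives $r_A\bigl[\bigcap_n R_{\eta\restr n}\bigr]=\bigcap_n r_A[R_{\eta\restr n}]$, whence $X_A=\Souslin_s r_A[R_s]$; as $r_A$ is a closed map, each $r_A[R_s]$ is closed, so $X_A\in\Souslin(\CLO(S(A)))$.

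Finally, (iii) is formal: if $X$ is $A$-invariant and $X\in\Souslin(B\textrm{-type-definable})$, then each set in the defining scheme is $B$-invariant, hence $X$ is $B$-invariant; by (i) over $B$, $X_B\in\Souslin(\CLO(S(B)))$; by (ii), $X_A\in\Souslin(\CLO(S(A)))$; and by (i) over $A$, $X\in\Souslin(A\textrm{-type-definable})$. The only genuinely delicate steps are the two ``image commutes with the Souslin operation'' moves: in (i) this rests on fibre-saturation of the defining sets and needs no topology, while in (ii) it is exactly the content of Remark \ref{rem: image of intersection} together with the closed-map property of restriction maps and the harmless passage to a regular scheme. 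I expect the only real care to be bookkeeping over which parameter set a given set is invariant over, and checking that $\rho_D[P]=S(D)$ and $\rho_D[P_s]$ is the expected closed set under the standing saturation assumptions; no new idea beyond the above is needed.
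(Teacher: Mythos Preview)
Your proof is correct and follows essentially the same route as the paper's. The only cosmetic difference is in (ii): where the paper writes ``we can assume without loss of generality that $A\subseteq B$ or $B\subseteq A$'' and then treats those two cases, you make that reduction explicit by passing through $C=A\cup B$ and composing a pullback along $r_B$ with a pushforward along $r_A$; these amount to the same argument, and both rely on Remark~\ref{rem: image of intersection} and the closedness of restriction maps in exactly the same way. Your treatment of (i) is also a bit more explicit than the paper's about why images of $A$-invariant sets under $\rho_A$ commute with intersections, but the underlying observation (fibre-saturation) is identical.
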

	\begin{proof}
		Recall that for an $A$-invariant set $X$, $X_A:=\{\tp(a/A)\sbmid a\in X\}$.
		
		Point (i) is rather clear. Namely, since $A$-type-definable sets are $A$-invariant, if
		\[
			X=\bigcup_{\eta\in \omega^\omega} \bigcap_{n\in \omega} K_{\eta\restr n},
		\]
		where the sets $K_{\eta\restr n}$, $\eta \in \omega^\omega$, $n \in \omega$, are $A$-type-definable, then
		\[
			X_A=\bigcup_{\eta\in \omega^\omega} \bigcap_{n\in \omega} (K_{\eta\restr n})_A.
		\]
		Conversely, if
		\[
			X_A=\bigcup_{\eta\in \omega^\omega} \bigcap_{n\in \omega} [\pi_{\eta\restr n}]
		\]
		for some partial types $\pi_{\eta\restr n}$ over $A$, where $\eta \in \omega^\omega$, $n \in \omega$, then
		\[
			X=\bigcup_{\eta\in \omega^\omega} \bigcap_{n\in \omega} \pi_{\eta\restr n}(\C).
		\]
		
		(ii) We can assume without loss of generality that $A\subseteq B$ or $B \subseteq A$. In the latter case, the conclusion easily follows by (i). Alternatively, it follows from Remark~\ref{remark: preservation of analyticity by images and preimages}(1) after noticing that the preimage of $X_B$ by the restriction map $S(A) \to S(B)$ is exactly $X_A$. In the former case, the conclusion follows from Remark~\ref{remark: preservation of analyticity by images and preimages}(2) after noticing that the image of $X_B$ by the restriction map $S(B) \to S(A)$ is exactly $X_A$.
		
		Point (iii) readily follows from (i) and (ii).
	\end{proof}

	\subsection{Mycielski-style lemma}
	The next -- purely topological -- proposition (and its corollary for locally compact groups) is a generalization of a classical theorem of Mycielski for Polish spaces \cite[Theorem 5.3.1]{SuG}, and it will be useful for both parts of Theorem~\ref{thm:nwg}.
	
	\begin{prop}[generalization of a theorem of Mycielski]
		Suppose $E$ is a meager equivalence relation on a locally compact, Hausdorff space $X$. Then $\lvert X/E\rvert \geq 2^{\aleph_0}$.
	\end{prop}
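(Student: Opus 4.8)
The plan is to prove this by a Cantor-scheme construction in the spirit of Mycielski's classical theorem, replacing the role of shrinking metric diameters (unavailable here) by that of nested compact closures, which is where local compactness enters. I would first make two reductions. We may assume $X\neq\emptyset$. Since $E$ is reflexive, $\Delta_X\subseteq E$; hence if $p$ were an isolated point of $X$, then $(p,p)$ would be an isolated point of $X\times X$ lying inside $E$, which is impossible for a meager set (a nowhere dense set contains no nonempty open set). So $X$ has no isolated points; in particular every nonempty open subset of $X$ is infinite and, by Hausdorffness, contains two disjoint nonempty open subsets. Next, write $E\subseteq\bigcup_{n\in\omega}F_n$ with each $F_n$ closed nowhere dense in $X\times X$ and $F_0\subseteq F_1\subseteq\cdots$ (replace each nowhere dense set in a given cover by its closure and take finite unions).

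The heart of the argument is to construct, by induction on $n$, nonempty open sets $U_s\subseteq X$ for $s\in 2^{<\omega}$ such that: (a) $\overline{U_s}$ is compact; (b) $\overline{U_{s^\frown i}}\subseteq U_s$ for $i\in\{0,1\}$; (c) $\overline{U_{s^\frown 0}}\cap\overline{U_{s^\frown 1}}=\emptyset$; and (d) $(\overline{U_s}\times\overline{U_t})\cap F_n=\emptyset$ for all distinct $s,t\in 2^n$. For the base case take $U_\emptyset$ to be any nonempty open set with compact closure (available by local compactness). Given the level-$n$ sets, for each $s\in 2^n$ use that $U_s$ is infinite to pick two distinct points and, by local compactness and Hausdorffness, separate them by disjoint open sets with compact closures inside $U_s$; this gives preliminary children $U_{s^\frown 0},U_{s^\frown 1}$. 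Then enforce (d) at level $n+1$ by processing the finitely many pairs $(a,b)$ of distinct elements of $2^{n+1}$ one at a time: for the current sets $W_a,W_b$, since $F_{n+1}$ is closed nowhere dense, the nonempty open set $W_a\times W_b$ is not contained in $F_{n+1}$, so pick a point of $(W_a\times W_b)\setminus F_{n+1}$ and shrink $W_a,W_b$ to open sets with compact closures whose product misses the closed set $F_{n+1}$. All modifications are shrinkings, so earlier instances of (d), as well as (b) and (c), persist; this completes the inductive step.

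Finally, for $\eta\in 2^\omega$ the sets $\overline{U_{\eta\restr n}}$ form a decreasing chain of nonempty compacta, so pick $x_\eta\in\bigcap_n\overline{U_{\eta\restr n}}$. If $\eta\neq\nu$, let $\ell$ be least with $\eta\restr\ell\neq\nu\restr\ell$; then for every $n\geq\ell$ clause (d) gives $(\overline{U_{\eta\restr n}}\times\overline{U_{\nu\restr n}})\cap F_n=\emptyset$, and since the $F_n$ increase, for each $k$ (take $n=\max(\ell,k)$) we get $(x_\eta,x_\nu)\notin F_k$; hence $(x_\eta,x_\nu)\notin\bigcup_k F_k\supseteq E$, so $x_\eta$ and $x_\nu$ lie in distinct $E$-classes. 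Thus $\eta\mapsto[x_\eta]_E$ injects $2^\omega$ into $X/E$, so $\lvert X/E\rvert\geq 2^{\aleph_0}$. I expect the only real difficulty to be the adaptation itself rather than any single step: one must extract perfectness of $X$ from meagerness of $E$ just to be able to branch, and use compact closures together with nestedness (hence the local compactness hypothesis) in place of vanishing diameters to keep $\bigcap_n\overline{U_{\eta\restr n}}$ nonempty, while carrying out the routine bookkeeping that (d) is securable at each level by finitely many shrinkings using nowhere-density of the $F_n$.
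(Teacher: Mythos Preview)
Your proof is correct and follows essentially the same Cantor-scheme strategy as the paper's: cover $E$ by an increasing sequence of closed nowhere dense sets, build a tree of shrinking open sets whose level-$n$ products avoid $F_n$, and use compactness to extract points along each branch. The only difference is cosmetic: the paper first passes to a compact $\overline{U}\subseteq X$ and then works entirely inside it, whereas you carry local compactness through the construction by demanding each $\overline{U_s}$ be compact; also, your explicit perfectness argument and disjoint-sibling condition (c) are not actually needed (the paper omits both, since the condition $(U_s\times U_t)\cap F_n=\emptyset$ already forces $x_\eta\neq x_\nu$ via reflexivity of $E$), but they do no harm.
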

	\begin{proof}
		The proof mimics that of the classical theorem for Polish spaces (for example see \cite[Theorem 5.3.1]{SuG}), only we replace the notion of diameter by compactness.
		
		Firstly, we can assume without loss of generality that $X$ is compact. This is because we can restrict our attention to the closure $\overline U$ of a small open set $U$: $E$ restricted to $\overline U$ is still meager, and if we show that $\overline U/E$ has the cardinality of at least the continuum, clearly the same will hold for $X/E$.

		Suppose $E\subseteq \bigcup_{n\in \omega} F_n$ with $F_n\subseteq X^2$ closed, nowhere dense. We can assume
		that the sets $F_n$ form an increasing sequence. We will define a family of nonempty open sets $U_s$ with $s\in 2^{<\omega}$, recursively with respect to the length of $s$, such that:
		\begin{itemize}
			\item
			$\overline{U_{s0}},\overline{U_{s1}}\subseteq U_s$,
			\item
			if $s\neq t$ and $s,t\in 2^{n+1}$, then $(U_s\times U_t)\cap F_n=\emptyset$.
		\end{itemize}
		
		Then, by compactness, for each $\eta\in 2^\omega$ we will find a point $x_\eta\in \bigcap_n U_{\eta\restr n}$. It is easy to see that this will yield a map from $2^\omega$ into $X$ such that any two distinct points are mapped to $E$-unrelated points.
		
		The construction can be performed as follows:
		\begin{enumerate}
			\item
			For $s=\emptyset$, we put $U_\emptyset=X$.
			\item
			Suppose we already have $U_s$ for all $\lvert s\rvert \leq n$, satisfying the assumptions.
			\item
			By compactness (more precisely, regularity), for each $s\in 2^n$ and $i\in\{0,1\}$ we can find a nonempty open set $U_{si}'$ such that $\overline{U_{si}'}\subseteq U_s$.
			\item
			For each (ordered) pair of distinct $\sigma,\tau\in 2^{n+1}$, the set $(U'_\sigma\times U'_\tau)\setminus F_n$ is a nonempty open set (because $F_n$ is closed, nowhere dense), so in particular, $U'_\sigma\times U'_\tau$ contains a smaller (nonempty, open) rectangle $U''_\sigma\times U''_\tau$ which is disjoint from $F_n$.
			\item
			Repeating the procedure from the previous point recursively, for each ordered pair $(\sigma,\tau)$, we obtain for each $\sigma\in 2^{n+1}$ a nonempty open set $U_\sigma\subseteq U_\sigma'$ such that for $\sigma\neq \tau$ we have $(U_\sigma\times U_\tau)\cap F_n=\emptyset$. It is easy to see that the sets $U_\sigma$ satisfy the inductive step for $n+1$.\qedhere
		\end{enumerate}		
	\end{proof}
	
	\begin{cor}\label{cor: from Mycielski}
		Suppose $G$ is a locally compact, Hausdorff group and $H$ is a subgroup which has the Baire property, but is not open. Then $\lvert G/H\rvert\geq 2^{\aleph_0}$.
	\end{cor}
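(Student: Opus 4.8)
The plan is to present $G/H$ as the quotient of $G$ by a meager equivalence relation and then invoke the preceding (Mycielski-style) proposition. Concretely, I would introduce the equivalence relation $E_H \subseteq G \times G$ defined by $g \mathrel{E_H} g' \iff g^{-1}g' \in H$, whose classes are exactly the left cosets of $H$; thus $\lvert G/H\rvert = \lvert G/E_H\rvert$. Since $E_H$ is the preimage of $H$ under the continuous map $(g,g') \mapsto g^{-1}g'$ and $H$ has the Baire property, $E_H$ has the Baire property in $G \times G$. As $G$ is locally compact and Hausdorff, it will suffice by the preceding proposition to check that $E_H$ is meager in $G \times G$.

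First I would show that $H$ is meager in $G$. Suppose not; then $H$ has the Baire property and is non-meager, so by the Piccard--Pettis theorem (Fact \ref{fct: Pettis}) the set $H^{-1}H$ contains an open neighbourhood of the identity. But $H$ is a subgroup, so $H^{-1}H = H$, and hence $H$ is open --- contradicting the hypothesis. Therefore $H$ is meager.

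It remains to transfer meagerness from $H$ to $E_H$. The map $\Phi\fcolon G \times G \to G \times G$ given by $\Phi(g,g') = (g, g^{-1}g')$ is a homeomorphism carrying $E_H$ onto $G \times H$. Writing $H \subseteq \bigcup_{n} N_n$ with each $N_n$ nowhere dense, we get $G \times H \subseteq \bigcup_n (G \times N_n)$, and each $G \times N_n$ is nowhere dense in $G \times G$ (its closure is $G \times \overline{N_n}$, which has empty interior). So $G \times H$ is meager, hence so is $E_H$, and the preceding proposition yields $\lvert G/H\rvert = \lvert G/E_H\rvert \geq 2^{\aleph_0}$.

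I do not anticipate a genuine obstacle here: the content is carried entirely by the generalized Mycielski proposition above, whose whole point is precisely that no second-countability (let alone Polishness) hypothesis on $G$ is needed. The only step requiring any care is the passage from ``$H$ meager'' to ``$E_H$ meager'', which is handled by the product-space observation, while the rest is a routine application of Pettis's theorem together with the fact that a subgroup with nonempty interior is open.
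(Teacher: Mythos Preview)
Your proof is correct and follows essentially the same route as the paper: apply Piccard--Pettis to conclude that $H$ is meager, deduce that the coset equivalence relation on $G$ is meager, and invoke the preceding Mycielski-style proposition. The only cosmetic difference is in the meagerness transfer step---the paper notes that $(x,y)\mapsto xy^{-1}$ is continuous and open so that preimages of meager sets are meager, while you use the equivalent observation that $(g,g')\mapsto (g,g^{-1}g')$ is a homeomorphism carrying $E_H$ to $G\times H$.
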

	\begin{proof}
		It follows from Pettis theorem (see Fact~\ref{fct: Pettis}) that a non-meager Baire subgroup of a topological group is open, so, in our case, $H$ is meager. It is readily derived that the orbit equivalence relation of $H$ acting by left translations on $G$ is meager (because the map $(x,y)\mapsto xy^{-1}$ is continuous and open, so preimages of meager sets are meager), so we obtain the corollary immediately from the preceding proposition.
	\end{proof}
	
	\section{Application to bounded, invariant equivalence relations in a countable language}\label{section: smoothness}
	
	The main result of this section is Theorem~\ref{thm:main_Borel}. Before the proof, we formulate several immediate corollaries which in particular answer open questions mentioned in Subsection~\ref{subsection: bounded relations}.
	
	In this section, we have a blanket assumption that the theory is countable, and that the types we consider are in countably many variables.
	
	\begin{thm}\label{thm:main_Borel}
		We are working in a monster model $\C$ of a complete, countable theory. Suppose we have:
		\begin{itemize}[nosep]
			\item
			a $\emptyset$-type-definable, countably supported set $X$,
			\item
			a bounded, invariant equivalence relation $E$ on $X$,
			\item
			a pseudo-closed (i.e.\ type-definable) and $E$-saturated set $Y\subseteq X$.
		\end{itemize}
		Then, for every type $p \in X_{\emptyset}$ consistent with $Y$, either $E\restr_{Y\cap p(\C)}$ is type-definable, or $E\restr_{Y\cap p(\C)}$ is not smooth.
	\end{thm}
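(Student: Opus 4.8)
The plan is to prove, for a fixed $p$, that if $E\restr_{Y\cap p(\C)}$ is smooth then it is type-definable, by applying Miller's theorem (Fact \ref{fct:Miller}) to a compact, Hausdorff group coming from the Ellis group and then reading off the conclusion via Theorem \ref{thm:main theorem 3}.

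First I would reduce: replacing $E$ by $E\restr_{p(\C)}$ and $Y$ by $Y\cap p(\C)$ (which is type-definable, and $E\restr_{p(\C)}$-saturated because $Y$ is $E$-saturated), we may assume that $E$ is a bounded, invariant equivalence relation refining $\equiv$ on the single complete type $p(\C)$, that $Y\subseteq p(\C)$ is type-definable and $E$-saturated with $\bar\alpha\in Y$, that $E\restr_Y$ is smooth, and that the goal is that $E\restr_Y$ is type-definable. Since $Y$ is type-definable and $E$-saturated, $Y/E$ is closed in $p(\C)/E$ (its preimage in $p(\C)$ is $Y$), so by Theorem \ref{thm:main theorem 3} one has $\cl(\bar\alpha/E)=\bar h_E[\cl_\tau(\ker(\bar h_E))]\subseteq Y/E$. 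If $\lvert Y/E\rvert=1$ this already gives $\cl(\bar\alpha/E)=\{\bar\alpha/E\}$, so we may assume $\lvert Y/E\rvert\geq 2$, and it suffices to prove $\cl_\tau(\ker(\bar h_E))=\ker(\bar h_E)$: then $\cl(\bar\alpha/E)=\{\bar\alpha/E\}$, hence $[\bar\alpha]_E$ is type-definable, hence $E$ (and so $E\restr_Y$) is type-definable by Remark \ref{rem:type-definability_of_relations}.

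Next, from smoothness I would extract separating sets. As $E\restr_Y$ is smooth, so is $E^M\restr_{Y_M}$ for a countable model $M$, and Fact \ref{fct: separating family} provides a countable family $\{B_i\}$ of Borel, $E^M\restr_{Y_M}$-saturated subsets of $Y_M$ separating its classes (each class lying in some $B_i$, as $\lvert Y/E\rvert\geq2$). Since $Y_M$ is a closed, $E^M$-saturated subset of $P_M:=p(\C)_M$, each $B_i$ is $E^M$-saturated in $P_M$ and lies in $\Souslin(\CLO(P_M))$, so Lemma \ref{lem: main technical lemma} yields $E_i:=j[r^{-1}[B_i]]\in\Souslin(\CLO(u\M/H(u\M)))$; in particular the $E_i$ are strictly Baire. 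Writing $\rho\fcolon P_M\to p(\C)/E$ for the map sending a type to the $E$-class of its realizations, we have $h_E=\rho\circ r$ and, by $E^M$-saturation, $B_i=\rho^{-1}[\rho[B_i]]$; hence $r^{-1}[B_i]=h_E^{-1}[C_i]$ where $C_i:=\rho[B_i]\subseteq Y/E$, and therefore $E_i=\bar h_E^{-1}[C_i]$. The $C_i$ correspond to the $B_i$ under the bijection $Y/E\cong Y_M/E^M$ and so separate the points of $Y/E$; and each $E_i$ is right $\ker(\bar h_E)$-invariant, because $\ker(\bar h_E)$ stabilizes $\bar\alpha/E$.

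Finally I would apply Miller's theorem in the group $G':=\cl_\tau(\ker(\bar h_E))$, which is a closed, hence compact, Hausdorff (hence totally non-meager) subgroup of $u\M/H(u\M)$, with the dense subgroup $H':=\ker(\bar h_E)$ and the sets $E_i':=E_i\cap G'$. These are right $H'$-invariant and, being traces on the closed set $G'$ of sets in $\Souslin(\CLO(u\M/H(u\M)))$, they lie in $\Souslin(\CLO(G'))$ and are strictly Baire in $G'$. They separate the left $H'$-cosets of $G'$: for $g\in G'$ we have $\bar h_E(g)\in\bar h_E[G']=\cl(\bar\alpha/E)\subseteq Y/E$ and $gH'=\bar h_E^{-1}[\{\bar h_E(g)\}]\subseteq G'$, so, as $\{C_i\}$ separates the points of $Y/E$, $\bigcap\{E_i'\fcolon g\in E_i'\}=G'\cap\bar h_E^{-1}[\{\bar h_E(g)\}]=gH'$. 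By Fact \ref{fct:Miller}, $H'$ is closed in $G'$; being dense too, $H'=G'$, i.e.\ $\cl_\tau(\ker(\bar h_E))=\ker(\bar h_E)$, which completes the proof. The main obstacle is precisely this last matching: the separating family coming from $E\restr_Y$ only controls the part $Y/E$ of $p(\C)/E$, whereas $\bar h_E$ is onto all of $p(\C)/E$; this is why Miller's theorem must be applied inside $G'=\cl_\tau(\ker(\bar h_E))$ rather than in $u\M/H(u\M)$ itself, using that $\bar h_E[G']=\cl(\bar\alpha/E)$ is already contained in $Y/E$.
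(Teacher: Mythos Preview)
Your proof is correct and follows essentially the same strategy as the paper's: reduce to a single type, extract a countable separating family from smoothness via Fact~\ref{fct: separating family}, push it through Lemma~\ref{lem: main technical lemma} to get strictly Baire sets in the Ellis-group quotient, apply Miller's theorem (Fact~\ref{fct:Miller}) to conclude $\ker(\bar h_E)$ is closed, and then read off type-definability of $[\bar\alpha]_E$ from Theorem~\ref{thm:main theorem 3}.

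The one organizational difference is where Miller's theorem is applied. The paper first replaces $Y$ by $Y_{\bar\alpha}$ (the preimage of $\cl(\bar\alpha/E)$), introduces the setwise stabilizer $S\leq\gal_L(T)$ of $Y$, and works inside the closed subgroup $(u\M)_S/H(u\M):=\bar f^{-1}[S]$, which contains $\ker(\bar h_E)$ after the replacement. You instead work directly in $G':=\cl_\tau(\ker(\bar h_E))$, using that by Theorem~\ref{thm:main theorem 3} one already has $\bar h_E[G']=\cl(\bar\alpha/E)\subseteq Y/E$, so the separating family restricted to $G'$ does what is needed. This avoids the detour through $S$ and the replacement of $Y$; the price is the small extra care you take to ensure every left $H'$-coset in $G'$ actually lies in some $E_i'$ (your $\lvert Y/E\rvert\geq 2$ case split). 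Both routes give the same conclusion with the same ingredients; yours is slightly more direct.
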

	
	
	Recall that Fact~\ref{fct: type-definability imply smoothness} tells us that type-definability of an equivalence relation implies its smoothness. Hence, in all our dichotomies (formulated in this section) between type-definability and non-smoothness of a given relation, the fact that at most one of these options holds is always clear. Note also that these dichotomies can be formulated in the equivalent form saying that type-definability of the appropriate relation is equivalent to its smoothness.
	
	By Remark~\ref{rem:type-definability_of_relations}, in the context of Theorem~\ref{thm:main_Borel}, if $Y \cap p(\C) \ne \emptyset$, then the following conditions are equivalent: (1) $E\restr_{Y\cap p(\C)}$ is type-definable; (2) some [every] class of $E\restr_{Y\cap p(\C)}$ is type-definable; (3) $E\restr_{p(\C)}$ is type-definable. The condition that $E\restr_{Y\cap p(\C)}$ is not smooth implies that $E\restr_{p(\C)}$
	and $E\restr_Y$ are non-smooth. Recall that if $E$ is Borel, then non-smoothness of the above relations is equivalent to the fact that $\EZ$ Borel reduces to each them (Fact~\ref{fct:Harrington-Kechris-Louveau dichotomy}); in particular, for Borel relations non-smoothness implies having $2^{\aleph_0}$ classes.

	\begin{cor}\label{cor:Borel1}
		Take the assumptions of Theorem~\ref{thm:main_Borel}.
		Then either the restriction of $E$ to any complete type over $\emptyset$ consistent with $Y$ is type-definable,
		or $E\restr_Y$ is not smooth. If $E$ refines $\equiv$, then the first possibility is equivalent to the condition that every class of $E$ contained in $Y$ is type-definable.
	\end{cor}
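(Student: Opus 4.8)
The plan is to obtain this as a formal consequence of Theorem~\ref{thm:main_Borel} together with the elementary observations collected in the paragraph immediately following that theorem. I would prove the dichotomy in contrapositive form: assume the first alternative fails, so there is a complete type $p$ over $\emptyset$ consistent with $Y$ (that is, $Y\cap p(\C)\neq\emptyset$) for which $E\restr_{p(\C)}$ is not type-definable, and the aim is to deduce that $E\restr_Y$ is not smooth. Since $Y\cap p(\C)\neq\emptyset$, the equivalence of ``$E\restr_{Y\cap p(\C)}$ is type-definable'' with ``$E\restr_{p(\C)}$ is type-definable'' — the instance of Remark~\ref{rem:type-definability_of_relations} recorded just after Theorem~\ref{thm:main_Borel} — shows that $E\restr_{Y\cap p(\C)}$ is not type-definable. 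Theorem~\ref{thm:main_Borel} applied to this $p$ then gives that $E\restr_{Y\cap p(\C)}$ is not smooth.

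Next I would pass from non-smoothness on $Y\cap p(\C)$ to non-smoothness on $Y$. This is precisely the assertion (again recorded after Theorem~\ref{thm:main_Borel}) that non-smoothness of $E\restr_{Y\cap p(\C)}$ implies non-smoothness of $E\restr_Y$: after fixing a countable model $M$, the set $(Y\cap p(\C))_M$ is a closed subset of $Y_M$ — it is the intersection of $Y_M$ with the clopen set of types over $M$ extending $p$ — so any Borel reduction of $E^M\restr_{Y_M}$ to $\Delta_{2^{\N}}$ would restrict to a Borel reduction of $E^M\restr_{(Y\cap p(\C))_M}$, contradicting the previous paragraph. This completes the dichotomy; that the two alternatives exclude each other is clear from Fact~\ref{fct: type-definability imply smoothness}.

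For the parenthetical clause I would assume $E$ refines $\equiv$ and apply Remark~\ref{rem:type-definability_of_relations} in both directions. If every $E$-class contained in $Y$ is type-definable, then for each complete type $p$ over $\emptyset$ consistent with $Y$ and each $a\in Y\cap p(\C)$, the class $[a]_E$ is contained in $Y\cap p(\C)$ (because $Y$ is $E$-saturated and $E$ refines $\equiv$) and is type-definable by hypothesis, so $E\restr_{p(\C)}$ is type-definable. Conversely, if $E\restr_{p(\C)}$ is type-definable for every $p$ consistent with $Y$, then any $E$-class $C\subseteq Y$ lies in a single $p(\C)$, this $p$ is consistent with $Y$, and $C$ is obtained from $p(\C)$ by specializing a partial type defining $E\restr_{p(\C)}$, hence is type-definable.

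I do not expect a real obstacle here, since all the difficulty is already concentrated in Theorem~\ref{thm:main_Borel}; the one point that needs care is the bookkeeping that restricting to the $\emptyset$-type-definable piece $p(\C)$ is compatible with the $E^M$-formalism used to define smoothness and Borel cardinality for possibly non-Borel bounded invariant relations, and this is settled by Facts~\ref{fct:eqcmp} and~\ref{fct:cartdf} and the surrounding discussion.
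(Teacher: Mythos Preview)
Your proposal is correct and matches the paper's intended argument: the paper states this corollary without proof, treating it as an immediate consequence of Theorem~\ref{thm:main_Borel} together with the observations in the paragraph just before the corollary (the equivalence coming from Remark~\ref{rem:type-definability_of_relations} and the monotonicity of non-smoothness under restriction). Your write-up simply spells out these steps, including the parenthetical, exactly as intended.
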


	The most interesting instances of Theorem~\ref{thm:main_Borel} are when $Y$ is a complete type over $\emptyset$ or when it is one Kim-Pillay type. This is described in the next two corollaries.
	
	\begin{cor}\label{cor:mainA}
		If $E$ is a bounded, invariant equivalence relation defined on a single complete type $p \in S(\emptyset)$ (in countably many variables), then either $E$ is type-definable, or it is non-smooth.
	\end{cor}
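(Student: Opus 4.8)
The plan is to read off Corollary~\ref{cor:mainA} as the special case $X = Y = p(\C)$ of Theorem~\ref{thm:main_Borel}. Since $p$ is a complete type over $\emptyset$ in countably many variables, $X := p(\C)$ is $\emptyset$-type-definable and countably supported; $E$ is bounded and invariant on $X$ by hypothesis; and $Y := X$ is pseudo-closed (indeed $\emptyset$-type-definable) and trivially $E$-saturated, being the whole domain. As $X_\emptyset = \{p\}$, Theorem~\ref{thm:main_Borel} applied to this single type gives that $E\restr_{Y \cap p(\C)} = E$ is type-definable or not smooth, which is exactly the claim. (That the two options are mutually exclusive is immediate from Fact~\ref{fct: type-definability imply smoothness}.)

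Since this reduction is purely formal, the substance is in Theorem~\ref{thm:main_Borel}, which I would prove as follows, for a type $p$ with $Y\cap p(\C)\neq\emptyset$ (the empty case being trivial by convention). Fix $\bar\alpha\in Y\cap p(\C)$ and a countable $M\prec\C$, suppose $E\restr_{Y\cap p(\C)}$ is \emph{not} type-definable, and assume for contradiction that it is smooth. By Fact~\ref{fct: separating family} there is a countable family of Borel, $E^M$-saturated subsets of $P_M$ separating the $E^M$-classes that meet $(Y\cap p(\C))_M$; being Borel, each such set lies in $\Souslin(\CLO(P_M))$, and so do the finitely many Boolean combinations one needs with the set $Y_M$, which is closed in $P_M$. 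Transport this family up to the compact, Hausdorff group $G := u\M/H(u\M)$ via the maps $r\fcolon u\M\to P_M$ and $j\fcolon u\M\to G$ of Section~\ref{section: topological lemmas}: by Lemma~\ref{lem: main technical lemma} each image $j[r^{-1}[B]]$ again belongs to $\Souslin(\CLO(G))$ and is therefore strictly Baire in $G$. Because each $B$ is $E^M$-saturated, each $j[r^{-1}[B]]$ is a union of cosets of $K := \ker(\bar h_E)$, and the whole family separates the cosets of $K$. Now $G$ is compact Hausdorff, hence totally non-meager, so Miller's theorem (Fact~\ref{fct:Miller}) applies and yields that $K$ is closed in $G$, i.e.\ $\cl_\tau(K) = K$. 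By Theorem~\ref{thm:main theorem 3}, $\cl(\bar\alpha/E) = \bar h_E[\cl_\tau(K)] = \bar h_E[K] = \{\bar\alpha/E\}$, so $\bar\alpha/E$ is a closed point of $[\bar\alpha]_\equiv/E$; by Remark~\ref{rem:type-definability_of_relations} this means $E\restr_{p(\C)}$, and hence $E\restr_{Y\cap p(\C)}$, is type-definable --- a contradiction. Therefore $E\restr_{Y\cap p(\C)}$ is non-smooth, and moreover, when $E$ is Borel, Fact~\ref{Harrington-Kechris-Louveau dichotomy} upgrades this to $\EZ\leq_B E^M\restr_{(Y\cap p(\C))_M}$.

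The main obstacle is precisely the transport step: the quotient $p(\C)/E$ carries only the logic topology, which for non-type-definable $E$ is non-Hausdorff (and by Proposition~\ref{prop: closure of alpha/E} trivial on each closure of a point), so no descriptive-set-theoretic separation argument can be run there directly; one must first replace $p(\C)/E$ by the compact Hausdorff group $u\M/H(u\M)$ furnished by Theorems~\ref{thm:main theorem 1}--\ref{thm:main theorem 3}, while keeping the pulled-back sets inside $\Souslin(\CLO(\cdot))$. The delicate point inside Lemma~\ref{lem: main technical lemma} is that $r^{-1}[B]$ need not be $\tau$-closed even when $B$ is closed, which forces the detour through $\cl(u\M)$ with its finer Ellis topology, the continuity of $j\circ\zeta$ from Lemma~\ref{lem: variant}, and Remark~\ref{rem: image of intersection} to commute $j$ with descending countable intersections. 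Once the images are seen to be strictly Baire, Miller's closed-subgroup criterion together with Theorem~\ref{thm:main theorem 3} closes the loop essentially mechanically.
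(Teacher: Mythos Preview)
Your reduction of the corollary to Theorem~\ref{thm:main_Borel} with $X=Y:=p(\C)$ is exactly the paper's proof, and is correct.

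Your sketch of Theorem~\ref{thm:main_Borel} follows the paper's strategy (separating family $\to$ Lemma~\ref{lem: main technical lemma} $\to$ Miller's theorem $\to$ Theorem~\ref{thm:main theorem 3}), but there is a gap in the general case $Y\subsetneq p(\C)$. You apply Fact~\ref{fct:Miller} in $G=u\M/H(u\M)$ with $H=K=\ker(\bar h_E)$, asserting that the transported family $\{j[r^{-1}[B_i]]\}$ separates \emph{all} left cosets of $K$ in $G$. But the cosets of $K$ in $G$ are in bijection (via $\bar h_E$) with all of $[\bar\alpha]_\equiv/E=p(\C)/E$, whereas your $B_i$'s only separate $E^M$-classes inside $Y_M$; since $r[u\M]$ need not land in $Y_M$, the preimages $r^{-1}[B_i]$ need not cover $u\M$, and the family need not separate cosets corresponding to $E$-classes outside $Y$. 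Intersecting with $Y_M$ does not help: the problem is missing cosets, not overlapping ones.

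The paper closes this gap in two steps: first it replaces $Y$ by the smaller set $Y_{\bar\alpha}=\{\bar\beta:\bar\beta/E\in\cl(\bar\alpha/E)\}$, which is still type-definable, $E$-saturated, and contains $\bar\alpha$; this guarantees that any $\sigma'$ fixing $\bar\alpha/E$ stabilizes $Y$ setwise. Second, it passes to the closed subgroup $(u\M)_S/H(u\M)$, where $S\leq\gal_L(T)$ is the setwise stabilizer of $Y$; now $K\leq (u\M)_S/H(u\M)$, one has $r[(u\M)_S]\subseteq Y_M$, and Miller's theorem applies inside this subgroup. In the specific case $Y=p(\C)$ of Corollary~\ref{cor:mainA}, $r[u\M]\subseteq P_M=Y_M$ holds automatically, so your direct argument in $u\M/H(u\M)$ goes through and the gap does not arise.
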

	
	\begin{proof}
		Apply Theorem~\ref{thm:main_Borel} to $X=Y:=p(\C)$.
	\end{proof}

	The next corollary is a generalization of a theorem of Kaplan, Miller and Simon (see Fact~\ref{fct: KPS theorem}) from Lascar strong types to arbitrary bounded, invariant equivalence relations. Strictly speaking, it is a generalization of a key corollary of this theorem (namely, of Conjecture 1 from \cite{KPS}) concerning only the dichotomy between the condition that the restriction of the relation in question to a fixed Kim-Pillay type has one class and the condition that it is non-smooth (i.e.\ it does not use diameters in its formulation, as diameters are not available at such a level of generality).
	
	\begin{cor}\label{cor:mainB}
		Assume $E$ is a bounded, invariant equivalence relation (on some product $X$ of countably many sorts) refining $E_{KP}$. Then, for any $a \in X$, either $E$ restricted to $[a]_{E_{KP}}$ has only one class, or it is non-smooth. Thus, if $E \ne E_{KP}$, then $E$ is non-smooth.
	\end{cor}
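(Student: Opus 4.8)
The plan is to obtain the statement immediately from Theorem \ref{thm:main_Borel} (in the form of Corollary \ref{cor:Borel1}); essentially no new argument is required, only the right choice of data and a short piece of bookkeeping.

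First I would fix $a \in X$, set $p := \tp(a/\emptyset)$ and $Y := [a]_{E_{KP}}$, and take the ambient product of sorts as the set ``$X$'' in Theorem \ref{thm:main_Borel} (it is $\emptyset$-definable, hence $\emptyset$-type-definable, and countably supported by the blanket assumption of this section). Then $Y$ is type-definable, being $[a]_{E_{KP}}$ with $E_{KP}$ an $\emptyset$-type-definable equivalence relation; $Y$ is $E$-saturated because $E$ refines $E_{KP}$; and $Y \subseteq [a]_\equiv = p(\C)$ because $E_{KP}$ refines the bounded, $\emptyset$-type-definable relation $\equiv$, so $p$ is the only complete type over $\emptyset$ that meets $Y$. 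Hence the hypotheses of Theorem \ref{thm:main_Borel} hold, and Corollary \ref{cor:Borel1} yields the dichotomy: either $E\restr_{p(\C)}$ is type-definable, or $E\restr_Y$ is not smooth. In the second case there is nothing left to prove.

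In the first case I would argue that $E\restr_Y$ has a single class. Since $E\restr_{p(\C)}$ is a bounded, invariant equivalence relation defined on the single complete type $p(\C) = [a]_\equiv$, the argument in the proof of Remark \ref{rem:type-definability_of_relations} upgrades ``type-definable'' to ``$\emptyset$-type-definable''. Then, because $E_{KP}\restr_{p(\C)}$ is the finest bounded, $\emptyset$-type-definable equivalence relation on $p(\C)$, it refines $E\restr_{p(\C)}$, while $E$ refining $E_{KP}$ gives the opposite refinement; so $E$ and $E_{KP}$ agree on $p(\C)$, and in particular $[a]_E = [a]_{E_{KP}} = Y$, i.e.\ $E\restr_Y$ has exactly one class. (A variant of this last step avoids Remark \ref{rem:type-definability_of_relations}: once $E\restr_{p(\C)}$ is type-definable, $[a]_E$ is type-definable and it is $E_L$-saturated since $E_L$ refines $E$; then Lemma \ref{lem:lem_closed}(iii) makes $[a]_E$ a union of $E_{KP}$-classes, forcing $[a]_E = [a]_{E_{KP}}$.)

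I do not expect any genuine obstacle here --- the whole weight of the statement is carried by Theorem \ref{thm:main_Borel}. The only points to watch are that $Y = [a]_{E_{KP}}$ really fits the hypotheses of that theorem (type-definability, $E$-saturation, and being concentrated on a single $\emptyset$-type) and the passage from type-definability over parameters to $\emptyset$-type-definability of $E\restr_{p(\C)}$; after that, minimality of $E_{KP}$ collapses $E\restr_{[a]_{E_{KP}}}$ to the trivial relation.
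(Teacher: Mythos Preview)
Your proposal is correct and follows essentially the same route as the paper: set $Y=[a]_{E_{KP}}$, apply Theorem~\ref{thm:main_Borel}, and in the type-definable branch use Remark~\ref{rem:type-definability_of_relations} together with the minimality of $E_{KP}\restr_{p(\C)}$ among bounded $\emptyset$-type-definable relations to collapse $E\restr_Y$ to a single class. Your write-up is simply more explicit about the verifications, and the alternative finish via Lemma~\ref{lem:lem_closed}(iii) is a pleasant variant but not a genuinely different argument.
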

	
	\begin{proof}
		Let $p=\tp(a)$. Apply Theorem~\ref{thm:main_Borel} to $Y:=[a]_{E_{KP}}$, and use \cite[Lemma 4.18]{LaPi} (which says that $E_{KP}$ restricted to $p(\C)$ is the finest bounded, $\emptyset$-type-definable equivalence relation on $p(\C)$)				 together with Remark~\ref{rem:type-definability_of_relations}.
	\end{proof}
	
	The next corollary answers Question 4.11 from \cite{KrRz} (i.e.\ Question~\ref{ques:quest1_from_KrRz} above).
	
	\begin{cor}\label{cor: answer to question from or paper}
		Suppose that $E$ is a bounded, invariant equivalence relation which is defined on a single complete type over $\emptyset$ or which refines $E_{KP}$.
		Then $E$ is smooth iff $E$ is type-definable.
		
		If we drop the assumption that $E$ is defined on a single complete type over $\emptyset$ or refines $E_{KP}$, then smoothness need not imply type-definability (while the converse is always true).
	\end{cor}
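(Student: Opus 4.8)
The plan is to read this off directly from Theorem~\ref{thm:main_Borel} together with its Corollaries~\ref{cor:mainA} and~\ref{cor:mainB} and from Fact~\ref{fct: type-definability imply smoothness}. The implication ``type-definable $\Rightarrow$ smooth'' is precisely Fact~\ref{fct: type-definability imply smoothness} (this is where countability of the theory enters), and it also supplies the parenthetical ``the converse is always true'' in the last paragraph of the statement; nothing further is needed in that direction.

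For ``smooth $\Rightarrow$ type-definable'' under the stated hypotheses I would split into the two cases. If $E$ is defined on a single complete type $p \in S(\emptyset)$ (in countably many variables, as in the blanket assumption of this section), then Corollary~\ref{cor:mainA} gives that $E$ is type-definable or non-smooth; since $E$ is smooth, it is type-definable. If instead $E$ refines $E_{KP}$, fix $a \in X$ and put $Y := [a]_{E_{KP}}$. This $Y$ is type-definable (a single $E_{KP}$-class, hence type-definable over $a$ by Remark~\ref{rem:type-definability_of_relations}) and $E$-saturated (as $E$ refines $E_{KP}$), so by Fact~\ref{fct:cartdf} the Borel cardinality of $E\restr_Y$ is well defined, and smoothness of $E$ descends to $E\restr_Y$ since a Borel reduction of $E^M$ restricts to one of $E^M\restr_{Y_M}$. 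Corollary~\ref{cor:mainB} then forces $E\restr_{[a]_{E_{KP}}}$ to have a single class; as $a$ was arbitrary, $E$ coincides with $E_{KP}$, which is $\emptyset$-type-definable. (One could equally invoke Corollary~\ref{cor:Borel1} with $Y = X$, using that $E$ refines $\equiv$.)

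For the final assertion — that the hypothesis cannot simply be dropped — I would prove nothing new but point to the example already recalled in Subsection~\ref{subsection: bounded relations}: by \cite[Theorem~4.9]{KrRz} (see also \cite[Example~4.4]{KrRz}) there is a bounded, invariant (indeed $F_\sigma$) equivalence relation which is smooth but not type-definable, whose domain meets more than one complete type over $\emptyset$ and which does not refine $E_{KP}$. Apart from the routine bookkeeping in the second case (checking that $[a]_{E_{KP}}$ meets the hypotheses of Theorem~\ref{thm:main_Borel} and that smoothness is inherited by the restriction), there is no genuine obstacle here: all the substance lies in Theorem~\ref{thm:main_Borel}.
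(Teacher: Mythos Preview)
Your proposal is correct and takes essentially the same approach as the paper: the paper's proof simply cites Corollaries~\ref{cor:mainA} and~\ref{cor:mainB} for the first part and Example~4.4 in \cite{KrRz} for the second, and you have unpacked those citations with the appropriate bookkeeping. One small bibliographic slip: for the counterexample in the final paragraph you want \cite[Example~4.4]{KrRz} alone; \cite[Theorem~4.9]{KrRz} is the positive $F_\sigma$ orbital result, not the example showing the hypothesis cannot be dropped.
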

	
	\begin{proof}
		The first part of the corollary follows immediately from Corollaries~\ref{cor:mainA} and~\ref{cor:mainB}.
		The second part was demonstrated by Example 4.4 in \cite{KrRz}.
	\end{proof}
	
	Here, we should explain why in the discussion after Question 4.11 in \cite{KrRz} it is said that also some kind of ``definability'' assumption (like Borelness) on $E$ is needed in order to get that smoothness of $E$ implies type-definability, whereas in the above corollary we do not assume anything like that. Note that smoothness of $E$ (in the sense of this paper) implies that $E$ is Borel (see Subsection~\ref{subsection: descriptive set theory}). The point is that in \cite{KrRz} we did not specify what smoothness of non-Borel equivalence relations means and actually we thought about such a variant of this notion for which non-smoothness would imply possessing $2^{\aleph_0}$ classes. In this paper, we use the same definition of smoothness as for Borel relations, and for such a definition it may happen that a non-smooth relation has only two classes. For instance, see Example 4.7 from \cite{KrRz} (which originally comes from \cite{KaMi}) of an invariant equivalence relation $E$ on a single complete type over $\emptyset$ which is not type-definable, and so non-smooth by Theorem~\ref{thm:main_Borel}, and has only two classes; note that this relation $E$ is not Borel,
	because otherwise, being non-smooth, it would have $2^{\aleph_0}$ classes, by the Silver dichotomy or the Harrington-Kechris-Louveau dichotomy (Facts~\ref{fct:silver},~\ref{fct:Harrington-Kechris-Louveau dichotomy}).
	
	The next corollary solves Problem 3.22 from \cite{KaMi} mentioned below Fact~\ref{fct:mainA}. To be precise, as was discussed below Fact~\ref{fct:mainA}, in \cite{KaMi} the formulation of the problem is slightly more general, but we find this generalization to be rather of a technical nature and we do not deal with it in this paper.
	
	\begin{cor}
		In Fact~\ref{fct:mainA}, one can replace the `orbitality on types' assumption by the assumption that $E$ refines $\equiv$. Without the assumption that $E$ refines $\equiv$, the conclusion of Fact~\ref{fct:mainA} may fail.
	\end{cor}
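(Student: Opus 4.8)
The plan is to deduce the first assertion from Theorem~\ref{thm:main_Borel}, the only genuinely non-formal step being to turn the ``infinite diameter'' hypothesis into ``non-type-definability''. Assume the hypotheses of Fact~\ref{fct:mainA} with ``orbital on types'' replaced by ``$E$ refines $\equiv$'' (this is genuinely a weaker hypothesis, since orbitality on types by definition includes refining $\equiv$): so $X$ is $\emptyset$-type-definable and countably supported, $E$ is an $F_\sigma$, bounded, invariant equivalence relation on $X$ refining $\equiv$, $Y\subseteq X$ is pseudo-closed and $E$-saturated, $C\subseteq Y$ is an $E$-class, and $\biglor_n\Phi_n$ is a normal form for $E$ with respect to which $C$ has infinite diameter. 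Fix $a\in C$ and put $p:=\tp(a)\in X_\emptyset$; since $E$ refines $\equiv$ we get $C=[a]_E\subseteq p(\C)$, hence $C\subseteq Y\cap p(\C)$.

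First I would check that $E\restr_{Y\cap p(\C)}$ is not type-definable. Suppose it were. Then $C$, which is exactly the $E\restr_{Y\cap p(\C)}$-class of $a$, is type-definable (a class of a type-definable equivalence relation on a type-definable set is type-definable). Now apply Fact~\ref{fct:twN} to $E$ with the single class $C$ playing the role of ``$Y$'': $C$ is type-definable, is $E$-saturated (being one $E$-class), and is contained in $p(\C)$, while $\lvert C/E\rvert=1<2^{\aleph_0}$; hence the dichotomy forces $E$ to coincide on $C$ with some $\Phi_n$, so $d_\Phi(a',b')\leq n$ for all $a',b'\in C$ (all elements of $C$ being $E$-related), contradicting that $C$ has infinite diameter. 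The idea of this step is precisely that feeding in the single class $C$ rather than $Y$ or $p(\C)$ automatically excludes the ``$2^{\aleph_0}$ classes'' alternative of Newelski's dichotomy.

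With non-type-definability established, Theorem~\ref{thm:main_Borel} applied to $X$, $E$, $Y$ and this particular $p$ yields that $E\restr_{Y\cap p(\C)}$ is not smooth. Finally, for a fixed countable model $M$ (which exists as the language is countable), the inclusion $(Y\cap p(\C))_M\hookrightarrow Y_M$ is continuous and is a Borel reduction of $E^M\restr_{(Y\cap p(\C))_M}$ into $E^M\restr_{Y_M}$, so non-smoothness passes upward and $E\restr_Y$ is not smooth. This proves the first assertion.

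For the second assertion I would invoke \cite[Example~4.4]{KrRz}, which provides a bounded, $F_\sigma$, invariant equivalence relation $E$ that does \emph{not} refine $\equiv$, together with a pseudo-closed $E$-saturated set $Y$ and a normal form of $E$ with respect to which some $E$-class contained in $Y$ has infinite diameter, and yet $E\restr_Y$ is smooth; thus every hypothesis of Fact~\ref{fct:mainA} other than ``$E$ refines $\equiv$'' holds while its conclusion fails. I expect the main obstacle to be the diameter-to-non-type-definability conversion in the second paragraph; everything else is bookkeeping with Remark~\ref{rem:type-definability_of_relations}, the definition of smoothness for restrictions, and elementary properties of Borel reductions.
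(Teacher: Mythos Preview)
Your argument for the first assertion is correct and essentially identical to the paper's: both convert ``infinite diameter'' to ``non-type-definable'' via Newelski's theorem (the paper invokes \cite[Corollary~2.24]{KrRz}, which is exactly the consequence of Fact~\ref{fct:twN} you spell out by hand), and then apply the main theorem (you use Theorem~\ref{thm:main_Borel} directly, the paper its immediate Corollary~\ref{cor:Borel1}).

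For the second assertion there is a discrepancy: the paper cites \cite[Example~2.25]{KrRz}, not Example~4.4. Example~2.25 is the total relation on $\C$ in $\Th(\R,+,\cdot,<)$: a single class (hence trivially smooth) which nonetheless has infinite diameter under a suitable normal form. This is the simplest possible witness, and the paper makes no claim that Example~4.4 has the required infinite-diameter property; you should verify that your cited example actually satisfies \emph{all} the hypotheses of Fact~\ref{fct:mainA} except refinement of $\equiv$, including the existence of a normal form giving some class infinite diameter, before relying on it.
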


	\begin{proof}
		The fact that if we just drop the `orbitality on types' assumption (without assuming that $E$ refines $\equiv$ instead), then Fact~\ref{fct:mainA} is not any longer true is witnessed by the very simple Example 2.25 from \cite{KrRz}, namely: $T:=\Th(\R,+,\cdot,<)$, $X=Y:=\C$, and $E$ is the total (i.e.\ with only one class) relation on $\C$. It is explained in \cite{KrRz} how to find a normal form for $E$ with respect to which the only equivalence class of $E$ has infinite diameter, but clearly $E$ is smooth.
		
		Now, we check that if we replace the `orbitality on types' assumption by the assumption that $E$ refines $\equiv$, then Fact~\ref{fct:mainA} remains true. By Corollary 2.24 from \cite{KrRz} (which immediately follows from Newelski's theorem, i.e.\ from Fact~\ref{fct:twN}), the assumption that $C$ is of infinite diameter implies that it is not type-definable. Thus, by Corollary~\ref{cor:Borel1}, we get that $E\restr_Y$ is not smooth.
	\end{proof}

	The final corollary of Theorem~\ref{thm:main_Borel} concerns definable groups, and is a generalization of Fact~\ref{fct:mainG_KR}. Namely, we drop the assumptions that $H$ is normal and $F_\sigma$.
	\begin{cor}
		\label{cor:group_borel}
		Assume the language is countable.
		Suppose that $G$ is a $\emptyset$-definable group and $H\leq G$ is an invariant subgroup of bounded index. Suppose in addition that $K\geq H$ is a type-definable subgroup of $G$. Then $E_H\restr_{K}$ is smooth if and only if $H$ is type-definable (where $E_H$ is the relation of lying in the same right coset of $H$.)
	\end{cor}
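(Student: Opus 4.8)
The plan is to deduce this from Theorem \ref{thm:main_Borel} by the ``affine sort'' technique, exactly in the spirit of how Fact \ref{fct:mainG_KR} was obtained from Fact \ref{fct:mainA} in \cite{KrRz}. First I would pass to the expanded structure $(\C,X,\cdot)$ of Fact \ref{fct:affine_sort}, where $X$ is an affine copy of $G$. Since $G$ is $\emptyset$-definable and the language of $\C$ is countable, the theory of $(\C,X,\cdot)$ is complete and countable, and $(\C,X,\cdot)$ is a monster model of it; moreover, as $G$ acts transitively on $X$, all elements of $X$ realize a single complete type $p$ over $\emptyset$, so $X=p(\C)$. Using the identification $\aut((\C,X,\cdot))\cong G\rtimes\aut(\C)$ one checks, as in the discussion preceding Fact \ref{fct:grres}, that $E_{H,X}$ is invariant over $\emptyset$ and bounded (its number of classes equals $[G:H]$), that $Y:=K\cdot x_0$ is type-definable (because $K$ is type-definable in $\C$), and that $Y$ is $E_{H,X}$-saturated, since for $k\in K$ the $H$-orbit of $k\cdot x_0$ is $Hk\cdot x_0\subseteq K\cdot x_0$.

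Then I would apply Theorem \ref{thm:main_Borel} inside $(\C,X,\cdot)$, taking the $\emptyset$-type-definable, countably supported set to be the sort $X$, the equivalence relation to be $E_{H,X}$, and the pseudo-closed $E_{H,X}$-saturated subset to be $Y=K\cdot x_0$. Since $X=p(\C)$, the restriction $E_{H,X}\restr_{Y\cap p(\C)}$ is just $E_{H,X}\restr_{K\cdot x_0}$, so the theorem yields the dichotomy: $E_{H,X}\restr_{K\cdot x_0}$ is either type-definable or not smooth.

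It remains to transport this back to $G$. By Fact \ref{fct:grres}, for any small (in particular countable) model $M$, with $N:=(M,G(M)\cdot x_0)$, the map $g\mapsto g\cdot x_0$ induces a homeomorphism $K_M\to(K\cdot x_0)_N$ carrying $E_H^M\restr_{K_M}$ to $E_{H,X}^N\restr_{(K\cdot x_0)_N}$; a homeomorphism is a Borel isomorphism, so these two relations have the same Borel cardinality and, in particular, one is smooth if and only if the other is. This step needs no Borelness assumption on $H$ — the literal phrasing of Fact \ref{fct:grres} restricts to Borel $H$ only because that was the scope there — and it is the only slightly non-routine point: everything else is the affine-sort bookkeeping already carried out in \cite{KrRz}. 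Finally, by Remark \ref{rem:grresplus}, $E_{H,X}\restr_{K\cdot x_0}$ is type-definable if and only if $H$ is; combining this with the dichotomy above and with Fact \ref{fct: type-definability imply smoothness} (type-definability implies smoothness), we conclude that $E_H\restr_K$ is smooth if and only if $H$ is type-definable, as desired. Thus the genuine content all sits in Theorem \ref{thm:main_Borel}; there is no new obstacle beyond verifying the transfer of invariance, boundedness, type-definability and saturation through the affine sort, which is standard.
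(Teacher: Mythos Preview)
Your proposal is correct and follows essentially the same approach as the paper: pass to the affine sort via Fact~\ref{fct:affine_sort}, apply Theorem~\ref{thm:main_Borel} to $E_{H,X}$ on $Y=K\cdot x_0$, and then transfer type-definability and smoothness back to $H$ and $E_H\restr_K$ using Fact~\ref{fct:grres} and Remark~\ref{rem:grresplus}. Your write-up is in fact more detailed than the paper's, and your observation that the homeomorphism in Fact~\ref{fct:grres} gives the smoothness equivalence without any Borelness hypothesis on $H$ is exactly the point the paper makes parenthetically.
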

	\begin{proof}
		We use the construction from Fact~\ref{fct:affine_sort}. We apply Theorem~\ref{thm:main_Borel} to $E:=E_{H,X}$ and $Y:=K\cdot x_0$, and then use Fact~\ref{fct:grres} and Remark~\ref{rem:grresplus} -- more precisely, the parts saying that:
		\begin{itemize}
			\item
			$K\cdot x_0$ is type-definable, 
			\item
			$H$ is type-definable if and only if $E_{H,X}\restr_{K\cdot x_0}$ is,
			\item
			$E_H \restr_K$ is smooth if and only if $E_{H,X}\restr_{K\cdot x_0}$ is (which is witnessed by the homeomorphism from Fact~\ref{fct:grres}).\qedhere
		\end{itemize}
	\end{proof}
	
	Now, we turn to the proof of Theorem~\ref{thm:main_Borel}. In the course of this proof, we take the notation (in particular, the names for all the functions) from Sections~\ref{section: top dyn for aut(C)} and~\ref{section: topological lemmas}.
	
	\begin{proof}[Proof of Theorem~\ref{thm:main_Borel}.]
		It is clear that Theorem~\ref{thm:main_Borel} does not depend on the choice of the monster model in which we are working.	
		So, in this proof, we are working in the bigger monster model $\C' \succ \C$ which was used to define $\bar h_E$ in Section~\ref{section: top dyn for aut(C)}.
		
		First of all, without loss of generality, we can assume that $X=p(\C')$ is the set of realizations of a single complete type $p$ over $\emptyset$.
		
		Take any $\bar \alpha \in Y(\C)$. By Remark~\ref{rem:type-definability_of_relations}, we know that $E$ is type-definable iff $[\bar \alpha]_E$ is type-definable iff $E\restr_Y$ is type-definable.
		
		Assume that $E\restr_Y$ is smooth.
		To prove the theorem, \emph{ we need to show that ${[\bar \alpha]_E}$ is type-definable}.	
		
		Let $Y_{\bar \alpha}$ be the collection of $\bar \beta \in X$ such that $\bar \beta /E \in \cl (\bar \alpha/E)$. Then $Y_{\bar \alpha}$ is type-definable, $E$-saturated and contained in $Y$.
		Thus, $E\restr_{Y_{\bar \alpha}}$ is smooth, and we can assume that $Y=Y_{\bar \alpha}$.
		Then, for every $\sigma' \in \aut(\C')$, the condition $\sigma'(\bar \alpha/E)=\bar \alpha/E$ implies that $\sigma'[Y]=Y$
		(because automorphisms of $\C'$ induce homeomorphisms of $X/E$, so $\sigma'$ takes $\cl(\bar \alpha/E)$ to $\cl( \sigma'(\bar\alpha)/E)$).
		
		Put
		\[
			S:=\{\sigma'/\autf_L(\C') \in \gal_L(T)\sbmid \sigma'[Y]=Y\}.
		\]
		Then $S$ is closed by Lemma~\ref{lem:lem_closed}. Now, we define
		\[
			(u\M)_S:=f^{-1}[S],
		\]
		which is a $\tau$-closed subgroup of $u\M$ by the continuity of $f$ (see Theorem~\ref{thm:main theorem 1}). By the conclusion of the preceding paragraph, we get
		
		\begin{equation}
			\ker(h_E) \leq (u\M)_S \; \; \mbox{and} \;\; \ker(\bar h_E) \leq (u\M)_S/H(u\M).
		\end{equation}
		
		We aim to show that $\ker(\bar h_E)$ is closed in the compact, Hausdorff group $u\M/H(u\M)$ which will allow us to finish the proof quickly, using Theorem~\ref{thm:main theorem 3}. To show that $\ker(\bar h_E)$ is closed, it is enough to check that $\ker(\bar h_E)$ (treated as a subgroup of the compact, Hausdorff group $(u\M)_S/H(u\M)$) satisfies the assumptions of Fact~\ref{fct:Miller}, which is done in Claims 2(ii) and 3 below.
		
		Choose any countable $M\prec \C$. Recall that after Question~\ref{question: Is zeta continuous} we defined the natural restriction function
		\[
			r \fcolon u\M \to p(\C')_M.
		\]
		From the definition of $S$ and $r$, it is clear that
		$g_E[S] \subseteq Y/E$ and $r[(u\M)_S] \subseteq Y_M$.
		
		Thus, we have the following commutative diagram of functions considered below. More precisely, these are restrictions to some smaller domains of the functions considered in Sections~\ref{section: top dyn for aut(C)},~\ref{section: topological lemmas} and also in the above observations, but we do not introduce new names for these restrictions. The natural function from $Y_M$ to $Y/E$ taking a type in $Y_M$ to the $E$-class of its realization will be denoted by $\rho$.
		
		\begin{figure}[H]
			\label{fig:cd2}
			\centering
			\begin{tikzcd}
				(u\M)_S\arrow[r,"j"]\arrow[d,"r"]\arrow[rrd,"h_E"] & (u\M)_S/H(u\M) \arrow[r,"\bar f"]\arrow[dr,"\bar h_E"]& S \arrow[d,"g_E"] \\
				Y_M\arrow[rr,"\rho"] & & Y/E
			\end{tikzcd}
			\caption{Commutative diagram of maps considered below}
		\end{figure}
		
		Define a relation $E'$ on $(u\M)_S$ by
		\[
			x\Er 'y \iff r(x)\Er ^Mr(y),
		\]
		and $E''$ on $(u\M)_S/H(u\M)$ as lying in the same left coset modulo $\ker (\bar h_E)$.
		Of course, $j\fcolon(u\M)_S \to (u\M)_S/H(u\M)$ is the quotient map.\\
		
		\begin{clm}
			For any $x,y \in (u\M)_S$, $r(x) \Er ^M r(y)$ iff $x\Er 'y$ iff $j(x)\Er ''j(y)$.
		\end{clm}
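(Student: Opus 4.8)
The plan is to prove the claim by unwinding the definitions along the commutative diagram above. The equivalence $r(x)\Er^M r(y)\iff x\Er' y$ is literally the definition of $E'$, so the content lies in the equivalence with $j(x)\Er'' j(y)$, which I would obtain by routing through the equality $h_E(x)=h_E(y)$ of the images under $h_E\fcolon (u\M)_S\to Y/E$ (the restriction of the map introduced in Section~\ref{section: top dyn for aut(C)}).

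First I would record the explicit descriptions of the maps. For $z\in(u\M)_S$ fix $\sigma'_z\in\aut(\C')$ with $\sigma'_z(\bar c)\models\pi_0(z)$; then, by the very definitions of $r$ (the restriction of $\hat r$ to $u\M$) and of $h_E=g_E\circ f$, we have $r(z)=\tp(\sigma'_z(\bar\alpha)/M)$ and $h_E(z)=\sigma'_z(\bar\alpha)/E$, using the \emph{same} automorphism $\sigma'_z$ in both formulas. Since $E$ is coarser than the relation of having the same type over $M$, every $E$-class is $M$-invariant (this is exactly the observation already used in the definition of $E^M$). Hence for any $a\models r(x)$ and $b\models r(y)$ we have $a\equiv_M\sigma'_x(\bar\alpha)$ and $b\equiv_M\sigma'_y(\bar\alpha)$, so $a\Er b$ iff $\sigma'_x(\bar\alpha)\Er\sigma'_y(\bar\alpha)$; that is, $r(x)\Er^M r(y)$ iff $h_E(x)=h_E(y)$.

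Next I would pass to the quotient by $H(u\M)$. Recall that $h_E=\bar h_E\circ j$, where $j\fcolon(u\M)_S\to(u\M)_S/H(u\M)$ is the quotient map, that $\bar h_E(g)=g\cdot(\bar\alpha/E)$ for the action of $u\M/H(u\M)$ on $[\bar\alpha]_\equiv/E$ induced via $\bar f$ from the action of $\gal_L(T)$, and that $\ker(\bar h_E)$ is precisely the stabilizer of $\bar\alpha/E$ under this action (and it is contained in $(u\M)_S/H(u\M)$, as recorded in the displayed inclusions above). Therefore $h_E(x)=h_E(y)$ iff $\bar h_E(j(x))=\bar h_E(j(y))$ iff $j(y)^{-1}j(x)$ fixes $\bar\alpha/E$, i.e.\ iff $j(y)^{-1}j(x)\in\ker(\bar h_E)$, i.e.\ iff $j(x)$ and $j(y)$ lie in the same left coset of $\ker(\bar h_E)$ --- which is by definition $j(x)\Er'' j(y)$. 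Chaining the equivalences proves the claim.

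I do not anticipate a genuine obstacle: this is bookkeeping with the diagram. The only points needing a little care are (a) using the \emph{same} representative $\sigma'_z$ when comparing $r$ and $h_E$, together with $M$-invariance of the $E$-classes, for the first equivalence; and (b) observing that although $\ker(\bar h_E)$ need not be normal, ``lying in the same left coset'' is still a well-defined equivalence relation, so $E''$ makes sense, and that the stabilizer description of $\bar h_E$ on $u\M/H(u\M)$ restricts correctly to the subgroup $(u\M)_S/H(u\M)$.
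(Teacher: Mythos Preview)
Your proof is correct and follows essentially the same route as the paper: the first equivalence is the definition of $E'$, and for the second you unwind $r$ and $h_E$ via a chosen $\sigma'_z\in\aut(\C')$ with $\sigma'_z(\bar c)\models\pi_0(z)$, then pass to $\bar h_E\circ j$ and use the stabilizer description of $\ker(\bar h_E)$ to land on the left-coset relation $E''$. The paper's argument is the same computation written more tersely (it works directly with $\sigma',\tau'$ rather than naming the intermediate map $h_E$), so there is no substantive difference.
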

		
		\begin{clmproof}[Proof of Claim 1]
			Only the second equivalence requires an explanation. 
			Roughly speaking, it follows from the commutativity of the above diagram, but we give the detailed sequence of equivalent conditions: $x \Er' y \iff r(x) \Er^M r(y) \iff (\rho \circ r)(x) = (\rho \circ r)(y) \iff (g_E \circ \bar f \circ j)(x) =  (g_E \circ \bar f \circ j)(y) \iff (g_E \circ \bar f)(j(x)) =  (g_E \circ \bar f)(j(y)) \iff \bar h_E(j(x)) = \bar h_E(j(y)) \iff j(y)^{-1}j(x) \in \ker(\bar h_E) \iff j(x) \Er'' j(y)$.
		\end{clmproof}
		
		Recall that we have assumed that $E\restr_Y$ is smooth, which by definition means that $E^M\restr_{Y_M}$ is smooth. Then Fact~\ref{fct: separating family} gives us a countable family $\{B_i\sbmid i\in \omega\}$ of Borel ($E^M$-saturated) subsets of $Y_M$, separating classes of $E^M\restr_{Y_M}$.\\
		
		\begin{clm}$\,$
			\begin{enumerate}[nosep,label=\roman{*})]
				\item
				The family $\{r^{-1}[B_i]\sbmid i \in \omega\}$ separates classes of $E'$, and so consists of $E'$-saturated sets.
				\item
				The family $\{j[r^{-1}[B_i]]\sbmid i \in \omega\}$ separates classes of $E''$, and so consists of $E''$-saturated (i.e.\ right $\ker (\bar h_E)$-invariant) sets.
			\end{enumerate}
		\end{clm}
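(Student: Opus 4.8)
The plan is to deduce both parts directly from Claim~1 --- which says that on $(u\M)_S$ the relation $E'$ is at once the $r$-preimage of $E^M$ and the $j$-preimage of $E''$ --- together with the observation (made just before the claim) that $r$ maps $(u\M)_S$ into $Y_M$, the space on which the separating family $\{B_i\}$ lives.

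For part (i), the $E'$-saturation of each $r^{-1}[B_i]$ is immediate: if $x \in r^{-1}[B_i]$ and $x \Er' y$, then $r(x) \Er^M r(y)$ with $r(x) \in B_i$, so $r(y) \in B_i$ because $B_i$ is $E^M$-saturated. For separation I would fix $x \in (u\M)_S$ and check $[x]_{E'} = \bigcap\{\,r^{-1}[B_i] \sbmid x \in r^{-1}[B_i]\,\}$: the inclusion $\subseteq$ is just the $E'$-saturation already noted, while for $\supseteq$, if some $y$ in this intersection had $\neg(x \Er' y)$, then $r(x)$ and $r(y)$ would be distinct $E^M$-classes \emph{both lying in $Y_M$}, so some $B_i$ would separate them, and pulling this back through $r$ would contradict $y$ lying in the intersection.

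For part (ii), the key preliminary step is that every $r^{-1}[B_i]$ is actually $H(u\M)$-bi-invariant, hence equals $j^{-1}[j[r^{-1}[B_i]]]$. Indeed, for $h \in H(u\M)$ we have $j(h) = e$, so $j(xh) = j(x) = j(hx)$, whence $xh \Er' x \Er' hx$ by Claim~1; combined with the $E'$-saturation from (i) this gives $H(u\M) \cdot r^{-1}[B_i] \cdot H(u\M) = r^{-1}[B_i]$. Since $E' = j^{-1}(E'')$ and $j$ is onto, it follows at once that $j[r^{-1}[B_i]]$ is $E''$-saturated, i.e.\ right $\ker(\bar h_E)$-invariant. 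For separation, given $z \in (u\M)_S/H(u\M)$ I would pick $x$ with $j(x) = z$ and compute $[z]_{E''} = j\bigl[[x]_{E'}\bigr] = \bigcap\{\,j[r^{-1}[B_i]] \sbmid z \in j[r^{-1}[B_i]]\,\}$, using part (i) for the middle equality and the identity $j^{-1}[j[r^{-1}[B_i]]] = r^{-1}[B_i]$ (plus surjectivity of $j$) to see that $j$ commutes with the intersection and that the index set is the right one.

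The whole thing is essentially diagram-chasing; the step I would be most careful about is the identity $j^{-1}[j[r^{-1}[B_i]]] = r^{-1}[B_i]$ used in (ii). It cannot be obtained by a blanket ``pass to the quotient'' argument, because $E'$ need not be translation-invariant and $\ker(\bar h_E)$ need not be normal in $(u\M)_S/H(u\M)$; one really has to use that $j$ annihilates $H(u\M)$, so that each $H(u\M)$-translate of a point is $E'$-equivalent to it, and then feed in the $E'$-saturation from part (i).
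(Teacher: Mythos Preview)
Your proof is correct and follows essentially the same route as the paper's: both parts are deduced from Claim~1 by pulling back along $r$ and pushing forward along $j$, with the key identity $j^{-1}[j[r^{-1}[B_i]]]=r^{-1}[B_i]$ obtained from the $E'$-saturation of $r^{-1}[B_i]$ (the paper phrases this as ``$r^{-1}[B_i]$ is a union of left cosets of $H(u\M)$'', which is exactly your observation that $H(u\M)$-translates of a point are $E'$-equivalent to it). Your cautionary remark about why a blanket pass-to-the-quotient argument does not suffice is well taken, though note that the specific fact you use---that $j(hx)=j(x)=j(xh)$---holds simply because $H(u\M)$ is normal in $u\M$ and $j$ is the quotient homomorphism.
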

		
		\begin{clmproof}[Proof of Claim 2]	
			(i) By definition, we have $x\Er 'y \iff r(x) \Er ^M r(y)$. Thus, $r^{-1}[[r(x)]_{E^M}]=[x]_{E'}$ for $x \in (u\M)_S$. 
			Since $\{B_i\sbmid i\in \omega\}$ separates classes of $E^M\restr_{Y_M}$, for any $x \in (u\M)_S$
			we have $[r(x)]_{E^M} = \bigcap \{ B_i\sbmid r(x) \in B_i\}$, so
			\[
				[x]_{E'}=r^{-1}[[r(x)]_{E^M}]=\bigcap \{r^{-1}[B_i]\sbmid x \in r^{-1}[B_i]\}.
			\]
			This means that $\{r^{-1}[B_i]\sbmid i \in \omega\}$ separates classes of $E'$.\\[1mm]
			(ii) By Claim 1, we have $x\Er 'y \iff j(x)\Er ''j(y)$. 
			Hence, since by (i) the preimages $r^{-1}[B_i]$ are $E'$-saturated, we see that for every $i$, $j^{-1}[j[r^{-1}[B_i]]]=r^{-1}[B_i]$.
			Therefore, for any $I \subseteq \omega$,
			\[
				j\left[\bigcap_{i \in I} r^{-1}[B_i]\right]=\bigcap_{i \in I}j[r^{-1}[B_i]].
			\]
			Since by (i) the family $\{r^{-1}[B_i]\sbmid i \in \omega\}$ separates classes of $E'$ and $j$ is onto, this implies that the family $\{j[r^{-1}[B_i]]\sbmid i \in \omega\}$ separates classes of $E''$. Indeed, for any $z \in (u\M)_S/H(u\M)$, the preimage $j^{-1}(z)$ is nonempty, and choosing $x \in j^{-1}(z)$, we have that $j^{-1}(z) \subseteq [x]_{E'}$ and
			\[
				\bigcap \{j[r^{-1}[B_i]]: z \in j[r^{-1}[B_i]]\} = j\left[\bigcap \{r^{-1}[B_i]: j^{-1}(z) \subseteq r^{-1}[B_i]\}\right] = j[[x]_{E'}] = [z]_{E''}.
			\]
		\end{clmproof}
		
		\begin{clm}	
			For every $i \in \omega$, the set $j[r^{-1}[B_i]]$ is an analytic subset of $(u\M)_S/H(u\M)$, i.e. it belongs to $\Souslin(\CLO((u\M)_S/H(u\M)))$, which implies that it is strictly Baire in $(u\M)_S/H(u\M)$.
		\end{clm}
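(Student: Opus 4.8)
The plan is to derive Claim 3 from Lemma \ref{lem: main technical lemma}, the ``strictly Baire'' conclusion coming for free from the observation recalled in the topology subsection that, in a Hausdorff space, every member of $\Souslin(\CLO(\cdot))$ is strictly Baire.

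First I would check that $B_i\in\Souslin(\CLO(p(\C')_M))$. The space $p(\C')_M$ is compact and metrizable --- hence Polish --- since the language, the model $M$, and the tuple of variables are all countable; and $Y_M$ is closed in it, because $Y$ is type-definable. So $B_i$, being Borel in $Y_M$, is Borel in $p(\C')_M$, and by \cite[Theorem 25.7]{Ke} it equals $\Souslin_s F_s$ for a regular Souslin scheme $(F_s)_{s\in\omega^{<\omega}}$ of closed subsets of $p(\C')_M$. Moreover $B_i$ is $E^M$-saturated as a subset of $p(\C')_M$: since $Y$ is $E$-saturated, $Y_M$ is $E^M$-saturated, so the $E^M$-class (in $p(\C')_M$) of a point of $B_i\subseteq Y_M$ is contained in $Y_M$ and hence, as $B_i$ is $E^M$-saturated in $Y_M$, in $B_i$. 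Therefore Lemma \ref{lem: main technical lemma} applies with $B:=B_i$ (here $P_M=p(\C')_M$, since $P=X=p(\C')$ is the domain of $E$ after the initial reduction), yielding $j[r^{-1}[B_i]]\in\Souslin(\CLO((u\M)/H(u\M)))$, where $r\fcolon u\M\to p(\C')_M$ is the map from Section \ref{section: topological lemmas}.

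Next I would localize this to the closed subgroup $(u\M)_S/H(u\M)$. For that it suffices to see that $r^{-1}[B_i]\subseteq(u\M)_S$, and since $B_i\subseteq Y_M$ it is enough to see $r^{-1}[Y_M]\subseteq(u\M)_S$. So let $x\in u\M$ and pick $\sigma'\in\aut(\C')$ with $\sigma'(\bar c)\models\pi_0(x)$ and with $r(x)=\tp(\sigma'(\bar\alpha)/M)\in Y_M$, i.e.\ $\sigma'(\bar\alpha)\in Y$. Recall that at the start of the proof we reduced to $Y=Y_{\bar\alpha}$, so $\sigma'(\bar\alpha)\in Y$ means $\sigma'(\bar\alpha)/E\in\cl(\bar\alpha/E)$; by Proposition \ref{prop: closure of alpha/E} this forces $\cl(\sigma'(\bar\alpha)/E)=\cl(\bar\alpha/E)$, and since $\sigma'$ induces a homeomorphism of $X/E$ we get $\sigma'[Y]=\sigma'[Y_{\bar\alpha}]=Y_{\sigma'(\bar\alpha)}=Y_{\bar\alpha}=Y$, whence $\sigma'/\autf_L(\C')\in S$ and $x\in f^{-1}[S]=(u\M)_S$. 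Now $(u\M)_S$ is $\tau$-closed and is a union of cosets of $H(u\M)$, so $(u\M)_S/H(u\M)$ is a closed subset of $u\M/H(u\M)$ and $j$ maps $(u\M)_S$ onto it; intersecting each member of the scheme witnessing $j[r^{-1}[B_i]]\in\Souslin(\CLO((u\M)/H(u\M)))$ with this closed set does not change the outcome of the Souslin operation (the outcome being already contained in it), so $j[r^{-1}[B_i]]\in\Souslin(\CLO((u\M)_S/H(u\M)))$.

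Finally, $(u\M)_S/H(u\M)$ is a closed subgroup of the compact, Hausdorff group $u\M/H(u\M)$, hence itself compact Hausdorff, so every set in $\Souslin(\CLO((u\M)_S/H(u\M)))$ is strictly Baire, which is the last assertion of the claim. I expect the substantive work to be entirely hidden inside Lemma \ref{lem: main technical lemma}; the only thing that needs attention here is the bookkeeping of ambient spaces --- recognizing $B_i$ as a member of $\Souslin(\CLO(p(\C')_M))$, and confirming that $j[r^{-1}[B_i]]$ actually lands inside the smaller space $(u\M)_S/H(u\M)$ so that the more restrictive Souslin class claimed is correct.
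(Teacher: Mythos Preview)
Your proof is correct and follows the same approach as the paper's: reduce to Lemma~\ref{lem: main technical lemma} by recognizing $B_i\in\Souslin(\CLO(p(\C')_M))$, then pass to the closed subgroup $(u\M)_S/H(u\M)$. The paper is terser at the localization step, simply writing ``we easily conclude'' that the Souslin class transfers to the smaller space; your argument via $r^{-1}[Y_M]\subseteq(u\M)_S$ (using the reduction $Y=Y_{\bar\alpha}$ and Proposition~\ref{prop: closure of alpha/E}) is a clean way to make this explicit, and in fact slightly sharper than what the paper records earlier in the proof, which only asserts $\sigma'[Y]=Y$ under the stronger hypothesis $\sigma'(\bar\alpha/E)=\bar\alpha/E$.
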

		
		\begin{clmproof}[Proof of Claim 3]
			Fix any $i \in \omega$. Since $B_i$ is a Borel (and hence analytic) subset of the Polish space $Y_M$, we have that $B_i \in \Souslin(\CLO(Y_M))$. Since $Y_M$ is closed in $p(\C')_M$, we get that $B_i \in \Souslin(\CLO(p(\C')_M))$. By Lemma~\ref{lem: main technical lemma}, for $r$ considered on its whole original domain $u\M$ (and not like in the current proof only on $(u\M)_S$), we get that $j[r^{-1}[B_i]] \in \Souslin(\CLO(u\M/H(u\M)))$. By intersecting with $(u\M)_S/H(u\M)$, we easily conclude that for $r$ restricted to $(u\M)_S$, one has $j[r^{-1}[B_i]] \in \Souslin(\CLO((u\M)_S/H(u\M)))$. By the basic facts recalled in the second paragraph below Fact~\ref{fct:Miller}, we get that $j[r^{-1}[B_i]]$ is a strictly Baire subset of $(u\M)_S/H(u\M)$.
		\end{clmproof}
		
		The group $(u\M)_S/H(u\M)$ is a compact, Hausdorff group. Hence, by Fact~\ref{fct:Miller} and Claims 2(ii) and 3, we conclude that the kernel $\ker (\bar h_E)$ is a $\tau$-closed subgroup of $(u\M)_S/H(u\M)$, and so of $u\M/H(u\M)$ as well. Hence, by Theorem~\ref{thm:main theorem 3}, we get
		\[
			\{\bar\alpha /E\}=\bar h_E[\ker (\bar h_E)]=\bar h_E[\cl_\tau(\ker (\bar h_E))]=\cl(\bar \alpha/E).
		\]
		This means that $\{\bar \alpha/E\}$ is closed, so the class $[\bar \alpha]_E$ is type-definable, and the proof is complete.
	\end{proof}

	\section{Application to bounded, invariant equivalence relations in an arbitrary language}\label{section: arbitrary language}

	\subsection{Main theorem for arbitrary language}
	
	In the countable language case, Theorem~\ref{thm:main_Borel} implies that whenever the relation in question is
	also Borel, then its restriction to $Y \cap p(\C)$ is type-definable, or it has $2^{\aleph_0}$ classes. The main goal of this section is to prove Theorem~\ref{thm:nwg} which is a generalization of this statement to the case of an arbitrary language; in particular, it generalizes
	key corollaries of Newelski's theorem (namely, Corollaries~\ref{cor:nwcr} and~\ref{cor: Newelski's theorem}). Moreover, in Theorem~\ref{thm:nwg}, we obtain new information concerning relative definability. Theorems~\ref{thm:main_Borel} and~\ref{thm:nwg} will easily give us the trichotomy theorem in the Section~\ref{section: trichotomy} which explains very well relationships between smoothness, type-definability, relative definability and the number of classes of Borel, bounded equivalence relations
	(on the set of realizations of a complete type over $\emptyset$).
	
	\begin{thm}
		\label{thm:nwg}
		Suppose that $E$ is a bounded, invariant equivalence relation
		on an invariant set $X\supseteq p(\C)$ for a complete type $p$ over $\emptyset$. Assume that $E$ can be obtained from type-definable sets by the Souslin operation
		(i.e.\ $E$ is in $\Souslin(\textrm{type-definable})$), while $Y\subseteq p(\C)$ is type-definable (with parameters) and $E$-saturated. Then:
		\begin{enumerate}[label=(\Roman*)]
			\item
			\label{it:nwg_1}
			$E\restr_{p(\C)}$ is type-definable, or $E\restr_Y$ has at least $2^{\aleph_0}$ classes,
			\item
			\label{it:nwg_2}
			in addition, if $\aut(\C/\{Y\})$ acts transitively on $Y/E$ (e.g.\ $Y=p(\C)$ or $Y$ is a KP strong type), then either $E\restr_Y$ is relatively definable (so, by compactness, it has finitely many classes), or $E\restr_Y$ has at least $2^{\aleph_0}$ classes.
		\end{enumerate}
	\end{thm}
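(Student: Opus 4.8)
The overall strategy is to route everything through the continuous surjection $\bar h_E\fcolon u\M/H(u\M)\to p(\C)/E$ of Section \ref{section: top dyn for aut(C)}, applied to $E\restr_{p(\C)}$ (which trivially refines $\equiv$), and to use Lemma \ref{lem: main technical lemma} to show that the relevant subgroups of the compact Hausdorff group $u\M/H(u\M)$ lie in $\Souslin(\CLO(\cdot))$, hence are strictly Baire; the $2^{\aleph_0}$ classes will then be produced by the Mycielski-type Corollary \ref{cor: from Mycielski}. Fix $\bar\alpha\in Y$ (if $Y=\emptyset$ there is nothing to prove), a small model $M$, and note that $E\restr_{p(\C)}$ is again in $\Souslin(\textrm{type-definable})$, being the intersection of $E$ with the type-definable set $p(\C)^2$; so by the Souslin analogue of Fact \ref{fct:eqcmp} (provable by arguments as in Proposition \ref{prop: Souslin and parameters}) the relation $E^M$ lies in $\Souslin(\CLO((p(\C)_M)^2))$, whence the $E^M$-class $B:=[\tp(\bar\alpha/M)]_{E^M}$, being a section of $E^M$, lies in $\Souslin(\CLO(p(\C)_M))$. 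Since $\ker(h_E)=r^{-1}[B]$ and $h_E=\bar h_E\circ j$, Lemma \ref{lem: main technical lemma} gives $\ker(\bar h_E)=j[r^{-1}[B]]\in\Souslin(\CLO(u\M/H(u\M)))$, so $\ker(\bar h_E)$ is strictly Baire; the same then holds for its trace on any closed subgroup.

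For \ref{it:nwg_1}: first I would record, via Theorem \ref{thm:main theorem 3}, that $\bar h_E$ restricts to a bijection $\cl_\tau(\ker\bar h_E)/\ker\bar h_E\to\cl(\bar\alpha/E)$ (closure in $p(\C)/E$); hence, by Remark \ref{rem:type-definability_of_relations} and the definition of the logic topology, $E\restr_{p(\C)}$ is type-definable $\iff$ $\{\bar\alpha/E\}$ is closed $\iff$ $\ker\bar h_E=\cl_\tau(\ker\bar h_E)$, i.e.\ $\ker\bar h_E$ is $\tau$-closed. So assume $E\restr_{p(\C)}$ is not type-definable. Then $\ker\bar h_E$ is a proper, dense, hence non-open subgroup of the compact Hausdorff group $\cl_\tau(\ker\bar h_E)$, and it has the Baire property there; by Corollary \ref{cor: from Mycielski}, $[\cl_\tau(\ker\bar h_E):\ker\bar h_E]=|\cl(\bar\alpha/E)|\geq 2^{\aleph_0}$. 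Finally, $Y_{\bar\alpha}:=\{\bar\beta\in p(\C)\sbmid \bar\beta/E\in\cl(\bar\alpha/E)\}$ is type-definable, $E$-saturated, and contained in $Y$ (as $Y/E$ is closed and contains $\bar\alpha/E$), so $|Y/E|\geq|Y_{\bar\alpha}/E|=|\cl(\bar\alpha/E)|\geq 2^{\aleph_0}$.

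For \ref{it:nwg_2}: let $S\leq\gal_L(T)$ be the setwise stabilizer of $Y/E$, which is closed by Lemma \ref{lem:lem_closed} (note $Y$ is $E_L$-saturated, being $E$-saturated), and put $(u\M)_S:=f^{-1}[S]$, a $\tau$-closed subgroup containing $H(u\M)$, so that $(u\M)_S/H(u\M)$ is a compact Hausdorff group. By the transitivity hypothesis it acts transitively on $Y/E$ through $\bar h_E$, with stabilizer of $\bar\alpha/E$ equal to $K:=\ker\bar h_E\cap((u\M)_S/H(u\M))$; thus $|Y/E|=[(u\M)_S/H(u\M):K]$, and $K$ is strictly Baire by the first paragraph. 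If $|Y/E|<2^{\aleph_0}$, then Corollary \ref{cor: from Mycielski} forces $K$ open, so of finite index, so $Y/E$ — and a fortiori $\cl(\bar\alpha/E)$ — is finite; feeding this into Theorem \ref{thm:main theorem 3} and applying Corollary \ref{cor: from Mycielski} to the Baire subgroup $\ker\bar h_E\leq\cl_\tau(\ker\bar h_E)$, finiteness of the index $|\cl(\bar\alpha/E)|$ forces $\ker\bar h_E$ to be open, hence clopen, hence (being dense) equal to $\cl_\tau(\ker\bar h_E)$; so $\ker\bar h_E$ is $\tau$-closed and, by \ref{it:nwg_1}'s equivalence, $[\bar\alpha]_E$ is type-definable. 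By transitivity every $E$-class inside $Y$ is then type-definable, and finitely many type-definable classes partitioning the type-definable set $Y$ are, by compactness, relatively definable; thus $E\restr_Y=\bigcup_i C_i\times C_i$ is relatively definable. Otherwise $|Y/E|\geq 2^{\aleph_0}$.

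The main obstacle is exactly this last implication — passing from \emph{finitely many classes} to \emph{relative definability} — which cannot be read off from cardinality alone (a finite quotient need not be $T_1$) and is where the Souslin hypothesis on $E$ is genuinely used: it makes $\ker\bar h_E$ strictly Baire, enabling the \emph{second} invocation of Corollary \ref{cor: from Mycielski} on the (necessarily finite-index, hence trivial) inclusion $\ker\bar h_E\leq\cl_\tau(\ker\bar h_E)$ to upgrade finiteness of $\cl(\bar\alpha/E)$ to type-definability of $[\bar\alpha]_E$. The remaining bookkeeping — checking that the Souslin calculus of Lemma \ref{lem: main technical lemma} survives restriction to the closed subgroups $(u\M)_S/H(u\M)$ and $\cl_\tau(\ker\bar h_E)$ (Proposition \ref{prop: Souslin and parameters}) and that the orbit–stabilizer identifications really match $Y/E$ — is routine.
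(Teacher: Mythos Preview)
Your proposal is correct and follows essentially the same route as the paper: reduce to the compact Hausdorff group $u\M/H(u\M)$ via $\bar h_E$, use Lemma \ref{lem: main technical lemma} (packaged in the paper as Lemma \ref{lem:analytic_subgroup}) to see that $\ker\bar h_E$ is in $\Souslin(\CLO(\cdot))$ and hence strictly Baire, and then invoke Theorem \ref{thm:main theorem 3} together with Corollary \ref{cor: from Mycielski}. The one place you work harder than necessary is in \ref{it:nwg_2}: once your first application of Corollary \ref{cor: from Mycielski} (to $K\leq (u\M)_S/H(u\M)$) gives $|Y/E|<2^{\aleph_0}\Rightarrow |Y/E|<\omega$, the paper simply cites part \ref{it:nwg_1} to get type-definability of $E\restr_{p(\C)}$ directly, and then finiteness $+$ type-definability $+$ compactness yields relative definability --- your ``second invocation'' of Corollary \ref{cor: from Mycielski} on $\ker\bar h_E\leq\cl_\tau(\ker\bar h_E)$ re-proves that special case of \ref{it:nwg_1} rather than quoting it, so it is correct but redundant.
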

	
	Applying this theorem in the case when $X=Y=p(\C)$, we get
	
	\begin{cor}
		Let $E$ be a bounded, invariant equivalence relation on a single complete type $p$ over $\emptyset$, and assume that $E$ is in $\Souslin(\textrm{type-definable})$. Then either $E$ is relatively definable (and so it has finitely many classes), or it has at least $2^{\aleph_0}$ classes.
	\end{cor}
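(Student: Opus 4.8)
The corollary is the special case $X=Y=p(\C)$ of the second part of Theorem~\ref{thm:nwg}, so the plan is just to specialize that theorem and verify its hypothesis. Here $Y=p(\C)$ is the set of realizations of a complete type over $\emptyset$, so $\aut(\C/\{Y\})=\aut(\C)$ already acts transitively on $p(\C)$, a fortiori on the quotient $p(\C)/E$; hence Theorem~\ref{thm:nwg} applies and yields that $E=E\restr_{p(\C)}$ is either relatively definable or has at least $2^{\aleph_0}$ classes. The parenthetical ``and so it has finitely many classes'' is then a routine compactness remark: if $E$ is the trace on $p(\C)^2$ of a $\C$-definable set, then each $E$-class and its complement in $p(\C)$ are relatively definable, so every singleton of $p(\C)/E$ is clopen in the logic topology, whence the compact space $p(\C)/E$ is discrete and therefore finite.

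For completeness let me indicate how the relevant instance of Theorem~\ref{thm:nwg} is proved, since that is where the work lies. Fix $\bar\alpha\in p(\C)$, choose a model $M$, and pass to the compact, Hausdorff group $G:=u\M/H(u\M)$ of Section~\ref{section: top dyn for aut(C)}, together with the topological quotient map $\bar h_E\fcolon G\to p(\C)/E$ (Corollary~\ref{cor: h_E is quotient}) and the stabilizer subgroup $\ker(\bar h_E)\leq G$ of $\bar\alpha/E$; since $\bar h_E$ is a quotient map and $G$ acts transitively on $p(\C)/E$, the logic topology on $p(\C)/E$ is the quotient of $G$ by the coset space of $\ker(\bar h_E)$, and in particular $\lvert p(\C)/E\rvert=\lvert G/\ker(\bar h_E)\rvert$. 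The decisive point is that $\ker(\bar h_E)$ has the Baire property in $G$: one has $\ker(h_E)=r^{-1}[B]$, where $B$ is the $E^M$-class of $\tp(\bar\alpha/M)$ (an $E^M$-saturated subset of $P_M$), and since $E\in\Souslin(\textrm{type-definable})$, Proposition~\ref{prop: Souslin and parameters} gives $B\in\Souslin(\CLO(P_M))$; applying Lemma~\ref{lem: main technical lemma} we obtain $\ker(\bar h_E)=j[\ker(h_E)]=j[r^{-1}[B]]\in\Souslin(\CLO(G))$, hence $\ker(\bar h_E)$ is strictly Baire in $G$. As $G$ is a (locally) compact Hausdorff group, Corollary~\ref{cor: from Mycielski} now gives a clean alternative: either $\ker(\bar h_E)$ is open in $G$, or $\lvert G/\ker(\bar h_E)\rvert\geq 2^{\aleph_0}$. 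In the first case $\ker(\bar h_E)$ is clopen of finite index, so all its cosets are clopen, so (by the quotient map property) every singleton of $p(\C)/E$ is clopen in the logic topology, i.e.\ $E$ is relatively definable on $p(\C)$ with finitely many classes; in the second case $\lvert p(\C)/E\rvert\geq 2^{\aleph_0}$.

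The main obstacle in this scheme is the strictly-Baire claim for $\ker(\bar h_E)$; everything else is a routine application of the general topology recalled in the preliminaries. Concretely, one has to carry a regular Souslin scheme of closed sets through the zig-zag $\cl(u\M)\xrightarrow{\zeta}u\M\xrightarrow{j}G$ while bridging the gap between the topology on $\cl(u\M)$ inherited from the Ellis semigroup and the (coarser) $\tau$-topology on $u\M$ — which is exactly what Lemma~\ref{lem: variant} (continuity of $j\circ\zeta$ out of the Ellis topology into the Hausdorff group $G$) and the $\tau$-closure bookkeeping in the alternative proof of Lemma~\ref{lem: main technical lemma} are designed to handle; the transfer of parameters needed in the uncountable-language case is handled by Proposition~\ref{prop: Souslin and parameters}. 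Once the strictly-Baire property is in hand, the Piccard--Pettis theorem (Fact~\ref{fct: Pettis}) and the Mycielski-style argument behind Corollary~\ref{cor: from Mycielski} do the rest.
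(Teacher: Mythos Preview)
Your proposal is correct and takes essentially the same approach as the paper: the corollary is obtained by specializing Theorem~\ref{thm:nwg}\ref{it:nwg_2} to $X=Y=p(\C)$, where the transitivity hypothesis is automatic (as the theorem itself notes parenthetically) and the finiteness of the class set is already packaged into the theorem's statement via compactness. Your second and third paragraphs go beyond what is needed for the corollary itself---you are re-deriving the relevant instance of Theorem~\ref{thm:nwg}, whose proof you may simply cite---but the sketch you give is accurate and even slightly streamlined compared to the paper's proof of \ref{it:nwg_2} (you bypass the detour through part~\ref{it:nwg_1} and work directly with $G=u\M/H(u\M)$, which is legitimate here because $Y=p(\C)$ is $\aut(\C)$-invariant, so the auxiliary subgroup $(u\M)_S/H(u\M)$ coincides with the full group).
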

	
	Applying the theorem in the case when $Y$ is a KP type, we get

	\begin{cor}\label{cor:mainBu}
		Let $E$ be a bounded, invariant equivalence relation (on some $X$) refining $E_{KP}$, and assume that $E$ is in $\Souslin(\textrm{type-definable})$. Then, for any $a \in X$, either $E \restr_{[a]_{KP}}$ has only one class, or it has at least $2^{\aleph_0}$ classes.
	\end{cor}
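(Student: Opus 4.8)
The plan is to obtain Corollary \ref{cor:mainBu} as a quick consequence of Theorem \ref{thm:nwg}\ref{it:nwg_1}. Fix $a \in X$, set $p := \tp(a/\emptyset)$, and take $Y := [a]_{E_{KP}}$. Since $E$ refines $E_{KP}$ and $E_{KP}$ refines $\equiv$, we have $Y \subseteq p(\C)$ and $Y$ is $E$-saturated; and since $E_{KP}$ restricted to $p(\C)$ is $\emptyset$-type-definable, the class $Y$ is type-definable (over $a$). Thus the hypotheses of Theorem \ref{thm:nwg} hold for this $Y$ (with $E$ in $\Souslin(\textrm{type-definable})$ by assumption), and part \ref{it:nwg_1} gives us that either $E\restr_{p(\C)}$ is type-definable, or $E\restr_Y$ has at least $2^{\aleph_0}$ classes. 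In the latter case there is nothing more to prove, so assume $E\restr_{p(\C)}$ is type-definable.

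The remaining task is to upgrade ``type-definable'' to ``has exactly one class''. First I would note that, by (the proof of) Remark \ref{rem:type-definability_of_relations}, an invariant equivalence relation defined on a single complete type over $\emptyset$ whose classes are type-definable is in fact $\emptyset$-type-definable; hence $E\restr_{p(\C)}$ is $\emptyset$-type-definable. It is also bounded, since $E$ is. Now I would invoke the standard fact (used already in the proof of Corollary \ref{cor:mainB}) that $E_{KP}$ restricted to $p(\C)$ is the \emph{finest} bounded, $\emptyset$-type-definable equivalence relation on $p(\C)$: this means precisely that $E_{KP}\restr_{p(\C)}$ refines $E\restr_{p(\C)}$. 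Together with the hypothesis that $E$ refines $E_{KP}$, this yields $E\restr_{p(\C)} = E_{KP}\restr_{p(\C)}$, and in particular $E\restr_{[a]_{E_{KP}}} = E_{KP}\restr_{[a]_{E_{KP}}}$ has exactly one class, which is what we wanted.

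Everything here is routine given Theorem \ref{thm:nwg}; the one step worth double-checking is the passage from the type-definability of $E\restr_{p(\C)}$ to its $\emptyset$-type-definability, which is why I rely on the \emph{proof} of Remark \ref{rem:type-definability_of_relations} rather than just its statement --- it produces a partial type $\pi(x,y)$ over $\emptyset$ defining $E$ on $p(\C)$, and it is this $\emptyset$-definability that makes the minimality of $E_{KP}$ applicable. (One could equally run the argument through part \ref{it:nwg_2} of Theorem \ref{thm:nwg}: as $Y=[a]_{E_{KP}}$ is a KP strong type, $\aut(\C/\{Y\})$ acts transitively on $Y/E$, so $E\restr_Y$ is relatively definable or has at least $2^{\aleph_0}$ classes; relative definability of $E\restr_Y$ makes $[a]_E\subseteq Y$ type-definable, and one concludes as above.)
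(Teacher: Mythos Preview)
Your proof is correct and follows essentially the same approach as the paper. The paper treats this corollary as an immediate application of Theorem~\ref{thm:nwg} with $Y$ a KP strong type, and the detailed argument you supply---reducing the ``type-definable'' branch to ``one class'' via Remark~\ref{rem:type-definability_of_relations} and the minimality of $E_{KP}\restr_{p(\C)}$---exactly parallels the paper's proof of the countable-language analogue, Corollary~\ref{cor:mainB}.
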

	
	Similarly to Corollary~\ref{cor:group_borel} in the countable case, we obtain the following corollary (where the extra assumption of~\ref{it:nwg_2} is ``automatically'' satisfied). This is a significant strengthening of
	\cite[Corollary 3.37]{KaMi} -- we weaken the assumption that $H$ is $F_\sigma$, relativize to a subgroup, and rule out infinite
	indices below $2^{\aleph_0}$. 
	\begin{cor}
		\label{cor:group_nwg}
		Suppose that $H$ is a bounded index, invariant subgroup of a $\emptyset$-definable group $G$. Assume that $H$ is in $\Souslin(\textrm{type-definable})$, while $K\geq H$ is a type-definable subgroup of $G$. Then either $H$ is relatively definable in $K$ (in which case $[K:H]$ is finite), or $[K:H]\geq 2^{\aleph_0}$.
	\end{cor}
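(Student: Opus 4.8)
The plan is to reduce to Theorem \ref{thm:nwg} through the affine sort, exactly the way Corollary \ref{cor:group_borel} was reduced to Theorem \ref{thm:main_Borel}, and then to read the result back into the group language using Fact \ref{fct:grres} and Remark \ref{rem:grresplus}.

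Concretely, I would first adjoin to $\C$ a left principal homogeneous space $X$ of $G$ with its base point $x_0$, as in Fact \ref{fct:affine_sort}, so that $\aut((\C,X,\cdot))\cong G\rtimes\aut(\C)$. In this expanded structure $X=p(\C)$ for a single complete type $p$ over $\emptyset$, since the orbit of $x_0$ under $\{\sigma_g\sbmid g\in G\}$ is already all of $X$. I would then set $E:=E_{H,X}$ (the relation ``same $H$-orbit'') and $Y:=K\cdot x_0$. The relation $E$ is bounded (it has $[G:H]<\kappa$ classes) and, crucially, it is invariant over $\emptyset$: $H$ acts on $X$ on the left, each $\sigma_g$ acts on the right, and every $\sigma\in\aut(\C)$ sends $H$ to $H$ setwise since $H$ is invariant, so all of $G\rtimes\aut(\C)$ permutes the $H$-orbits --- this is the usual reason the affine sort trick does not need $H$ normal. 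By Fact \ref{fct:grres} and Remark \ref{rem:grresplus}, $Y$ is type-definable; it is $E$-saturated because $H\leq K$; and, because $H\in\Souslin(\textrm{type-definable})$, the $\Souslin$-variant of Fact \ref{fct:cartdf} noted after Remark \ref{rem:grresplus} (provable by a Proposition \ref{prop: Souslin and parameters}(ii)-style argument) gives $E_{H,X}\in\Souslin(\textrm{type-definable})$.

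Next I would verify the extra hypothesis of part \ref{it:nwg_2} of Theorem \ref{thm:nwg}, namely that $\aut((\C,X,\cdot)/\{Y\})$ acts transitively on $Y/E$. For $g\in K$ the automorphism $\sigma_g$ fixes $\C$ pointwise and maps $Y=K\cdot x_0$ onto itself, because $\sigma_g(h\cdot x_0)=(hg^{-1})\cdot x_0$ and $hg^{-1}\in K$; thus $\sigma_g\in\aut((\C,X,\cdot)/\{Y\})$. Identifying $Y/E$ with $H\backslash K$, the map $\sigma_g$ acts as right multiplication by $g^{-1}$, so $\{\sigma_g\sbmid g\in K\}$ already acts transitively on $Y/E$. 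Hence Theorem \ref{thm:nwg}\ref{it:nwg_2} applies and yields that $E\restr_Y$ is relatively definable, or $E\restr_Y$ has at least $2^{\aleph_0}$ classes.

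Finally I would translate back. By Remark \ref{rem:grresplus}, $E_{H,X}\restr_{K\cdot x_0}$ is relatively definable iff $H$ is relatively definable in $K$, and by the homeomorphism of Fact \ref{fct:grres} the number of classes of $E_{H,X}\restr_{K\cdot x_0}$ is $[K:H]$. So either $H$ is relatively definable in $K$, or $[K:H]\geq 2^{\aleph_0}$; and in the first case $[K:H]$ is finite by a standard compactness argument, since a relatively definable subgroup of bounded index of the type-definable group $K$ has finite index (the cosets are clopen in $K$, so $K/H$ with the logic topology is compact, Hausdorff and discrete). I do not expect a real obstacle here: all the work sits in Theorem \ref{thm:nwg}, and the only points needing genuine care are the invariance of $E_{H,X}$ over $\emptyset$ and the automatic verification of the hypothesis of part \ref{it:nwg_2}, both of which reduce to the commutation of the left $H$-action with the right action of the $\sigma_g$'s on the affine sort.
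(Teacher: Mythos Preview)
Your proposal is correct and follows essentially the same route as the paper's own proof: pass to the affine sort via Fact~\ref{fct:affine_sort}, apply Theorem~\ref{thm:nwg}\ref{it:nwg_2} to $E=E_{H,X}$ and $Y=K\cdot x_0$, and translate back via Fact~\ref{fct:grres} and Remark~\ref{rem:grresplus}. You give more detail than the paper does on why $E_{H,X}$ is $\emptyset$-invariant and on why the transitivity hypothesis of part~\ref{it:nwg_2} holds (the paper just says it follows ``from the description of the automorphism groups''), and your final compactness remark on finiteness of $[K:H]$ is already contained in the statement of Theorem~\ref{thm:nwg}\ref{it:nwg_2}, but none of this is a deviation in approach.
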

	\begin{proof}
		Similarly to Corollary~\ref{cor:group_borel}, we use the construction from Fact~\ref{fct:affine_sort}. We want to apply Theorem~\ref{thm:nwg} for $E:=E_{H,X}$ and $Y:=K\cdot x_0$. 

		By Remark~\ref{rem:grresplus}, $Y$ is type-definable (and clearly $E$-saturated). By Proposition~\ref{prop: Souslin and parameters} and Remark~\ref{remark: analyticity for groups}, we get that $E$ is in $\Souslin(\textrm{type-definable})$. Moreover, from the description of the automorphism groups, we see that the extra assumption of Theorem~\ref{thm:nwg}~\ref{it:nwg_2} is also satisfied. Therefore we can apply this theorem in our case. Then we only need to notice that the number of classes of $E_{H,X}\restr_{K\cdot x_0}$ is just $[K:H]$ and apply Remark~\ref{rem:grresplus}.
	\end{proof}

	The following lemma will be used in the proof of both parts of Theorem~\ref{thm:nwg}.
	\begin{lem}
		\label{lem:analytic_subgroup}
		Let $E$ be a bounded, invariant equivalence relation on the set of realizations of $p$ (where $p$ is a complete type over $\emptyset$), and let $\bar \alpha\in p(\C)$.
		
		Consider the group $H:=\ker \bar h_E\leq u\M/H(u\M)$, where $\bar h_E$ is defined 
		following Remark~\ref{rem: g_E is quotient}.
		
		If $E$ is in $\Souslin(\textrm{type-definable})$, then $H$ is in $\Souslin(\CLO(u\M/H(u\M)))$, and as such, it is strictly Baire in $u\M/H(u\M)$.
	\end{lem}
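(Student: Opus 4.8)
The plan is to exhibit $H$ as the $j$-image of a set of the form $r^{-1}[B]$, where $j\fcolon u\M\to u\M/H(u\M)$ is the quotient map, $r\fcolon u\M\to P_M$ is the restriction map from Section \ref{section: topological lemmas} (here $P:=p(\C)$, and $M\prec\C$ is a fixed small model --- although Lemma \ref{lem: main technical lemma} was stated for countable $M$, neither its statement nor its proof uses countability of $M$, so it applies here as well), and $B\subseteq P_M$ is an $E^M$-saturated set belonging to $\Souslin(\CLO(P_M))$. Granting this, Lemma \ref{lem: main technical lemma} immediately gives $H\in\Souslin(\CLO(u\M/H(u\M)))$; and since $u\M/H(u\M)$ is compact Hausdorff, every member of $\Souslin(\CLO(u\M/H(u\M)))$ is strictly Baire (as recalled below Fact \ref{fct:Miller}), which yields the remaining assertion.

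For the identification, recall from Section \ref{section: topological lemmas} that $h_E=g_E\circ f\fcolon u\M\to P/E$ factors through $r$, namely $h_E=\mathrm{nat}\circ r$, where $\mathrm{nat}\fcolon P_M\to P/E$ sends a type over $M$ to the $E$-class of any of its realizations. Since $f(u)=\id\,\autf_L(\C')$, we have $h_E(u)=\bar\alpha/E$, so
\[
\ker(h_E)=h_E^{-1}[\{\bar\alpha/E\}]=r^{-1}[B],\qquad B:=\{\,q\in P_M\sbmid q\Er^M\tp(\bar\alpha/M)\,\},
\]
and $B=\mathrm{nat}^{-1}[\{\bar\alpha/E\}]$ is a single $E^M$-class, hence $E^M$-saturated. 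Moreover $H(u\M)\leq\ker(f)\leq\ker(h_E)$ (the first inclusion is Theorem \ref{thm:main theorem 1}(2)), so $\ker(h_E)$ is a union of cosets of $H(u\M)$, and therefore $j[\ker(h_E)]=\ker(\bar h_E)=H$.

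It thus remains to check that $B\in\Souslin(\CLO(P_M))$, and this is where the only real work lies --- transporting the Souslin property of $E$ across the change of parameters from $\emptyset$ to $M$ and across a pushforward, which is exactly what Proposition \ref{prop: Souslin and parameters} and the identity $\pi^{-1}[E^M]=E_M$ from Fact \ref{fct:eqcmp} are designed for. Concretely: $E$ is invariant (hence also $M$-invariant, as its classes are $M$-invariant) and lies in $\Souslin(\textrm{type-definable})$, so Proposition \ref{prop: Souslin and parameters} gives $E_M\in\Souslin(\CLO((P^2)_M))$. The restriction map $\pi\fcolon(P^2)_M\to(P_M)^2$ is a continuous surjection of compact Hausdorff spaces with $\pi^{-1}[E^M]=E_M$, so $E^M=\pi[E_M]$; writing $E_M=\Souslin_s F_s$ for a regular Souslin scheme of closed sets and applying Remark \ref{rem: image of intersection} to the descending sequences $(F_{\eta\restr n})_n$, we obtain $E^M=\Souslin_s\pi[F_s]\in\Souslin(\CLO((P_M)^2))$. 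Finally, $B$ is the preimage of $E^M$ under the continuous map $q\mapsto(q,\tp(\bar\alpha/M))$, so $B$ is the result of the Souslin operation applied to the closed sets $\{q:(q,\tp(\bar\alpha/M))\in F_s\}$; hence $B\in\Souslin(\CLO(P_M))$, and the argument is complete.
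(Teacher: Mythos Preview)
Your proof is correct and follows essentially the same route as the paper: identify $H=j[r^{-1}[B]]$ with $B=[\tp(\bar\alpha/M)]_{E^M}$, then invoke Lemma \ref{lem: main technical lemma}. The only difference is in how you obtain $B\in\Souslin(\CLO(P_M))$: the paper observes directly that $[\bar\alpha]_E$ inherits the $\Souslin(\textrm{type-definable})$ property from $E$ (as a slice) and then applies Proposition \ref{prop: Souslin and parameters} to $[\bar\alpha]_E$ itself, whereas you pass through $E_M$ and $E^M$ first and slice at the end --- both are valid, yours is just slightly longer. One small slip: in your last sentence the sets should be $\{q:(q,\tp(\bar\alpha/M))\in\pi[F_s]\}$, since your $F_s$ live in $(P^2)_M$, not $(P_M)^2$.
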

	\begin{proof}
		Put $X=p(\C')$, where $\C' \succ \C$ is the bigger monster model which was used to define $\bar h_E$ in Section~\ref{section: top dyn for aut(C)}.
		
		The assumption that $E$ is in $\Souslin(\textrm{type-definable})$ clearly implies that so is $[\bar \alpha]_E$. By Proposition~\ref{prop: Souslin and parameters}, this implies that for a small model $M$, $[\tp(\bar \alpha/M)]_{E^M}$ is in $\Souslin(\CLO(S(M)))$, and so it is in $\Souslin(\CLO(X_M))$.
		Therefore, by Lemma~\ref{lem: main technical lemma}, the set $j[r^{-1}[[\tp(\bar \alpha/M)]_{E^M}]]$ is in $\Souslin(\CLO(u\M/H(u\M)))$.
		
		This finishes the proof, once we notice that $j[r^{-1}[[\tp(\bar \alpha/M)]_{E^M}]]=H$. The last equality follows from the commutativity of the following diagram and surjectivity of $j$. Namely, by the commutativity of the diagram, $r^{-1}[[\tp(\bar \alpha/M)]_{E^M}] = j^{-1}[\ker(\bar h_E)] = j^{-1}[H]$, so by the surjectivity of $j$, $j[r^{-1}[[\tp(\bar \alpha/M)]_{E^M}]]=H$.
		\begin{figure}[H]
			\centering
			\begin{tikzcd}
				u\M\arrow[r,"j"]\arrow[d,"r"]\arrow[rrd,"h_E"] & (u\M)/H(u\M) \arrow[r,"\bar f"]\arrow[dr,"\bar h_E"]& \gal_L(T) \arrow[d,"g_E"] \\
				X_M\arrow[rr] & & X/E
			\end{tikzcd}
			
		\end{figure}
	\end{proof}
	
	Now, we can proceed with the proof of Theorem~\ref{thm:nwg}.
	
	\begin{proof}[Proof of Theorem~\ref{thm:nwg}]
		\label{pf:nwg}
		Let $\bar \alpha \in Y(\C)$. Let $\C' \succ \C$ be the monster model using which $\bar h_E$ is defined. We can assume that $X=p(\C')$. We take the notation from Section~\ref{section: top dyn for aut(C)}.
		
		\ref{it:nwg_1} The argument
		is reminiscent of the proof of Theorem~\ref{thm:main_Borel}, but instead of Fact~\ref{fct:Miller}, we intend to apply Corollary~\ref{cor: from Mycielski} with $H:=\ker (\bar h_E)$ and $G:=\cl_\tau(H)$. Note that $G$ is a closed subgroup of $u\M/H(u\M)$, so it is a compact, Hausdorff group.
		
		Suppose $E\;(=E\restr_{p(\C')})$ is not type-definable. Then $\{\bar{\alpha}/E\}$ is not closed by Remark~\ref{rem:type-definability_of_relations}, which implies, by Theorem~\ref{thm:main theorem 3}, that $H$ is not $\tau$-closed (and thus not $\tau$-open).
		By Lemma~\ref{lem:analytic_subgroup}, $H$ has the Baire property, so Corollary~\ref{cor: from Mycielski} gives us that $|G/H|\geq 2^{\aleph_0}$. By Theorem~\ref{thm:main theorem 3}, this completes the proof, as $\bar h_E$ induces a bijection between $G/H$ and $\cl(\bar\alpha/E)\subseteq Y/E$.
		
		\ref{it:nwg_2}
		Let us assume that $E\restr_Y$ has less than $2^{\aleph_0}$ classes. Then, by part~\ref{it:nwg_1}, it is type-definable. We need to prove that it is relatively definable.
		
		By Lemma~\ref{lem:analytic_subgroup}, $H:=\ker(\bar h_E)$ has the strict Baire property as a subgroup of $u\M /H(u\M)$. Now, put
		\[
			G=(u\M)_S/H(u\M) := \bar{f}^{-1}[\aut(\C'/\{Y\})/\autf_L(\C')]
		\]
		and $H'=G\cap H$.
		
		By Lemma~\ref{lem:lem_closed} and continuity of $\bar f$, $G$ is a closed subgroup of $u\M /H(u\M)$, and therefore a compact, Hausdorff group, while $H'$ is (strictly) Baire in $G$.
		
		Notice that by the assumption of (II) concerning transitivity of the action together with the surjectivity of $\bar f$, $\bar h_E$ induces a bijection between $G/H'$ and $Y/E$.
		Since we have assumed that $\lvert Y/E\rvert<2^{\aleph_0}$, we deduce from Corollary~\ref{cor: from Mycielski} that $H'$ is open in $G$. 
		Since $G$ is compact, this implies that, in fact, $[G:H']$ is finite, and so is $Y/E$. Using this and the fact that $E$ is type-definable, compactness gives us that all classes of $E\restr_Y$ are relatively definable, and so is $E\restr_Y$. 
\end{proof}

	Note that if $Y$ is relatively definable in $p(\C)$, then in~\ref{it:nwg_2} of the last theorem, if $E\restr_Y$ has less than $2^{\aleph_0}$ classes, we have that in fact $E\restr_{p(\C)}$ is relatively definable (because it has a relatively definable class and we can use Remark~\ref{rem:type-definability_of_relations}), but otherwise this need not be true (e.g.\ if $Y$ is a single $E_{KP}$-class and $E={E_{KP}}$, then trivially $Y/E$ is a singleton, but $E_{KP}$ need not be relatively definable on a single type).
	
	Notice also that the assumption that $\aut(\C/\{Y\})$ acts transitively on $Y/E$ is essential in~\ref{it:nwg_2}, which can be seen in the following example.
	
	\begin{ex}
		Consider $T=\Th(2^\omega,E_n)_{n\in \omega}$, where $E_n$ is equality on the $n$-th coordinate. In the monster model, we consider the relation $E$ which is the intersection of all the relations $E_n$.
		
		Then there is only one type in $S_1(\emptyset)$, and $\C/E$ is naturally homeomorphic to $2^\omega$. We can find an $E$-saturated set $Y$ such that $Y/E$ corresponds to
		a subset of $2^\omega$ consisting of a convergent sequence of pairwise distinct elements along with its limit, which is the only limit point of the set of elements of this sequence. Then $Y$ is type-definable and $Y/E$ is of cardinality $\aleph_0$, which clearly implies that $E\restr_Y$ is not relatively definable. Hence, $Y$ does not satisfy the conclusion of Theorem~\ref{thm:nwg}~\ref{it:nwg_2}.
	\end{ex}
	
	Notice that Theorem~\ref{thm:nwg} gives us an alternative proof of Corollary~\ref{cor:nwcr}.
	
	\begin{proof}[Proof of Corollary~\ref{cor:nwcr}]
		If $E$ is $F_\sigma$, it can be obtained from type-definable sets by the Souslin operation (trivially, with $K_\eta=K_{\eta(0)}$ depending only on the first term of any given $\eta \in \omega^{<\omega}$), so Theorem~\ref{thm:nwg} applies.
	\end{proof}

	\subsection{Further considerations}
	
	In Theorem~\ref{thm:nwg}, we generalized Corollary~\ref{cor:nwcr}.
	To obtain an alternative proof or a generalization of Newelski's theorem (i.e.\ Fact~\ref{fct:twN}), we would need to somehow recover the notion of diameter, which is lost in the present generality. Hence the question is:
	
	\begin{ques}
		\label{ques:diam}
		Can we somehow extend the notion of diameter of a class of an $F_\sigma$ equivalence relation in a way that would allow us to generalize Fact~\ref{fct:twN}, or can we at least use our techniques to give an alternative proof of Fact~\ref{fct:twN}?
	\end{ques}
	
	In a different direction, recall the notion of the sub-Vietoris topology introduced in \cite{KrRz}.
	
	\begin{dfn}
		Suppose $X$ is a topological space. Then by the {\em sub-Vietoris topology} we mean the topology on $\powerset(X)$ (i.e.\ on the family of all subsets of $X$), or on any subfamily of $\powerset(X)$, generated by subbasis of open sets of the form $\{A\subseteq X\sbmid A\cap F=\emptyset\}$ for $F\subseteq X$ closed.
	\end{dfn}
	
	This allows us to state the following conjecture. The motivation is similar to \cite{KMS} and \cite{KrRz}. Namely, we would like to ``generalize'' Theorem~\ref{thm:main_Borel} to arbitrary (possibly uncountable) languages, not only in the form of a dichotomy between type-definability of the relation in question and a big cardinality of the set of its classes (as was done in Theorem~\ref{thm:nwg}\ref{it:nwg_1}), but we would like to show that if the relation is not type-definable, then, in some sense, $\EZ$ reduces to it. The main difference between the conjecture and Theorem~\ref{thm:main_Borel} is that we choose sets instead of points.
	
	\begin{conj}
		\label{conj:nwg2}
		Suppose we have $E,p,X,Y$ as in Theorem~\ref{thm:nwg}.
		
		Then whenever $E\restr_{p(\C)}$ is not type-definable, we have that for some small model $M$ there is a homeomorphic embedding $\psi\fcolon 2^{\omega}\to \powerset(Y_M)$ (where $\powerset(Y_M)$ is equipped with the sub-Vietoris topology) such that for any $\eta,\eta'\in 2^{\omega}$:
		\begin{enumerate}
			\item
			$\psi(\eta)$ is a nonempty closed set,
			\item
			if $\eta,\eta'$ are $\EZ$-related, then $[\psi(\eta)]_{E^M}=[\psi(\eta')]_{E^M}$,
			\item
			if $\eta,\eta'$ are distinct, then $\psi(\eta)\cap\psi(\eta')=\emptyset$,
			\item
			if $\eta,\eta'$ are not $\EZ$-related, then $(\psi(\eta)\times \psi(\eta'))\cap E^M=\emptyset$.
		\end{enumerate}
	\end{conj}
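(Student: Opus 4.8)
The plan is to combine the dynamical picture of Section~\ref{section: top dyn for aut(C)} with a simultaneous Mycielski/Harrington--Kechris--Louveau-type construction carried out inside a compact Hausdorff group. Fix $\bar\alpha\in Y(\C)$ and work, exactly as in the proof of Theorem~\ref{thm:nwg}, in $\C'$ with $H:=\ker\bar h_E\leq u\M/H(u\M)$ and $G:=\cl_\tau(H)$, a compact Hausdorff group; by Theorem~\ref{thm:main theorem 3}, $\bar h_E$ induces a bijection of $G$ modulo the left cosets of $H$ onto $\cl(\bar\alpha/E)\subseteq Y/E$, and $\bar h_E$ is constant on left cosets of $H$ (i.e.\ right-$H$-invariant), since for $h\in H$ one has $\bar h_E(xh)=\bar f(x)\bar f(h)(\bar\alpha/E)=\bar f(x)(\bar\alpha/E)=\bar h_E(x)$. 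By Lemma~\ref{lem:analytic_subgroup}, $H$ is strictly Baire; if $E\restr_{p(\C)}$ is not type-definable then $H$ is not $\tau$-closed (Theorem~\ref{thm:main theorem 3} and Remark~\ref{rem:type-definability_of_relations}), so by the Piccard--Pettis theorem (Fact~\ref{fct: Pettis}) $H$ is meager in $G$, whence the coset equivalence relation $E_H^G=\{(x,y):x^{-1}y\in H\}$ is a meager (hence $F_\sigma$-meager, say contained in $\bigcup_n F_n$ with $F_n$ closed nowhere dense) subset of $G\times G$, just as in the proof of Corollary~\ref{cor: from Mycielski}.

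The core would be a purely group-theoretic claim, which I would isolate as a lemma of independent interest (a ``set-valued Harrington--Kechris--Louveau theorem for non-metrizable compact groups''): \emph{if $H$ is a strictly Baire, non-closed subgroup of a compact Hausdorff group $G$, then there are nonempty open sets $W_s\subseteq G$ for $s\in 2^{<\omega}$ with $\overline{W_{si}}\subseteq W_s$, and elements $g_n\in H$, such that the subtree below $s^\frown 1$ is the right translate by $g_{|s|}$ of the subtree below $s^\frown 0$ (so that each level-$n$ piece is $W_{0^n}g_t$ with $g_t\in\langle g_i:i<n\rangle\leq H$, and the pieces at a fixed level are pairwise disjoint), and so that for $\EZ$-inequivalent $\eta,\eta'$ the compact sets $K_\eta:=\bigcap_n\overline{W_{\eta\restr n}}$ and $K_{\eta'}$ lie in distinct left cosets of $H$, i.e.\ $(K_\eta\times K_{\eta'})\cap E_H^G=\emptyset$.} Granting this, $\EZ$-equivalent $\eta,\eta'$ make $K_{\eta'}$ a right translate $K_\eta g$ with $g$ a finite product of the $g_i$, hence $g\in H$, so right-$H$-invariance of $\bar h_E$ (and Remark~\ref{rem: image of intersection}, to push $\bar h_E$ past the descending intersection) gives $\bar h_E[K_\eta]=\bar h_E[K_{\eta'}]$, whereas $\EZ$-inequivalence keeps $K_\eta,K_{\eta'}$ in disjoint left $H$-cosets, which $\bar h_E$ sends to disjoint subsets of $\cl(\bar\alpha/E)$ by injectivity. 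One then pulls these sets back along $j\circ\zeta\colon\cl(u\M)\to u\M/H(u\M)$ (continuous by Lemma~\ref{lem: variant}), intersects with clopen sets $\pi_0^{-1}[[\varphi_s]]$ for formulas $\varphi_s$ interleaved into the construction so as to separate distinct branches in the $\pi_0$-direction, and pushes forward along the continuous $r_{\cl}\colon\cl(u\M)\to P_M$ --- with $M$ chosen to contain the countably many parameters occurring --- exactly as in Lemma~\ref{lem: main technical lemma}, using compactness of $\cl(u\M)$, Hausdorffness of $P_M$ and Remark~\ref{rem: image of intersection} to keep all images closed; this produces the desired $\psi(\eta)\subseteq Y_M$ (here one uses that $K_\eta\subseteq G$, $\bar h_E[G]\subseteq Y/E$ and $Y$ is $E$-saturated to stay inside $Y_M$). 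Right-$H$-invariance yields~(2), injectivity of $G$ mod $H$ into $\cl(\bar\alpha/E)$ yields~(4), disjointness of the $K_\eta$ together with the $\varphi_s$ yields~(3), nonemptiness and closedness yield~(1), and $\psi$ is a sub-Vietoris homeomorphic embedding because its image is a $\subseteq$-antichain, on which the sub-Vietoris topology is Hausdorff, so the continuous bijection from the compact $2^\omega$ is automatically a homeomorphism.

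The hard part is the group-theoretic claim. The construction must reconcile the rigid right-translate tree structure (needed so that $\EZ$-equivalence is realised by a single element of $H$, which $\bar h_E$ forgets) with the Mycielski-style genericity (needed so that $\EZ$-inequivalent branches, differing in infinitely many coordinates, are separated modulo $H$). The second demand is the delicate one: the ``tail flip'' $\prod_{i\in I}g_i$ over an infinite $I$ does not converge in $G$, so one cannot simply translate the whole tree, and instead, at infinitely many stages one must shrink the current pieces so that the finitely many pairs that will remain $\EZ$-inequivalent avoid the appropriate $F_n$ \emph{without} breaking the forced translate relations among siblings --- this coupling of the shrinking across translated subtrees is where I expect the real work to lie. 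Two secondary points are: the interleaving of the clopen data and the choice of a single small $M$ absorbing all of it (so the descent genuinely lands in $\powerset(Y_M)$), and, when $G$ is ``small'' (e.g.\ monothetic), ensuring that enough $g_n\in H$ with all relevant subset-products distinct are available, which should follow from density of $H$ and a genericity argument. If the claim turns out to be out of reach in full generality, a natural fallback is to prove it under a metrizability hypothesis on $G$ --- i.e.\ a countability assumption on the theory --- where the genuine Harrington--Kechris--Louveau machinery (Fact~\ref{Harrington-Kechris-Louveau dichotomy}) applies and one reconnects with Theorem~\ref{thm:main_Borel}.
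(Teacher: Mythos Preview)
The statement you are attempting to prove is Conjecture~\ref{conj:nwg2}, and the paper does \emph{not} prove it; it is explicitly left open. What the paper does prove is the converse (Proposition~\ref{prop:conj_converse}) and a weak variant under the extra hypothesis that $E$ is $F_\sigma$ (Proposition~\ref{prop:conj_weak}), the latter via the group-theoretic Corollary~\ref{cor:Vietoris_embed}, which is exactly the $F_\sigma$ special case of your isolated ``set-valued Harrington--Kechris--Louveau'' lemma. The paper's own commentary after the proof of Proposition~\ref{prop:conj_weak} says precisely what you say: if one could extend Corollary~\ref{cor:Vietoris_embed} from $F_\sigma$ to $\Souslin(\CLO(G))$ subgroups $H$, and in addition arrange that distinct $\eta$ are sent to disjoint sets after pushing forward to $Y_M$, the full conjecture would follow. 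So your plan is not a new route but rather a faithful reconstruction of the paper's own template, with the open step correctly located.

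Your proposal, then, is not wrong so much as it is a research programme rather than a proof: the ``core group-theoretic claim'' you isolate is genuinely open in the paper, and you acknowledge as much. Two comments on the details. First, the paper's proof of the $F_\sigma$ case (Corollary~\ref{cor:Vietoris_embed}) does not use the ``right-translate tree'' structure you describe; instead it feeds the coset relation into the abstract strong-Choquet machinery of Fact~\ref{fct:dtmtoolu}, where the group $\Sigma=H$ acts and the translate structure comes for free from the conclusion of that fact. Your explicit tree-of-translates idea is a reasonable alternative picture, but the ``coupling of the shrinking across translated subtrees'' that worries you is exactly the content of Fact~\ref{fct:dtmtoolu}, so you would be re-proving that fact rather than bypassing it. Second, your device of interleaving clopen sets $\pi_0^{-1}[[\varphi_s]]$ to force disjointness after the push-forward to $Y_M$ is an idea the paper does not pursue; the paper flags disjointness in $Y_M$ (item~(3) of the conjecture) as a separate obstacle, speculating only that ``a careful choice of the model $M$'' might suffice. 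Whether your interleaving can be made compatible with the group-side construction is not clear --- the clopen data live in $EL$, not in $u\M/H(u\M)$, and passing through $j\circ\zeta$ and then $r_{\cl}$ need not preserve the separation you build upstairs --- so this step would itself require justification beyond what you have sketched.
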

	
	It should be noted that the conjecture, if true, immediately implies Theorem~\ref{thm:nwg}\ref{it:nwg_1}. Furthermore, it would be (essentially) a generalization of \cite[Theorem 3.18]{KrRz} -- which, in turn, is a generalization of \cite[Theorem 5.1]{KMS} (see also \cite[Theorems 2.19, 3.19]{KaMi}).

	As we will see in Proposition~\ref{prop:conj_converse}, the conclusion of the conjecture implies that the relation $E$ is \emph{not} type-definable, so if Conjecture~\ref{conj:nwg2} holds, it actually gives us an equivalent condition for type-definability, similarly to Theorem~\ref{thm:main_Borel}, but without any countability assumptions.
	
	To show this, we first prove the following topological lemma.
	
	\begin{lem}
		\label{lem:subv_closed_relation}
		Let $X$ be a compact, Hausdorff space. Suppose $E$ is a binary relation on $X$. Write $\overline E$ for the relation on $2^X$ (closed subsets of $X$) defined by
		\[
			K_1 \mathrel{\overline E} K_2 \iff \exists k_1\in K_1\exists k_2\in K_2\quad k_1\Er k_2
		\]
		Then, if $E$ is a closed relation, so is $\mathrel{\overline E}$ (on $2^X$ with the sub-Vietoris topology).
	\end{lem}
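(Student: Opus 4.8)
The plan is to show that the complement of $\overline{E}$ in $2^X \times 2^X$ (with the product of the sub-Vietoris topology with itself) is open. Unwinding the definition of $\overline{E}$, a pair $(K_1, K_2)$ of closed sets lies in this complement exactly when $(K_1 \times K_2) \cap E = \emptyset$. So I would fix such a pair and look for (sub)basic sub-Vietoris open sets $\mathcal{U} \ni K_1$ and $\mathcal{V} \ni K_2$ with the property that $(K_1' \times K_2') \cap E = \emptyset$ for all $K_1' \in \mathcal{U}$, $K_2' \in \mathcal{V}$; producing such neighbourhoods witnesses openness of the complement.

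First I would invoke a tube-lemma-style separation: since $E$ is closed, $W := (X \times X) \setminus E$ is open and contains the compact set $K_1 \times K_2$ (compact as a product of closed subsets of the compact space $X$), so by Wallace's theorem --- or, directly, by covering $K_1 \times K_2$ with finitely many open boxes inside $W$ and taking appropriate unions, using compactness of each factor --- there are open sets $U \supseteq K_1$ and $V \supseteq K_2$ in $X$ with $(U \times V) \cap E = \emptyset$. The second and key step is to express the conditions ``$A \subseteq U$'' and ``$A \subseteq V$'' using only the sub-Vietoris subbasis, whose members have the form $\{A : A \cap F = \emptyset\}$ with $F$ closed: the sets $X \setminus U$ and $X \setminus V$ are closed, $K_1$ misses $X \setminus U$, and $K_2$ misses $X \setminus V$, so $\mathcal{U} := \{A \in 2^X : A \cap (X \setminus U) = \emptyset\} = \{A \in 2^X : A \subseteq U\}$ is a sub-Vietoris open neighbourhood of $K_1$, and similarly $\mathcal{V} := \{A \in 2^X : A \subseteq V\}$ of $K_2$. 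Then for $K_1' \in \mathcal{U}$ and $K_2' \in \mathcal{V}$ we get $K_1' \times K_2' \subseteq U \times V \subseteq W$, hence $(K_1' \times K_2') \cap E = \emptyset$, so $\mathcal{U} \times \mathcal{V}$ sits inside the complement of $\overline{E}$, as required.

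I do not expect a serious obstacle; the one point that needs care is that the sub-Vietoris topology supplies only ``misses a closed set'' open sets (and is in general far from Hausdorff), which is precisely why the reformulation $\{A : A \subseteq U\} = \{A : A \cap (X \setminus U) = \emptyset\}$ is the crucial move --- it is what lets the purely Vietoris-style tube argument descend to the sub-Vietoris setting. A secondary point to double-check is that no nonemptiness hypothesis on the $K_i$ is needed: if either set is empty the pair trivially lies in the complement and the box $U \times V$ can be taken with an empty factor, so the argument goes through unchanged.
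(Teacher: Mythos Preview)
Your argument is correct and complete. The paper, however, takes a different route: it proves closedness directly via nets. Given a net $(K_i,K_i')$ in $\overline{E}$ converging to $(K,K')$, the paper picks witnesses $(k_i,k_i')\in E\cap(K_i\times K_i')$, passes to a subnet converging to some $(k,k')$ by compactness of $X\times X$, and then argues that $k\in K$ and $k'\in K'$ using regularity of $X$ together with the definition of sub-Vietoris convergence (if $k\notin K$, separate $k$ from $K$ by disjoint opens and contradict convergence of $K_i$ to $K$).

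Your approach is arguably cleaner: one invocation of Wallace's theorem replaces the subnet extraction and the regularity-based separation, and the reformulation $\{A:A\subseteq U\}=\{A:A\cap(X\setminus U)=\emptyset\}$ makes the link to the sub-Vietoris subbasis entirely transparent. The paper's net argument, on the other hand, is a bit more hands-on and closer in spirit to how one typically verifies upper-semicontinuity-type statements about hyperspaces; it also localizes the use of compactness and Hausdorffness slightly differently (compactness for the subnet, Hausdorffness via regularity for the membership step), whereas you bundle both into the tube lemma. Either way the essential content is the same compactness-driven separation, just packaged differently.
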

	\begin{proof}
		Choose an arbitrary net $(K_i,K'_i)_{i\in I}$ in $\mathrel{\overline E}$ converging to some $(K,K')$ in $2^X$. We need to show that $(K,K')\in \overline E$.
		
		Let $k_i\in K_i, k_i'\in K_i'$ be such that $k_i \Er k_i'$. By compactness, we can assume without loss of generality that $(k_i,k_i')$ converges to some $(k,k') \in E$ (as $E$ is closed). If $k\in K$ and $k'\in K'$, we are done.
		
		Let us assume towards contradiction that $k\notin K$. Then, since $K$ is closed, and $X$ is compact, Hausdorff (and thus regular), we can find disjoint open sets $U,V$ such that $K\subseteq U$ and $k\in V$. Then we can assume without loss of generality that all $k_i$ are in $V$ (passing to a subnet if necessary). We see that $F:=X\setminus U$ is a closed set such that $F\cap K=\emptyset$. But for all $i$ we have $k_i\in F\cap K_i$, which gives us a (sub-Vietoris) basic open set separating $K$ from all $K_i$, a contradiction; therefore, we must have $k\in K$.
		
		Similarly, it cannot be that $k'\notin K'$, which completes the proof.
	\end{proof}
	(In fact, the converse is also true, because the map $x\mapsto \{x\}$ is a homeomorphic embedding of $X$ into $2^X$ with the sub-Vietoris topology.)

	Without further ado, we can prove the aforementioned proposition.
	\begin{prop}
		\label{prop:conj_converse}
		The converse of Conjecture~\ref{conj:nwg2} holds. More precisely, if $E$ is a bounded, $\emptyset$-type-definable equivalence relation on an invariant set $X$, while $Y$ is a type-definable, $E$-saturated subset of $X$, then there is no function $\psi$ as in the conclusion of Conjecture~\ref{conj:nwg2}.
	\end{prop}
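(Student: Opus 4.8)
The plan is to reduce everything to Lemma \ref{lem:subv_closed_relation} together with the elementary fact that $\EZ$ is not a closed subset of $2^\omega\times 2^\omega$. So I would fix a small model $M$ and suppose towards a contradiction that there is a map $\psi\fcolon 2^\omega\to \powerset(Y_M)$ as in the conclusion of Conjecture \ref{conj:nwg2}. First, since both $E$ and $Y$ are type-definable, $Y_M$ is a closed subset of $S(M)$, hence a compact, Hausdorff space, and by Fact \ref{fct:eqcmp} the relation $E^M\restr_{Y_M}$ is a \emph{closed} binary relation on $Y_M$ (this is the one place where it matters that $E$ is type-definable and not merely $F_\sigma$). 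By condition (1), $\psi$ actually takes values in the space of nonempty closed subsets of $Y_M$, so Lemma \ref{lem:subv_closed_relation} applies and yields that the induced relation $\overline{E^M}$ on that space (with the sub-Vietoris topology) is closed. As $\psi$ is continuous, the pullback
\[
R:=\{(\eta,\eta')\in 2^\omega\times 2^\omega\sbmid \psi(\eta)\mathrel{\overline{E^M}}\psi(\eta')\}
\]
is then a closed subset of $2^\omega\times 2^\omega$.

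Next I would identify $R$ with $\EZ$, using only conditions (1), (2) and (4) (condition (3) is not needed). If $\eta\EZ\eta'$, then by (2) we have $[\psi(\eta)]_{E^M}=[\psi(\eta')]_{E^M}$; choosing any $k\in\psi(\eta)$, which is possible by (1), we get $k\in[\psi(\eta')]_{E^M}$, so $k$ is $E^M$-related to some element of $\psi(\eta')$, whence $\psi(\eta)\mathrel{\overline{E^M}}\psi(\eta')$, i.e.\ $(\eta,\eta')\in R$. Conversely, if $\eta,\eta'$ are not $\EZ$-related, then by (4) we have $(\psi(\eta)\times\psi(\eta'))\cap E^M=\emptyset$, which is exactly the assertion that $\psi(\eta)$ and $\psi(\eta')$ are not $\overline{E^M}$-related, i.e.\ $(\eta,\eta')\notin R$. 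Thus $R=\EZ$.

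Finally I would derive the contradiction: $\EZ$ is a proper subset of $2^\omega\times 2^\omega$ (any two sequences disagreeing at infinitely many coordinates are $\EZ$-unrelated) which is dense in $2^\omega\times 2^\omega$ (given finite binary strings $s,t$ of equal length, the two sequences obtained by appending all zeros to $s$ and to $t$ respectively are $\EZ$-related and lie in the basic rectangle determined by $s$ and $t$); hence $\overline{\EZ}=2^\omega\times 2^\omega\ne\EZ$, so $\EZ$ is not closed, contradicting that $R=\EZ$ was shown to be closed. Therefore no such $\psi$ exists. I do not expect any genuine obstacle in this argument; the only step that requires a little care is confirming that $E^M\restr_{Y_M}$ is actually closed (and not just $F_\sigma$ or Souslin), which is where type-definability of both $E$ and $Y$ is used via Fact \ref{fct:eqcmp}.
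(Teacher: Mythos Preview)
Your proof is correct and follows essentially the same strategy as the paper: both arguments hinge on Lemma \ref{lem:subv_closed_relation}, using type-definability of $E$ to get that $E^M$ is closed and hence that $\overline{E^M}$ is closed in the sub-Vietoris topology. The difference lies only in the final contradiction. The paper passes to the image $\mathcal F=\psi[2^\omega]$, observes (using that $\psi$ is a homeomorphic embedding) that $\mathcal F$ is Polish, that $\overline{E^M}\restr_{\mathcal F}$ is a closed equivalence relation there, and that $\psi$ is a Borel reduction of $\EZ$ to it; one then invokes the fact that closed equivalence relations on Polish spaces are smooth. You instead pull $\overline{E^M}$ back to $2^\omega$ along $\psi$ and identify the pullback directly with $\EZ$, deriving a contradiction from the elementary observation that $\EZ$ is dense and proper, hence not closed. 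Your finish is slightly more self-contained (no appeal to smoothness of closed relations) and, as you note, uses neither condition (3) nor the fact that $\psi$ is a homeomorphism---continuity of $\psi$ suffices.
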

	\begin{proof}
		Suppose towards contradiction that we have such a function $\psi\fcolon 2^\omega \to \powerset(Y_M)$. Denote by $\mathcal F$ the image of $\psi$.
		
		Since $E$ is type-definable, $E^M$ is closed, and since $\mathcal F$ consists of closed sets, by Lemma~\ref{lem:subv_closed_relation}, the restriction $\overline{E^M}\restr_{\mathcal F}$ is a closed relation. On the other hand, by the properties of $\psi$,  for any $\eta_1,\eta_2 \in2^{\omega}$, $\eta_1 \EZ \eta_2 \iff \psi(\eta_1) \overline{\Er^M} \psi(\eta_2)$. Since $\psi$ is a homeomorphism from $2^{\omega}$ to ${\mathcal F}$, we conclude that $\EZ$ is a closed relation which is obviously not true.		 
	\end{proof}
	
	The proposition below is a weak variant of Conjecture~\ref{conj:nwg2}: namely, we assume that $E$ is $F_\sigma$ and, in the conclusion, we replace ``for some model'' with ``for any model'', and in return, we drop the property that $\psi$ takes distinct points to disjoint sets (which would imply that it is a homeomorphism, by Fact~\ref{fct:subVt} below).
	
	It should be noted that a variant of Conjecture~\ref{conj:nwg2} with the same conclusion, but
	with the assumption strengthened to $E$ being an \emph{orbital} $F_\sigma$ equivalence relation, is more or less a restatement of \cite[Theorem 3.18]{KrRz}, so the main strength of the next proposition lies in that we drop the ``orbital'' part of the assumption. Moreover, perhaps the proof could shed some light on how to prove the full conjecture.
	\begin{prop}
		\label{prop:conj_weak}
		Suppose we have $E,p,X,Y$ as in Theorem~\ref{thm:nwg}, and suppose moreover that $E$ is $F_\sigma$.
		
		Then whenever $E\restr_{p(\C)}$ is not type-definable, we have that for any model $M$, there is a continuous function $\psi\fcolon 2^{\omega}\to \powerset(Y_M)$ (where $\powerset(Y_M)$ is equipped with the sub-Vietoris topology) such that for any $\eta,\eta'\in 2^{\omega}$:
		\begin{itemize}
			\item
			$\psi(\eta)$ is a nonempty closed set,
			\item
			if $\eta,\eta'$ are $\EZ$-related, then $[\psi(\eta)]_{E^M}=[\psi(\eta')]_{E^M}$,
			\item
			if $\eta,\eta'$ are not $\EZ$-related, then $(\psi(\eta)\times \psi(\eta'))\cap E^M=\emptyset$.
		\end{itemize}
	\end{prop}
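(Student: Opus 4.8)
The plan is to adapt the proof of \cite[Theorem 3.18]{KrRz} — which is in essence this statement for \emph{orbital} $F_\sigma$ relations — by systematically replacing the group witnessing orbitality with the compact, Hausdorff group $u\M/H(u\M)$ and the map $\bar h_E$ of Section \ref{section: top dyn for aut(C)}, exactly as Theorems \ref{thm:main_Borel} and \ref{thm:nwg}\ref{it:nwg_1} upgrade Facts \ref{fct:mainA} and \ref{fct:twN}. Fix $\bar\alpha\in Y(\C)$ and the given model $M$, work in the bigger monster $\C'\succ\C$ used to define $\bar h_E$, and (as in the proof of Theorem \ref{thm:nwg}) assume $X=p(\C')$, so that $E$ refines $\equiv$. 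Since $E$ is $F_\sigma$, fix a normal form $\biglor_n\Phi_n$ for $E$ (\cite[Proposition 2.21]{KrRz}). Put $H:=\ker\bar h_E\leq u\M/H(u\M)$; by Lemma \ref{lem:analytic_subgroup} it lies in $\Souslin(\CLO(u\M/H(u\M)))$, hence is strictly Baire. Since $E\restr_{p(\C)}$ is not type-definable, $\{\bar\alpha/E\}$ is not closed (Remark \ref{rem:type-definability_of_relations}), so by Theorem \ref{thm:main theorem 3} $H$ is not $\tau$-closed; being a strictly Baire, non-open subgroup of the compact, Hausdorff group $\cl_\tau(H)$, it is meager there (Fact \ref{fct: Pettis}), hence a union of countably many $\tau$-closed, nowhere dense subsets of $\cl_\tau(H)$.

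The crucial point is to exhibit $H$ \emph{concretely} as a countable, increasing union of $\tau$-closed sets coming from the normal form — this is where the $F_\sigma$ hypothesis really enters, since it is not automatic from $H\in\Souslin(\CLO(\cdot))$. Since $\bar\alpha$ is a subtuple of $\bar c$, for each $n$ the set $C_n:=\{x\in u\M \sbmid {}\models\Phi_n(\sigma'(\bar\alpha),\bar\alpha)\text{ for }\sigma'\in\aut(\C'),\ \sigma'(\bar c)\models\pi_0(x)\}$ is well defined and closed in the Ellis topology, and $\ker h_E=\bigcup_nC_n$ (where $h_E=\bar h_E\circ j$, $j\fcolon u\M\to u\M/H(u\M)$ the quotient). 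The key lemma I would prove is that $\cl_\tau(C_n)\subseteq\ker h_E$: this is shown by a net argument in the style of the proof of Theorem \ref{thm:main theorem 1}(1) and of the subclaim in the alternative proof of Lemma \ref{lem: main technical lemma} — a $\tau$-limit of translates of points of $C_n$ yields, via compactness, an automorphism $\sigma'\in\autf_L(\C')$ intertwining the relevant tuples, and since $E_L$ refines $E$ one reads off that the limit point is still in $\ker h_E$. Writing $D_n:=\cl_\tau(j[C_n])=j[\cl_\tau(C_n)]$ (using that $j$ is a $\tau$-closed map), one gets $D_n\subseteq H$ and $\bigcup_nD_n=j[\ker h_E]=H$; passing to $\bigcup_{k\le n}D_k$, we may take the $D_n$ increasing, each $\tau$-closed, nowhere dense in $\cl_\tau(H)$, and a proper subset of the non-closed $H$, so there are ``flip'' elements $\gamma_n\in H\setminus D_n$.

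Now I would run a Harrington--Kechris--Louveau/Mycielski-style recursion inside the compact, Hausdorff group $\cl_\tau(H)$ — as in the proof of the Mycielski-type proposition of Section \ref{section: topological lemmas} and of \cite[Theorem 3.18]{KrRz} — to construct nonempty $\tau$-open $V_s$ ($s\in2^{<\omega}$) with $\cl_\tau(V_{s0}),\cl_\tau(V_{s1})\subseteq V_s$, arranged equivariantly so that the sets below $V_{s1}$ are obtained from those below $V_{s0}$ by a fixed right-translation within $H$, and so that for distinct $s,t\in2^{n+1}$ one has $(\cl_\tau(V_s)\times\cl_\tau(V_t))\cap\{(g,h)\sbmid g^{-1}h\in D_n\}=\emptyset$; the meagreness of $H$ and the $\tau$-openness of the complements of the $D_n$ give the room to split. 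Setting $A_\eta:=\bigcap_m\cl_\tau(V_{\eta\restr m})$ for $\eta\in2^\omega$ (nonempty by $\tau$-compactness), the equivariance yields $A_{\eta'}=A_\eta w$ with $w\in H$ whenever $\eta,\eta'$ are $\EZ$-related, while the separation yields $A_\eta^{-1}A_{\eta'}\cap D_n=\emptyset$ for infinitely many $n$, hence $A_\eta^{-1}A_{\eta'}\cap H=\emptyset$ (the $D_n$ increasing to $H$), whenever $\eta,\eta'$ are not $\EZ$-related. I then define $\psi(\eta):=r\big[j^{-1}[A_\eta]\big]=\bigcap_m r\big[j^{-1}[\cl_\tau(V_{\eta\restr m})]\big]$ (the second equality by Remark \ref{rem: image of intersection}, since each $j^{-1}[\cl_\tau(V_{\eta\restr m})]$ is $\tau$-closed hence Ellis-closed, $r$ is Ellis-continuous, and $u\M$ is Ellis-compact). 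Then $\psi(\eta)$ is a nonempty closed subset of $Y_M$ — it lies in $Y_M$ because $j^{-1}[A_\eta]$ maps under $\bar h_E\circ j$ into $\cl(\bar\alpha/E)\subseteq Y/E$ (Theorem \ref{thm:main theorem 3}) and $Y$ is $E$-saturated — and $\psi$ is sub-Vietoris continuous because $\eta\mapsto j^{-1}[\cl_\tau(V_{\eta\restr m})]$ is locally constant for each $m$ and the intersections are decreasing. Finally, unwinding the commutative diagrams of Sections \ref{section: top dyn for aut(C)} and \ref{section: topological lemmas} (as in Claim 1 of the proof of Theorem \ref{thm:main_Borel}), two points $r(x),r(x')$ are $E^M$-related precisely when $j(x),j(x')$ lie in a common left $H$-coset; combined with the two properties of the $A_\eta$, this gives $[\psi(\eta)]_{E^M}=[\psi(\eta')]_{E^M}$ for $\EZ$-related $\eta,\eta'$ and $(\psi(\eta)\times\psi(\eta'))\cap E^M=\emptyset$ for non-$\EZ$-related ones.

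The hard part is the clash between the two topologies on $u\M$ already met in Section \ref{section: topological lemmas}: the stratifying sets $C_n$ are closed only in the Ellis topology — the one carrying the fibre map $r$ down to $P_M$ — whereas the Cantor-tree recursion must take place in the $\tau$-topology, in which $u\M/H(u\M)$ is compact Hausdorff and Piccard--Pettis and the Mycielski argument are available. The bridge is exactly the inclusion $\cl_\tau(C_n)\subseteq\ker h_E$, whose proof (the $\autf_L$-manipulation of Theorem \ref{thm:main theorem 1}) is the technical heart: without it one cannot see that $H$ is $F_\sigma$ in the $\tau$-topology, and the construction never gets off the ground. A secondary, more bookkeeping-flavoured difficulty is to run the recursion equivariantly enough that the ``flip words'' along $\EZ$-related branches stay inside $H$ while the level-$n$ separation is genuinely controlled by $D_n$ — here one must be careful about how the $D_n$ behave under the translations used, and may need to enlarge $D_n$ slightly to a $\tau$-closed, nowhere-dense set with the required invariance; granting this, checking that $\psi$ is sub-Vietoris continuous rather than merely well defined is routine via Remark \ref{rem: image of intersection}.
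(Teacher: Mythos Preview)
Your overall strategy is correct and matches the paper's: show $H:=\ker\bar h_E$ is $F_\sigma$ in the compact Hausdorff group $G:=\cl_\tau(H)$ via the ``$\cl_\tau(C_n)\subseteq\ker h_E$'' argument (which is exactly the subclaim in the alternative proof of Lemma~\ref{lem: main technical lemma}, specialized to $F_\sigma$ sets), then run an equivariant Mycielski/HKL recursion in $G$ to get closed sets $A_\eta$, and push them down to $Y_M$. The paper does the recursion once and for all as a separate group-theoretic statement (Corollary~\ref{cor:Vietoris_embed}, via Fact~\ref{fct:dtmtoolu}); your inlined version is the same argument, and your worry about translating the $D_n$ is handled there exactly as you anticipate, by first arranging the closed pieces to be symmetric, increasing, and satisfying $F_nF_m\subseteq F_{n+m}$.

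There is, however, one genuine gap in the transfer step: you assert that ``$u\M$ is Ellis-compact'' to justify closedness of $\psi(\eta)=r[j^{-1}[A_\eta]]$, the intersection formula $\bigcap_m r[j^{-1}[\cl_\tau(V_{\eta\restr m})]]$, and sub-Vietoris continuity of $\psi$. But $u\M$ need not be closed in $EL$, so it is not Ellis-compact, and $r$ is not known to be $\tau$-continuous (this is precisely the open question following Lemma~\ref{lem: variant}). The paper's fix is to pass to $\cl(u\M)$ and use $j_{\cl}:=j\circ\zeta\fcolon\cl(u\M)\to u\M/H(u\M)$, which \emph{is} continuous for the Ellis topology by Lemma~\ref{lem: variant}; one then sets $\psi(\eta):=r_{\cl}[\,j_{\cl}^{-1}[\phi(\eta)]\,]$, so that the preimage is Ellis-closed in the Ellis-compact $\cl(u\M)$, its image under the Ellis-continuous $r_{\cl}$ is closed, and continuity of $\psi$ follows from Remark~\ref{rem:subVcont}. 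In other words, the ``bridge between the two topologies'' you flag is not only the inclusion $\cl_\tau(C_n)\subseteq\ker h_E$ (which gets you \emph{up} to $u\M/H(u\M)$), but also Lemma~\ref{lem: variant} (which gets you \emph{back down} to $Y_M$ through a genuinely compact space). Once you route through $\cl(u\M)$, everything you wrote goes through; note that $r_{\cl}[j_{\cl}^{-1}[A_\eta]]$ may be strictly larger than your $r[j^{-1}[A_\eta]]$, but the extra points are $E^M$-equivalent to ones already present (by the claim in the first proof of Lemma~\ref{lem: main technical lemma}), so the last two bullets are unaffected.
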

	
	Before the proof we need to recall a few facts and make some observations. The descriptive set theoretic tools which we use to prove the proposition are similar to those from \cite{KMS} and \cite{KrRz}.
	
	Recall that the {\em strong Choquet game} on a topological space $X$ is the following two-player game in $\omega$-rounds. In round $n$, player A chooses an open set
	$U_n \subseteq V_{n-1}$ and $x_n \in U_n$, and player B responds by choosing an open set $V_n \subseteq U_n$
	containing $x_n$. Player B wins when the intersection $\bigcap \{V_n \sbmid n < \omega\}$ is nonempty. A topological space X is a {\em strong Choquet space} if player B has a winning strategy in the strong Choquet game on $X$. For more details see Sections 8.C and 8.D of \cite[Chapter I]{Ke}. It is easy to see that each nonempty, compact, Hausdorff space is strong Choquet. Given a subset $C$ of $X$, we say that $X$ is {\em strong Choquet over $C$} to mean that the points that player A chooses are taken from $C$ (and player $B$ has a winning strategy in the modified game). Clearly, a strong Choquet space is also strong Choquet over each of its subsets.
	
	Given $X$, $R \subseteq X \times X$, and $x \in X$, define $R_x := \{y \in X \sbmid x \mathrel{R} y \}$.
	
	The next fact is Theorem 2.5 from \cite{KMS} with a slightly extended conclusion (which is a part of the proof there). It was stated in this extended form in \cite[Theorem 3.14]{KrRz}.
	
	\begin{fct}
		\label{fct:dtmtoolu}
		Suppose that $X$ is a regular topological space, $\langle R_n\mid n\in \omega\rangle$ is a sequence of $F_\sigma$ subsets of $X^2$, $\Sigma$ is a group of	homeomorphisms of $X$, and $\mathcal O\subseteq X$ is an orbit of $\Sigma$ with the property that for all $n\in \omega$ and open sets $U\subseteq X$ intersecting $\mathcal O$, there are distinct $x,y\in\mathcal O\cap U$ with $\mathcal O\cap (R_n)_x\cap (R_n)_y=\emptyset$ .
		If $X$ is strong Choquet over $\mathcal O$, then there is a function $\tilde\phi\fcolon 2^{<\omega}\to \powerset(X)$ such that for any $\eta\in 2^\omega$ and any $n\in \omega$:
		\begin{itemize}
			\item
			$\tilde\phi(\eta\restr n)$ is a nonempty open set,
			\item
			$\overline{\tilde\phi(\eta\restr{(n+1)})}\subseteq \tilde\phi(\eta\restr{n})$
		\end{itemize}
		Moreover, $\phi(\eta)=\bigcap_n \tilde\phi(\eta\restr n)=\bigcap_n \overline{\tilde\phi(\eta\restr n)}$ is a nonempty closed $G_\delta$ set such that for any $\eta,\eta'\in 2^\omega$ and $n\in\omega$:
		\begin{itemize}
			\item
			if $\eta \EZ \eta'$, then there is some $\sigma\in\Sigma$ such that $\sigma\cdot \phi(\eta)=\phi(\eta')$,
			\item
			if $\eta(n)\neq \eta'(n)$, then $(\phi(\eta)\times \phi(\eta'))\cap R_n=\emptyset$, and if $\eta,\eta'$ are not $\EZ$-related, then $(\phi(\eta)\times \phi(\eta'))\cap \bigcup R_n=\emptyset$.
		\end{itemize}
	\end{fct}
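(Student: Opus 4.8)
The plan is to prove Fact~\ref{fct:dtmtoolu} by the usual fusion argument producing a Cantor scheme, in the spirit of the Harrington--Kechris--Louveau construction and its model-theoretic adaptations in \cite{KMS} and \cite{KrRz}. First I would fix, for each $n$, an increasing presentation $R_n=\bigcup_m F_n^m$ with each $F_n^m$ closed (possible since $R_n$ is $F_\sigma$), and a bijection $\tau\fcolon\omega\to\omega^2$, $\tau(k)=(n_k,m_k)$, arranged so that $n_k\le k$; at stage $k$ of the recursion the scheme will ``take care of'' the single closed set $F_{n_k}^{m_k}$. Then I would build $\tilde\phi\fcolon 2^{<\omega}\to\powerset(X)$ by recursion on $\lvert s\rvert$, together with auxiliary data (new homeomorphisms $\sigma_k\in\Sigma$ introduced at stage $k$, and distinguished points $x_s\in\mathcal O\cap\tilde\phi(s)$), maintaining: (i) each $\tilde\phi(s)$ is a nonempty open set meeting $\mathcal O$ with $\overline{\tilde\phi(si)}\subseteq\tilde\phi(s)$, using regularity of $X$; (ii) a \emph{coherence} condition recording, for strings that agree past some coordinate, an element of $\Sigma$ carrying one piece of the scheme onto the other — concretely, flipping the $k$-th bit is implemented by $\sigma_k$, and these are threaded compatibly as the strings grow, so that the element relating $\tilde\phi(\eta\restr\ell)$ and $\tilde\phi(\eta'\restr\ell)$ stabilises along any $\EZ$-pair $\eta,\eta'$; (iii) a \emph{largeness} condition: along every branch $\eta$ the pairs $(\tilde\phi(\eta\restr k),x_{\eta\restr k})$ form a legal run won by the nonempty player in the strong Choquet game over $\mathcal O$ — this is exactly where the hypothesis that $X$ is strong Choquet over $\mathcal O$ enters; and (iv) a \emph{separation} condition: with $(n,m)=\tau(k)$, for all $s,t\in 2^{k+1}$ with $s(n)\ne t(n)$ one has $(\overline{\tilde\phi(s)}\times\overline{\tilde\phi(t)})\cap F_n^m=\emptyset$.

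Granting such a scheme, I would set $\phi(\eta):=\bigcap_n\tilde\phi(\eta\restr n)$. The shrinking-closure clause (i) makes this equal to $\bigcap_n\overline{\tilde\phi(\eta\restr n)}$, hence a closed $G_\delta$ set, and it is nonempty by (iii). Clause~(a) of the conclusion is read off (ii): if $\eta\EZ\eta'$ they differ in finitely many coordinates, and the corresponding finite product of the $\sigma_k^{\pm1}$ maps $\phi(\eta)$ onto $\phi(\eta')$. For clause~(b), suppose $\eta(n)\ne\eta'(n)$; for each $m$ pick $k$ with $\tau(k)=(n,m)$, so that (iv) gives $(\overline{\tilde\phi(\eta\restr(k+1))}\times\overline{\tilde\phi(\eta'\restr(k+1))})\cap F_n^m=\emptyset$, and since $\phi(\eta)\times\phi(\eta')$ sits inside this box one concludes $(\phi(\eta)\times\phi(\eta'))\cap F_n^m=\emptyset$ for every $m$, i.e.\ $(\phi(\eta)\times\phi(\eta'))\cap R_n=\emptyset$; if moreover $\eta,\eta'$ are not $\EZ$-related they differ at infinitely many coordinates, so $(\phi(\eta)\times\phi(\eta'))\cap\bigcup_nR_n=\emptyset$.

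The real work — and the step I expect to be the main obstacle — is the single recursion step, where (i)--(iv) must all be kept alive at once. By the coherence condition (ii) the whole of level $k+1$ of the scheme is essentially determined by its value on one seed branch together with the freshly chosen $\sigma_k$, so one is really choosing only the seed open set, the seed point of $\mathcal O$, and $\sigma_k$; yet these choices must simultaneously (1) shrink correctly inside the parent, (2) be a legal Choquet answer, and (3) force separation against $F_{n_k}^{m_k}$ for all the relevant pairs of strings at level $k+1$. For (3) one invokes the splitting hypothesis $(\star)$: using that $\mathcal O$ is a \emph{single} $\Sigma$-orbit, transport points $x,y\in\mathcal O$ with $\mathcal O\cap(R_{n_k})_x\cap(R_{n_k})_y=\emptyset$ into the relevant open sets by elements of $\Sigma$, and then, since $F_{n_k}^{m_k}$ is closed and $X$ is regular, thicken this sectionwise disjointness — valid a priori only on $\mathcal O$ — to disjointness of the closures of small open boxes around the transported points. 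The two genuinely delicate points are: running the strong Choquet strategy \emph{$\Sigma$-equivariantly}, so that the response to a $\sigma$-image of a partial play is the $\sigma$-image of the response (this is what reconciles (ii) with (iii)); and organising the passage from $(\star)$ — a statement about sections of $R_{n_k}$ inside $\mathcal O$ — to the purely topological separation in (iv), which is precisely why the distinguished points $x_s$ are forced to stay inside $\mathcal O$ at every stage.
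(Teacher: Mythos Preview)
The paper does not prove this statement; it is quoted as a Fact from \cite[Theorem 3.14]{KrRz} and used as a black box in the proof of Corollary~\ref{cor:Vietoris_embed}. So there is no in-paper proof to compare against.

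That said, your sketch is the correct argument and matches the proof given in the cited reference (which in turn adapts \cite{KMS}): decompose each $R_n$ into closed pieces, run a fusion/Cantor-scheme construction maintaining (i) shrinking closures via regularity, (ii) $\Sigma$-coherence recorded by group elements $\sigma_k$ flipping single bits, (iii) nonemptiness along branches via a winning strong-Choquet strategy over $\mathcal O$, and (iv) separation against one closed piece $F_{n_k}^{m_k}$ per stage via the splitting hypothesis. Your identification of the two delicate points --- playing the Choquet strategy equivariantly so that (ii) and (iii) do not conflict, and passing from the $\mathcal O$-restricted sectionwise disjointness in the hypothesis to genuine topological separation of closed boxes --- is exactly right, and these are handled in \cite{KrRz} just as you describe.
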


	\begin{fct}[{\cite[Proposition 3.16]{KrRz}}]
		\label{fct:subVt}
		Suppose $X$ is a normal topological space (e.g.\ a compact, Hausdorff space) and $\mathcal A$ is any family of pairwise disjoint, nonempty closed subsets of $X$. Then $\mathcal A$ is Hausdorff with the sub-Vietoris topology.
	\end{fct}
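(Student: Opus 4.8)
The plan is to check the Hausdorff axiom directly from the definition of the sub-Vietoris topology, with normality of $X$ as the only non-trivial input. So I would fix two distinct members $A_1,A_2\in\mathcal A$. Since $\mathcal A$ consists of pairwise disjoint sets we have $A_1\cap A_2=\emptyset$, and since $A_1,A_2$ are nonempty closed subsets of the normal space $X$, normality supplies disjoint open sets $U_1\supseteq A_1$ and $U_2\supseteq A_2$. I then put $F_i:=X\setminus U_i$, which is closed, for $i=1,2$.

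The next step is to produce the separating open sets. Recall that the sub-Vietoris subbasis consists of the sets $\{A\mid A\cap F=\emptyset\}$ with $F$ closed, so $\mathcal V_i:=\{A\in\mathcal A\mid A\cap F_i=\emptyset\}$ is open in $\mathcal A$. Because $A_i\subseteq U_i$, equivalently $A_i\cap F_i=\emptyset$, we get $A_i\in\mathcal V_i$; thus $\mathcal V_1$ and $\mathcal V_2$ are open neighborhoods of $A_1$ and $A_2$ respectively. Finally I would check that they are disjoint: if some $A\in\mathcal A$ lay in $\mathcal V_1\cap\mathcal V_2$, then $A\cap F_1=A\cap F_2=\emptyset$, i.e.\ $A\subseteq U_1\cap U_2=\emptyset$, so $A=\emptyset$ --- impossible, since every member of $\mathcal A$ is nonempty. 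Hence $\mathcal V_1\cap\mathcal V_2=\emptyset$, and $\mathcal A$ is Hausdorff.

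There is essentially no obstacle here; the only point requiring a little care is that the sub-Vietoris subbasis is phrased in terms of a set \emph{missing} a closed set rather than being \emph{contained} in an open set, so one must pass to the complements $F_i=X\setminus U_i$. It is also worth keeping track of which hypotheses are genuinely used: pairwise disjointness is needed so that normality can be applied to the pair $A_1,A_2$, while nonemptiness of the members of $\mathcal A$ is invoked (twice) to conclude both that $\mathcal V_1$ and $\mathcal V_2$ are disjoint and that $\mathcal V_2$ really does miss $A_1$ (and $\mathcal V_1$ miss $A_2$).
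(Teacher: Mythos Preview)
Your proof is correct. Note that the paper does not actually supply a proof of this fact; it is stated as a citation from \cite{KrRz} and closed with a bare $\qed$. Your argument is the standard direct verification and is exactly what one would expect the cited proof to be. One minor comment on your closing paragraph: once you have $A_1\in\mathcal V_1$ and $\mathcal V_1\cap\mathcal V_2=\emptyset$, the fact that $A_1\notin\mathcal V_2$ is automatic, so nonemptiness is really only used once (for the disjointness of $\mathcal V_1$ and $\mathcal V_2$); this is a rhetorical redundancy, not a mathematical error.
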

	
	Using the last two facts, we obtain a corollary reminiscent of \cite[Theorem 3.18]{KrRz} (albeit topological group theoretic, and not model theoretic in nature), which will be used in the proof of Proposition~\ref{prop:conj_weak}.
	
	\begin{cor}
		\label{cor:Vietoris_embed}
		Suppose $G$ is a compact, Hausdorff group, while $H\leq G$ is $F_\sigma$ and not closed. Then there is a homeomorphic embedding $\phi\fcolon 2^\omega\to \powerset(G)$ (with the sub-Vietoris topology) such that for any $\eta,\eta'\in 2^\omega$:
		\begin{itemize}
			\item
			$\phi(\eta)$ is a nonempty closed set,	
			\item
			if $\eta \EZ \eta'$, then there is some $h\in H$ such that $\phi(\eta)h=\phi(\eta')$,
			\item
			if $\eta\neq \eta'$, then $\phi(\eta)\cap \phi(\eta')=\emptyset$,
			\item
			if $\eta,\eta'$ are not $\EZ$-related, then $\phi(\eta)H\cap \phi(\eta')H=\emptyset$.
		\end{itemize}
		In particular, $[G:H]\geq 2^{\aleph_0}$.
	\end{cor}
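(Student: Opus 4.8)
The plan is to obtain $\phi$ by applying the Mycielski‑type construction of Fact~\ref{fct:dtmtoolu} inside $X:=G$, taking $\Sigma$ to be the group of right translations $x\mapsto xh$ by elements $h\in H$ (these are homeomorphisms of $G$) and $\mathcal O:=H$, the $\Sigma$‑orbit of the identity. Since $H$ is $F_\sigma$ we may write $H=\bigcup_n F_n$ with each $F_n$ closed, increasing, and --- after replacing $F_n$ by $\{e\}\cup F_n$ --- nonempty and containing $e$. Put $R_n:=\{(x,y)\in G^2\sbmid x^{-1}y\in F_n\}$; each $R_n$ is closed, being the preimage of the compact set $F_n$ under $(x,y)\mapsto x^{-1}y$, so $(R_n)_n$ is a sequence of $F_\sigma$ subsets of $G^2$, $\bigcup_n R_n$ is exactly the $\Sigma$‑orbit equivalence relation, $(R_n)_x=xF_n$, and each $R_n$ contains the diagonal. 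A compact Hausdorff space is strong Choquet over any subset, so the only nontrivial hypothesis of Fact~\ref{fct:dtmtoolu} to be checked is: for every $n$ and every open $U$ meeting $H$ there are distinct $x,y\in H\cap U$ with $H\cap(R_n)_x\cap(R_n)_y=\emptyset$. Since $x\in H$ forces $xF_n\subseteq xH=H$, this amounts to finding distinct $x,y\in H\cap U$ with $xF_n\cap yF_n=\emptyset$, i.e.\ $x^{-1}y\notin F_nF_n^{-1}$.

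To verify this --- the crux of the proof, and the step I expect to be the main obstacle --- suppose it fails for some $n$ and some open $U$ with $U\cap H\neq\emptyset$. Fixing $x_0\in U\cap H$, the failure forces $H\cap U\subseteq x_0F_nF_n^{-1}$. Now $F_nF_n^{-1}$ is compact (the continuous image of $F_n\times F_n$), hence closed, and it lies inside $H$ (as $F_n\subseteq H$ and $H$ is a subgroup), so $x_0F_nF_n^{-1}$ is a closed subset of $G$ contained in $H$; therefore $\overline{H\cap U}\subseteq x_0F_nF_n^{-1}\subseteq H$, and since $\overline H\cap U\subseteq\overline{H\cap U}$ we get $\overline H\cap U\subseteq H$. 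But $H$ is not closed, so $\overline H$ is a subgroup strictly containing $H$ in which $H$ is dense; pick $g\in\overline H\setminus H$ and $x_1\in U\cap H$, and set $U':=gx_1^{-1}U$, an open set containing $g$. For any $z\in\overline H\cap U'$ one has $x_1g^{-1}z\in U$ and $x_1g^{-1}z\in\overline H$, hence $x_1g^{-1}z\in\overline H\cap U\subseteq H$, so $z\in gx_1^{-1}H=gH$; thus $\overline H\cap U'\subseteq gH$ and in particular is disjoint from $H$. This is absurd, because $U'\cap\overline H$ is a nonempty (it contains $g$) open subset of $\overline H$, so by density it meets $H$. Hence the hypothesis of Fact~\ref{fct:dtmtoolu} holds. (By Piccard--Pettis, Fact~\ref{fct: Pettis}, $H$ is automatically meager, so the $F_n$ are nowhere dense, but the argument above uses only density of $H$ in the proper closed supergroup $\overline H$.)

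With the hypothesis verified, Fact~\ref{fct:dtmtoolu} supplies $\phi\fcolon 2^\omega\to\powerset(G)$ with each $\phi(\eta)$ a nonempty closed set, with $\phi(\eta)h=\phi(\eta')$ for some $h\in H$ whenever $\eta\EZ\eta'$ (as $\Sigma$ acts by right translation), with $(\phi(\eta)\times\phi(\eta'))\cap R_n=\emptyset$ when $\eta(n)\neq\eta'(n)$, and with $(\phi(\eta)\times\phi(\eta'))\cap\bigcup_n R_n=\emptyset$ when $\eta,\eta'$ are not $\EZ$‑related; the last condition says precisely $\phi(\eta)H\cap\phi(\eta')H=\emptyset$. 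Since each $R_n$ (and $\bigcup_n R_n$) contains the diagonal, a common point of $\phi(\eta)$ and $\phi(\eta')$ for $\eta\neq\eta'$ would contradict one of the two disjointness clauses, so $\phi$ is injective and its image $\mathcal F:=\phi[2^\omega]$ is a family of pairwise disjoint nonempty closed subsets of the normal space $G$; by Fact~\ref{fct:subVt}, $\mathcal F$ is Hausdorff in the sub‑Vietoris topology. The map $\phi$ is continuous into $\mathcal F$: the preimage of a subbasic set $\{A\sbmid A\cap F=\emptyset\}$ is open, since $\phi(\eta)\cap F=\emptyset$ implies, by compactness of $G$ applied to the descending closed sets $\overline{\tilde\phi(\eta\restr n)}$, that $\overline{\tilde\phi(\eta\restr n)}\cap F=\emptyset$ for some $n$, and this then holds for all $\eta'$ with $\eta'\restr n=\eta\restr n$. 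A continuous bijection from the compact space $2^\omega$ onto the Hausdorff space $\mathcal F$ is a homeomorphism, so $\phi$ is the required homeomorphic embedding. Finally, picking $a_\eta\in\phi(\eta)$, the left cosets $a_\eta H$ with $\eta$ ranging over representatives of the $2^{\aleph_0}$ classes of $\EZ$ are pairwise distinct (an equality $a_\eta H=a_{\eta'}H$ would give $a_\eta^{-1}a_{\eta'}\in H$, contradicting $\phi(\eta)H\cap\phi(\eta')H=\emptyset$), whence $[G:H]\geq 2^{\aleph_0}$.
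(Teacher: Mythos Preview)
Your proof is correct and follows the same overall strategy as the paper: apply Fact~\ref{fct:dtmtoolu} with $X=G$, $\Sigma$ the right translations by $H$, $\mathcal O=H$, and $R_n$ the preimage of $F_n$ under $(x,y)\mapsto x^{-1}y$, then deduce the embedding properties and continuity exactly as the paper does.

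The one genuine difference is in how you verify the separating hypothesis of Fact~\ref{fct:dtmtoolu}. The paper first replaces $G$ by $\overline H$, invokes Piccard--Pettis to make $H$ meager, and arranges the $F_n$ to be symmetric with $F_nF_m\subseteq F_{n+m}$; it then picks $h\in F_N\cap U$ and $h'\in U\cap(H\setminus F_{2n+N})$ explicitly, using nowhere-denseness of the $F_m$. Your argument instead proceeds by contradiction: if the hypothesis failed on some $U$, then $H\cap U$ would lie in the closed-in-$H$ set $x_0F_nF_n^{-1}$, forcing $\overline H\cap U\subseteq H$, and translating this by $gx_1^{-1}$ produces an open set in $\overline H$ missing the dense subgroup $H$. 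This is a cleaner and more self-contained topological argument: it needs neither the semigroup condition on the $F_n$ nor the appeal to Piccard--Pettis (as you note parenthetically), only that $H$ is dense and proper in $\overline H$. The paper's version, on the other hand, produces the witnesses $h,h'$ constructively rather than by contradiction.
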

	\begin{proof}
		We can assume without loss of generality that $H$ is dense in $G$ (by replacing $G$ with $\overline H$). Since $H$ has the Baire property (as an $F_\sigma$ subset of a compact space), by Pettis theorem (i.e.\ Fact~\ref{fct: Pettis}) it follows that $H$ is meager in $G$ (because $H$ is not closed, and so not open). Therefore, since $H$ is $F_\sigma$ and closed meager sets are nowhere dense, there are nonempty closed, nowhere dense sets $F_n\subseteq G$, $n \in \omega$, such that $H = \bigcup_n F_n$. We can assume without loss of generality that the $F_n$'s are symmetric (i.e.\ $F_n=F_n^{-1}$ and $e \in F_n$), increasing, and satisfy $F_nF_m\subseteq F_{n+m}$.
		
		$H$ acts by homeomorphisms on $G$ (by right translations by inverses). Let us denote by $R_n$ the preimage of $F_n$ by $(g_1,g_2)\mapsto g_1^{-1}g_2$. We intend to show that the assumptions of Fact~\ref{fct:dtmtoolu} are satisfied, with $X:=G$, $\mathcal O=\Sigma:=H$ and $R_n$ just defined.
		
		Since $G$ is compact Hausdorff, it is strong Choquet over $\mathcal O$ (even over itself) and regular. Fix any open set $U$ and any $n\in \omega$. Then pick any $h\in H\cap U$ (which exists by density). Then $h\in F_N$ for some $N\in \omega$.
		
		From the fact that $H$ is dense and the $F_m$'s are closed nowhere dense, it follows that for each $m$, $H\setminus F_m$ is dense, so we can find some $h'\in U\cap (H\setminus F_{2n+N})$. Since the $F_n$'s are increasing, we see that $h \ne h'$.
		Moreover, we have
		\[
		H\cap (R_n)_h\cap (R_n)_{h'}=H\cap hF_n \cap h'F_n \subseteq F_N F_n \cap h' F_n .
		\]
		But if this last set was nonempty, we would have $h'\in F_NF_nF_n^{-1}\subseteq F_{2n+N}$ -- which would contradict the choice of $h'$ -- so $H\cap (R_n)_h\cap(R_n)_{h'}=\emptyset$, and the assumptions of Fact~\ref{fct:dtmtoolu} are satisfied. This gives us the map $\phi$, which satisfies all the bullets, as well as the auxiliary map $\tilde{\phi}$. What is left is to show that $\phi$ is a homeomorphic embedding.
		
		$\phi$ is clearly injective by the third bullet, and by the preceding fact, the range of $\phi$ is a Hausdorff space, so we only need to show that it is continuous. To do that, consider a subbasic open set $U=\{F\sbmid F\cap K= \emptyset\}$, and notice that by compactness, $\phi(\eta)\in U$ iff $\overline{\tilde{\phi}(\eta\restr n)}\cap K= \emptyset$ for some $n$, which is an open condition about $\eta$.
	\end{proof}
	
	\begin{rem}
		\label{rem:subVcont}
		Consider a map $f\fcolon X\to Y$ between topological spaces and the induced image and preimage maps $\mathcal F\fcolon \powerset(X)\to \powerset(Y)$ and $\mathcal G\fcolon \powerset(Y)\to \powerset(X)$. Then:
		\begin{itemize}
			\item
			If $f$ is continuous, so is $\mathcal F$.
			\item
			If $f$ is closed, $\mathcal G$ is continuous.
		\end{itemize}
		In particular, if $f$ is continuous, $Y$ is Hausdorff and $X$ is compact, then both $\mathcal F$ and $\mathcal G$ are continuous.
	\end{rem}
	\begin{proof}
		For the first point, consider a subbasic open set $B=\{A \sbmid A\cap F=\emptyset \}\subseteq \powerset(Y)$. Then $\mathcal F^{-1}[B]=\{A \sbmid f[A]\cap F=\emptyset\}=\{A \sbmid A\cap f^{-1}[F]=\emptyset\}$ (this is because any $a\in A$ witnessing that $A$ is not in one of the sets will witness the same for the other). The third set is clearly open in $\powerset(X)$. The second point is analogous.
	\end{proof}

	\begin{proof}[Proof of Proposition~\ref{prop:conj_weak}]
		Choose $\bar \alpha \in Y(\C)$. As usual, we can assume that $X=p(\C')$. Put $H=\ker(\bar h_E)$ and $G=\cl_\tau(H)$, as in the proof of part (I) of Theorem~\ref{thm:nwg} (page \pageref{pf:nwg}). Since $E$ is $F_\sigma$, we see that $[\tp(\bar \alpha/M)]_{E^M}$ is $F_\sigma$ as well. Thus, by 
		Remark~\ref{remark: to use in 5.12} and the equality $H=j[r^{-1}[[\tp(\bar \alpha/M)]_{E^M}]$ (justified in the last paragraph of the proof of Lemma~\ref{lem:analytic_subgroup}), we obtain that $H$ is $F_\sigma$ in the compact, Hausdorff group $G$. Further, since $E\restr_{p(\C')}$ is not type-definable, by Theorem~\ref{thm:main theorem 3} and Remark~\ref{rem:type-definability_of_relations}, we get that $H$ is not closed, so Corollary~\ref{cor:Vietoris_embed} applies and gives us a function $\phi\fcolon 2^\omega\to \powerset(G)$ as there. 
		
		Now, we need to introduce some other functions which combined with $\phi$ will yield the desired function $\psi\fcolon 2^\omega \to \powerset(Y_M)$.
		
		By Lemma~\ref{lem: variant}, 
		we have the continuous surjection $j_{\cl}:=j \circ \zeta\fcolon \cl(u\M)\twoheadrightarrow u\M/H(u\M)$ (explicitly given by $ j_{\cl} (p)=up/H(u\M)$) and the function $r_{\cl}\fcolon \cl(u\M)\to X_M$ (which is the restriction of the function $\hat r \fcolon EL\to X_M$ to $\cl(u\M)$). 

		We know that the following diagram commutes.
		\begin{figure}[H]
			\centering
			\begin{tikzcd}
				\cl(u\M)\arrow[r,"\hat{f}"]\arrow[d,"r_{\cl}"]&\gal_L(T)\arrow[d,"g_E"] \\
				X_M\arrow[r] & X/E
			\end{tikzcd}
		\end{figure}
		Since for any $x \in \cl(u\M)$ we have $\hat{f}(x)=\hat{f}(u)\hat{f}(x)=\hat{f}(ux)$ and $(\bar h_E \circ j_{\cl})(x)=(g_E\circ \bar f\circ j_{\cl})(x)=(g_E\circ \bar f \circ j)(ux)= (g_E \circ\hat{f})(ux)$, we see that $(\bar h_E \circ j_{\cl})(x)=(g_E \circ\hat{f})(x)$. Hence, from the commutativity of the above diagram, we conclude that the following diagram also commutes.
		\begin{figure}[H]
			\centering
			\begin{tikzcd}
				\cl(u\M) \arrow[r,"j_{\cl}"]\arrow[d,"r_{\cl}"] & u\M/H(u\M)\arrow[d,"\bar h_E"] \\
				X_M \arrow[r,"\rho"]& X/E
			\end{tikzcd}
		\end{figure}
		Consider the function $\psi\fcolon 2^\omega\to \powerset(X_M)$ given by $\psi(\eta)=r_{\cl}[j_{\cl}^{-1}[\phi(\eta)]]$. By Remark~\ref{rem:subVcont}, it is continuous. Further, the image of $\psi$ consists of subsets of $Y_M$ -- it is a consequence of the commutativity of the diagram above, the equality $\bar{h}_E[G]=\cl(\bar \alpha/E)\subseteq Y/E$ (which is the content of Theorem~\ref{thm:main theorem 3}) and the fact that $Y$ is $E$-saturated. It remains to check that $\psi$ satisfies the three bullets. This is an easy exercise using commutativity of the above diagram, but we give the details.

		\begin{itemize}
			\item Each $\psi(\eta)$ is closed and nonempty because of the surjectivity of $j_{\cl}$, continuity of $j_{\cl}$ and $r_{\cl}$ and the fact that $\phi(\eta)$ is closed and nonempty. 
			\item Consider any $\eta \EZ \eta'$. Then $\phi(\eta)h=\phi(\eta')$ for some $h \in H$. Consider any $p \in \psi(\eta)$. The goal is to find $q \in \psi(\eta')$ which is $E^M$ related to $p$. There is $x \in \cl(u\M)$ such that $r_{\cl}(x)=p$ and $j_{\cl}(x) \in \phi(\eta)$. Then $j_{\cl}(x)h \in \phi(\eta')$. Since $j_{\cl}$ is surjective, there is $y \in \cl(u\M)$ such that $j_{\cl}(y) = j_{\cl}(x)h \in \phi(\eta')$. This implies that $q:=r_{\cl}(y) \in \psi(\eta')$. It remains to show that $p \mathrel{E^M} q$. Since $h \in H = \ker(\bar h_E)$, we get $\rho(q)=(\rho \circ r_{\cl})(y) = (\bar h_E \circ j_{\cl})(y) = \bar h_E ( j_{\cl}(x)h) = (\bar h_E \circ j_{\cl})(x) = (\rho \circ r_{\cl})(x) = \rho(p)$, which means that $p \mathrel{E^M} q$.
			\item Consider $\eta, \eta'$ which are not $\EZ$ related. Then $\phi(\eta)H \cap \phi(\eta')H = \emptyset$. Consider any $p \in \psi(\eta)$ and $q \in \psi(\eta')$. The goal is to show that $p$ is not $E^M$ related to $q$. Suppose for a contradiction that $p \mathrel {E^M} q$. There are $x, y \in \cl(u\M)$ such that $r_{\cl}(x) =p$, $r_{\cl}(y)=q$, $j_{\cl}(x) \in \phi(\eta)$, and $j_{\cl}(y) \in \phi(\eta')$. We conclude that $\bar h_E(j_{\cl}(x)) = (\rho \circ r_{\cl})(x) = \rho(p) = \rho(q) = ( \rho \circ r_{\cl} )(y) =  \bar h_E(j_{\cl}(y))$. This implies that  $j_{\cl}(y)^{-1} j_{\cl}(x) \in H$, and so $j_{\cl}(x) \in \phi(\eta) \cap \phi(\eta')H$, a contradiction. \qedhere
		\end{itemize}
	\end{proof}
	
	Notice that by the above proof and Lemma~\ref{lem:analytic_subgroup}, if in Corollary~\ref{cor:Vietoris_embed} we were able to weaken the assumption that $H$ is $F_\sigma$ to the one that it is only in $\Souslin(\CLO(G))$, then the same thing could be done in Proposition~\ref{prop:conj_weak} 
	(i.e.\ we could drop the assumption that $E$ is $F_\sigma$, leaving it simply in $\Souslin(\textrm{type-definable})$, as in Theorem~\ref{thm:nwg}).
	
	We reiterate that if we could weaken the assumption of Proposition~\ref{prop:conj_weak} as in the last paragraph and strengthen the conclusion to have that $\psi$ maps distinct points to disjoint sets (for some model $M$), then we would obtain Conjecture~\ref{conj:nwg2} -- the only part apparently missing would be the property that $\psi$ is a homeomorphic embedding, which would then be an easy consequence of Fact~\ref{fct:subVt}. It is possible that the property of mapping distinct points to disjoint sets could be attainable just by a careful choice of the model $M$.

	\section{Trichotomy theorem}\label{section: trichotomy}
	Here, we formulate our trichotomy mentioned in the abstract (but in a more general form), and we give a very short proof based on Theorems~\ref{thm:main_Borel} and~\ref{thm:nwg}. In order to get the trichotomy stated in the abstract, it is enough to apply Corollary~\ref{cor: elegant trichotomy} to $X=Y:=p(\C)$.

	Recall that a Borel (more generally, analytic) subset of any product of sorts is invariant by definition (see Definition~\ref{definition: Borel subsets of a model} and the paragraph after Fact~\ref{fct:eqcmp}).
	
	\begin{cor}\label{cor: elegant trichotomy}
		Assume that the language is countable. Let $E$ be a bounded, Borel (or, more generally, analytic) equivalence relation on an invariant set $X\supseteq p(\C)$ for a complete type $p$ over $\emptyset$, and let $Y\subseteq p(\C)$ be type-definable (with parameters) and $E$-saturated. Assume additionally that $\aut(\C/\{Y\})$ acts transitively on $Y/E$ (e.g.\ $Y=p(\C)$ or $Y$ is a KP strong type). Then exactly one of the following holds:
		\begin{itemize}[nosep]
			\item
			$E\restr_{Y}$ is relatively definable (on $Y$), smooth, and has finitely many classes,
			\item
			$E\restr_{Y}$ is not relatively definable, but it is type-definable, smooth, and has $2^{\aleph_0}$ classes,
			\item
			$E\restr_{Y}$ is not type definable and not smooth, and has $2^{\aleph_0}$ classes.
		\end{itemize}
	\end{cor}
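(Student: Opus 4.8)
The plan is to derive the trichotomy by playing Theorems \ref{thm:main_Borel} and \ref{thm:nwg} off against each other, together with the elementary facts that type-definability implies smoothness (Fact \ref{fct: type-definability imply smoothness}) and that relative definability of an equivalence relation on a type-definable set forces finitely many classes. First I would set up the hypotheses for both theorems. Replacing $X$ by $p(\C)$, which is $\emptyset$-type-definable and countably supported (we work with countably many variables), we may assume $X=p(\C)$; then $Y\subseteq p(\C)$ is type-definable, and since $Y$ is $E$-saturated it is a union of classes of $E\restr_{p(\C)}$, with $E\restr_{Y\cap p(\C)}=E\restr_Y$. As the language is countable and the variables are countably many, $E_\emptyset$ is a Borel (or analytic) subset of the Polish space $(P^2)_\emptyset$, hence lies in $\Souslin(\CLO((P^2)_\emptyset))$; by Proposition \ref{prop: Souslin and parameters} this means $E$ is in $\Souslin(\textrm{type-definable})$, so Theorem \ref{thm:nwg} is applicable, and its part \ref{it:nwg_2} applies because $\aut(\C/\{Y\})$ acts transitively on $Y/E$ by hypothesis. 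Finally, for a countable model $M$ the space $Y_M$ is Polish and $E\restr_Y$ has the same number of classes as the analytic relation $E^M\restr_{Y_M}$ (the map $Y_M/E^M\to Y/E$ is a bijection since $E$-classes are $M$-invariant), so $|Y/E|\leq 2^{\aleph_0}$ always.

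Now I would split according to whether $E\restr_Y$ is type-definable. If it is \emph{not} type-definable, then it is not relatively definable (relative definability would make it type-definable), and by Theorem \ref{thm:main_Borel} it is not smooth; by Theorem \ref{thm:nwg}\ref{it:nwg_1} it has at least $2^{\aleph_0}$ classes, hence, by the bound above, exactly $2^{\aleph_0}$. This is the third alternative. If instead $E\restr_Y$ \emph{is} type-definable, then it is smooth by Fact \ref{fct: type-definability imply smoothness}, and Theorem \ref{thm:nwg}\ref{it:nwg_2} tells us that either $E\restr_Y$ is relatively definable or it has at least $2^{\aleph_0}$ classes. In the former case the partition of the compact Hausdorff space $Y_M$ into the relatively clopen images of the $E$-classes is finite, so $E\restr_Y$ has finitely many classes --- the first alternative. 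In the latter case $E\restr_Y$ is not relatively definable and, again by the bound above, has exactly $2^{\aleph_0}$ classes --- the second alternative.

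It remains to note that the three alternatives are mutually exclusive and exhaustive: relative definability implies type-definability, which implies smoothness, so the cases ``relatively definable'', ``type-definable but not relatively definable'', and ``not type-definable'' are genuinely disjoint and cover everything, and we have just shown each of them entails the full list of properties recorded in the corresponding bullet. The proof is thus essentially pure bookkeeping; the only points that need care, and so where I expect the ``real'' work to lie, are verifying that the Borel/analytic hypothesis is faithfully transported to membership in $\Souslin(\textrm{type-definable})$ (via Proposition \ref{prop: Souslin and parameters}) and to the Polish space $Y_M$ (via Fact \ref{fct:eqcmp}), so that Theorems \ref{thm:main_Borel} and \ref{thm:nwg} are legitimately invoked and the cardinality bound $|Y/E|\leq 2^{\aleph_0}$ is actually available.
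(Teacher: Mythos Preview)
Your proposal is correct and follows essentially the same route as the paper: verify via Proposition~\ref{prop: Souslin and parameters} that $E$ lies in $\Souslin(\textrm{type-definable})$, then combine Theorem~\ref{thm:main_Borel} (for the smoothness/type-definability equivalence) with Theorem~\ref{thm:nwg}\ref{it:nwg_2} (for the relative-definability/$2^{\aleph_0}$ dichotomy), using countability for the upper bound $\lvert Y/E\rvert\le 2^{\aleph_0}$. The only cosmetic difference is that the paper organizes the case split around relative definability first rather than type-definability, and invokes \ref{it:nwg_2} uniformly rather than also appealing to \ref{it:nwg_1} in the non-type-definable branch; your version is slightly more explicit about the upper cardinality bound, which the paper leaves implicit in ``countability of the language''.
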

	
	\begin{proof}
		This follows from Theorem~\ref{thm:main_Borel} and Theorem~\ref{thm:nwg}, after noting that since Borel sets in Polish spaces are analytic and so can be obtained by the Souslin operation applied to closed sets, the relation $E$ is in $\Souslin(\textrm{type-definable})$ (by Proposition~\ref{prop: Souslin and parameters}).
		
		Namely, if $E\restr_{Y}$ is relatively definable on $Y$, then it clearly has finitely many classes. Suppose $E\restr_{Y}$ is not relatively definable (on $Y$). Then, by Theorem~\ref{thm:nwg}(II) and the countability of the language, $E\restr_{Y}$ has exactly $2^{\aleph_0}$ classes. Finally, the equivalence of type-definability and smoothness of $E\restr_{Y}$ is provided by Theorem~\ref{thm:main_Borel}.
	\end{proof}
	
	This also immediately carries over to the definable group case.
	\begin{cor}
		\label{cor:group_trichotomy}
		Assume that the language is countable. Let $H$ be a bounded index, Borel  (or, more generally, analytic) subgroup of a $\emptyset$-definable group $G$, and suppose $K\geq H$ is a type-definable subgroup of $G$. Then exactly one of the following holds:
		\begin{itemize}[nosep]
			\item
			$H$ is relatively definable in $K$ and $[K:H]$ is finite,
			\item
			$H$ not relatively definable, but it is type-definable, $[K:H]=2^{\aleph_0}$ and $E_H$ is smooth,
			\item
			$H$ is not type-definable, $[K:H]=2^{\aleph_0}$ and $E_H$ is not smooth.
		\end{itemize}
	\end{cor}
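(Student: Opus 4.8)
The plan is to deduce this from the general trichotomy, Corollary~\ref{cor: elegant trichotomy}, via the affine sort construction of Fact~\ref{fct:affine_sort}, mirroring exactly how Corollaries~\ref{cor:group_borel} and~\ref{cor:group_nwg} were obtained from Theorems~\ref{thm:main_Borel} and~\ref{thm:nwg}. So first I would pass from $\C$ to the expanded structure $(\C,X,\cdot)$, where $X$ is a left principal homogeneous space of $G$ adjoined as a new sort together with the action. Since we add only one sort and one function symbol, the expanded theory is still countable, as required by Corollary~\ref{cor: elegant trichotomy}; moreover $X$ is $\emptyset$-definable (hence $\emptyset$-type-definable and finitely supported) and, because the automorphisms $\sigma_g$ act transitively on $X=G\cdot x_0$, it is the set of realizations of a single complete type $p$ over $\emptyset$. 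I would then take $E:=E_{H,X}$, the $H$-orbit equivalence relation on $X$, and $Y:=K\cdot x_0$.

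The next step is to check the hypotheses of Corollary~\ref{cor: elegant trichotomy} for this $E$ and $Y$. Since $H$ is Borel (respectively analytic), so is $E_{H,X}$, by the observation following Remark~\ref{rem:grresplus}; by Remark~\ref{rem:grresplus} the set $Y=K\cdot x_0$ is type-definable, and it is $E_{H,X}$-saturated because $H\leq K$; and, reading off from $\aut((\C,X,\cdot))\cong G\rtimes\aut(\C)$ in Fact~\ref{fct:affine_sort}, the automorphisms $\sigma_g$ with $g\in K$ fix $Y$ setwise and act transitively on $Y/E_{H,X}$ (this is precisely the transitivity point already used in the proof of Corollary~\ref{cor:group_nwg}), so the extra assumption of Corollary~\ref{cor: elegant trichotomy} holds. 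Applying it gives the stated three mutually exclusive, exhaustive possibilities for $E_{H,X}\restr_{K\cdot x_0}$, with the number of classes being $\lvert (K\cdot x_0)/E_{H,X}\rvert=[K:H]$.

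It then remains to transport the conclusion back to $G$, $H$, $K$. By Fact~\ref{fct:grres} together with Remark~\ref{rem:grresplus}, relative definability and type-definability of $E_{H,X}\restr_{K\cdot x_0}$ are equivalent to relative definability of $H$ in $K$ and to type-definability of $H$, respectively; by the homeomorphism of Fact~\ref{fct:grres}, smoothness of $E_{H,X}\restr_{K\cdot x_0}$ is equivalent to smoothness of $E_H\restr_K$; and the count $[K:H]$ is preserved. For the smoothness statements about $E_H$ on all of $G$ in the last two cases I would argue separately: if $H$ is type-definable then so is $E_H$ (being the coset relation), hence $E_H$ is smooth by Fact~\ref{fct:tdsmt}; and if $E_H\restr_K$ is non-smooth then, since $K$ is type-definable and $E_H\restr_K=E_H\cap K^2$, any Borel reduction of $E_H$ to $\Delta_{2^\omega}$ would restrict to one of $E_H\restr_K$, so $E_H$ is non-smooth. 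Mutual exclusivity and exhaustiveness transfer from Corollary~\ref{cor: elegant trichotomy}: relative definability of $H$ in $K$ implies type-definability of $H$, so the first case excludes the third, the first and second are separated by relative definability, and the second and third by type-definability.

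I do not expect a serious obstacle here — the argument is essentially bookkeeping on top of Corollary~\ref{cor: elegant trichotomy} and Fact~\ref{fct:grres}/Remark~\ref{rem:grresplus}. The only two slightly delicate points I would want to state carefully are: (i) the sharpening of ``at least $2^{\aleph_0}$ classes'' to ``exactly $2^{\aleph_0}$'', which follows because $(K\cdot x_0)_M$ is Polish and $E_{H,X}^M$ restricted to it is analytic, so its quotient has size at most $2^{\aleph_0}$; and (ii) the passage between smoothness of $E_H$ on $G$ and on the type-definable piece $K$, handled above. Everything else is the standard affine-sort translation already used twice earlier in the paper.
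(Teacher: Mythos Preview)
Your argument is correct and uses the same underlying machinery as the paper (the affine-sort translation plus the main theorems). The paper's own proof is a one-liner: it simply invokes Corollaries~\ref{cor:group_borel} and~\ref{cor:group_nwg}, which already encapsulate the affine-sort bookkeeping you redo by hand, so combining those two statements directly gives the trichotomy without going back through Fact~\ref{fct:affine_sort} and Corollary~\ref{cor: elegant trichotomy}; your more explicit treatment of the passage from smoothness of $E_H\restr_K$ to smoothness of $E_H$ on all of $G$ is a detail the paper leaves implicit.
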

	\begin{proof}
		It follows from Corollaries~\ref{cor:group_borel} and~\ref{cor:group_nwg}.
	\end{proof}

	It should be noted that if Conjecture~\ref{conj:nwg2} holds, the trichotomy can be extended to uncountable case as well (with the ``smoothness'' in the second branch of the trichotomy replaced by the nonexistence of a function such as the one in the conclusion of the conjecture, and with number of classes greater or equal to $2^{\aleph_0}$ in the second and third branch). This follows from Proposition~\ref{prop:conj_converse}.

\appendix
\section{On the existence of a semigroup structure on the type space \texorpdfstring{${S_{\bar c}(\mathfrak{C})}$}{Sc(C)}}\label{section: semigroup operation}

	When considering the dynamical system of $\aut(\C)$ acting on $S_{\bar c}(\C)$, we 
	heavily used the enveloping semigroup $EL=EL(S_{\bar c}(\mathfrak{C}))$. In this appendix, we show that the natural action of $\aut(\C)$ on $S_{\bar c}(\C)$ can be extended to a left continuous semigroup operation on $S_{\bar c}(\C)$ (which allows us to identify the semigroups $EL$ and $S_{\bar c}(\C)$) if and only if the underlying theory is stable.

	The general idea is as follows. We establish the inclusion of $\aut(\C)$ in the type space $S_{\bar c}(\C)$ as a universal object in a certain category. This allows us to describe the existence of a semigroup operation on $S_{\bar c}(\C)$ in terms of a ``definability of types'' kind of statement, which in turn can be related to stability using a type counting argument.
	
	\begin{prop}\label{proposition: category C}
		Consider $\aut(\C)\subseteq S_{\bar c}(\C)$ given by $\sigma \mapsto \tp(\sigma(\bar c)/\C)$.
		Consider the category $\catg$ whose objects are maps $\aut(\C)\to K$ such that:
		\begin{itemize}
			\item
			$K$ is a compact, zero-dimensional, Hausdorff space,
			\item
			preimages of clopen sets in $K$ are relatively $\C$-definable in $\aut(\C)$, i.e.\ for each clopen $C$ there is a formula $\varphi(x,a)$ with $a$ from $\C$ such that $\sigma$ is in the preimage of $C$ if and only if $\models \varphi(\sigma(\bar c),a)$,
		\end{itemize}
		where morphisms are continuous maps between target spaces with the obvious commutativity property.
		Then the inclusion of $\aut(\C)$ into the space $S_{\bar c}(\C)$ is the initial object of $\catg$.
	\end{prop}
	\begin{proof}
		Firstly, $\aut(\C)$ is dense in $S_{\bar c}(\C)$, so the uniqueness part of the universal property is immediate. What is left to show is that for every $h\fcolon \aut(\C)\to K$, $h\in \catg$, we can find a continuous map $\bar h\fcolon S_{\bar c}(\C)\to K$ extending $h$. 
		
		Choose any $p\in S_{\bar c}(\C)$ and consider it as an ultrafilter 
		on relatively $\C$-definable subsets of $\aut(\C)$, and then consider $K_p:=\bigcap \left\{\overline{h[D]}\sbmid D\in p\right\}\subseteq K$.
		It is the intersection of a centered (i.e.\ with the finite intersection property) family of nonempty, closed subsets of $K$, so it is nonempty. In fact, it is a singleton.
		If not, there are two distinct elements $k_1,k_2 \in K_p$. Take a clopen neighborhood $U$ of $k_1$ such that $k_2 \notin U$. Since $h \in \catg$, $h^{-1}[U]= \{\sigma \in \aut(\C) \sbmid \;\models \varphi(\sigma(\bar c),a)\}$ for some formula $\varphi(\bar x, a)$. If $\varphi(\bar x, a) \in p$, then $K_p \subseteq \overline{h[h^{-1}[U]]} \subseteq \overline{U}=U$, a contradiction as $k_2 \notin U$. If  $\neg \varphi(\bar x, a) \in p$, then $K_p \subseteq \overline{h[\aut(\C) \setminus h^{-1}[U]]} \subseteq \overline{K \setminus U}=K \setminus U$, a contradiction as $k_1 \notin K \setminus U$.
		In conclusion, we can define $\bar h(p)$ to be the unique point in $K_p$.
		%
		
		We see that $\bar h$ extends $h$. Moreover, $\bar h$ is continuous, because the preimage of a clopen set $C\subseteq K$ is the basic open set in $S_{\bar c}(\C)$ corresponding to the relatively definable set $h^{-1}[C]$.
	\end{proof}
	
	In Corollary~\ref{cor: semigroup iff definability of types}, we will establish the aforementioned ``definability of types''-like condition from the existence of a semigroup operation. 
	For this we will need the following definition.




	\begin{dfn}
		Let $M$ be a model (e.g. $M=\C$). A type $q(x) \in S(M)$ is {\em piecewise definable} if for every type $p(y)\in S(\emptyset)$ and formula $\varphi(x,y)$ the set of $a \in p(M)$ for which $q\vdash \varphi(x,a)$ is relatively $M$-definable in $p(M)$ (that is, there is a formula $\delta(y,c)$ (with $c$ from $M$) such that for any $a\in p(M)$ we have $q\vdash \varphi(x,a)$ if and only if $\delta(a,c)$).
	\end{dfn}


	\begin{rem}\label{remark: piecewise weak definability equals piecewise definability}
		Let  $q(x) \in S(\C)$. The following conditions are equivalent.
		\begin{enumerate}
			\item  $q(x)$ is piecewise definable.
			\item  For every type $p(y)\in S(\emptyset)$ and formula $\varphi(x,y)$ there is a type $\bar p(yz) \in S(\emptyset)$ extending $p(y)$ such that the set of $ab \models \bar p$ for which $q\vdash \varphi(x,a)$ is relatively $\C$-definable in $\bar p(\C)$ (that is, there is a formula $\delta(yz,c)$ (with $c$ from $\C$) such that for any $ab \models \bar p$ we have $q\vdash \varphi(x,a)$ if and only if $\delta(ab,c)$).
			\item For every $\varphi(x,y)$ and every $a$ from $\C$ there is some $b$ from $\C$ such that the set of all $a'b'$ from $\C$ with $a'b'\equiv ab$ and $q\vdash \varphi(x,a')$ is relatively definable over $\C$ (among all $a'b'$ from $\C$ equivalent to $ab$).
		\end{enumerate}
	\end{rem}

	\begin{proof}
		The equivalence $(2) \Leftrightarrow (3)$ is obvious. It is also clear that $(1) \Rightarrow (2)$, by taking $z$ and $b$ to be empty in (2). It remains to prove $(2) \Rightarrow (1)$.
		
		Take any type $p(y) \in S(\emptyset)$ and formula $\varphi(x,y)$. By (2), there is a type $\bar p(yz) \in S(\emptyset)$ extending $p(y)$ and a formula $\delta(yz,c)$ (with $c$ from $\C$) such that for any $ab \models \bar p$ we have $q\vdash \varphi(x,a)$ if and only if $\delta(ab,c)$. This implies that for any $a,b,b'$ such that $ab \models \bar p$ and $ab' \models \bar p$ we have $\delta(ab,c) \leftrightarrow \delta(ab',c)$. By compactness, there is a formula $\psi(y,z) \in \bar p(yz)$ such that for any $a,b,b'$ with $ab \models \psi(y,z)$ and $ab' \models \psi(y,z)$ we have $\delta(ab,c) \leftrightarrow \delta(ab',c)$. Put 
		$$\delta'(y,c) : = (\exists z) (\psi(y,z) \wedge \delta(yz,c)).$$
		It remains to check that for any $a \models p$, $q\vdash \varphi(x,a)$ if and only if $\delta'(a,c)$. 
		
		First, assume $q\vdash \varphi(x,a)$ and $a \models p$. Take $b$ such that $ab \models \bar p$. Then $\psi(a,b) \wedge \delta(ab,c)$, and so  $\delta'(a,c)$. 
		
		Now, assume that  $\delta'(a,c)$ and $a \models p$. Then there is $b$ such that $\psi(a,b)$ and $\delta(ab,c)$. There is also $b'$ with $ab' \models \bar p$, and then $\psi(a,b')$. By the choice of $\psi$, we conclude that $\delta(ab',c)$. Hence,  $q\vdash \varphi(x,a)$.
	\end{proof}

		

	\begin{cor}\label{cor: semigroup iff definability of types}
		The natural action $\aut(\C)\times S_{\bar c}(\C)\to S_{\bar c}(\C)$ extends to a left-continuous semigroup operation on $S_{\bar c}(\C)$ if and only if each complete type over $\C$ is piecewise definable.
	\end{cor}

	\begin{proof}
		First, using Proposition~\ref{proposition: category C}, we will easily deduce 
		\begin{clm*}
			The action $\aut(\C)\times S_{\bar c}(\C)\to S_{\bar c}(\C)$ extends to a left-continuous semigroup operation on $S_{\bar c}(\C)$ if and only if for each $q\in S_{\bar c}(\C)$ the mapping $h_q \fcolon \aut(\C)\to S_{\bar c}(\C)$ given by $\sigma \mapsto \sigma(q)$ is in the category $\catg$ (i.e.\ the preimages of clopen sets are relatively $\C$-definable).
		\end{clm*}

		\begin{clmproof}[Proof of claim]
			$(\Rightarrow)$ Let $*$ be a left-continuous semigroup operation on $S_{\bar c}(\C)$ extending the action of $\aut(\C)$. Consider any $q \in S_{\bar c}(\C)$. Define $\bar h_q \fcolon S_{\bar c}(\C) \to S_{\bar c}(\C)$ by $\bar h_q(p) := p*q$. Then $\bar h_q$ is a continuous extension of $h_q$.  By continuity, the preimages of clopen sets by $\bar h_q$ are clopen, and therefore their intersections with $\aut(\C)$ (which are exactly the preimages of clopen sets by the original map $h_q$) are relatively $\C$-definable.
			
			$(\Leftarrow)$ By Proposition~\ref{proposition: category C}, for any $q \in S_{\bar c}(\C)$ there exists a continuous function $\bar h_q \fcolon S_{\bar c}(\C) \to S_{\bar c}(\C)$ which extends $h_q$. For $p,q \in S_{\bar c}(\C)$ define $p*q := \bar h_q(p)$. It is clear that $*$ (treated as a two-variable function) is left continuous and extends the action of $\aut(\C)$ on $S_{\bar c}(\C)$. We leave as a standard exercise on limits of nets to check that $*$ is also associative.
		\end{clmproof}
		
		By the claim and Remark~\ref{remark: piecewise weak definability equals piecewise definability}, the whole proof boils down to showing that for any type $q(x) \in S_{\bar c}(\C)$ we have the following equivalence: the preimage by $h_q$ of any clopen subset of $S_{\bar c}(\C)$ is relatively definable in $\aut(\C)$ if and only if $q(x)$ satisfies item (3) of Remark~\ref{remark: piecewise weak definability equals piecewise definability}.

		Let us fix an arbitrary $q(x) \in S_{\bar c}(\C)$, and any formula $\varphi(x,a)$ for some
		$a$ from $\C$. The preimage by $h_q$ of the clopen set $[\varphi(x,a)]$  equals
		\[
			\{\sigma \in \aut(\C)\sbmid \sigma(q)\vdash \varphi(x,a)\}=\{\sigma \in \aut(\C)\sbmid q\vdash \varphi(x,\sigma^{-1}(a))\}.
		\]
		
		Now, for $(\Leftarrow)$, suppose there is some $b$ from $\C$ and a formula $\delta(yz,c)$ (with $c$ from $\C$) such that for any $a'b'\equiv ab$ we have $q\vdash \varphi(x,a')$ if and only if $\models \delta(a'b',c)$. Then (taking $a'b'=\sigma^{-1}(ab)$) we have that
		\[
			q\vdash \varphi(x,\sigma^{-1}(a))\iff \models \delta(\sigma^{-1}(ab),c)\iff \models \delta(ab,\sigma(c)),
		\]
		and the last statement is clearly relatively $\C$-definable about $\sigma$.
	
		For $(\Rightarrow)$, suppose $\{\sigma\in \aut(\C)\sbmid q\vdash \varphi(x,\sigma^{-1}(a))\}$ is defined by some formula $\delta$, i.e.\ for some $d,c$ from $\C$, for any $\sigma \in \aut(\C)$ we have
		\[
			q\vdash \varphi(x,\sigma^{-1}(a))\iff \models\delta(d,\sigma(c)) \iff \models \delta(\sigma^{-1}(d),c).
		\]
		We can assume without loss of generality that $d=ab$ for some $b$ from $\C$ (adding dummy variables to $\delta$ if necessary). But then, for $a'b'\equiv ab$ there is some automorphism $\sigma$ such that $\sigma(a'b')=ab$, so we have
		\[
			q\vdash \varphi(x,a')\iff \models \delta(a'b',c).\qedhere
		\]
	\end{proof}
	
	This easily implies that stability is sufficient for the existence of a semigroup structure.
	
	\begin{cor}\label{corollary: stability implies semigroup}
		If $T$ is stable, then $S_{\bar c}(\C)$ has a left-continuous semigroup operation extending the action of $\aut(\C)$ on $S_{\bar c}(\C)$
	\end{cor}
	\begin{proof}
		If $T$ is stable, then every type over $\C$ is definable, so in particular it is piecewise definable, which by Corollary~\ref{cor: semigroup iff definability of types} implies that the semigroup structure exists.
	\end{proof}
	
	For the other direction, we will use Corollary~\ref{cor: semigroup iff definability of types} and an easy counting argument. But before that we need to establish a transfer property for piecewise definability.
	
	\begin{prop}\label{proposition: transfer of piecewise definability}
		Suppose that each complete type over $\C$ is piecewise definable. Then each complete type over any model $M$ of cardinality less then $\kappa$ (where $\kappa$ is the degree of saturation of $\C$) is piecewise definable. 
	\end{prop}
	
	\begin{proof}
		Take any $q(x) \in S(M)$. 
		Consider any type $p(y) \in S(\emptyset)$ and formula $\varphi(x,y)$.
		Take a coheir extension $\bar q \in S(\C)$ of $q$. Then $q$ is invariant over $M$. By assumption, $\bar q$ is piecewise definable. So there is a formula $\delta(y,c)$ (with $c$ from $\C$) such that for any $a \in p(\C)$, $\bar q\vdash \varphi(x,a)$ if and only if $\delta(a,c)$. Denote by $A$ the set of all $a \in p(\C)$ satisfying these equivalent conditions; so $A$ is a relatively definable subset of $p(\C)$. By the invariance of $\bar q$ over $M$, we see that $A$ is invariant over $M$, and so, by $\kappa$-saturation and strong $\kappa$-homogeneity of $\C$, the subset $A$ of $p(\C)$ is relatively definable over $M$. In other words, there is a formula $\delta'(y,m)$ (with $m$ from $M$) such that for any  $a \in p(\C)$, $\bar q\vdash \varphi(x,a)$ if and only if $\delta'(a,m)$. Hence, for any  $a \in p(M)$, $q\vdash \varphi(x,a)$ if and only if $\delta'(a,m)$.
	\end{proof}
	
	\begin{cor}\label{cor: semigroup = stability}
	$S_{\bar c}(\C)$ has a left-continuous semigroup operation extending the action of $\aut(\C)$ on $S_{\bar c}(\C)$ if and only if $T$ is stable.
	\end{cor}
	\begin{proof}
		The ``if'' part is the content of Corollary~\ref{corollary: stability implies semigroup}.	
		
		$(\Rightarrow)$ Assume $S_{\bar c}(\C)$ has a left-continuous semigroup operation extending the action of $\aut(\C)$ on $S_{\bar c}(\C)$. Then, by Corollary~\ref{cor: semigroup iff definability of types}, all complete types over $\C$ are piecewise definable.  We will show that this implies that $T$ is $\beth_2(|T|)$-stable (where $\beth_2(|T|): = 2^{2^{|T|}}$). Consider any $M \models T$ of cardinality at most $\beth_2(|T|)$. We need to show that $|S_1(M)| \leq \beth_2(|T|)$. For this it is enough to prove that for any $\varphi(x,y)$ (where $x$ is a single variable) $|S_\varphi(M)| \leq  \beth_2(|T|)$. Without loss of generality $M \prec \C$.  
		By Proposition~\ref{proposition: transfer of piecewise definability}, each complete type over $M$ is piecewise definable. This implies that each type $q \in S_\varphi(M)$ is determined by a function $S_y(\emptyset) \to \lang(M)$ which takes $p(y)$ to $\delta(y,c)$ witnessing piecewise definability of $q$  (or, more precisely, of an arbitrarily chosen extension of $q$ to a type in $S_1(M)$) for the formula $\varphi(x,y)$. So $|S_\varphi(M)| \leq |\lang(M)|^{|S_y(\emptyset)|} \leq (\beth_2(|T|))^{2^{|T|}}=\beth_2(|T|)$.
	\end{proof}
	
	It is well known that if $T$ is stable, then it is $2^{|T|}$-stable. The reason why we worked with $\beth_2(|T|)$ in the above proof is that this is the ``degree'' of stability which we can deduce directly from piecewise definability. Then, knowing that $T$ is stable, we have the usual definability of types which implies $2^{|T|}$-stability.

	\section*{Acknowledgments}
	The first author would like to thank Maciej Malicki for
	drawing his attention to Fact~\ref{fct:Miller} which turned out to be useful in the proof of Theorem~\ref{thm:main_Borel}.
	
	The authors are grateful to the referee for very careful reading and all the comments and suggestions which helped us to improve presentation.
	
	\printbibliography

\end{document}